\documentclass[12pt,reqno]{amsart}

\usepackage{mathrsfs}
\usepackage[textwidth=16cm,hmarginratio=1:1]{geometry}
\usepackage[colorlinks=true,linkcolor=blue,citecolor=blue,urlcolor=blue]{hyperref}
\theoremstyle{plain}
\newtheorem{theorem}{Theorem}[section]
\newtheorem{proposition}[theorem]{Proposition}
\newtheorem{lemma}[theorem]{Lemma}
\newtheorem{definition}[theorem]{Definition}
\newtheorem{corollary}[theorem]{Corollary}
\theoremstyle{remark}
\newtheorem{remark}[theorem]{Remark}
\newtheorem{example}[theorem]{Example}
\numberwithin{equation}{section}

\newcommand{\K}{\mathbb K}
\newcommand{\A}{\mathbb A}
\newcommand{\Ahat}{\widehat A}
\newcommand{\D}{D}
\newcommand{\wt}{\operatorname{wt}}

\newcommand{\T}{\mathbf T}

\title[Cluster structures on bosonic extensions]{Quantum cluster structures
on bosonic extensions}
\author{Yingjin Bi}
\address{Department of Mathematics, Harbin Engineering University;
Research Institute for Mathematical Sciences, Kyoto University}
\email{yingjinbi@mail.bnu.edu.cn}
\date{}

\begin{document}
\raggedbottom

\begin{abstract}
We construct a quantum cluster algebra structure on the standard
$\mathbb Z[q^{\pm1/2}]$-form of the bosonic extension algebra
$\widehat A(b)$ for every positive braid $b$ and every symmetrizable
generalized Cartan matrix. The proof realizes this algebra as the
degree-zero part of a localization of a larger quantum unipotent
coordinate ring, obtained by thickening a word for $b$.
For symmetric Cartan matrices, this realization also gives a monoidal
categorification of the quantum cluster algebra.
\end{abstract}
\maketitle

\setcounter{tocdepth}{1} 
\tableofcontents
\section{Introduction}

Bosonic extensions of quantum unipotent coordinate rings arose
in the work of Hernandez--Leclerc on quantum Grothendieck rings
and derived Hall algebras in type $ADE$
\cite{hernandez2015quantum}. Kashiwara--Kim--Oh--Park developed
their braid symmetries and global bases for arbitrary
symmetrizable generalized Cartan matrices
\cite{KKOP-braid,KKOP-global}; see also \cite{oh2025pbw} for
the PBW theory in finite type. The braid symmetries associate a subalgebra
$\widehat A(b)$ with each positive braid $b$.
We study the quantum cluster structure on its standard
$\mathbb Z[q^{\pm1/2}]$-form and the monoidal categorification
of this structure. For a word $\mathbf i$ representing $b$,
let $\mathcal A_q(\mathbf s(\mathbf i))$ denote the quantum
cluster algebra of the word seed $\mathbf s(\mathbf i)$.
The frozen variables of this word seed are not inverted.

Qin constructed common triangular bases for these quantum
cluster algebras and obtained quasi-categorifications in
symmetric type \cite{qin2024analogs}, allowing changes of
quantization and coefficients, as well as localization.
In finite Cartan type, his braid-group construction identifies
$\mathcal A_q(\mathbf s(\mathbf i))$ with $\widehat A(b)$
\cite{qin2024based}. In type $ADE$, Kashiwara--Kim--Oh--Park
gave monoidal categorifications in Hernandez--Leclerc
categories \cite{kashiwara2025monoidal}. The author also
proved the cluster structure in this case using transition
maps between Lusztig parametrizations \cite{bi2025cluster}.

We extend the integral cluster structure known in type $ADE$
\cite{kashiwara2025monoidal} to all symmetrizable generalized Cartan
matrices, including affine and indefinite types. In symmetric type,
we also construct a monoidal categorification. Our approach draws on the
thickening construction of Bao--He \cite{bao2022total} and
Qin's extension of words \cite[Lemma~6.10]{qin2024analogs}.
Qin inserts auxiliary letters to obtain a reduced word for
a larger Weyl group and uses this construction to establish
quasi-categorification results.

We adapt the thickening construction of Bao--He~\cite{bao2022total}
to the chosen symmetrizing integers and identify the bosonic extension algebra $\Ahat(b)$ with a
subalgebra of a localized quantum unipotent coordinate ring
for a larger symmetrizable Kac--Moody algebra. The integral quantum
cluster structure of this ring, established by Goodearl--Yakimov
\cite{GY-integral}, yields the required structure on $\Ahat(b)$.
In symmetric type, we also use the ambient monoidal categorification
to categorify the resulting quantum cluster algebra.

The first difficulty is to recover $\widehat A(b)$, together
with its standard integral form, from the larger quantum
unipotent coordinate ring. We invert the auxiliary frozen
variables and take the subalgebra of degree zero in the
auxiliary roots. Using CGL extensions, we prove that this
subalgebra is isomorphic to $\widehat A(b)$ and that the
isomorphism preserves the standard integral forms.

The second difficulty is to obtain the required cluster
structure inside this subalgebra, since the ambient cluster
variables need not have degree zero. We normalize them
using the auxiliary frozen variables and prove that this
normalization respects every mutation. The normalized
variables generate the whole integral subalgebra and
identify it with $\mathcal A_{\A}(\mathbf s(\mathbf i))$.\\

\noindent
We now describe the construction. Let $q^{1/2}$ be an
indeterminate, and set $\A=\mathbb Z[q^{\pm1/2}]$ and
$\mathbb K=\mathbb Q(q^{1/2})$.
Let $C=(c_{ij})_{i,j\in I}$ be a symmetrizable generalized Cartan
matrix with finite index set $I$. Fix positive integers $d_i$ such that
$d_ic_{ij}=d_jc_{ji}$, and set $q_i=q^{d_i}$.
We normalize the invariant form by
$(\alpha_i,\alpha_j)=d_ic_{ij}$ and
$(\alpha_i,\varpi_j)=d_i\delta_{ij}$.
Let $\mathbf i=(i_1,\ldots,i_r)$ be a nonempty positive word
representing $b=\sigma_{i_1}\cdots\sigma_{i_r}$.
Adjoin vertices $\infty_1,\ldots,\infty_{r-1}$ and extend $C$
to a symmetrizable generalized Cartan matrix $\widetilde C$ by
\[
\widetilde c_{i,\infty_p}=-2,\qquad
\widetilde c_{\infty_p,i}=-2d_i,\qquad
\widetilde c_{\infty_p,\infty_q}=-2\quad(p\ne q),
\]
with diagonal entries $2$. Its symmetrizing integers are
$\widetilde d_i=d_i$ and $\widetilde d_{\infty_p}=1$.
Then
\[
\operatorname{th}(\mathbf i)
=(i_1,\infty_1,i_2,\ldots,\infty_{r-1},i_r)
\]
is a reduced word for an element $\widetilde w_{\mathbf i}$
of the enlarged Weyl group. Let
$A_{\mathbf i,\A}=A_{\A}(\mathfrak n(\widetilde w_{\mathbf i}))$
be its dual canonical integral quantum coordinate ring, and
let $X_1,\ldots,X_{2r-1}$ be the dual PBW generators.
For $1\le p<r$, set $H_p=q^{-1/2}X_{2p}$; this is the
normalized frozen seed variable at $\infty_p$.

Define the separator degree on the enlarged root lattice by
$\deg_\infty\alpha_i=0$ for $i\in I$ and
$\deg_\infty\alpha_{\infty_p}=e_p$, where $e_p$ is the $p$th
standard basis vector of $\mathbb Z^{r-1}$.
This grading extends to the Ore localization at the $H_p$.
Set
\[
L_{\mathbf i,\A}
=A_{\mathbf i,\A}[H_1^{-1},\ldots,H_{r-1}^{-1}],
\qquad
\D_{\mathbf i,\A}=(L_{\mathbf i,\A})_0,
\]
where the subscript $0$ denotes the component of separator
degree zero. The degrees of the $H_p$ form a
$\mathbb Z$-basis of $\mathbb Z^{r-1}$, so each odd PBW
generator has a unique right normalization
$Y_k=X_{2k-1}H^{\mathbf m_k}$ of separator degree zero.
The ordered monomials in the $Y_k$ form an $\A$-basis of
$\D_{\mathbf i,\A}$. For the empty word, all these integral
algebras are $\A$.

\begin{theorem}[{Theorem~\ref{thm:integral-comparison}}]
\label{thm:intro-realization}
For every symmetrizable generalized Cartan matrix and every
positive word $\mathbf i$ representing $b$, there are units
$c_k\in\A^\times$ such that
\[
\widehat A_{\A}(b)\xrightarrow{\sim}\D_{\mathbf i,\A},
\qquad P_k^{\mathbf i}\longmapsto c_kY_k.
\]
Here $\widehat A_{\A}(b)$ is the standard bosonic integral
form spanned by its global basis, or equivalently by its
normalized PBW basis.
\end{theorem}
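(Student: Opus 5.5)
The plan is to compare two presentations: one of $\widehat A_{\A}(b)$ and one of $\D_{\mathbf i,\A}$. We use the CGL/PBW structure on both sides. On the bosonic side, $\widehat A(b)$ is generated by the braid-symmetry root vectors $P_1^{\mathbf i},\ldots,P_r^{\mathbf i}$. Its ordered monomials form a $\K$-basis, normalized PBW monomials span the standard $\A$-form, and the root vectors satisfy Levendorskii--Soibelman type relations. Concretely, for $j<k$ the $q$-commutator $P_k^{\mathbf i}P_j^{\mathbf i}-q^{-(\beta_j,\beta_k)}P_j^{\mathbf i}P_k^{\mathbf i}$ lies in the span of ordered monomials in $P_{j+1}^{\mathbf i},\ldots,P_{k-1}^{\mathbf i}$. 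On the ambient side, $A_{\mathbf i,\A}$ is a CGL extension in $X_1,\ldots,X_{2r-1}$ with the analogous dual PBW relations. The $H_p=q^{-1/2}X_{2p}$ are frozen, hence $q$-commute with every $X_k$ up to powers of $q$ determined by the form $\widetilde{(\ ,\ )}$. Localizing at them and taking separator degree zero gives the algebra with basis the ordered monomials in $Y_k=X_{2k-1}H^{\mathbf m_k}$, as stated before the theorem.

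\textbf{Step 1: commutation data.} First compute the $q$-commutation exponents of the $Y_k$. Write $\widetilde\beta_{2k-1}$ for the root of $X_{2k-1}$. The degree-zero condition forces $\widetilde\beta_{2k-1}+\deg(H^{\mathbf m_k})$ to have trivial $\infty$-part. The choice $\widetilde c_{i,\infty_p}=-2$, $\widetilde c_{\infty_p,i}=-2d_i$ with $\widetilde d_{\infty_p}=1$ is made so that the pairing of these normalized degree-zero roots reproduces the bosonic form. That is, the commutation exponent of $Y_j,Y_k$ should equal $-(\beta_j,\beta_k)$, where $\beta_j$ are the roots attached to $\mathbf i$ in the bosonic lattice; on the bosonic side the letters $i_k$ at different "time levels" pair via the bosonic extension rule ($\pm$ shifts, with $(\alpha_i,\alpha_j)$ only between adjacent levels). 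I would verify this by an explicit induction on $r$. Here $\widetilde w_{\mathbf i}$ is built by appending $\infty_{r-1},i_r$, and the new roots are $s_{i_1}\cdots s_{i_{r-1}}(\alpha_{\infty_{r-1}})$ and so on. The cancellation of $\infty$-components uses the exponent $-2$, which makes $\infty$-reflections add multiples of $2\alpha_\infty$ that the $H$'s absorb.

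\textbf{Step 2: matching the straightening relations.} Next show that the $Y_k$ satisfy the same straightening relations as the $P_k^{\mathbf i}$, up to rescaling by units. Multiply the ambient LS relation for $X_{2k-1},X_{2j-1}$ on the right by appropriate $H$-monomials and commute them through using Step 1. The intermediate monomials involve even $X_{2p}=q^{1/2}H_p$ and odd $X$'s; after normalization they become degree-zero monomials in the $Y$'s. To identify the coefficients I would use that both algebras are iterated Ore extensions determined by a level-$2$ "interval" property. Since both the $P$'s and the $Y$'s come from quantum minors, I would compare via a surjection to a common object instead of computing each coefficient. One option is the braid-group/Qin identification in finite type extended formally. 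Simpler is the following: the subalgebra generated by $Y_k,Y_{k+1}$ (consecutive letters) and the $P$-analogue are both the rank-two situation, where relations are explicit. The general relations then follow from the fact that quantum minors $D(\varpi,w\varpi)$ generate, and a $T$-system of quantum minors holds on both sides. The constants $c_k$ arise from the normalizations $q^{-1/2}$ in $H_p$ and the bar-invariance conventions. They are fixed by requiring $c_kY_k$ to be bar-invariant, i.e.\ in the dual canonical basis up to $q^{\pm1/2}$ powers.

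\textbf{Step 3: the isomorphism.} With matched CGL presentations over $\K$, $P_k^{\mathbf i}\mapsto c_kY_k$ extends to an algebra isomorphism $\widehat A(b)\otimes\K\to\D_{\mathbf i,\A}\otimes\K$. Indeed, both have ordered-monomial bases with identical relations, so the map sends basis to basis. Integrally, the normalized PBW monomials of $\widehat A_{\A}(b)$ go to $\A^\times$-multiples of ordered monomials in the $Y_k$. These form an $\A$-basis of $\D_{\mathbf i,\A}$, so the isomorphism restricts integrally. It remains to check that the normalized PBW basis is a basis of the standard form (assumed from the bosonic theory) and that the $\A$-form is a genuine $\A$-algebra on both sides.

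I expect the main obstacle to be Step 2. The issue is to show that the lower-order terms in the straightening relations agree, not merely that the leading $q$-commutation exponents match. I would first try to reduce it to the $T$-system: on the ambient side it is the determinantal identity of Goodearl--Yakimov/GLS, and on the bosonic side it is the KKOP T-system. Since both algebras are generated by the frozen-free initial minors, this would force the relations to agree.
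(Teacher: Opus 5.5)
Your architecture (compare the two Ore presentations generator by generator, then check integrality) is the right one. Step~1 is essentially the paper's computation $f_k^{\mathsf t}\widetilde\Lambda f_\ell=-(\gamma_k,\gamma_\ell)$ of the leading commutation exponents of the $Y_k$. The gap is Step~2, which carries the whole content of the theorem, and none of the routes you list closes it.

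\begin{itemize}
\item The T-system route fails at the start, because the initial minors do not generate either algebra. For $\mathbf i=(i,i)$ the initial normalized minors are multiples of $x=q_i^{1/2}f_{i,0}$ and $xy-q_i$, of root degrees $\alpha_i$ and $0$. So $y=q_i^{1/2}f_{i,1}$, of degree $-\alpha_i$, lies outside the subalgebra they generate. Moreover, T-system identities are not a presentation, so matching them yields no map $P_k\mapsto c_kY_k$.
\item Rank-two reductions only see consecutive pairs, whose corrections are scalars. The correction between $Y_j$ and $Y_k$ for $k>j+1$ involves all of $Y_{j+1},\ldots,Y_{k-1}$.
\item The braid-group identification exists only in finite type, which this theorem is meant to go beyond.
\end{itemize}

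What is missing is a rigidity statement for the last Ore step. Induct on prefixes and identify both prefix algebras with a common $B=\K\langle x_1,\ldots,x_n\rangle$. The automorphism $\sigma(a)=q^{(\gamma,\deg a)}a$, with $\gamma=w_{\le n}\alpha_i$, is the same on both sides, and you must show that the two $\sigma$-derivations satisfy $\delta=c\,\delta'$. The paper proves this in four steps.
\begin{itemize}
\item A degree count shows that both derivations vanish on $x_j$ for every $j$ after the last occurrence $p$ of $i$.
\item $d=\delta(x_p)$ is nonzero by a crystal input: $\varepsilon_i^*(b_0)\ge1$ for the label of $\T_{j_1}\cdots\T_{j_t}(f_{i,0})$, with $j_a\ne i$. $d'=\delta'(x_p)$ is nonzero because the prime chains of $\D_{\mathbf i}$ are the level sets of $k\mapsto i_k$.
\item $(d')^{-1}d$ is central and $\mathcal H$-invariant in $\operatorname{Frac}B[p+1,n]$, whose invariant center is $\K$. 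Hence it is a scalar $c$.
\item CGL prime detection forces $\delta-c\delta'=0$, because this derivation kills $x_p,\ldots,x_n$.
\end{itemize}

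Your Step~3 also silently assumes $c_k\in\A^\times$, which is the other half of the statement and does not follow from Step~2. Once the prefix identification is fixed, the new scalar is forced by $d=cd'$; you cannot choose it. Bar-invariance could at best determine it up to bar-invariant factors such as $q+q^{-1}$, which are not units of $\A$. The paper instead shows that $d/(1-q_i^2)$ and $d'/(1-q_i^2)$ are both primitive in the same saturated integral interval lattice. This uses the upper-global-basis string formula for $e_i^*$ at $\varepsilon_i^*=1$, applied both for $C$ and for $\widetilde C$. Two proportional primitive vectors differ by a unit, so $c\in\A^\times$.
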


To obtain the cluster structure, we normalize the ambient
quantum cluster variables by Laurent monomials in the
separators. This normalization commutes with quantum mutation
and gives the word seed $\mathbf s(\mathbf i)$ of
Section~\ref{app:word-seeds}. The resulting integral cluster
algebra is $\D_{\mathbf i,\A}$, so the preceding theorem
gives the following isomorphism.

\begin{theorem}[{Corollary~\ref{cor:isoAb}}]
\label{thm:intro-cluster}
For every symmetrizable generalized Cartan matrix and every
positive word $\mathbf i$ representing $b$, there is an
isomorphism of $\A$-algebras
\[
\widehat A_{\A}(b)
\cong\mathcal A_{\A}(\mathbf s(\mathbf i)).
\]
\end{theorem}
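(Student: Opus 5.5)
Given Theorem~\ref{thm:intro-realization}, which identifies $\widehat A_{\A}(b)$ with $\D_{\mathbf i,\A}$, it remains to show $\D_{\mathbf i,\A}\cong\mathcal A_{\A}(\mathbf s(\mathbf i))$. I would transport the Goodearl--Yakimov integral cluster structure on $A_{\mathbf i,\A}$ for the reduced word $\operatorname{th}(\mathbf i)$. The plan has three steps.

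First, describe the ambient initial seed of $A_{\mathbf i,\A}$ given by \cite{GY-integral}. Its cluster variables are the normalized quantum minors $\Delta_k$, $1\le k\le 2r-1$, attached to the prefixes of $\operatorname{th}(\mathbf i)$. The $H_p=q^{-1/2}X_{2p}$ are frozen. Each $\Delta_k$ is homogeneous in the separator degree, with degree determined by which letters $\infty_p$ occur in the prefix. Since the degrees $e_p$ of the $H_p$ form a basis, each cluster variable $x$ has a unique right normalization $\bar x=xH^{\mathbf m(x)}$ of degree zero. I would check on the initial seed that the exchange matrix obtained by deleting the rows and columns of the $\infty$-frozen vertices, with the remaining frozen vertices kept, equals the exchange matrix of $\mathbf s(\mathbf i)$. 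The $\infty$-vertices are never mutable except possibly the last one at each separator. The quasi-commutation matrix must also restrict correctly after normalization. Both are direct computations with the bilinear form of $\widetilde C$, using $(\alpha_i,\alpha_{\infty_p})=-2d_i$.

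Second, show that normalization commutes with every mutation at a non-separator mutable vertex. Take an exchange relation $x_kx_k'=q^{a}M_++q^{b}M_-$. Both sides are homogeneous of the same separator degree, because the $\deg_\infty$-grading is a grading of the whole localized algebra. Multiplying on the right by the suitable monomial in the $H$'s and using the quasi-commutation of $H_p$ with everything (each $H_p$ is frozen, so it $q$-commutes with all cluster monomials) gives $\bar x_k\bar x_k'=q^{a'}\bar M_++q^{b'}\bar M_-$. The $q$-exponents must be checked to match the quantum mutation rule for the normalized compatible pair. I expect this bookkeeping to be the main obstacle: one has to verify that the normalized $\Lambda$-matrix is again compatible with the restricted exchange matrix with the same skew-symmetrizer. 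I would argue conceptually. Normalization is the composition of a toric change of frozen variables with a specialization to degree zero, and such maps intertwine quantum mutations whenever the frozen part is only rescaled. By induction along mutation sequences, every normalized ambient cluster variable is a cluster variable of $\mathbf s(\mathbf i)$, and conversely.

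Third, compare algebras. Every cluster variable of $\mathbf s(\mathbf i)$ is a normalized ambient cluster monomial, so it lies in $(L_{\mathbf i,\A})_0=\D_{\mathbf i,\A}$. This gives an injective map $\mathcal A_{\A}(\mathbf s(\mathbf i))\to\D_{\mathbf i,\A}$; injectivity follows from the Laurent phenomenon inside the common skew field. For surjectivity, I would show that each $Y_k$ is, up to a unit of $\A$, a normalized cluster variable. The GY theorem provides mutation sequences sending the initial seed to seeds containing each dual PBW generator $X_{2k-1}$, and these sequences avoid the $\infty$ vertices. Since ordered monomials in the $Y_k$ form an $\A$-basis of $\D_{\mathbf i,\A}$, the images generate, which gives the isomorphism. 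Composing with Theorem~\ref{thm:intro-realization} yields $\widehat A_{\A}(b)\cong\mathcal A_{\A}(\mathbf s(\mathbf i))$.
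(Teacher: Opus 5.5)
Your plan is essentially the paper's. You take the integral comparison $\widehat A_{\A}(b)\cong\D_{\mathbf i,\A}$ as given and prove $\D_{\mathbf i,\A}\cong\mathcal A_{\A}(\mathbf s(\mathbf i))$ by normalizing the Goodearl--Yakimov seed of $\operatorname{th}(\mathbf i)$ with separator monomials; the $q$-exponent bookkeeping you defer is settled in the paper because homogeneity forces the separator rows to equal $N_tB_t$, and compatibility gives $(u_+-u_-)^{\mathsf t}\widetilde\Omega_t h=0$ for the separator-supported shift $h$, so both exchange terms pick up the same power of $q$. The only real variation is surjectivity: the paper uses $A_{\mathbf i,\A}=\mathcal A_{\A}(\mathbf s(\operatorname{th}(\mathbf i)))$ and takes separator-degree-zero parts of the algebra generated by cluster variables and $H_p^{\pm1}$, whereas your argument is also valid: each $X_{2k-1}$ is an ambient cluster variable up to an $\A$-unit (via the Goodearl--Yakimov interval seeds), and ordered $Y$-monomials form an $\A$-basis.
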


For the categorification statements below, assume that $C$ is symmetric
and choose $d_i=1$ for every $i$. Then $\widetilde C$ is symmetric. Let
$\mathscr C_{\widetilde w_{\mathbf i}}$ be the standard
category of finite-dimensional graded modules over a symmetric
quiver Hecke algebra for $\widetilde C$, over an algebraically
closed field of characteristic zero. Localize this category
by inverting the determinantial modules corresponding to
the separators $H_p$, and let $\mathscr C_{\mathbf i}^{0}$
be the full subcategory of separator degree zero. Write
$K_{0,\A}(\mathscr C)=
\A\otimes_{\mathbb Z[q^{\pm1}]}K_0(\mathscr C)$.
A simple object is called real if its convolution square
is simple.

\begin{theorem}[{Theorem~\ref{thm:categorification}}]
\label{thm:intro-categorification}
For every symmetric generalized Cartan matrix and every positive word
$\mathbf i$, there is an isomorphism
\[
\mathcal A_{\A}(\mathbf s(\mathbf i))
\cong K_{0,\A}(\mathscr C_{\mathbf i}^{0})
\]
under which every quantum cluster monomial is a power of
$q^{1/2}$ times the class of a real simple object in
$\mathscr C^0_{\mathbf i}$. Thus $\mathscr C_{\mathbf i}^{0}$
is a monoidal categorification of
$\mathcal A_{\A}(\mathbf s(\mathbf i))$.
\end{theorem}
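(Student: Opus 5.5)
We use the monoidal categorification of the ambient ring and pass to the separator-degree-zero part. By Theorem~\ref{thm:intro-cluster} and its proof, $\mathcal A_{\A}(\mathbf s(\mathbf i))\cong\D_{\mathbf i,\A}$: every cluster monomial of $\mathbf s(\mathbf i)$ is an ambient quantum cluster monomial of $L_{\mathbf i,\A}$ times a Laurent monomial in the $H_p$, up to a power of $q^{1/2}$. For $\widetilde C$ symmetric, the categorification results of Kang--Kashiwara--Kim--Oh identify $A_{\A}(\mathfrak n(\widetilde w_{\mathbf i}))$ with $K_{0,\A}(\mathscr C_{\widetilde w_{\mathbf i}})$. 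Under this identification the dual canonical basis goes to classes of self-dual simple modules, each frozen variable goes to the class of a determinantial module $M_p$ (up to $q^{1/2}$-powers), and every ambient cluster monomial goes to a real simple module up to grading shift. So the plan is: (1) localize this identification; (2) match gradings; (3) transport the normalization.

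\emph{Step 1.} The determinantial modules $M_p$ for the separators $H_p$ are real and central up to grading shift: $M_p\circ X\simeq q^{c}X\circ M_p$ for all $X$, since they are frozen. Hence the Kashiwara--Kim--Oh--Park localization $\mathscr C_{\widetilde w_{\mathbf i}}[M_p^{\circ-1}]$ is defined. Its Grothendieck ring is the Ore localization $L_{\mathbf i,\A}$. Its simple objects are the images of $S\circ M^{\circ\mathbf m}$ for $S$ simple, and in the localized category, convolving with an invertible real central object preserves both simplicity and reality.

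\emph{Step 2.} Weights of modules live in the enlarged root lattice, so the separator degree gives a decomposition of the localized category into blocks. The degree-zero block $\mathscr C_{\mathbf i}^0$ is a monoidal subcategory, and $K_{0,\A}(\mathscr C^0_{\mathbf i})=(L_{\mathbf i,\A})_0=\D_{\mathbf i,\A}$. Composing with the isomorphism above gives $\mathcal A_{\A}(\mathbf s(\mathbf i))\cong K_{0,\A}(\mathscr C^0_{\mathbf i})$.

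\emph{Step 3.} Take a cluster monomial $x$ of $\mathbf s(\mathbf i)$. It equals $q^{a/2}\,x^{\mathrm{amb}}H^{\mathbf m}$ with $x^{\mathrm{amb}}=[S]$ for a real simple $S$, up to $q^{1/2}$. Then $x$ is, up to $q^{1/2}$, the class of $S\circ M^{\circ\mathbf m}$, which is real simple in the localized category. Its separator degree is zero because $x$ lies in $\D_{\mathbf i,\A}$, so the object lies in $\mathscr C^0_{\mathbf i}$. In the mixed case, when $\mathbf m$ has negative entries, we write $M^{\circ\mathbf m}$ as a convolution of $M_p$'s and their inverses. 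These commute up to shift, so the simplicity argument goes through unchanged.

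\emph{Main obstacle.} The main obstacle is checking that the $\A$-integral structures match: the $q^{1/2}$-twisted product on $K_{0,\A}$ must agree with the quantum torus normalization of $H_p=q^{-1/2}X_{2p}$ and of the ambient seed. We would handle this by using self-dual normalizations throughout. The powers $q^{1/2}$ appearing in the statement absorb any remaining discrepancy, so we only need the identification to hold up to units of $\A$.
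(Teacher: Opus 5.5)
Your proposal is correct and follows essentially the same route as the paper. In both, you localize $\mathscr C_{\widetilde w_{\mathbf i}}$ at the separator determinantial modules $M_p$ and identify $K_{0,\A}(\mathscr C^0_{\mathbf i})$ with $(L_{\mathbf i,\A})_0=\D_{\mathbf i,\A}\cong\mathcal A_{\A}(\mathbf s(\mathbf i))$. You then realize each normalized cluster monomial $\mathsf M_t(\mathbf a)$ as the class of $Q_{\mathbf i}(S_t^{\mathbf a})\circ M^{N_t\mathbf a}$, which is real because the separators braid with it up to grading shift. The paper additionally transports the ambient exchange short exact sequences into $\mathscr C^0_{\mathbf i}$, tensoring by $M^{\mathbf n_{t,k}+\mathbf n_{t',k}}$ and using the braiding isomorphisms, so that mutation itself is realized categorically; this is not needed for the statement as phrased.
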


Continue to assume that $C$ is symmetric. Let $\delta(\mathbf i)$
be the Demazure product of
$\mathbf i$, and let $v\leq\delta(\mathbf i)$.
Denote by $\mathbf s(v,\mathbf i)$ the
Bao--Ye--M\'enard seed defined in
Section~\ref{app:menard-seed}.
All categories below are associated with the enlarged
Cartan matrix $\widetilde C$.
Let $M_p$ be the determinantial module whose class
corresponds to $H_p$ up to a power of $q^{1/2}$.
Note that $M_p\in\mathscr C_{\widetilde w_{\mathbf i},v}$. Here $\mathscr C_{\widetilde w_{\mathbf i},v}$ refers to the category defined in Section~\ref{app:monoidal-localization}.
Define
\[
\mathscr L_{\widetilde w_{\mathbf i},v}
:=\mathscr C_{\widetilde w_{\mathbf i},v}
[M_p^{-1}\mid 1\leq p<r],
\qquad
\mathscr C_{\mathbf i,v}^{0}
:=(\mathscr L_{\widetilde w_{\mathbf i},v})_0,
\]
where the subscript $0$ denotes the subcategory of
auxiliary degree zero. 

\begin{theorem}[{Theorem~\ref{thm:menard-seed-embedding}}]
Under the symmetric-type assumptions above, normalization induces an algebra embedding
\[
\Phi_{\mathbf i,v,\mathbb A}:
\mathcal A_{\mathbb A}(\mathbf s(v,\mathbf i))
\hookrightarrow
K_{0,\mathbb A}(\mathscr C_{\mathbf i,v}^{0})
.
\]
It sends every quantum cluster monomial to a power
of $q^{1/2}$ times the class of a simple object of
$\mathscr C_{\mathbf i,v}^{0}$.
\end{theorem}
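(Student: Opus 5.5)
The plan is to transport the ambient monoidal categorification of the Bao--Ye--M\'enard seed for $(\widetilde w_{\mathbf i},v)$ through localization at the $M_p$, and then restrict to separator degree zero. The template is the proof of Theorem~\ref{thm:intro-categorification}, with $\mathscr C_{\widetilde w_{\mathbf i}}$ replaced by $\mathscr C_{\widetilde w_{\mathbf i},v}$ and the word seed replaced by $\mathbf s(v,\mathbf i)$.

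\textbf{Step 1: the ambient seed.} First identify, via the reduced word $\operatorname{th}(\mathbf i)$, an ambient M\'enard-type seed $\widetilde{\mathbf s}(v,\operatorname{th}(\mathbf i))$ for $\widetilde w_{\mathbf i}$ and $v$. I would invoke the known monoidal categorification of this seed in symmetric type: $K_0(\mathscr C_{\widetilde w_{\mathbf i},v})$ carries the corresponding quantum cluster structure, and every cluster monomial is, up to $q^{1/2}$, the class of a real simple module. I would then check that the separators $H_p$ are frozen variables of this seed. Their classes $[M_p]$ are classes of real simple modules that commute with everything up to powers of $q$. The lemma needed here is that $M_p$ is a central real simple object in the sense of the localization of Section~\ref{app:monoidal-localization}, so that $\mathscr L_{\widetilde w_{\mathbf i},v}$ exists. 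Its Grothendieck ring is then the Ore localization of $K_0$ at the $[M_p]$, and simples of $\mathscr L$ are of the form $S\circ M^{\mathbf m}$ with $S$ simple.

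\textbf{Step 2: normalization and the embedding.} Define the normalization $x\mapsto xH^{\mathbf m(x)}$ on ambient cluster variables, where $\mathbf m(x)$ is the unique vector making the separator degree zero. This uniqueness uses the fact that the degrees of the $H_p$ form a basis of $\mathbb Z^{r-1}$. I would then repeat the argument already used for word seeds: normalization commutes with quantum mutation at every non-frozen vertex. The reason is that the exchange relation is homogeneous in separator degree, and the separators are $q$-central, so multiplying both sides by suitable separator monomials rescales the two exchange monomials by the same power of $q^{1/2}$.

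With the frozen separator vertices deleted, the normalized seed should be exactly $\mathbf s(v,\mathbf i)$. This is the main combinatorial check: the exchange matrix with $\infty$ rows removed, and the compatible form restricted and corrected by the separator commutation, must match the definition in Section~\ref{app:menard-seed}. I expect this to be the main obstacle, and I would attack it by explicitly comparing the $\widetilde w$-adapted indices of $\operatorname{th}(\mathbf i)$ with the Bao--Ye--M\'enard indexing for $(v,\mathbf i)$.

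Given this identification, the map $\Phi_{\mathbf i,v,\A}$ is defined by sending each cluster variable to its normalized ambient counterpart. It is well defined because the normalized seeds mutate compatibly.

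\textbf{Step 3: injectivity and simplicity.} Injectivity follows from the quantum Laurent phenomenon. Both sides sit inside the quantum torus of the initial seed, and on that torus the map is the injective assignment of initial variables to normalized initial variables, a monomial change in the ambient torus localized at the separators.

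For simplicity, take a cluster monomial of $\mathbf s(v,\mathbf i)$. It maps to $q^{a/2}\,zH^{\mathbf m}$, where $z$ is an ambient cluster monomial. By Step 1, $z$ is, up to a power of $q^{1/2}$, the class of a real simple module $S$. Then $zH^{\mathbf m}$ is the class of $S\circ M^{\mathbf m}$ in $\mathscr L$, which is simple because convolution with an invertible central real simple object preserves simplicity. It lies in degree zero, so it is an object of $\mathscr C^0_{\mathbf i,v}$.
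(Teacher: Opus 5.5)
Your plan follows the paper's proof: the ambient Bao--Ye--M\'enard seed for $\operatorname{th}(\mathbf i)$, the reduced-word embedding of \cite{bi2026monoidal,kashiwara2026monoidal}, localization at the $M_p$, normalization via Proposition~\ref{pro:normalized-mutation}, and simplicity of $Q_{\mathbf i,v}(S)\circ M^{\mathbf n}$; only an embedding sending cluster monomials to simple classes is needed there, not a full categorification. The step you flag as the main obstacle is settled by one observation: $v\le\delta(\mathbf i)$ lies in the parabolic subgroup $W\subset\widetilde W$, so its leftmost reduced subexpression in $\operatorname{th}(\mathbf i)$ uses only the odd positions $2p_k-1$. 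Hence $n_i,a_k,b_k$ are unchanged, the separators are never mutated or deleted, the frozen and deleted vertices match under $a\leftrightarrow 2a-1$, and deleting separator rows commutes with matrix mutation; moreover, since $W$ fixes $\varpi_{\infty_p}$, one gets $M_p=M(\widetilde w\varpi_{\infty_p},v\varpi_{\infty_p})\in\mathscr C_{\widetilde w,v}$.
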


Section~\ref{sec:CGL} recalls CGL extensions and quantum
unipotent coordinate rings. Section~\ref{sec:bosonic} studies
the bosonic integral form and the straightening corrections.
Section~\ref{sec:new-subalgebra} constructs the thickening
and proves the integral comparison. In
Section~\ref{sec:cluster-category}, we prove the cluster
isomorphism in symmetrizable type and the monoidal categorification
in symmetric type, and give an explicit example. Appendix~\ref{sec:appendix} collects the
conventions for quantum seeds, word seeds, and monoidal
localization.

\medskip
\noindent\textbf{Use of AI.}
The author used ChatGPT 6 Astra to edit the manuscript and
assist in checking the proofs. The author takes full
responsibility for the arguments and results.

\subsection*{Acknowledgments}

The author thanks Masaki Kashiwara for helpful discussions
on the results of this paper and Ryo Fujita for suggestions
that helped improve them. This work was supported by the
China Scholarship Council (CSC), No.~202506680034.

\section{CGL extensions}\label{sec:CGL}

Throughout this section, \(\mathbb K\) is a field and all algebras
are unital \(\mathbb K\)-algebras.
For integers \(a\le b\), write \([a,b]:=\{a,a+1,\ldots,b\}\).
Let \(S\) be an algebra and \(\sigma\) a \(\mathbb K\)-algebra
endomorphism of \(S\).
A \emph{left \(\sigma\)-derivation} is a \(\mathbb K\)-linear map
\(\delta:S\to S\) satisfying
\(\delta(st)=\sigma(s)\delta(t)+\delta(s)t\) for all \(s,t\in S\).
The \emph{Ore extension} \(S[x;\sigma,\delta]\) is the free left
\(S\)-module with basis \(\{x^n\mid n\ge0\}\), whose multiplication
extends that of \(S\) and is determined by
\(xs=\sigma(s)x+\delta(s)\).

In our applications, \(\mathbb K=\mathbb Q(q^{1/2})\).
We treat integral forms over \(\A=\mathbb Z[q^{\pm1/2}]\)
separately. The prime classification below applies over
\(\mathbb K\); over \(\A\), scalar primes give additional
homogeneous prime elements.

\subsection{Definition of CGL extensions}

Consider the iterated Ore extension
\begin{equation}\label{eq:R}
  R=\mathbb K[x_1][x_2;\sigma_2,\delta_2]\cdots
    [x_N;\sigma_N,\delta_N].
\end{equation}
Let \(R_k\) be the subalgebra generated by \(x_1,\ldots,x_k\),
and set \(R_0:=\mathbb K\), \(\sigma_1:=\operatorname{id}_{\mathbb K}\),
and \(\delta_1:=0\).

\begin{definition}
The algebra \(R\) in \eqref{eq:R} is a
\emph{Cauchon--Goodearl--Letzter (CGL) extension} if it admits a grading
\(R=\bigoplus_{\gamma\in Q}R_\gamma\) by a finite-rank lattice \(Q\)
with the following properties. Let \(\mathcal H\) be the split torus
with character lattice \(Q\). Identify its \(\mathbb K\)-points
with \(\operatorname{Hom}_{\mathbb Z}(Q,\mathbb K^\times)\), acting
on \(R\) by \(h\cdot u=h(\gamma)u\) for \(u\in R_\gamma\).
\begin{enumerate}
  \item Each generator \(x_k\) is homogeneous.
  \item For \(k\in[2,N]\), the derivation \(\delta_k\) is locally
  nilpotent on \(R_{k-1}\).
  \item For each \(k\in[1,N]\), there exists \(h_k\in\mathcal H\)
  such that \(\sigma_k=(h_k\cdot)|_{R_{k-1}}\) and
  \(h_k\cdot x_k=\lambda_kx_k\), where
  \(\lambda_k\in\mathbb K^\times\) is not a root of unity.
\end{enumerate}
\end{definition}

In particular, \(\sigma_k(x_j)=\lambda_{kj}x_j\) for some
\(\lambda_{kj}\in\mathbb K^\times\) whenever \(1\le j<k\le N\).

Here \(\lambda_k\) denotes the eigenvalue of \(h_k\) on \(x_k\).
The multiplicatively skew-symmetric matrix \((\lambda_{kj})\)
has diagonal entries \(1\).

\subsection{Homogeneous prime elements}

A \emph{prime element} of a domain \(A\) is a nonzero nonunit
\(u\in A\) such that \(Au=uA\) and \(A/Au\) is a domain.
A prime element is \emph{homogeneous} if it is homogeneous
for the given grading.

For a function \(\eta:[1,N]\to\mathbb Z\), define its predecessor
and successor functions by
\(p(k):=\max\{j<k\mid\eta(j)=\eta(k)\}\) and
\(s(k):=\min\{j>k\mid\eta(j)=\eta(k)\}\), with
\(\max\emptyset:=-\infty\) and \(\min\emptyset:=+\infty\).
For \(m\in[1,N]\), set
\(\mathcal P(m):=\{j\in[1,m]\mid s(j)>m\}\).

\begin{theorem}[{\cite[Theorem~4.3]{GY-UFD};
\cite[Theorem~3.6]{GY-cluster}}]
\label{thm:cgl-recursive-primes}
Let \(R\) be a CGL extension.
There exist a function \(\eta:[1,N]\to\mathbb Z\) and nonzero
elements \(c_k\in R_{k-1}\), for \(p(k)\ne-\infty\), such that
the recursively defined elements
$$
  y_k:=
  \begin{cases}
    x_k, & p(k)=-\infty,\\
    y_{p(k)}x_k-c_k, & p(k)\ne-\infty
  \end{cases}
$$
are homogeneous and, for every \(m\in[1,N]\),
\(\{y_j\mid j\in\mathcal P(m)\}\) is a complete set of representatives
of the homogeneous prime elements of \(R_m\), up to nonzero scalar
multiples.
Moreover, \(p(k)=-\infty\) if and only if \(\delta_k=0\).

The elements \(y_1,\ldots,y_N\) and the partition into the level
sets of \(\eta\) are uniquely determined by the CGL presentation.
\end{theorem}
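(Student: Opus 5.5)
This is the Goodearl--Yakimov theorem on homogeneous prime elements of a CGL extension. The plan is the standard induction on \(m\), building \(R_m\) from \(R_{m-1}\) one Ore extension at a time. We carry along the statement that the homogeneous primes of \(R_{m-1}\) are represented by \(\{y_j\mid j\in\mathcal P(m-1)\}\). The function \(\eta\) is defined on the fly: at step \(m\) we either start a new level set, or we join the level set of a unique earlier index \(p(m)\in\mathcal P(m-1)\).

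The key input is a dichotomy for a CGL extension \(R_m=R_{m-1}[x_m;\sigma_m,\delta_m]\), in the style of Cauchon's deleting derivations. Since \(\delta_m\) is locally nilpotent and \(\sigma_m\) acts by the torus element \(h_m\), we have \(\delta_m\sigma_m=\lambda_m\sigma_m\delta_m\). Because \(\lambda_m\) is not a root of unity, the \(q\)-Leibniz formula lets us write down the deleting-derivation map
\[
\theta(s)=\sum_n \frac{(1-\lambda_m)^{-n}}{[n]!_{\lambda_m}}\,\delta_m^n\sigma_m^{-n}(s)\,x_m^{-n}.
\]
This map identifies the localization \(R_m[x_m^{-1}]\) with a skew-Laurent extension \(R_{m-1}[x_m^{\pm1};\sigma_m]\). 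The two cases are as follows.

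\emph{Case \(\delta_m=0\).} Here \(x_m\) is normal, so it is a homogeneous prime. Every old prime \(y_j\), \(j\in\mathcal P(m-1)\), remains prime in \(R_m=R_{m-1}[x_m;\sigma_m]\). This uses that \(\sigma_m\) rescales \(y_j\) by a scalar, because \(y_j\) is homogeneous and \(\sigma_m\) is a torus action. Then \(R_m/y_jR_m\cong (R_{m-1}/y_j)[x_m;\bar\sigma_m]\) is a domain. We set \(y_m=x_m\) and \(p(m)=-\infty\), with a new value of \(\eta\).

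\emph{Case \(\delta_m\neq0\).} Via \(\theta\) and the inductive description, a homogeneous prime of \(R_{m-1}\) either stays prime in \(R_m\) or fails to be normal there. One shows that exactly one \(y_j\) fails. The natural candidate is the \(j\) for which \(\delta_m(y_j)\neq0\). Then \(p(m):=j\), and a homogeneous correction \(c_m\in R_{m-1}\) is chosen so that \(y_jx_m-c_m\) is normal. One takes \(c_m\) to be essentially the \(\theta\)-image correction, namely \(\theta^{-1}(y_j)x_m-y_jx_m\) up to scalar.

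In both cases the remaining step is to show completeness. Every homogeneous prime \(u\) of \(R_m\) either divides \(x_m\)-data, which is handled by passing to \(R_m[x_m^{-1}]\), or restricts to a prime of \(R_{m-1}\) after localization. The torus-invariant prime ideal theory of Goodearl--Letzter, namely that \(\mathcal H\)-primes are finitely many and completely prime, lets one compare the height-one \(\mathcal H\)-primes of \(R_m\) and of \(R_{m-1}\). We then conclude that the count is \(|\mathcal P(m)|\). The last assertion, \(p(k)=-\infty\iff\delta_k=0\), is built into the construction.

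Uniqueness is the final step. The homogeneous primes of each \(R_m\) are intrinsic up to scalars. The element \(y_m\) is characterized as the unique homogeneous prime of \(R_m\) not lying in \(R_{m-1}\), and it is determined up to scalar. The normalization \(y_m=y_{p(m)}x_m-c_m\) then fixes the scalar. The level sets of \(\eta\) are recovered by the rule that \(m\) joins the level set of \(p(m)\) exactly when \(y_{p(m)}\) stops being prime in \(R_m\).

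The main obstacle is the case \(\delta_m\neq0\). There we must show that \emph{exactly one} old prime dies and is replaced, rather than zero or several. I would attack this by computing ranks. Under \(\theta\), the height-one \(\mathcal H\)-primes of \(R_m[x_m^{-1}]\) correspond to those of \(R_{m-1}[x_m^{\pm1};\sigma_m]\). This gives \(|\mathcal P(m-1)|\) of them, up to the one involving \(x_m\). Comparing with the count of primes in \(R_m\), using that \(x_m\) is not normal, forces exactly one replacement.
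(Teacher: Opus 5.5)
The paper does not prove this statement; it quotes it from Goodearl--Yakimov. Your framework, Cauchon's isomorphism \(R_m[x_m^{-1}]\cong R_{m-1}[x_m^{\pm1};\sigma_m]\) with induction on \(m\), is the right one. But the decisive step in the case \(\delta_m\ne0\) is not proved, and the counting argument you propose cannot prove it.

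Granting that \(R_m\) is an \(\mathcal H\)-UFD, localization at \(x_m\) gives two facts:
\begin{itemize}
\item no height-one \(\mathcal H\)-prime of \(R_m\) contains \(x_m\);
\item the map \(p\mapsto\theta(p)x_m^{n_p}\), with \(n_p=\max\{n\mid\delta_m^n(p)\ne0\}\), is a bijection, up to scalars, from the homogeneous primes of \(R_{m-1}\) onto those of \(R_m\).
\end{itemize}
So the number of primes is unchanged. That is equally consistent with zero, one, or several old primes being replaced, since the replaced ones are exactly those with \(\delta_m(y_j)\ne0\).

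You still have to prove three things:
\begin{itemize}
\item[(i)] if \(\delta_m\ne0\), then \(\delta_m(y_j)\ne0\) for some \(j\in\mathcal P(m-1)\);
\item[(ii)] there is at most one such \(j\);
\item[(iii)] for that \(j\), \(\delta_m^2(y_j)=0\), so that \(\theta(y_j)x_m=y_jx_m-c_m\) with \(c_m\in R_{m-1}\).
\end{itemize}
Without (iii), your recipe for \(c_m\) does not produce an element of \(R_{m-1}\); it should also use \(\theta(y_j)\), not \(\theta^{-1}(y_j)\). Without (i) and (ii), your uniqueness paragraph, which characterizes \(y_m\) as \emph{the} new prime of \(R_m\), rests on an unproved claim. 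These facts are what the paper records separately as Theorem~\ref{thm:cgl-detected-prime}. They need a genuine structural argument about the Ore step, in which local nilpotence of \(\delta_m\) and the condition on \(\lambda_m\) are used essentially. A comparison of cardinalities cannot supply them.

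There are also smaller gaps. First, finiteness and complete primeness of \(\mathcal H\)-primes do not make a height-one \(\mathcal H\)-prime principal. To turn counts of \(\mathcal H\)-primes into counts of prime elements, you must carry through the induction that each \(R_m\) is a noetherian \(\mathcal H\)-UFD (Launois--Lenagan--Rigal). This includes the step \(\delta_m\ne0\), where \(x_m\) is not normal. Second, to identify the primes that stop being prime with those satisfying \(\delta_m(y_j)\ne0\), you need the implication \(\delta_m(y_j)\in y_jR_{m-1}\Rightarrow\delta_m(y_j)=0\). This follows from additivity of the \(\delta_m\)-nilpotency degree on the domain \(R_{m-1}\), which again uses that \(\lambda_m\) is not a root of unity. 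Finally, applying \(h_m\) to \(x_ms=\sigma_m(s)x_m+\delta_m(s)\) gives \(\sigma_m\delta_m=\lambda_m\delta_m\sigma_m\), not the relation you wrote. Your displayed formula for \(\theta\) is Cauchon's formula for the correct relation.
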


We call the level sets of \(\eta\) the \emph{prime chains}.
Their terminal indices in \([1,m]\) form \(\mathcal P(m)\).

\begin{theorem}[{\cite[Theorem~4.2 and Proposition~3.8]{GY-UFD}}]
\label{thm:cgl-detected-prime}
With the notation of Theorem~\ref{thm:cgl-recursive-primes},
suppose that \(k\in[2,N]\) and \(\delta_k\ne0\). Then
\(p(k)\in\mathcal P(k-1)\) and
\(\delta_k(y_j)=0\) for every
\(j\in\mathcal P(k-1)\setminus\{p(k)\}\).
Writing \(\sigma_k(y_{p(k)})=a_ky_{p(k)}\), with
\(a_k\in\mathbb K^\times\), we have
$$
  \delta_k(y_{p(k)})=a_k(\lambda_k-1)c_k\ne0.
$$
Hence \(y_{p(k)}\) is, up to a nonzero scalar, the unique homogeneous
prime element of \(R_{k-1}\) on which \(\delta_k\) is nonzero.
The homogeneous prime elements of \(R_k\) are obtained by replacing
\(y_{p(k)}\) with \(y_k=y_{p(k)}x_k-c_k\) and retaining the other
representatives indexed by \(\mathcal P(k-1)\).
\end{theorem}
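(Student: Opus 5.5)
The plan is to derive everything from Theorem~\ref{thm:cgl-recursive-primes}, combined with the standard fact from \cite{GY-UFD} that homogeneous normal elements of a CGL extension are $\mathcal H$-normal. In other words, if $u\in R_m$ is a homogeneous prime element, then there is $h\in\mathcal H$ with $ur=(h\cdot r)u$ for all $r\in R_m$. In particular, $u$ $q$-commutes with each homogeneous generator $x_i$ ($i\le m$) up to a nonzero scalar. I take this fact as the main input. If it is not available, the hardest step would be to prove it: from $Ru=uR$ one gets an automorphism $\tau$ of $R_m$, and one must show that $\tau$ is given by a torus element. The argument I would try uses homogeneity and the fact that the $\lambda_k$ are not roots of unity, so that $\tau$ rescales each $x_i$.

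\emph{Step 1: combinatorics of the chains.} Since $\delta_k\neq0$, Theorem~\ref{thm:cgl-recursive-primes} gives $p(k)\neq-\infty$. Because $p(k)<k$ and $s(p(k))=k>k-1$, we have $p(k)\in\mathcal P(k-1)$. Next take $j\in\mathcal P(k-1)\setminus\{p(k)\}$. Then $\eta(j)\neq\eta(k)$, since otherwise $s(j)\le k$ would force $j=p(k)$. Hence $s(j)>k$ and $j\in\mathcal P(k)$. The same reasoning gives
\[
\mathcal P(k)=\bigl(\mathcal P(k-1)\setminus\{p(k)\}\bigr)\cup\{k\}.
\]
By Theorem~\ref{thm:cgl-recursive-primes} for $m=k$, this already yields the final sentence of the statement, namely the replacement of $y_{p(k)}$ by $y_k$.

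\emph{Step 2: $\delta_k(y_j)=0$ for $j\ne p(k)$.} Since $y_j$ is a homogeneous prime of $R_k$, there is $\beta\in\mathbb K^\times$ with $x_ky_j=\beta y_jx_k$. On the other hand, $x_ky_j=\sigma_k(y_j)x_k+\delta_k(y_j)$, where $\sigma_k(y_j),\delta_k(y_j)\in R_{k-1}$. Comparing coefficients in the free left $R_{k-1}$-basis $\{x_k^n\}$ gives $\delta_k(y_j)=0$, and also $\sigma_k(y_j)=\beta y_j$.

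\emph{Step 3: the formula for $\delta_k(y_{p(k)})$.} Write $p=p(k)$. The element $y_k=y_px_k-c_k$ is a homogeneous prime of $R_k$, so $x_ky_k=\beta y_kx_k$ for some $\beta\in\mathbb K^\times$. I expand the left side using $x_ks=\sigma_k(s)x_k+\delta_k(s)$ and $\sigma_k(y_p)=a_ky_p$:
\[
x_ky_k=a_ky_px_k^2+\bigl(\delta_k(y_p)-\sigma_k(c_k)\bigr)x_k-\delta_k(c_k).
\]
The right side is $\beta y_px_k^2-\beta c_kx_k$. Comparing coefficients gives $\beta=a_k$ and $\delta_k(c_k)=0$, together with
\[
\delta_k(y_p)=\sigma_k(c_k)-a_kc_k.
\]
Homogeneity of $y_k$ forces $\deg c_k=\deg y_p+\deg x_k$. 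Since $\sigma_k$ is the action of $h_k$, it acts on $c_k$ by the scalar $h_k(\deg y_p)h_k(\deg x_k)=a_k\lambda_k$. Therefore
\[
\delta_k(y_p)=a_k(\lambda_k-1)c_k.
\]
This is nonzero because $c_k\ne0$, $a_k\ne0$, and $\lambda_k$ is not a root of unity, so $\lambda_k\neq1$.

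\emph{Step 4: uniqueness.} By Theorem~\ref{thm:cgl-recursive-primes}, every homogeneous prime of $R_{k-1}$ is a scalar multiple of some $y_j$ with $j\in\mathcal P(k-1)$. Steps 2 and 3 show that $\delta_k$ vanishes on all of these except $y_{p(k)}$. Hence $y_{p(k)}$ is, up to scalar, the unique homogeneous prime of $R_{k-1}$ not killed by $\delta_k$. The main obstacle is the $\mathcal H$-normality input used in Steps 2 and 3; the remaining steps are coefficient comparisons.
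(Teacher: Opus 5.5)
Your Steps~1 and~4 are correct, and the coefficient comparisons in Steps~2 and~3 are right. The gap is the ``main input'' on which the argument rests, which is not available here.

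In the form you state it, with the normalizing automorphism given by some \(h\in\mathcal H\), it is false for CGL extensions in this generality. Take \(R=\mathbb K[x_1][x_2;\sigma_2]\) with \(x_2x_1=qx_1x_2\), graded by \(Q=\mathbb Z\) with \(\deg x_1=\deg x_2=1\) and \(h_1=h_2=(n\mapsto q^n)\). This is a CGL extension, and \(x_1\) is a homogeneous prime. Its normalizing automorphism fixes \(x_1\) and sends \(x_2\mapsto q^{-1}x_2\). No \(h\in\mathcal H\) does this, because \(x_1\) and \(x_2\) have the same degree. So the route you sketch for proving the input cannot work.

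What you actually use is weaker: homogeneous primes of \(R_k\) \(q\)-commute with \(x_k\). For \(y_j\in R_{k-1}\), however, this is literally equivalent to \(\delta_k(y_j)=0\), since \(x_ky_j=\sigma_k(y_j)x_k+\delta_k(y_j)\) with \(\sigma_k(y_j)\in\mathbb K^\times y_j\). Thus Step~2 assumes its own conclusion, and Step~3 similarly assumes that the term \(t\) introduced below vanishes. In the paper, such \(q\)-commutations are derived from this very theorem (see the proof of Lemma~\ref{lem:PBW-Dbfi}), so they cannot be quoted here.

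The gap can be closed using only what Theorem~\ref{thm:cgl-recursive-primes} provides: each \(y_j\) with \(j\in\mathcal P(k)\) is prime, hence normal, in \(R_k\). For such a \(y\), write \(yx_k=zy\) with \(z\in R_k\). Additivity of the \(x_k\)-degree forces \(z=a^{-1}x_k+t\), where \(t\in R_{k-1}\) and \(a\) is the \(\sigma_k\)-eigenvalue of the leading \(x_k\)-coefficient of \(y\).
\begin{itemize}
\item For \(y=y_j\in R_{k-1}\), this gives \(\delta_k(y_j)=-a\,t\,y_j\).
\item For \(y=y_k=y_{p(k)}x_k-c_k\), comparing the coefficients of \(x_k\) and \(1\) gives \(\delta_k(y_{p(k)})=a_k(\lambda_k-1)c_k-a_k\,t\,y_{p(k)}\) and \(\delta_k(c_k)=-a_k\,t\,c_k\).
\end{itemize}
To show that \(t=0\), note that \(\delta_k\) raises degree by \(\deg x_k\), so \(\sigma_k\delta_k=\lambda_k\delta_k\sigma_k\). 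The resulting \(\lambda_k\)-Leibniz formula has nonzero Gaussian binomial coefficients, because \(\lambda_k\) is not a root of unity. Hence \(\nu(r):=\max\{n\mid\delta_k^n(r)\ne0\}\) satisfies \(\nu(rs)=\nu(r)+\nu(s)\) on the domain \(R_{k-1}\). Therefore \(\delta_k(v)=sv\) with \(v\ne0\) forces \(s=0\); otherwise \(\nu(\delta_kv)=\nu(v)-1<\nu(s)+\nu(v)=\nu(sv)\). Applying this with \(v=y_j\), and with \(v=c_k\ne0\), gives \(t=0\) in each case. Your Steps~2--4 then go through as written.
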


\subsection{Symmetric CGL extensions}

For \(1\le a\le b\le N\), let \(R[a,b]\) be the subalgebra of \(R\)
generated by \(x_a,\ldots,x_b\), and set \(R[a,b]:=\mathbb K\)
when \(a>b\).
Thus \(R[1,k]=R_k\).

\begin{definition}
A CGL extension \(R\) is \emph{symmetric} if:
\begin{enumerate}
  \item \(\delta_k(x_j)\in R[j+1,k-1]\) for all
  \(1\le j<k\le N\);
  \item for each \(j\in[1,N]\), there exists \(h_j^*\in\mathcal H\)
  such that \(h_j^*\cdot x_k=\lambda_{kj}^{-1}x_k\) for all \(k>j\)
  and \(h_j^*\cdot x_j=\lambda_j^*x_j\), where
  \(\lambda_j^*\in\mathbb K^\times\) is not a root of unity.
\end{enumerate}
\end{definition}

\begin{lemma}[{\cite[Corollary~5.7]{GY-cluster}}]
\label{lem:cgl-interval-chains}
Let \(R\) be a symmetric CGL extension, with \(\eta,p,s\) as in
Theorem~\ref{thm:cgl-recursive-primes}, and fix \(1\le a\le b\le N\).
Then \(R[a,b]\), with its induced Ore presentation, is a symmetric
CGL extension whose chain-label function can be chosen as
\(\eta|_{[a,b]}\).

With the original indexing, its predecessor function satisfies
\(p_{[a,b]}(k)=p(k)\) when \(p(k)\ge a\), and
\(p_{[a,b]}(k)=-\infty\) otherwise.
Similarly, \(s_{[a,b]}(k)=s(k)\) when \(s(k)\le b\), and
\(s_{[a,b]}(k)=+\infty\) otherwise.
Its recursive homogeneous prime elements are
$$
  y_k^{[a]}:=
  \begin{cases}
    x_k, & p(k)<a,\\
    y_{p(k)}^{[a]}x_k-c_k^{[a]}, & p(k)\ge a,
  \end{cases}
  \qquad
  c_k^{[a]}\in R[a,k-1].
$$
These elements may differ from \(y_k\).
For \(a<k\le b\), we have
$$
  \left.\delta_k\right|_{R[a,k-1]}\ne0
  \quad\Longleftrightarrow\quad
  p(k)\ge a.
$$
\end{lemma}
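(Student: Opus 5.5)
We reduce everything to Theorems~\ref{thm:cgl-recursive-primes} and~\ref{thm:cgl-detected-prime}, applied to the subalgebra $R[a,b]$ with its induced Ore presentation
\[
R[a,b]=\mathbb K[x_a][x_{a+1};\sigma_{a+1},\delta_{a+1}]\cdots[x_b;\sigma_b,\delta_b].
\]
\emph{Step 1: $R[a,b]$ is an iterated Ore extension with the stated structure maps.} By symmetry condition (1), $\delta_k(x_j)\in R[j+1,k-1]\subseteq R[a,k-1]$ for $a\le j<k$. Since $\delta_k$ is a $\sigma_k$-derivation and $\sigma_k$ acts diagonally on the $x_j$, both maps preserve $R[a,k-1]$. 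The PBW monomials $x_a^{n_a}\cdots x_b^{n_b}$ are linearly independent in $R$, so $R[a,k]$ is free over $R[a,k-1]$ on the powers of $x_k$.

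\emph{Step 2: the CGL and symmetric axioms pass to the subalgebra.} The grading, the elements $h_k$, the eigenvalues $\lambda_k$, the local nilpotence of $\delta_k$, and the elements $h_j^*$ all restrict. Hence $R[a,b]$ is again a symmetric CGL extension.

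\emph{Step 3: the chain-label function.} We argue by induction on $b$ with $a$ fixed; the base case $b=a$ is trivial. Assume $\eta|_{[a,b-1]}$ is a valid labeling, with predecessor function $p_{[a,b-1]}(k)=p(k)$ if $p(k)\ge a$, and recursive primes $y_k^{[a]}$. Apply Theorem~\ref{thm:cgl-detected-prime} to $R[a,b]$. If $\delta_b|_{R[a,b-1]}\ne0$, the new prime $y_b^{[a]}$ extends the chain of the unique terminal prime of $R[a,b-1]$ on which $\delta_b$ is nonzero. Otherwise $y_b^{[a]}=x_b$ starts a new chain. Everything therefore reduces to the equivalence
\[
\left.\delta_b\right|_{R[a,b-1]}\ne0\iff p(b)\ge a,
\]
together with the claim that, when it holds, the chain extended is the one labeled $\eta(b)$. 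The second part follows from homogeneity: in a CGL extension the prime $y_{p(b)}^{[a]}$ has degree $\deg y_b-\deg x_b$, and the chain containing $p(b)$ is the only one whose terminal element in $[a,b-1]$ has the required degree/torus character. Once this holds, uniqueness of the labeling up to relabeling lets us choose $\eta|_{[a,b]}$. The formula for $s_{[a,b]}$ follows formally from the formula for $p_{[a,b]}$.

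\emph{Step 4 (main obstacle): the equivalence itself.} For ($\Leftarrow$) I would use the torus characters of the elements $h_k$ and $h_k^*$ on prime elements. The goal is to show that $\delta_b(y^{[a]}_{p(b)})\ne0$, by comparing $y^{[a]}_{p(b)}$ with $y_{p(b)}$ modulo the ideal generated by $x_1,\ldots,x_{a-1}$. Alternatively, one can use the reversed presentation, which is a CGL extension by the symmetry hypothesis. In that presentation the chains are read in reverse order, and this provides predecessor data from the right.

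For ($\Rightarrow$), suppose $p(b)<a$ but $\delta_b|_{R[a,b-1]}\ne0$. Then $R[a,b]$ would gain a prime $y^{[a]}_{p'}x_b-c$ with $p'\ge a$. The number of chains is determined by the rank of the grading lattice, namely by counting $\mathcal H$-eigenvalue classes; comparing this count with that of $R[a,b]\subset R$ should give a contradiction. I expect this to be the genuinely delicate point. The plan is to follow Goodearl--Yakimov's argument that the $\lambda$-data ($\lambda_b=\lambda_{p(b)}$) determines $p$. Within $[a,b]$, the only index $j<b$ with $\eta(j)=\eta(b)$ and the correct character relation that is terminal in $[a,b-1]$ is $p(b)$. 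If no such index exists, the characters force $\delta_b$ to vanish on all terminal primes of $R[a,b-1]$. A $\sigma_b$-derivation that kills every homogeneous prime, and therefore a generating set of the quotient field, is zero.
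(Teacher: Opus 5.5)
Your Steps 1--2 are fine, and you correctly isolate the crux: everything rests on the equivalence \(\delta_b|_{R[a,b-1]}\neq0\iff p(b)\ge a\). But Step~4 does not prove it. It only lists strategies, and several of the concrete claims offered for them are wrong.

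\emph{The claims in Step~4.}
The number of prime chains is not determined by the grading lattice. The lattice is the same for every interval, yet \(R[a,a]=\mathbb K[x_a]\) has one chain while larger intervals may have several. Homogeneous primes also do not generate the fraction field: for the word \((1,2,1)\) in type \(A_2\), \(A_q(\mathfrak n(w))\) has three generators but only two homogeneous primes, and these \(q\)-commute. The conclusion that \(\delta_b\) vanishes on \(R[a,b-1]\) once it kills all terminal primes is true, but only as the contrapositive of Theorems~\ref{thm:cgl-recursive-primes} and~\ref{thm:cgl-detected-prime} applied to \(R[a,b]\). The real content of \((\Rightarrow)\) is that \(\delta_b\) kills every terminal prime \(y^{[a]}_j\) of \(R[a,b-1]\) when \(p(b)<a\), and you give no argument for this. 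Reduction modulo the ideal generated by \(x_1,\ldots,x_{a-1}\) does not see \(R[a,b]\) faithfully. In the same example \(\delta_3(x_1)\) is a nonzero multiple of \(x_2\), so reduction modulo \((x_1)\) kills the generator \(x_2\) of \(R[2,3]\).

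\emph{The error in Step~3.}
There is no a priori target degree for \(y^{[a]}_b\). Moreover, \(\deg y^{[a]}_{p(b)}\) is the sum of \(\deg x_j\) over \(j\in[a,p(b)]\) with \(\eta(j)=\eta(b)\). This differs from \(\deg y_b-\deg x_b=\deg y_{p(b)}\) as soon as that chain meets \([1,a-1]\). So even granting the equivalence, you have not shown that \(x_b\) extends the chain through \(p(b)\).

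The paper argues in the opposite direction. It takes the chain description of \(R[a,b]\) from \cite[Corollary~5.7]{GY-cluster}. It then deduces only the last equivalence, by applying Theorem~\ref{thm:cgl-recursive-primes} to \(R[a,k]\), where \(p_{[a,k]}(k)=-\infty\) if and only if \(\delta_k|_{R[a,k-1]}=0\). An independent proof needs a genuinely symmetric input.

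\emph{A possible route.}
Comparing \(x_k\)-degrees in the PBW basis shows that \(\delta_k|_{R[a,k-1]}=0\) if and only if \(x_k\) is normal in \(R[a,k]\). In that case \(x_k\) is also prime, since \(R[a,k]/x_kR[a,k]\cong R[a,k-1]\). Moreover, \(R[a,k]\) is a prefix of the reversed presentation \(x_k,x_{k-1},\ldots,x_1\) of \(R[1,k]\). This presentation is again CGL by the symmetry hypothesis and starts with \(x_k\). Hence \(x_k\) is prime in \(R[a,k]\) exactly when no index of \([a,k-1]\) lies in the reversed chain of \(k\). The equivalence therefore follows once one knows that the reversed presentation has the same prime chains as the forward one. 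That is a substantive fact about symmetric CGL extensions, essentially equivalent to the cited corollary. Your remark that the reversed presentation ``provides predecessor data from the right'' stops exactly short of it.

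Once the equivalence is available for all symmetric CGL extensions, Step~3 can be repaired. Apply the equivalence to both \(R\) and \(R[a,b]\) on every interval \([a',b]\) with \(a\le a'\le b\). This forces \(p_{[a,b]}(b)=p(b)\) when \(p(b)\ge a\), and \(p_{[a,b]}(b)=-\infty\) otherwise.
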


The last equivalence follows by applying
Theorem~\ref{thm:cgl-recursive-primes} to the induced presentation
of \(R[a,k]\).

Let
\(R=\mathbb K[X_1][X_2;\sigma_2,\delta_2]\cdots
[X_N;\sigma_N,\delta_N]\)
be a symmetric CGL extension.
Assume that \(q\in\mathbb K^\times\) and that
\(\Lambda=(\Lambda_{st})\in M_N(\mathbb Z)\) is skew-symmetric
with \(\sigma_t(X_s)=q^{-\Lambda_{st}}X_s\) for \(s<t\).
Write \(X^{\mathbf a}=X_1^{a_1}\cdots X_N^{a_N}\) for
\(\mathbf a\in\mathbb N^N\), where
\(\mathbb N=\mathbb Z_{\geq0}\).

We use the \emph{right lexicographic order} on \(\mathbb N^N\):
for distinct \(\mathbf c,\mathbf d\), write
\(\mathbf c\prec\mathbf d\) if \(c_j<d_j\) at the largest
index \(j\) for which \(c_j\ne d_j\).
Thus we compare coordinates in the order \(N,N-1,\ldots,1\).
Set
\(R_{\prec\mathbf d}
=\operatorname{span}_{\mathbb K}
\{X^{\mathbf c}\mid\mathbf c\prec\mathbf d\}\).
\begin{lemma}[Leading commutation factors of PBW monomials]
\label{lem:leading-commutation}
For all \(\mathbf a,\mathbf b\in\mathbb N^N\),
\begin{equation}\label{eq:PBW-leading-product}
X^{\mathbf a}X^{\mathbf b}
\equiv
q^{-\sum_{s<t}a_t b_s\Lambda_{st}}
X^{\mathbf a+\mathbf b}
\pmod{R_{\prec\mathbf a+\mathbf b}}.
\end{equation}
Consequently,
\begin{equation}\label{eq:PBW-leading-commutation}
X^{\mathbf a}X^{\mathbf b}
-q^{\mathbf a^{\mathsf t}\Lambda\mathbf b}
X^{\mathbf b}X^{\mathbf a}
\in R_{\prec\mathbf a+\mathbf b}.
\end{equation}
If all \(\delta_t\) vanish, both formulas hold without lower
terms. In this case, after adjoining the inverses of the generators, they
also hold for \(\mathbf a,\mathbf b\in\mathbb Z^N\).
\end{lemma}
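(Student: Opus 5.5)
The plan is to first establish a single-generator commutation rule and then bootstrap it to monomials by induction. The basic rule is that for \(s<t\),
\[
X_tX_s=\sigma_t(X_s)X_t+\delta_t(X_s)=q^{-\Lambda_{st}}X_sX_t+\delta_t(X_s),
\]
and symmetry gives \(\delta_t(X_s)\in R[s+1,t-1]\). The key observation is that every element of \(R[s+1,t-1]\) is a \(\mathbb K\)-combination of ordered monomials \(X^{\mathbf c}\) supported on \([s+1,t-1]\); this is the PBW property of iterated Ore extensions, applied to the induced presentation of \(R[s+1,t-1]\) from Lemma~\ref{lem:cgl-interval-chains}.

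The first step is to show that the span \(R_{\prec\mathbf d}\) behaves well under multiplication: if \(\mathbf c\prec\mathbf d\) and \(\mathbf e\) is arbitrary, then \(X^{\mathbf c}X^{\mathbf e}\) and \(X^{\mathbf e}X^{\mathbf c}\) lie in \(R_{\prec\mathbf d+\mathbf e}\). This is the filtration compatibility statement, and I would prove it simultaneously with \eqref{eq:PBW-leading-product}. The induction runs on the total degree \(|\mathbf a|+|\mathbf b|\), and within that on the number of inversions needed to sort the word \(X^{\mathbf a}X^{\mathbf b}\). Each swap \(X_tX_s\to q^{-\Lambda_{st}}X_sX_t\) contributes the scalar \(q^{-\Lambda_{st}}\). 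It also produces a correction in which the adjacent pair \(X_tX_s\) is replaced by a monomial in letters strictly between \(s\) and \(t\). That correction term has the same number of letters at indices above \(t\) but one fewer \(X_t\), while the added letters all have index below \(t\). Hence, in right lexicographic order, its exponent vector (once the word is fully sorted) is strictly smaller than \(\mathbf a+\mathbf b\).

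The main obstacle is making this rigorous, because the correction words must themselves be reordered, and reordering can generate further corrections. To control this I would prove a cleaner statement by induction: for any word \(w\) in the generators with content vector \(\mathbf d\), the product \(w\) lies in \(\mathbb K^\times X^{\mathbf d}+R_{\prec\mathbf d}\), and every word of content \(\mathbf d'\prec\mathbf d\) lies in \(R_{\prec\mathbf d}\). The induction is on \(\mathbf d\) with respect to \(\prec\), refined by the length and inversion number of the word. This order is a well-order on contents bounded by \(\mathbf d\): the top coordinate bounds all lower ones only after fixing a finite total degree, and since the correction terms can raise the total degree, I would instead use the grading. Each \(X_k\) is homogeneous of weight \(\gamma_k\), all relations are homogeneous, and so contents are confined to the finite set of vectors of the fixed weight with nonnegative entries. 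Finiteness of this set follows if the weights lie in a pointed cone, which is the case in the applications to root-lattice gradings. Failing that, I would argue instead via local nilpotence of \(\delta_t\).

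Tracking the scalar is then straightforward. To sort \(X^{\mathbf a}X^{\mathbf b}\), each copy of \(X_t\) from \(\mathbf a\) must pass each copy of \(X_s\) from \(\mathbf b\) with \(s<t\), giving exactly \(\prod_{s<t}q^{-a_tb_s\Lambda_{st}}\). This proves \eqref{eq:PBW-leading-product}. For \eqref{eq:PBW-leading-commutation}, apply the same formula to \(X^{\mathbf b}X^{\mathbf a}\) and take the ratio of the two factors. The resulting exponent is \(-\sum_{s<t}(a_tb_s-b_ta_s)\Lambda_{st}=\sum_{s,t}a_s\Lambda_{st}b_t=\mathbf a^{\mathsf t}\Lambda\mathbf b\), using skew-symmetry of \(\Lambda\), and the lower terms cancel modulo \(R_{\prec\mathbf a+\mathbf b}\).

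When all \(\delta_t=0\), no correction terms ever appear, so both identities hold exactly. The algebra is then a quantum affine space, and in its localization (a quantum torus) the relation \(X_t^{-1}X_s=q^{\Lambda_{st}}X_sX_t^{-1}\) and its analogues hold for every sign pattern of exponents. The same counting of swaps therefore gives both identities for \(\mathbf a,\mathbf b\in\mathbb Z^N\), because the scalar from each swap is bilinear in the exponents.
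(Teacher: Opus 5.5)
Your straightening mechanism, the strict $\prec$-decrease of correction contents, the swap count and the ratio computation all match the paper's proof. The gap is in termination. You treat $\prec$ as usable for induction only after total degree is bounded. You then replace it by finiteness of the set of contents of a fixed weight, which needs the generator weights to lie in a pointed cone. That is not a hypothesis of the lemma, and it fails exactly where the paper applies it. Lemma~\ref{lem:normalized-seed-commutation} uses the lemma for the generators $Y_k$, of degree $\gamma_k$, and these include negative roots when the word is not reduced. For $\mathbf i=(i,i)$ the degrees are $\alpha_i$ and $-\alpha_i$, so every content $(n,n)$ has weight zero. Your fallback ``via local nilpotence of $\delta_t$'' is only a pointer, not an argument.

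The missing observation is that $\prec$ is already a well-order on $\mathbb N^N$, even though its initial segments are infinite; for instance $(M,0,\ldots,0)\prec(0,\ldots,0,1)$ for every $M$. Along a strictly descending chain the $N$th coordinate is weakly decreasing in $\mathbb N$, so it is eventually constant. From that point on the same holds for coordinate $N-1$, and so on, which rules out an infinite chain. Hence the pair $(\nu(W),\operatorname{inv}(W))$, ordered first by $\prec$ and then by inversion number, is well-founded without any use of the grading, and your induction closes exactly as in the paper. A triangular weighting as in the proof of Theorem~\ref{thm:bosonic-symmetric-CGL} would also give a terminating measure: choose $w_s$, working downward from $s=N$, larger than the weight of every word in every $\delta_t(X_s)$. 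But that weighting has to be constructed, and it is not needed.
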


\begin{proof}
For \(s<t\), the Ore relation and the symmetry condition give
$$
X_tX_s=q^{-\Lambda_{st}}X_sX_t+\delta_t(X_s),
\qquad
\delta_t(X_s)\in
\mathbb K\langle X_{s+1},\ldots,X_{t-1}\rangle,
$$
where the algebra on the right is \(\mathbb K\) if \(t=s+1\).
For a word \(W=X_{j_1}\cdots X_{j_m}\), define
$$
\nu(W)=\sum_{a=1}^{m}\mathbf e_{j_a},
\qquad
\operatorname{inv}(W)
=\#\{(a,b)\mid a<b,\ j_a>j_b\},
$$
with \(\nu(1)=0\) and \(\operatorname{inv}(1)=0\).
The interval condition allows us to write
\(\delta_t(X_s)=\sum_{\ell}d_\ell W_\ell\),
where each \(W_\ell\) is a word in
\(X_{s+1},\ldots,X_{t-1}\).
Thus
$$
\begin{aligned}
\nu(W_\ell)_u&=0=(\mathbf e_s+\mathbf e_t)_u
&&\text{for }u>t,\\
\nu(W_\ell)_t&=0<1=(\mathbf e_s+\mathbf e_t)_t,
\end{aligned}
\qquad
\Longrightarrow\qquad
\nu(W_\ell)\prec\mathbf e_s+\mathbf e_t.
$$

If \(W=UX_tX_sV\), with \(s<t\), the straightening relation gives
$$
W=q^{-\Lambda_{st}}UX_sX_tV
+\sum_{\ell}d_\ell UW_\ell V.
$$
The leading term satisfies
$$
\nu(UX_sX_tV)=\nu(W),
\qquad
\operatorname{inv}(UX_sX_tV)=\operatorname{inv}(W)-1.
$$
Since \(\prec\) is compatible with addition, each correction
term satisfies
$$
\begin{aligned}
\nu(UW_\ell V)
&=\nu(U)+\nu(W_\ell)+\nu(V)\\
&\prec\nu(U)+\mathbf e_s+\mathbf e_t+\nu(V)
=\nu(W).
\end{aligned}
$$
Hence each replacement strictly decreases the pair
\((\nu(W),\operatorname{inv}(W))\), ordered first by
\(\prec\) and then by the usual order on \(\mathbb N\).
This order is well-founded, so induction gives
$$
W\in\mathbb K X^{\nu(W)}+R_{\prec\nu(W)},
\qquad
UW_\ell V\in R_{\prec\nu(W)}.
$$
In particular, a correction term cannot contribute to the
leading PBW monomial \(X^{\nu(W)}\).

In straightening \(X^{\mathbf a}X^{\mathbf b}\), each of the
\(b_s\) copies of \(X_s\) in the second factor passes the
\(a_t\) copies of \(X_t\) in the first factor whenever \(s<t\).
Each interchange contributes \(q^{-\Lambda_{st}}\).
Thus the leading coefficient is
\(q^{-\sum_{s<t}a_t b_s\Lambda_{st}}\), and the ordered
monomial is \(X^{\mathbf a+\mathbf b}\).
Every contribution involving a correction term lies in
\(R_{\prec\mathbf a+\mathbf b}\), proving
\eqref{eq:PBW-leading-product}.

Interchanging \(\mathbf a\) and \(\mathbf b\), the ratio
of the two leading coefficients is
\(q^{\sum_{s<t}(a_s b_t-a_t b_s)\Lambda_{st}}
=q^{\mathbf a^{\mathsf t}\Lambda\mathbf b}\),
by skew-symmetry of \(\Lambda\).
The leading terms therefore cancel in
\eqref{eq:PBW-leading-commutation}.

If all \(\delta_t\) vanish, no correction terms occur.
After adjoining the inverses of the generators, the relation
\(X_s^uX_t^v=q^{uv\Lambda_{st}}X_t^vX_s^u\)
holds for all \(u,v\in\mathbb Z\), so the same calculation
applies to arbitrary integer exponent vectors.
\end{proof}

\subsection{Cauchon's quantum torus}

Let \(\Lambda=(t_{kl})_{k,l=1}^N\) be the multiplicatively
skew-symmetric matrix defined by \(t_{kk}:=1\),
\(t_{kl}:=\lambda_{kl}\), and \(t_{lk}:=\lambda_{kl}^{-1}\) for \(k>l\).
Thus \(t_{kl}t_{lk}=1\).
For any CGL extension \(R\), Cauchon's deleting-derivations procedure
produces elements \(\overline{x}_1,\ldots,\overline{x}_N\) in the
division ring of fractions \(\operatorname{Frac}(R)\), satisfying
\(\overline{x}_k\overline{x}_l
=t_{kl}\overline{x}_l\overline{x}_k\).
They generate a quantum torus
$$
  \mathcal T_\Lambda
  :=\mathbb K\langle
    \overline{x}_1^{\pm1},\ldots,\overline{x}_N^{\pm1}
  \rangle,
  \qquad
  R\subseteq\mathcal T_\Lambda\subseteq\operatorname{Frac}(R);
$$
see \cite[Section~6.1]{GY-UFD}.

\subsection{Quantum unipotent coordinate rings}

Let $C=(c_{ij})_{i,j\in I}$ be a symmetrizable generalized Cartan
matrix with finite index set $I$, and let $W$ be its Weyl group.
Fix positive integers $d_i$ such that
$D_C C$ is symmetric, where $D_C=\operatorname{diag}(d_i)_{i\in I}$,
and put $q_i=q^{d_i}$.
Put $\mathbb A_0=\mathbb Z[q^{\pm1}]$,
$\A=\mathbb Z[q^{\pm1/2}]$, and $\mathbb K=\mathbb Q(q^{1/2})$.
Write $Q=\bigoplus_{i\in I}\mathbb Z\alpha_i$ and
$Q_+=\bigoplus_{i\in I}\mathbb N\alpha_i$, with
$(\alpha_i,\alpha_j)=d_i c_{ij}$.
We use fundamental weights $\varpi_i$ in a fixed realization
of $C$, so that $(\alpha_i,\varpi_j)=d_i\delta_{ij}$.
For $\beta=\sum_i b_i\alpha_i\in Q$, put
$\operatorname{ht}(\beta)=\sum_i b_i$.

We use the coordinate-ring conventions of
\cite[Sections~1.1--1.3]{KKOP-braid}.
Let $U_q^-(\mathfrak g)$ be the negative half of the quantum
group over $\mathbb Q(q)$, generated by $f_i$, and let
$U^-_{\mathbb A_0}(\mathfrak g)$ be its divided-power integral form.
The weight of $f_i$ is $-\alpha_i$.
The twisted coproduct $r$ is determined by
$r(f_i)=f_i\otimes1+1\otimes f_i$ and the multiplication
\[
 (a\otimes b)(c\otimes d)
 =q^{-(\operatorname{wt}b,\operatorname{wt}c)}ac\otimes bd
\]
on homogeneous tensors.
The quantum coordinate ring is the restricted graded dual
\[
 A_q(\mathfrak n)
 =\bigoplus_{\beta\in Q_+}
 \operatorname{Hom}_{\mathbb Q(q)}
 \bigl(U_q^-(\mathfrak g)_{-\beta},\mathbb Q(q)\bigr),
 \qquad
 \langle uv,x\rangle=\langle u\otimes v,r(x)\rangle.
\]
Its integral form is
\[
 A_{\mathbb A_0}(\mathfrak n)
 =\{u\in A_q(\mathfrak n)\mid
       \langle u,U^-_{\mathbb A_0}(\mathfrak g)\rangle
       \subseteq\mathbb A_0\}.
\]
We extend scalars to $\A$ or $\mathbb K$ when indicated.
For a functional supported on $U_q^-(\mathfrak g)_{-\beta}$,
we set
$\deg u=\beta$ and $\operatorname{wt}u=-\beta$.

Let $(\ ,\ )_L$ be Lusztig's bilinear form on
$U_q^-(\mathfrak g)$, normalized by
$(f_i,f_i)_L=(1-q_i^2)^{-1}$.
It gives the algebra isomorphism
\[
 \iota:U_q^-(\mathfrak g)\longrightarrow A_q(\mathfrak n),
 \qquad \langle\iota(x),y\rangle=(x,y)_L.
\]
In particular, if $u_i$ is the functional dual to $f_i$, then
$\iota(f_i)=(1-q_i^2)^{-1}u_i$.
This agrees with \cite[(1.8) and (1.10)]{KKOP-braid}.

Let $\mathsf S_i=T'_{i,-1}$ be Lusztig's braid automorphism.
For $j\ne i$, we use the convention
\[
 \mathsf S_i(f_j)
 =\sum_{a+b=-c_{ij}}(-q_i)^b f_i^{(a)}f_jf_i^{(b)},
 \qquad f_i^{(a)}=f_i^a/[a]_{q_i}!.
\]
For a reduced word $\mathbf w=(i_1,\ldots,i_r)$ of $w$, put
$w_{\le0}=1$, $w_{\le k}=s_{i_1}\cdots s_{i_k}$,
$\beta_k=w_{\le k-1}\alpha_{i_k}$, and
\[
 F_k^{\mathbf w}
 =\mathsf S_{i_1}\cdots\mathsf S_{i_{k-1}}(f_{i_k}),
 \qquad
 P_k^{\mathbf w}=(1-q_{i_k}^2)\iota(F_k^{\mathbf w}).
\]
Thus $\deg P_k^{\mathbf w}=\beta_k$.
The $\mathbb A_0$-algebra generated by these dual root vectors
is denoted by $A_{\mathbb A_0}(\mathfrak n(w))$; it is independent
of the reduced expression.
We put
$A_{\A}(\mathfrak n(w))=\A\otimes_{\mathbb A_0}
A_{\mathbb A_0}(\mathfrak n(w))$ and use
$A_q(\mathfrak n(w))$ for its scalar extension to $\mathbb K$.

For $\mathbf a=(a_1,\ldots,a_r)\in\mathbb N^r$, define
\[
 P^{\mathbf w}(\mathbf a)
 =q^{\sum_{k=1}^r d_{i_k}\binom{a_k}{2}}
   (P_r^{\mathbf w})^{a_r}\cdots(P_1^{\mathbf w})^{a_1}.
\]
These elements form the dual PBW basis of
$A_{\mathbb A_0}(\mathfrak n(w))$.
The upper global basis is compatible with this subalgebra;
write $G^{\mathbf w}(\mathbf a)$ for its element with
Lusztig parameter $\mathbf a$.
In particular, $G^{\mathbf w}(e_k)=P_k^{\mathbf w}$.
For these facts over both the field and the integral form,
see \cite[Section~1.3 and Section~1.4]{KKOP-braid}.

For $a\le b$, write $A[a,b]$ for the $\mathbb K$-subalgebra
generated by $P_a^{\mathbf w},\ldots,P_b^{\mathbf w}$;
put $A[a,b]=\mathbb K$ if $a>b$.
The Levendorskii--Soibelman relation in these conventions is
\begin{equation}\label{eq:cell-LS-fixed}
 P_j^{\mathbf w}P_k^{\mathbf w}
 -q^{-(\beta_j,\beta_k)}P_k^{\mathbf w}P_j^{\mathbf w}
 \in A[j+1,k-1]
 \qquad(j<k).
\end{equation}
Its coefficients in the interval PBW basis belong to
$\mathbb A_0$; see also \cite[Proposition~4.13]{KKOP-braid}
applied to a reduced braid word.
For homogeneous $u\in A[1,k-1]$, define
\[
 \sigma_k(u)=q^{(\beta_k,\deg u)}u,
 \qquad
 \delta_k(u)=P_k^{\mathbf w}u-\sigma_k(u)P_k^{\mathbf w}.
\]

\begin{proposition}\label{pro:CGL-extension-unipotent}
The algebra $A_q(\mathfrak n(w))$ has the symmetric CGL
presentation
\[
 A_q(\mathfrak n(w))
 =\mathbb K[P_1^{\mathbf w}]
 [P_2^{\mathbf w};\sigma_2,\delta_2]\cdots
 [P_r^{\mathbf w};\sigma_r,\delta_r].
\]
For the root torus
$\mathcal H=\operatorname{Hom}_{\mathbb Z}(Q,\mathbb K^\times)$
acting through the degree grading, one can take
\[
 h_k(\beta)=q^{(\beta_k,\beta)},
 \qquad h_k^*(\beta)=q^{-(\beta_k,\beta)}.
\]
Their eigenvalues on $P_k^{\mathbf w}$ are $q_{i_k}^2$ and
$q_{i_k}^{-2}$, respectively.
Moreover, the same Ore presentation is defined over $\A$:
the maps $\sigma_k^{\pm1}$ and $\delta_k$ preserve the
corresponding integral prefix algebra.
\end{proposition}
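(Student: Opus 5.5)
We verify the axioms of a symmetric CGL extension directly, taking as given three standard facts. These are the PBW basis theorem for $A_q(\mathfrak n(w))$, the Levendorskii--Soibelman relation \eqref{eq:cell-LS-fixed}, and the integrality of its coefficients.

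First, the iterated Ore structure. The ordered monomials $(P_r^{\mathbf w})^{a_r}\cdots(P_1^{\mathbf w})^{a_1}$ form a basis. Applying this to the prefix words $(i_1,\ldots,i_k)$, which are reduced, identifies $A[1,k]$ with $A_q(\mathfrak n(w_{\le k}))$. So $A[1,k]$ is a free left $A[1,k-1]$-module on the powers of $P_k^{\mathbf w}$; reordering the monomials to put $P_k$ on the right uses \eqref{eq:cell-LS-fixed} and a triangularity argument as in Lemma~\ref{lem:leading-commutation}. Next, $\sigma_k(u)=q^{(\beta_k,\deg u)}u$ is visibly an algebra automorphism of $A[1,k-1]$, since $\deg$ is additive. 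The map $\delta_k$ is a $\sigma_k$-derivation by the formal identity $P(uv)-\sigma(uv)P=\sigma(u)(Pv-\sigma(v)P)+(Pu-\sigma(u)P)v$. It maps $A[1,k-1]$ into itself: by \eqref{eq:cell-LS-fixed} with $j<k$, rearranged via skew-symmetry of the exponent, we get $\delta_k(P_j)\in A[j+1,k-1]$, and then the derivation rule applies. This gives the presentation and symmetry condition (1).

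Second, the torus conditions. The generators are homogeneous of degree $\beta_k$. With $h_k(\beta)=q^{(\beta_k,\beta)}$ we get $\sigma_k=h_k\cdot$ on $A[1,k-1]$. The eigenvalue on $P_k$ is $q^{(\beta_k,\beta_k)}=q^{(\alpha_{i_k},\alpha_{i_k})}=q^{2d_{i_k}}=q_{i_k}^2$, using $W$-invariance of the form; this is not a root of unity. Similarly $h_j^*(\beta)=q^{-(\beta_j,\beta)}$ has eigenvalue $q_{i_j}^{-2}$ on $P_j$. On $P_k$ with $k>j$ it acts by $q^{-(\beta_j,\beta_k)}$, which is $\lambda_{kj}^{-1}$ since $\lambda_{kj}=q^{(\beta_k,\beta_j)}$.

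Third, local nilpotence of $\delta_k$. Here $\delta_k$ raises degree by $\beta_k$ while staying in $A[1,k-1]$. I would show $\delta_k^n(P_j)=0$ for $n\gg0$ by induction on $k-j$, using that $\delta_k(P_j)\in A[j+1,k-1]$. Concretely, $\delta_k^n(P_j)$ lies in $A[j+1,k-1]$ in degree $\beta_j+n\beta_k$. Expanding in the PBW basis of $A[j+1,k-1]$, its weight $\beta_j+n\beta_k$ must be a sum of $\beta_{j+1},\ldots,\beta_{k-1}$. Apply $w_{\le k-1}^{-1}$: the $\beta_l$ with $l<k$ become negative roots, while $\beta_k$ becomes the positive root $\alpha_{i_k}$. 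Pairing with a suitable dominant coweight then shows that for large $n$ the weight cannot be a nonnegative combination, forcing $\delta_k^n(P_j)=0$. Since nilpotence on generators extends to the algebra they generate through the twisted Leibniz rule, local nilpotence follows. This weight bound is the main obstacle; failing a clean argument I would cite the standard result \cite[Section~1.3 and Section~1.4]{KKOP-braid}, or Goodearl--Yakimov's treatment.

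Finally, integrality. Since $\deg$ is additive, $\sigma_k^{\pm1}$ rescales PBW monomials by powers of $q$ and so preserves the $\A$-span of prefix PBW monomials. The element $\delta_k(P_j)$ has $\mathbb A_0$-coefficients by the stated integrality of \eqref{eq:cell-LS-fixed}. By the twisted Leibniz rule, $\delta_k$ then maps $\A$-combinations of products of generators to $\A$-combinations, and the integral prefix algebra is generated by the $P_l$.
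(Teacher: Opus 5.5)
Your proposal is correct and follows essentially the paper's route. It uses the PBW theorem together with the Levendorskii--Soibelman relation for the Ore presentation and the interval condition, the root characters for the torus conditions, integral straightening for preservation over $\A$, and a weight bound for local nilpotence. Your weight argument, applying $w_{\le k-1}^{-1}$ and pairing with a dominant coweight, is the paper's functional $\ell_k(\beta)=-\operatorname{ht}(w_{\le k-1}^{-1}\beta)$. The paper applies it directly as a nonnegative filtration on all of $A[1,k-1]$ that $\delta_k$ lowers by one, so your induction on $k-j$ and the extension from generators via the twisted Leibniz rule are unnecessary, though harmless.
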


\begin{proof}
The PBW theorem and \eqref{eq:cell-LS-fixed} give the
Ore presentation and the interval condition.
To prove local nilpotence, consider the functional
$\ell_k(\beta)=-\operatorname{ht}(w_{\le k-1}^{-1}\beta)$.
It is positive on $\beta_1,\ldots,\beta_{k-1}$ and has
value $-1$ on $\beta_k$.
Since $\delta_k$ raises root degree by $\beta_k$, it lowers
this nonnegative filtration by one and is locally nilpotent.
The displayed characters implement $\sigma_k$ and the
reverse Ore automorphisms, and $(\beta_k,\beta_k)=2d_{i_k}$.
Finally, integral straightening gives preservation by
$\delta_k$, while $\sigma_k^{\pm1}$ acts by powers of $q$.
\end{proof}

\begin{lemma}[Root coordinates and commutation exponents]
\label{lem:root-coordinate-matrices}
Identify \(Q\) with \(\mathbb Z^I\) using the simple roots.
Let \(S,G\in\operatorname{Mat}_{I\times[1,r]}(\mathbb Z)\)
have columns \(\alpha_{i_k}\) and \(\beta_k\), respectively.
Let \(T\) be upper unitriangular with
\(T_{kl}=c_{i_ki_l}\) for \(k<l\), and put
\[
E=T^{-1},\qquad
\Delta_{\mathbf w}=\operatorname{diag}(d_{i_1},\ldots,d_{i_r}),
\qquad \mathsf E=\Delta_{\mathbf w}E.
\]
Then
\[
\begin{aligned}
G&=SE,\\
S^{\mathsf t}D_CCS&=\Delta_{\mathbf w}T+T^{\mathsf t}\Delta_{\mathbf w},\\
G^{\mathsf t}D_CCG&=\Delta_{\mathbf w}E+E^{\mathsf t}\Delta_{\mathbf w}
=\mathsf E+\mathsf E^{\mathsf t}.
\end{aligned}
\]
In particular,
\(\beta_k=\sum_{j\leq k}E_{jk}\alpha_{i_j}\) and
\((\beta_k,\beta_l)=d_{i_k}E_{kl}=\mathsf E_{kl}\) for \(k<l\).
The skew-symmetric matrix \(\Lambda_0\) defined by
\(-(\Lambda_0)_{kl}=(\beta_k,\beta_l)\) for \(k<l\)
is therefore
\(\Lambda_0=\mathsf E^{\mathsf t}-\mathsf E
=E^{\mathsf t}\Delta_{\mathbf w}-\Delta_{\mathbf w}E\).
These identities also hold when the word is not reduced.
\end{lemma}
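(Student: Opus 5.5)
The plan is to derive all three matrix identities from one telescoping formula for the Weyl group action, and then read off the remaining statements entrywise. The key observation is that $S=GT$. For fixed $l$ and $1\le k\le l-1$, apply $s_{i_k}(x)=x-\langle\alpha_{i_k}^\vee,x\rangle\alpha_{i_k}$ inside $w_{\le k-1}$:
\[
w_{\le k}\alpha_{i_l}-w_{\le k-1}\alpha_{i_l}
=w_{\le k-1}\bigl(s_{i_k}\alpha_{i_l}-\alpha_{i_l}\bigr)
=-c_{i_ki_l}\,w_{\le k-1}\alpha_{i_k}
=-c_{i_ki_l}\,\beta_k .
\]
Summing over $k=1,\ldots,l-1$ gives $\beta_l-\alpha_{i_l}=-\sum_{k<l}c_{i_ki_l}\beta_k$. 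Equivalently, $\alpha_{i_l}=\sum_{k\le l}\beta_kT_{kl}$, which is $S=GT$. Since $T$ is upper unitriangular, it is invertible over $\mathbb Z$, and $E=T^{-1}$ is again upper unitriangular. Hence $G=SE$, and this yields $\beta_k=\sum_{j\le k}E_{jk}\alpha_{i_j}$. The argument only uses the definition of $\beta_k$, not reducedness, so it also covers non-reduced words.

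For the second identity, I will compare entries. The $(k,l)$ entry of $S^{\mathsf t}D_CCS$ is $(\alpha_{i_k},\alpha_{i_l})=d_{i_k}c_{i_ki_l}$. For $k<l$, the entry of $\Delta_{\mathbf w}T$ is $d_{i_k}c_{i_ki_l}$, while that of $T^{\mathsf t}\Delta_{\mathbf w}$ is $T_{lk}d_{i_l}=0$. On the diagonal the two terms give $d+d=2d_{i_k}$, matching $(\alpha_{i_k},\alpha_{i_k})$. For $k>l$ the identity follows by symmetry, using $d_ic_{ij}=d_jc_{ji}$.

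The third identity follows by substitution:
\[
G^{\mathsf t}D_CCG=E^{\mathsf t}\bigl(\Delta_{\mathbf w}T+T^{\mathsf t}\Delta_{\mathbf w}\bigr)E
=E^{\mathsf t}\Delta_{\mathbf w}+\Delta_{\mathbf w}E .
\]
For $k<l$, the term $(E^{\mathsf t}\Delta_{\mathbf w})_{kl}=E_{lk}d_{i_l}$ vanishes because $E$ is upper triangular. Therefore $(\beta_k,\beta_l)=d_{i_k}E_{kl}=\mathsf E_{kl}$.

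Finally, consider $\mathsf E^{\mathsf t}-\mathsf E$. It is skew-symmetric, and its diagonal entries vanish. For $k<l$, its $(k,l)$ entry is $-\mathsf E_{kl}=-(\beta_k,\beta_l)$, which agrees with $\Lambda_0$. This identifies $\Lambda_0$.

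There is no real obstacle here. The only point needing care is the sign and index convention in the telescoping step, namely that the coefficient is $c_{i_ki_l}$ and not $c_{i_li_k}$. This is fixed by $\langle\alpha_i^\vee,\alpha_j\rangle=c_{ij}$.
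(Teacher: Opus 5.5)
Your proposal is correct and matches the paper's proof step for step. Both arguments telescope the reflection formula to get $S=GT$, check $S^{\mathsf t}D_CCS$ entrywise using $d_ic_{ij}=d_jc_{ji}$, conjugate by $E$, and read off the strictly upper-triangular entries to identify $\Lambda_0$; neither step uses reducedness.
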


\begin{proof}
The identity
\[
1-w_{\leq k-1}
=\sum_{j=1}^{k-1}w_{\leq j-1}(1-s_{i_j})
\]
and the reflection formula
\((1-s_{i_j})\alpha_{i_k}=c_{i_ji_k}\alpha_{i_j}\)
give
\[
\alpha_{i_k}-\beta_k=\sum_{j<k}c_{i_ji_k}\beta_j.
\]
Thus \(GT=S\), and hence \(G=SE\).

The \((k,l)\)-entry of \(S^{\mathsf t}D_CCS\) is
\(d_{i_k}c_{i_ki_l}\). For \(k<l\), this is the
\((k,l)\)-entry of \(\Delta_{\mathbf w}T\).
For \(k>l\), the symmetrizing identity
\(d_{i_k}c_{i_ki_l}=d_{i_l}c_{i_li_k}\)
identifies it with the \((k,l)\)-entry of
\(T^{\mathsf t}\Delta_{\mathbf w}\).
The diagonal entries are \(2d_{i_k}\). Consequently,
\[
S^{\mathsf t}D_CCS=\Delta_{\mathbf w}T+T^{\mathsf t}\Delta_{\mathbf w}.
\]
Multiplying by \(E^{\mathsf t}\) and \(E\), and using \(TE=I\), yields
\[
\begin{aligned}
G^{\mathsf t}D_CCG
&=E^{\mathsf t}(\Delta_{\mathbf w}T+T^{\mathsf t}\Delta_{\mathbf w})E\\
&=E^{\mathsf t}\Delta_{\mathbf w}+\Delta_{\mathbf w}E
=\mathsf E^{\mathsf t}+\mathsf E.
\end{aligned}
\]
The formula for \(\beta_k\) follows from \(G=SE\).
Since \(\mathsf E\) is upper triangular, its \((k,l)\)-entry
for \(k<l\) is \((\beta_k,\beta_l)\), proving the assertion
about \(\Lambda_0\). The argument uses only the reflection
formula and the symmetrizer, so no reducedness assumption is needed.
\end{proof}

\subsection{The quantum cluster structure}

For \(k\in[1,r]\), let \(k^+\) and \(k^-\) be the next and
previous positions carrying \(i_k\), with values \(+\infty\)
and \(-\infty\), respectively, when no such position exists.
Set \(\operatorname{fr}(\mathbf w)=\{k\mid k^+=+\infty\}\)
and \(\operatorname{ex}(\mathbf w)=[1,r]\setminus
\operatorname{fr}(\mathbf w)\).
The extended exchange matrix
\(B_{\mathbf w}=(b_{st})_{s\in[1,r],\,t\in\operatorname{ex}(\mathbf w)}\)
is defined by
$$
b_{st}=
\begin{cases}
 -1, & s=t^+,\\
  1, & s^+=t,\\
  c_{i_si_t}, & s<t<s^+<t^+,\\
 -c_{i_si_t}, & t<s<t^+<s^+,\\
  0, & \text{otherwise}.
\end{cases}
$$
The principal part of \(B_{\mathbf w}\) is skew-symmetrizable,
with symmetrizer \(\operatorname{diag}(d_{i_k})_{k\in\operatorname{ex}(\mathbf w)}\).
Put \(\zeta_k=\varpi_{i_k}-w_{\leq k}\varpi_{i_k}\), and
define the skew-symmetric integer matrix \(\Lambda_{\mathbf w}\)
by \((\Lambda_{\mathbf w})_{ss}=0\) and, for \(s<t\),
$$
(\Lambda_{\mathbf w})_{st}
=(\zeta_s,\varpi_{i_t}+w_{\leq t}\varpi_{i_t}).
$$

For each \(k\in[1,r]\), define
\(\mathbf f_k=(f_{pk})_{p=1}^{r}\in\mathbb N^r\) by
\(f_{pk}=1\) if \(p\leq k\) and \(i_p=i_k\), and
\(f_{pk}=0\) otherwise.
Set \(D_k^{\mathbf w}=G^{\mathbf w}(\mathbf f_k)\).
These quantum minors satisfy
\(D_s^{\mathbf w}D_t^{\mathbf w}
=q^{(\Lambda_{\mathbf w})_{st}}D_t^{\mathbf w}D_s^{\mathbf w}\)
for all \(s,t\in[1,r]\).
To use the standard quantum mutation convention, set
\(X_k^{\mathbf w}=q^{-(\zeta_k,\zeta_k)/4}D_k^{\mathbf w}\);
this is the normalized-minor seed of
\cite[Section~7.1 and Theorem~7.3]{GY-integral}, in our
coordinate-ring conventions. In symmetric type it agrees with
\cite[Theorem~11.1.4]{kang2018monoidal}.
The compatibility identity is
\[
B_{\mathbf w}^{\mathsf t}\Lambda_{\mathbf w}
=2\bigl(d_{i_t}\delta_{tu}\bigr)_
 {t\in\operatorname{ex}(\mathbf w),\,u\in[1,r]}.
\]
The corresponding quantum seed is
\begin{equation}\label{eq:seed-w}
   \mathbf s(\mathbf w)= ((X_1^{\mathbf w},\ldots,X_r^{\mathbf w}),
 B_{\mathbf w},\Lambda_{\mathbf w}).
\end{equation}

The integral seed is defined over \(\A\).
Since \((\zeta_k,\zeta_k)\) is even, the normalization factor
is a unit of \(\A\).
For a singleton prime chain, \(D_k^{\mathbf w}=P_k^{\mathbf w}\)
and \((\zeta_k,\zeta_k)=2d_{i_k}\); its normalized seed variable is
therefore \(q_{i_k}^{-1/2}P_k^{\mathbf w}\).

\begin{theorem}[{\cite[Theorem~7.3]{GY-integral}}]\label{thm:cell-integral-cluster}
For every symmetrizable generalized Cartan matrix and reduced word
\(\mathbf w\) of \(w\), the normalized quantum minors identify
\[
\mathcal A_{\A}(\mathbf s(\mathbf w))
\cong A_{\A}(\mathfrak n(w)).
\]
No frozen variable is inverted. Scalar extension to \(\mathbb K\)
gives the corresponding isomorphism for \(A_q(\mathfrak n(w))\).
\end{theorem}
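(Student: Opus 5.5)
This result is cited from \cite[Theorem~7.3]{GY-integral}, so the plan is to reduce it to the Goodearl--Yakimov integral theorem for symmetric CGL extensions, using the setup already established above.

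\emph{Step 1: identify the CGL data.} By Proposition~\ref{pro:CGL-extension-unipotent}, $A_q(\mathfrak n(w))$ is a symmetric CGL extension in the generators $P_1^{\mathbf w},\ldots,P_r^{\mathbf w}$, with torus characters $h_k,h_k^*$. The same Ore presentation is defined over $\A$. The scalars $\lambda_k=q_{i_k}^2$ are not roots of unity, and the commutation exponents $\Lambda_0$ come from Lemma~\ref{lem:root-coordinate-matrices}. Next I determine the function $\eta$ of Theorem~\ref{thm:cgl-recursive-primes}. I take $\eta(k)=i_k$, so that $p(k)=k^-$ and $s(k)=k^+$. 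To justify this, I show that $\delta_k|_{A[j,k-1]}\ne0$ exactly when $j\le k^-$. This uses Lemma~\ref{lem:cgl-interval-chains} together with the T-system/determinantal identity: the homogeneous prime elements of $A[1,m]$ are the quantum minors $D_j^{\mathbf w}$, with $j$ the last occurrence of each color up to $m$. These minors are normal and have the right degree $\zeta_j$, so they match the $y_j$ up to scalars by uniqueness in Theorem~\ref{thm:cgl-recursive-primes}. I also check that the scalars are units in $\A$, using the unitriangularity of $G^{\mathbf w}(\mathbf f_k)$ against the dual PBW basis.

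\emph{Step 2: apply the general theorem.} Goodearl--Yakimov prove that a symmetric CGL extension satisfying their integrality conditions has an initial seed given by the normalized prime elements $y_k$. These conditions are: the presentation is defined over $\A$, $\lambda_k$ is a power of $q$, and the leading coefficients of the exchange elements are units. The resulting cluster algebra, with no frozen variables inverted, equals the $\A$-form generated by the $x_k$. In that theorem, the exchange matrix is determined by the compatibility relation with the commutation matrix of the $y_k$. So I must verify two things. First, the quasi-commutation $D_sD_t=q^{(\Lambda_{\mathbf w})_{st}}D_tD_s$. Second, the identity $B_{\mathbf w}^{\mathsf t}\Lambda_{\mathbf w}=2(d_{i_t}\delta_{tu})$, which is a direct Weyl-group computation with $\zeta_k$. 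Full rank of $B_{\mathbf w}$ then pins $B_{\mathbf w}$ down as the GY exchange matrix. Finally, the normalization $q^{-(\zeta_k,\zeta_k)/4}$ is exactly the bar-invariant rescaling that makes the mutation rule standard, and it is a unit because $(\zeta_k,\zeta_k)$ is even.

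\emph{Step 3: match integral forms.} I need the $\A$-algebra generated by the $P_k^{\mathbf w}$ to be $A_{\A}(\mathfrak n(w))$. This holds by definition, together with the fact that the dual PBW basis is an $\A$-basis. The main obstacle is the GY integrality hypothesis that every one-step mutation in the recursive construction produces an element of the integral form with unit normalizations. I would verify it through the quantum T-system among unipotent quantum minors: it expresses $D_{k^-}^{\mathbf w}$ times the mutated variable as $q$-power multiples of monomials in minors. Since the minors are upper global basis elements, their products with suitable $q$-powers are integral. Scalar extension to $\mathbb K$ is then immediate.
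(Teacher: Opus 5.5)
Your overall route, feeding the symmetric CGL presentation of Proposition~\ref{pro:CGL-extension-unipotent} into the general Goodearl--Yakimov integral theorem, is how \cite[Theorem~7.3]{GY-integral} is itself obtained; the paper simply cites that theorem. However, the step identifying $B_{\mathbf w}$ with the Goodearl--Yakimov exchange matrix fails as written. The compatibility relation fixes each column of an exchange matrix only modulo $\ker\Lambda_{\mathbf w}$, and $\Lambda_{\mathbf w}$ is often singular (always when $r$ is odd). Full rank of $B_{\mathbf w}$ is automatic from compatibility and gives no uniqueness. For $\mathbf w=(1,2,1)$ in type $A_2$ one computes
\[
\Lambda_{\mathbf w}=\begin{pmatrix}0&-1&1\\1&0&0\\-1&0&0\end{pmatrix},
\qquad
B_{\mathbf w}=\begin{pmatrix}0\\1\\-1\end{pmatrix}.
\]
The column $(0,2,0)^{\mathsf t}=B_{\mathbf w}+(0,1,1)^{\mathsf t}$ satisfies the same relation $b^{\mathsf t}\Lambda_{\mathbf w}=(2,0,0)$ and also has full rank, yet it is a different matrix.

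The missing condition is homogeneity. The Goodearl--Yakimov cluster variables are root-homogeneous, so the two exchange monomials in each relation have equal degree. Hence each of their columns satisfies $\sum_s b_{sk}\zeta_s=0$. In the example, $\zeta_2=\zeta_3=\alpha_1+\alpha_2$, whereas $2\zeta_2\neq0$. You need two further steps. First, verify this identity for $B_{\mathbf w}$; it is a Weyl-group computation with $\zeta_s=(1-w_{\le s})\varpi_{i_s}$. Second, show that $\Lambda_{\mathbf w}$ and the degree map have trivial joint kernel. The purely combinatorial computation in Lemma~\ref{lem:normalized-seed-commutation} gives $\Lambda_{\mathbf w}=V^{\mathsf t}\Lambda_0V$, and the degree matrix of the minors is $GV$. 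So the kernel claim reduces to $Gu=\Lambda_0u=0\Rightarrow 2\mathsf Eu=(G^{\mathsf t}D_CCG-\Lambda_0)u=0\Rightarrow u=0$, exactly as in Lemma~\ref{lem:center}.

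Two smaller steps also need repair. In Step~1, normality and degree do not make $D_j^{\mathbf w}$ prime, so the uniqueness in Theorem~\ref{thm:cgl-recursive-primes} does not apply as stated. Moreover, the ``right degree'' presupposes the $\eta$ you are trying to determine. Either cite primality of the minors, or establish $p(k)=k^-$ independently and then use that a homogeneous normal element is a scalar times a monomial in the $y_l$, comparing degrees. In Step~3, the T-system shows only that the product of the mutated variable with a minor is integral, which does not make the mutated variable integral. What is needed is that the relevant elements are themselves unipotent quantum minors, hence $q^{\mathbb Z/2}$-multiples of upper global basis elements.
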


\section{Bosonic extension algebras}\label{sec:bosonic}

We recall the bosonic extension associated with the symmetrizable
generalized Cartan matrix \(C=(c_{ij})_{i,j\in I}\). We work over
\(\K=\mathbb Q(q^{1/2})\), where \(q\) is an indeterminate. We retain the symmetrizer
\(D_C=\operatorname{diag}(d_i)\), the parameters \(q_i=q^{d_i}\),
and the pairing \((\alpha_i,\alpha_j)=d_ic_{ij}\) fixed above.
All results in this section hold without a finite-type assumption.

\begin{definition}[Bosonic extension]
The bosonic extension \(\Ahat\) is the \(\K\)-algebra generated
by \(f_{i,m}\), for \(i\in I\) and \(m\in\mathbb Z\), subject to
the quantum Serre relations
$$
\sum_{a=0}^{1-c_{ij}}(-1)^a
\begin{bmatrix}1-c_{ij}\\a\end{bmatrix}_{q_i}
f_{i,m}^{\,1-c_{ij}-a}f_{j,m}f_{i,m}^{\,a}=0
\qquad(i\ne j,\ m\in\mathbb Z)
$$
and the relations
$$
f_{i,m}f_{j,n}
=q^{(-1)^{n-m+1}(\alpha_i,\alpha_j)}f_{j,n}f_{i,m}
+\delta_{ij}\delta_{n,m+1}(1-q_i^2)
\qquad(m<n).
$$
Here \(\begin{bmatrix}n\\a\end{bmatrix}_{q_i}=[n]_{q_i}!/([a]_{q_i}![n-a]_{q_i}!)\).
The algebra is \(Q\)-graded by
\(\deg f_{i,m}=(-1)^m\alpha_i\); we also write
\(\operatorname{wt}(u)=-\deg(u)\) for homogeneous \(u\).
\end{definition}

Set \(\mathcal F_{i,m}=q_i^{1/2}(1-q_i^2)^{-1}f_{i,m}\) and
\(\mathcal F_{i,m}^{(a)}=\mathcal F_{i,m}^a/[a]_{q_i}!\).
By \cite[Theorem~3.1]{KKOP-braid}, the formulas
$$
\T_i(f_{j,m})=
\begin{cases}
f_{i,m+1}, & j=i,\\[3pt]
\displaystyle
\sum_{a+b=-c_{ij}}(-q_i)^b
\mathcal F_{i,m}^{(a)}f_{j,m}\mathcal F_{i,m}^{(b)},
& j\ne i
\end{cases}
$$
define algebra automorphisms satisfying the braid relations.
For homogeneous \(u\), they satisfy
\(\deg(\T_i(u))=s_i(\deg(u))\).

Let \(\mathbf i=(i_1,\ldots,i_r)\) be a positive braid word
representing \(b\) in the positive braid monoid associated with
\(C\). Put \(w_{\leq0}=1\),
\(w_{\leq k}=s_{i_1}\cdots s_{i_k}\), and
\(\gamma_k=w_{\leq k-1}\alpha_{i_k}\).
The word \(\mathbf i\) need not be reduced in \(W\).

\begin{definition}[Bosonic PBW generators]\label{def:PBW-roots}
For \(k\in[1,r]\), define
\[P_k^{\mathbf i}=\T_{i_1}\cdots\T_{i_{k-1}}(q_{i_k}^{1/2}f_{i_k,0}).\]
Set
\(\Ahat(b)=\K\langle P_1^{\mathbf i},\ldots,P_r^{\mathbf i}\rangle
\subseteq\Ahat\).
Then \(\deg P_k^{\mathbf i}=\gamma_k\).
\end{definition}

The subalgebra \(\Ahat(b)\) is independent of the chosen
positive expression of \(b\), and the ordered monomials
\((P_r^{\mathbf i})^{a_r}\cdots(P_1^{\mathbf i})^{a_1}\),
with \((a_1,\ldots,a_r)\in\mathbb N^r\), form a \(\K\)-basis
\cite[Section~4.1 and Corollary~4.9]{KKOP-braid}.
For the fixed word \(\mathbf i\), write
\(\Ahat(b)[k,l]=\K\langle P_k^{\mathbf i},\ldots,P_l^{\mathbf i}\rangle\)
when \(k\leq l\), and \(\Ahat(b)[k,l]=\K\) when \(k>l\).
The Levendorskii--Soibelman relation
\cite[Proposition~4.13]{KKOP-braid} gives
\begin{equation}\label{eq:LS}
P_k^{\mathbf i}P_l^{\mathbf i}
-q^{-(\gamma_k,\gamma_l)}P_l^{\mathbf i}P_k^{\mathbf i}
\in\Ahat(b)[k+1,l-1]
\qquad(k<l).
\end{equation}

For the CGL presentation, write \(x_k=P_k^{\mathbf i}\) and set
\(\Ahat(b)[k,l]=\K\langle x_k,\ldots,x_l\rangle\) for
\(k\leq l\), and \(\Ahat(b)[k,l]=\K\) otherwise.
For \(k\geq2\) and homogeneous
\(u\in \Ahat(b)[1,k-1]\) of degree \(\beta\in Q\), define
\(\sigma_k(u)=q^{(\gamma_k,\beta)}u\) and
\(\delta_k(u)=x_ku-\sigma_k(u)x_k\), extending both maps linearly.

\begin{theorem}\label{thm:bosonic-symmetric-CGL}
For every positive braid word \(\mathbf i\), the algebra
\(\Ahat(b)\) has the symmetric CGL presentation
$$
\Ahat(b)
=\K[x_1][x_2;\sigma_2,\delta_2]\cdots
[x_r;\sigma_r,\delta_r].
$$
For the torus
\(\mathcal H=\operatorname{Hom}_{\mathbb Z}(Q,\K^\times)\)
acting through the \(Q\)-grading, one may take
\(h_k(\beta)=q^{(\gamma_k,\beta)}\) and
\(h_k^*(\beta)=q^{-(\gamma_k,\beta)}\).
Their eigenvalues on \(x_k\) are \(q_{i_k}^{2}\) and \(q_{i_k}^{-2}\),
respectively.
\end{theorem}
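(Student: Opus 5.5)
We verify the three axioms of a CGL extension, plus the two extra conditions for symmetry, directly from the PBW theorem and the Levendorskii--Soibelman relation \eqref{eq:LS}, in the same way as in Proposition~\ref{pro:CGL-extension-unipotent}.

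\emph{Iterated Ore structure.} First, the ordered monomials $(P_r^{\mathbf i})^{a_r}\cdots(P_1^{\mathbf i})^{a_1}$ form a $\K$-basis of $\Ahat(b)$ \cite[Corollary~4.9]{KKOP-braid}. Applying \eqref{eq:LS} to a prefix word $(i_1,\ldots,i_k)$, which represents a positive braid $b_{\le k}$, the monomials in $x_1,\ldots,x_k$ form a basis of $\Ahat(b)[1,k]=\Ahat(b_{\le k})$. Relation \eqref{eq:LS} gives $x_kx_j=q^{(\gamma_k,\gamma_j)}x_jx_k+\delta_k(x_j)$ with $\delta_k(x_j)\in\Ahat(b)[j+1,k-1]$. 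The map $\sigma_k$ is the grading automorphism $h_k\cdot$, so it is an algebra automorphism. A direct computation gives $\delta_k(uv)=\sigma_k(u)\delta_k(v)+\delta_k(u)v$, so $\delta_k$ is a left $\sigma_k$-derivation. The straightening relations show that $\Ahat(b)[1,k]=\sum_n\Ahat(b)[1,k-1]x_k^n$. The PBW basis shows that this sum is free, so it is the Ore extension $\Ahat(b)[1,k-1][x_k;\sigma_k,\delta_k]$. The interval condition in the symmetry definition is exactly \eqref{eq:LS}.

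\emph{Torus conditions.} Each $x_k$ is homogeneous of degree $\gamma_k$. The character $h_k(\beta)=q^{(\gamma_k,\beta)}$ restricts to $\sigma_k$ on $\Ahat(b)[1,k-1]$ and acts on $x_k$ by $q^{(\gamma_k,\gamma_k)}$. Since the Weyl group preserves the form, $(\gamma_k,\gamma_k)=(\alpha_{i_k},\alpha_{i_k})=2d_{i_k}$, so this eigenvalue is $q_{i_k}^2$, which is not a root of unity. For $h_j^*(\beta)=q^{-(\gamma_j,\beta)}$ and $k>j$ we get $h_j^*\cdot x_k=q^{-(\gamma_j,\gamma_k)}x_k=\lambda_{kj}^{-1}x_k$, because $\lambda_{kj}=q^{(\gamma_k,\gamma_j)}$. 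We also get $h_j^*\cdot x_j=q_{i_j}^{-2}x_j$. All of this uses only $W$-invariance of the form, so reducedness of $\mathbf i$ is not needed.

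\emph{Local nilpotence (main obstacle).} The proof of Proposition~\ref{pro:CGL-extension-unipotent} used a height functional that is positive on $\beta_1,\ldots,\beta_{k-1}$. Here the word need not be reduced, and the degrees $\gamma_j$ can be negative roots, so that functional may fail. I would instead use the PBW filtration. By \eqref{eq:LS}, $\delta_k(x_j)$ is a combination of monomials in $x_{j+1},\ldots,x_{k-1}$.

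Assign to $x_j$ the weight $\omega_j=2^{k-j}$. For a monomial in $x_{j+1},\ldots,x_{k-1}$ in the support of $\delta_k(x_j)$, compare with the weight of $x_jx_k$, or simply with $\omega_j$.

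Degree alone does not bound such monomials, since they may have large total length. I would therefore argue by induction on $k-j$, proving that $\delta_k^{n}(x_j)=0$ for large $n$, and then extend to products through the twisted Leibniz rule. The Leibniz rule gives $\delta_k^n(uv)=\sum\binom{n}{m}_{\ast}\sigma_k^{m}\delta_k^{n-m}(u)\,\delta_k^m(v)$ up to $q$-binomial factors, valid because $\sigma_k\delta_k=q^{\ast}\delta_k\sigma_k$ on homogeneous elements. This reduces local nilpotence to the generators.

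For the generators, I would first try to avoid the combinatorics entirely by passing to the thickened word. By the comparison theorem, $\Ahat(b)[1,k]$ sits inside a localized quantum unipotent coordinate ring. There, conjugation by $x_k$ is the adjoint action of a root vector of a reduced word, which is locally nilpotent by Proposition~\ref{pro:CGL-extension-unipotent}. The localization at the frozen $H_p$ commutes with $x_k$ up to powers of $q$. If that is circular at this point of the paper, I would instead invoke the analogue in \cite[Section~4]{KKOP-braid}: the Levendorskii--Soibelman corrections strictly decrease the Lusztig parameter in the bi-lexicographic order within each weight space. Together with the fact that the weight spaces of $\Ahat(b)[1,k-1]$ relevant to $\delta_k^n(x_j)$ are finite-dimensional with bounded filtration, this forces $\delta_k^n(x_j)=0$ for large $n$.
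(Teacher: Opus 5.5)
Your verification of the iterated Ore structure, the interval condition, and the torus characters agrees with the paper's proof. The gap is in local nilpotence. You announce an induction on $k-j$ but never supply its inductive step. After noting that the $q$-Leibniz formula reduces the problem to generators, you offer two arguments for the generators, and neither works.

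The thickening argument is circular. The only map from $\Ahat(b)[1,k]$ to $\D_{\mathbf i}$ is the one built in Theorem~\ref{thm:comparison}, and its inductive proof starts from the Ore presentation of $\Ahat(b)$ established in this theorem. Its own nilpotence step, for $\varepsilon=\delta-c\delta'$, is moreover the same word-filtration argument that proves nilpotence of $\delta_k$ directly.

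The second argument rests on a false claim. When $\mathbf i$ is not reduced, the weight spaces of $\Ahat(b)[1,k-1]$ containing $\delta_k^n(x_j)$ need not be finite-dimensional. For $\mathbf i=(i,i,i)$ and $k=3$, one has $\gamma_1=\gamma_3=\alpha_i$ and $\gamma_2=-\alpha_i$. So the weight space of degree $\gamma_1+n\gamma_3=(n+1)\alpha_i$ contains every $x_1^{a+n+1}x_2^{a}$ with $a\ge0$. Moreover, the iterates $\delta_k^n(x_j)$ pass through the different weight spaces $\gamma_j+n\gamma_k$. An ordering that decreases within each weight space therefore does not by itself bound $n$.

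The missing observation is that your induction closes without any further input. Put $S_j=\Ahat(b)[j,k-1]$, with $S_k=\K$. The skew Leibniz rule and the interval condition give $\delta_k(S_j)\subseteq S_j$.
\begin{itemize}
\item Base case: $\delta_k$ vanishes on $S_k=\K$.
\item Inductive step: suppose $\delta_k$ is locally nilpotent on $S_{j+1}$. Since $\delta_k(x_j)\in S_{j+1}$, we get $\delta_k^{n+1}(x_j)=\delta_k^{n}(\delta_k(x_j))=0$ for large $n$. Your $q$-Leibniz formula is valid because $\sigma_k\delta_k=q_{i_k}^{2}\delta_k\sigma_k$ on homogeneous elements. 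It shows that homogeneous locally nilpotent elements are closed under products, so nilpotence extends to the algebra $S_j$ generated by $x_j$ and $S_{j+1}$.
\end{itemize}

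The paper instead repairs your first idea. The fixed weights $2^{k-j}$ fail for the reason you noticed. The paper instead assigns positive integer weights recursively to $x_{k-1},x_{k-2},\ldots,x_1$, each larger than the weight of every word in a fixed expression for $\delta_k(x_j)$. Then $\sigma_k$ preserves the word-weight filtration and $\delta_k$ strictly lowers it. Both arguments use only the interval form of \eqref{eq:LS}, so neither reducedness nor finiteness of weight spaces is needed.
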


\begin{proof}
Write \(R_k=\Ahat(b)[1,k]\) and \(R_0=\K\).
The PBW theorem, applied to each prefix of \(\mathbf i\),
shows that the monomials \(x_1^{a_1}\cdots x_k^{a_k}\)
form a \(\K\)-basis of \(R_k\).
Since the \(x_j\) are homogeneous, \(\sigma_k\) is an
automorphism of \(R_{k-1}\).
Rearranging \eqref{eq:LS} gives
$$
\delta_k(x_j)
=x_kx_j-q^{(\gamma_k,\gamma_j)}x_jx_k
\in \Ahat(b)[j+1,k-1]
\qquad(j<k).
$$
By definition, \(\delta_k\) satisfies the skew Leibniz rule
\(\delta_k(uv)=\sigma_k(u)\delta_k(v)+\delta_k(u)v\).
The preceding inclusion therefore implies that
\(\delta_k(R_{k-1})\subseteq R_{k-1}\).
The natural map
\(R_{k-1}[X;\sigma_k,\delta_k]\to R_k\), sending \(X\) to
\(x_k\), is surjective. It is injective by the PBW basis,
so \(R_k=R_{k-1}[x_k;\sigma_k,\delta_k]\).

Since \eqref{eq:LS} gives
\(\delta_k(x_j)\in \Ahat(b)[j+1,k-1]\) for every \(j<k\),
we can assign positive integer weights to \(x_{k-1},x_{k-2},\ldots,x_1\),
in that order, so that every word in a fixed expression for
\(\delta_k(x_j)\) has smaller weight than \(x_j\).
Since \(\sigma_k\) preserves the resulting word filtration,
the skew Leibniz rule shows that \(\delta_k\) lowers it strictly. Thus \(\delta_k\) is locally
nilpotent.

Finally, the \(Q\)-grading induces a rational
\(\mathcal H\)-action for which each \(x_k\) is an eigenvector,
and \(h_k|_{R_{k-1}}=\sigma_k\).
As \(\gamma_k\) is a Weyl group translate of a simple root,
\((\gamma_k,\gamma_k)=2d_{i_k}\), so \(h_k\cdot x_k=q_{i_k}^{2}x_k\).
As this eigenvalue is not a root of unity, the CGL conditions hold.
The interval inclusion for \(\delta_k(x_j)\) gives the first
symmetry condition. For \(l>j\),
\(h_j^*\cdot x_l=q^{-(\gamma_j,\gamma_l)}x_l
=\lambda_{lj}^{-1}x_l\), where
\(\lambda_{lj}=q^{(\gamma_l,\gamma_j)}\), and
\(h_j^*\cdot x_j=q_{i_j}^{-2}x_j\).
This proves the second symmetry condition.
\end{proof}

\subsection{Global basis and the standard integral form}

Let \(B(\infty)\) be the crystal of \(U_q^-(\mathfrak g)\),
with highest element \(u_\infty\), and let
\(G^{\mathrm{up}}(x)\) denote the corresponding upper global
basis element of \(A_q(\mathfrak n)\).
We use the usual crystal operators
\(\widetilde e_i,\widetilde f_i\) and functions
\(\varepsilon_i,\varphi_i\).
Let \(*\) denote Kashiwara's involution, and set
\(\widetilde e_i^*=*\widetilde e_i*\),
\(\widetilde f_i^*=*\widetilde f_i*\),
\(\varepsilon_i^*(x)=\varepsilon_i(x^*)\), and
\(\varphi_i^*(x)=\varepsilon_i^*(x)+
\langle h_i,\wt x\rangle\).

The extended crystal is
$$
\widehat B(\infty)
=
\left\{
\mathbf b=(b_m)_{m\in\mathbb Z}
\;\middle|\;
b_m\in B(\infty),\
b_m=u_\infty \text{ for all but finitely many }m
\right\}.
$$
For \(m\in\mathbb Z\), let
\(\Ahat[m]=\mathbb K\langle f_{i,m}\mid i\in I\rangle\).
For an interval $[u,v]$, set
$\Ahat[u,v]=\mathbb K\langle f_{i,m}\mid i\in I;m\in [u,v]\rangle$.
After extending scalars to \(\mathbb K\), let
\(\psi_m:A_q(\mathfrak n)\to\Ahat[m]\) be the standard
isomorphism normalized by
\(\psi_m(G^{\mathrm{up}}(\widetilde f_i u_\infty))
=q_i^{1/2}f_{i,m}\).
Write \(g_m(x)=\psi_m(G^{\mathrm{up}}(x))\).
For \(\mathbf b\) supported in an interval \([u,v]\), define
$$
P(\mathbf b)=g_v(b_v)\cdots g_u(b_u),
\qquad
\beta_m(\mathbf b)=-\wt b_m\in Q_+.
$$
This product is independent of the choice of \([u,v]\).
Define \(\mathbf c\preceq\mathbf b\) if
\(\beta_m(\mathbf c)\le\beta_m(\mathbf b)\) for every \(m\),
where \(\mu\le\nu\) means \(\nu-\mu\in Q_+\).
Write \(\mathbf c\prec\mathbf b\) if, in addition, at least
one of these inequalities is strict.

The bosonic global basis
\(\{G(\mathbf b)\mid\mathbf b\in\widehat B(\infty)\}\)
constructed in \cite[Theorem~7.6]{KKOP-global} satisfies
\begin{equation}\label{eq:global-pbw}
G(\mathbf b)-P(\mathbf b)
\in
\sum_{\mathbf c\prec\mathbf b}q\mathbb Z[q]P(\mathbf c).
\end{equation}
For every positive braid \(b\), its intersection with
\(\Ahat(b)\) is a basis of \(\Ahat(b)\)
\cite[Corollary~4.11]{KKOP-braid}.

For \(\mathbf a=(a_1,\ldots,a_r)\in\mathbb N^r\), put
\[
P^{\mathbf i}(\mathbf a)
=\left(\prod_{k=1}^r q_{i_k}^{\binom{a_k}{2}}\right)
(P_r^{\mathbf i})^{a_r}\cdots(P_1^{\mathbf i})^{a_1}.
\]
Define the \emph{standard integral form} by
\begin{equation}\label{eq:bosonic-integral-form}
\Ahat_{\A}(b)
:=\bigoplus_{G\in\mathbf G\cap\Ahat(b)}\A G
=\bigoplus_{\mathbf a\in\mathbb N^r}\A P^{\mathbf i}(\mathbf a),
\end{equation}
where \(\mathbf G\) denotes the bosonic global basis.
The equality of these spans, closure under multiplication, and
independence of \(\mathbf i\) follow from
\cite[Corollaries~4.9 and~4.11]{KKOP-braid}, after extending
\(\mathbb Z[q^{\pm1}]\) to \(\A\).
The integral straightening law
\cite[Proposition~4.13]{KKOP-braid} has coefficients in
\(\mathbb Z[q^{\pm1}]\). Thus the ordinary ordered monomials
in the \(P_k^{\mathbf i}\), in either ascending or descending order,
are \(\A\)-bases, and
\(\mathbb K\otimes_{\A}\Ahat_{\A}(b)=\Ahat(b)\).
The braid symmetries preserve \(\mathbf G\), and hence the
integral form whenever they identify the corresponding subalgebras.
Their explicit formulas are evaluated over $\mathbb Q(q^{1/2})$;
individual terms need not be integral.

The root-torus maps \(h_k,h_k^*\) and the Ore maps also preserve
the integral prefix algebras. We apply the classification of
homogeneous prime elements and the invariant-center arguments
over \(\mathbb K\).

Let $\widehat A_{\A}$ be the full bosonic integral form spanned by
$\mathbf G$. 
For a position interval $[a,d]\subseteq[1,r]$, let
$\widehat A_{\A}(b)[a,d]$ denote the $\A$-span of the ordered
PBW monomials supported in $[a,d]$; for an empty position interval,
set $\widehat A_{\A}(b)[a,d]=\A$.

\begin{lemma}[Integral interval subalgebras]
\label{lem:integral-interval-saturation}
For every position interval $[a,d]$, one has
\begin{equation}\label{eq:integral-interval-intersection}
 \widehat A_{\A}\cap\widehat A(b)[a,d]
 =\widehat A_{\A}(b)[a,d].
\end{equation}
This integral interval algebra is saturated in $\widehat A_{\A}$
and is a direct summand, as an $\A$-module, of every integral
prefix algebra $\widehat A_{\A}(b)[1,n]$ with $d\le n\le r$.
\end{lemma}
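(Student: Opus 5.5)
The identity \eqref{eq:integral-interval-intersection} will follow from two facts: the interval subalgebra $\widehat A(b)[a,d]$ is itself the algebra $\widehat A(b')$ of a braid, up to a braid symmetry; and the standard integral forms are cut out by the global basis $\mathbf G$.

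\textbf{Reduction to a prefix.} Write $b=b_1b'b_2$ with $b_1=\sigma_{i_1}\cdots\sigma_{i_{a-1}}$ and $b'=\sigma_{i_a}\cdots\sigma_{i_d}$. By Definition~\ref{def:PBW-roots}, $P_k^{\mathbf i}=\T_{i_1}\cdots\T_{i_{a-1}}(P_{k-a+1}^{\mathbf i'})$ for $k\in[a,d]$, where $\mathbf i'=(i_a,\ldots,i_d)$. Hence $\widehat A(b)[a,d]=\T_{b_1}(\widehat A(b'))$.

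\textbf{The identity \eqref{eq:integral-interval-intersection}.}
\begin{itemize}
\item By \cite[Corollary~4.11]{KKOP-braid}, $\mathbf G\cap\widehat A(b')$ is a $\mathbb K$-basis of $\widehat A(b')$.
\item The braid symmetries permute $\mathbf G$, as recalled before the lemma. Therefore $\mathbf G\cap\widehat A(b)[a,d]$ is a $\mathbb K$-basis of $\widehat A(b)[a,d]$.
\item Applying $\T_{b_1}$ to the equality of spans \eqref{eq:bosonic-integral-form} for $b'$ shows that the $\A$-span of this basis is $\T_{b_1}(\widehat A_{\A}(b'))$. This is the $\A$-span of the normalized PBW monomials in $P_a^{\mathbf i},\ldots,P_d^{\mathbf i}$, namely $\widehat A_{\A}(b)[a,d]$.
\item Since $\mathbf G$ is an $\A$-basis of $\widehat A_{\A}$, an element of $\widehat A_{\A}\cap\widehat A(b)[a,d]$ has $\A$-coefficients with respect to $\mathbf G$. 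It also lies in the $\mathbb K$-span of $\mathbf G\cap\widehat A(b)[a,d]$. By uniqueness of coordinates in the basis $\mathbf G$, only elements of $\mathbf G\cap\widehat A(b)[a,d]$ occur, and they occur with $\A$-coefficients.
\end{itemize}
This gives the inclusion $\subseteq$. The reverse inclusion is clear.

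\textbf{Saturation and direct summands.} Saturation is immediate: if $cx\in\widehat A_{\A}(b)[a,d]$ with $0\neq c\in\A$ and $x\in\widehat A_{\A}$, then $x\in\widehat A(b)[a,d]$, so the identity applies. For the direct-summand claim, $\mathbf G\cap\widehat A(b)[a,d]$ is a subset of $\mathbf G\cap\widehat A(b)[1,n]$, because $\widehat A(b)[a,d]\subseteq\widehat A(b)[1,n]$. The latter set is an $\A$-basis of $\widehat A_{\A}(b)[1,n]$, by the same argument applied to the prefix braid. The span of the complementary basis elements is therefore an $\A$-complement.

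\textbf{Main obstacle.} The delicate step is the claim that $\T_{b_1}$ carries $\mathbf G\cap\widehat A(b')$ onto $\mathbf G\cap\widehat A(b)[a,d]$, and identifies the corresponding integral spans. The explicit formulas for $\T_i$ are not integral term by term, so this cannot be read off the formulas. I would invoke the preservation of $\mathbf G$ by the braid symmetries from \cite{KKOP-global,KKOP-braid}, which is global-basis-theoretic. Independence of \eqref{eq:bosonic-integral-form} from the chosen expression then matches the PBW and global-basis spans for the sub-braid.

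As a fallback for the inclusion $\subseteq$, one could avoid $\mathbf G$ and use the unitriangularity \eqref{eq:global-pbw} directly. Write an element of the intersection in the PBW basis of $\widehat A(b)[1,n]$. Its coefficients are then forced into $\A$ by a triangular inversion together with the integral Levendorskii--Soibelman law \eqref{eq:LS}.
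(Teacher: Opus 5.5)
Your proposal is correct and follows essentially the paper's proof. You transport $\widehat A(b')$, with $b'=\sigma_{i_a}\cdots\sigma_{i_d}$, by $\T_{i_1}\cdots\T_{i_{a-1}}$, combine \cite[Corollary~4.11]{KKOP-braid} with the fact that the braid symmetries permute $\mathbf G$, and prove saturation exactly as the paper does. The only difference is minor: you obtain the direct-summand property from the subset $\mathbf G\cap\widehat A(b)[a,d]$ of the global basis of the prefix, whereas the paper uses the subset of ordered PBW monomials supported in $[a,d]$. Your fallback via \eqref{eq:global-pbw} is unnecessary, and it would not apply directly, since that unitriangularity is with respect to the layer products $P(\mathbf c)$ rather than the bosonic PBW monomials.
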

\begin{proof}
For an empty interval the assertion is immediate. Otherwise, put
$\mathbf j=(i_a,\ldots,i_d)$,
$b_{[a,d]}=\sigma_{i_a}\cdots\sigma_{i_d}$, and
$\T_{<a}=\T_{i_1}\cdots\T_{i_{a-1}}$.
The definitions of the PBW generators give
\[
 \widehat A(b)[a,d]
 =\T_{<a}\bigl(\widehat A(b_{[a,d]})\bigr),
 \qquad
 \widehat A_{\A}(b)[a,d]
 =\T_{<a}\bigl(\widehat A_{\A}(b_{[a,d]})\bigr).
\]
By \cite[Corollary~4.11]{KKOP-braid},
$\widehat A(b_{[a,d]})$ is spanned over $\mathbb K$ by a subset
of $\mathbf G$, and its integral form is the $\A$-span of the
same subset. Thus its integral form is the intersection of this
subalgebra with $\widehat A_{\A}$.
The automorphism $\T_{<a}$ permutes $\mathbf G$
\cite[Theorem~3.7]{KKOP-braid}; applying it proves
\eqref{eq:integral-interval-intersection}.

Suppose that $0\ne s\in\A$, $u\in\widehat A_\A$, and
$su\in\widehat A_\A(b)[a,d]$.
Since $s$ is invertible in $\mathbb K$, we obtain
$u=s^{-1}(su)\in\widehat A(b)[a,d]$.
Since $u\in\widehat A_\A$,
\eqref{eq:integral-interval-intersection} gives
$u\in\widehat A_\A(b)[a,d]$.
Thus the interval integral form is saturated.

In each prefix containing $[a,d]$, the PBW monomials supported
in this interval form a subset of the ordered PBW basis.
The remaining monomials span an $\A$-module complement.
\end{proof}

To describe the braid action on the global basis, recall the
Saito reflection
\(\mathscr S_j\), defined on \(\varepsilon_j(x)=0\), and
its extension:
$$
\mathscr S_j(x)
=
\widetilde f_j^{\,\varphi_j^*(x)}
(\widetilde e_j^*)^{\varepsilon_j^*(x)}x,
\qquad
\widetilde{\mathscr S}_j(x)
=
\mathscr S_j\bigl(
\widetilde e_j^{\,\varepsilon_j(x)}x\bigr).
$$
By \cite[Theorem~3.7]{KKOP-braid},
$$
\T_jG(\mathbf b)=G(\mathsf R_j\mathbf b),
\qquad
(\mathsf R_j\mathbf b)_m
=
(\widetilde f_j^*)^{\varepsilon_j(b_{m-1})}
\widetilde{\mathscr S}_j(b_m).
$$

\begin{lemma}\label{lem:star-epsilon-monotonicity}
Let \(i\ne j\) and \(x\in B(\infty)\).
If \(\widetilde e_j^*x\ne0\), then
\(\varepsilon_i^*(\widetilde e_j^*x)
\ge\varepsilon_i^*(x)\).
Moreover,
\begin{equation}\label{eq:epsilon-monotonicity}
\varepsilon_i^*(\widetilde{\mathscr S}_j x)
\ge\varepsilon_i^*(x).
\end{equation}
\end{lemma}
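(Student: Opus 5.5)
The plan is to reduce everything to the ordinary crystal $B(\infty)$ by applying Kashiwara's involution, and then to use the tensor-product (bicrystal) description of $\widetilde e_j^*$ and $\widetilde f_j^*$ relative to $\varepsilon_i^*$. Set $y=x^*$. Then $\varepsilon_i^*(x)=\varepsilon_i(y)$ and $\widetilde e_j^*x=(\widetilde e_jy)^*$. The first claim becomes the following: for $i\ne j$ and $\widetilde e_jy\ne0$, we have $\varepsilon_i(\widetilde e_jy)\ge\varepsilon_i(y)$. This follows from the crystal axioms together with the weight formula. Removing a $j$-arrow changes $\langle h_i,\wt\rangle$ by $-c_{ij}\ge0$. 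In $B(\infty)$, for $i\ne j$, the operators $\widetilde e_j$ and $\widetilde e_i^{\max}$ interact through the rank-two subcrystal, and $\varepsilon_i$ can only grow or stay the same when a $j$-arrow is removed. To make this rigorous I would use Kashiwara's embedding $B(\infty)\hookrightarrow B(\infty)\otimes B_i$ (the $i$-string decomposition). In that embedding $\varepsilon_i(y)$ is the $B_i$-component and $\widetilde e_j$ ($j\ne i$) acts only on the first factor. So this step needs care with the $*$-twisted version: the right tool is the embedding $\Psi_i:B(\infty)\to B(\infty)\otimes B_i$, $y\mapsto (\widetilde e_i^{*})^{\max}y\otimes b_i(-\varepsilon_i^*(y))$. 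Under $\Psi_i$, the operator $\widetilde e_j$ for $j\ne i$ acts on the first factor by the tensor rule, since $\varepsilon_j(b_i(n))=-\infty$. Applying this to $x$ directly, with the roles of $*$ and non-$*$ swapped, gives $\varepsilon_i^*(\widetilde e_j^*x)\ge \varepsilon_i^*(x)$.

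For \eqref{eq:epsilon-monotonicity}, write $\widetilde{\mathscr S}_jx=\widetilde f_j^{\,\varphi_j^*(z)}(\widetilde e_j^*)^{\varepsilon_j^*(z)}z$, where $z=\widetilde e_j^{\,\varepsilon_j(x)}x$. There are three steps.

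First, $\varepsilon_i^*(z)\ge\varepsilon_i^*(x)$: applying the non-starred $\widetilde e_j$ preserves $\varepsilon_i^*$. This holds because the $*$-structure and the ordinary structure commute up to the bicrystal rules, so $\widetilde e_j$ acts on the first factor of $\Psi_i^*$ and fixes the $B_i$-coordinate.

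Second, applying $(\widetilde e_j^*)^{\varepsilon_j^*(z)}$ does not decrease $\varepsilon_i^*$, by the first claim iterated.

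Third, applying $\widetilde f_j^{\,n}$ with the ordinary operator preserves $\varepsilon_i^*$, again by the commutation just used.

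The main obstacle is justifying that ordinary $\widetilde e_j,\widetilde f_j$ leave $\varepsilon_i^*$ unchanged for $i\ne j$. I would first try the Kashiwara bicrystal fact: for $i\ne j$, $\widetilde f_j$ commutes with $\widetilde f_i^*$ and $\widetilde e_i^*$. This follows from the embedding $B(\infty)\to B(\infty)\otimes B_i$ and the tensor rule, or from the characterization of $\varepsilon_i^*$ via $\Psi_i$. If the action is not on the first factor only, I would fall back on the inequality version, which suffices here since only $\ge$ is needed.
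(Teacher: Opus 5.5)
Your reduction of the first claim, via $y=x^*$, to $\varepsilon_i(\widetilde e_jy)\ge\varepsilon_i(y)$ is correct and is the paper's reduction. The argument you then give for that inequality does not work, and this is a genuine gap.

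The embedding $\Psi_i(y)=(\widetilde e_i^*)^{\max}y\otimes b_i(-\varepsilon_i^*(y))$ is a strict morphism for the \emph{unstarred} crystal structure. Its $B_i$-coordinate records $\varepsilon_i^*$, not $\varepsilon_i$. So ``$\widetilde e_j$ acts on the first factor'' proves $\varepsilon_i^*(\widetilde e_jx)=\varepsilon_i^*(x)$, and swapping the roles of $*$ proves $\varepsilon_i(\widetilde e_j^*x)=\varepsilon_i(x)$. Both statements pair a starred object with an unstarred one. The first claim instead pairs $\widetilde e_j^*$ with $\varepsilon_i^*$, equivalently $\widetilde e_j$ with $\varepsilon_i$.

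No first-factor argument can handle this, because it would prove an equality, and the equality is false. In type $A_2$, take $y=\widetilde f_2\widetilde f_1u_\infty$. If $\widetilde e_1y\ne0$, then $\widetilde e_1y=\widetilde f_2u_\infty$ and $y=\widetilde f_1\widetilde f_2u_\infty$, which is the other of the two elements of weight $-\alpha_1-\alpha_2$. Hence $\varepsilon_1(y)=0$, while $\varepsilon_1(\widetilde e_2y)=\varepsilon_1(\widetilde f_1u_\infty)=1$. The crystal axioms do not help either: along a $j$-arrow with $j\ne i$ they only control $\varphi_i-\varepsilon_i$. Your proof of \eqref{eq:epsilon-monotonicity} iterates the first claim, so it inherits the gap.

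What is missing is an actual proof of $\varepsilon_i(\widetilde f_jb)\le\varepsilon_i(b)$ for $i\ne j$. The paper imports this from Lauda--Vazirani (the quiver Hecke realization of $B(\infty)$) and applies it to $b=\widetilde e_j(x^*)$. In your framework it comes from the tensor-product \emph{formula} for $\varepsilon_i$, not from a preserved coordinate. In Kashiwara's iterated embedding $B(\infty)\hookrightarrow\cdots\otimes B_{i_2}\otimes B_{i_1}$, $\varepsilon_i$ is the maximum over the factors $c$ of $\varepsilon_i(c)$ minus the $h_i$-pairing with the total weight of the factors to the left of $c$. The operator $\widetilde f_j$ changes a single $B_j$-factor, on which $\varepsilon_i=-\infty$, and lowers its weight by $\alpha_j$. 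So every term to the right of that factor changes by $c_{ij}\le0$, and no term increases. Alternatively, induct on height using a single $\Psi_k$ with $\varepsilon_k^*(b)>0$. This is where your observation about $c_{ij}$ actually enters.

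The rest of your plan matches the paper's decomposition: pass to $z=\widetilde e_j^{\varepsilon_j(x)}x$, then apply $(\widetilde e_j^*)^{\varepsilon_j^*(z)}$, then $\widetilde f_j^{\varphi_j^*(z)}$. You correctly justify the invariance of $\varepsilon_i^*$ under unstarred $\widetilde e_j,\widetilde f_j$ for $j\ne i$ by $\Psi_i$, where the paper cites Lauda--Vazirani, Prop.~6.2(ii). So you have the difficulty backwards: that invariance is the easy step, and the real content is the first claim.
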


\begin{proof}
By \cite[Proposition~2.4 and Theorems~6.21,~7.4]
{LaudaVazirani2011},
\(\varepsilon_i(\widetilde f_jy)\le\varepsilon_i(y)\)
for \(i\ne j\). These results concern arbitrary symmetrizable
Cartan data: one first chooses a sufficiently large dominant weight
using Proposition~2.4, applies Theorem~6.21, and then uses
Theorem~7.4 to identify the resulting crystal with \(B(\infty)\).
Taking \(y=\widetilde e_j(x^*)\), we obtain
$$
\varepsilon_i^*(x)
=\varepsilon_i(\widetilde f_jy)
\le\varepsilon_i(y)
=\varepsilon_i^*(\widetilde e_j^*x).
$$

For \(i\ne j\), we also have
\(\varepsilon_i^*(\widetilde f_jz)=\varepsilon_i^*(z)\)
by \cite[Proposition~6.2(ii)]{LaudaVazirani2011}.
Since \(\widetilde f_j\widetilde e_jz=z\), the same
invariance holds under \(\widetilde e_j\) whenever
\(\widetilde e_jz\ne0\).
Set \(c=\widetilde e_j^{\varepsilon_j(x)}x\),
\(a=\varepsilon_j^*(c)\), and \(d=\varphi_j^*(c)\).
Since \(\varepsilon_j(c)=0\), the Saito reflection is defined
on \(c\) and \(d\geq0\).
Then
$$
\begin{aligned}
\varepsilon_i^*(\widetilde{\mathscr S}_j x)
&=\varepsilon_i^*\bigl(
  \widetilde f_j^{\,d}(\widetilde e_j^*)^a c\bigr)\\
&=\varepsilon_i^*\bigl((\widetilde e_j^*)^a c\bigr)
\ge\varepsilon_i^*(c)
=\varepsilon_i^*(x),
\end{aligned}
$$
where the inequality follows by applying the first
assertion \(a\) times.
\end{proof}

For homogeneous \(x,y\in\Ahat\), write
\([x,y]_q=xy-q^{-(\wt x,\wt y)}yx\), and set
\(\mathcal E_i^*(x)=[f_{i,-1},x]_q\).

\begin{lemma}[Nonvanishing of straightening corrections]
\label{lem:nonvanishing}
Let \(\mathbf j=(j_1,\ldots,j_t)\) be a word with
\(j_a\ne i\) for all \(a\). Then
$$
\bigl[f_{i,-1},
\T_{j_1}\cdots\T_{j_t}(f_{i,0})\bigr]_q\ne0.
$$
Consequently, the straightening correction between the
first and last bosonic PBW generators of
\((i,j_1,\ldots,j_t,i)\) is nonzero.
\end{lemma}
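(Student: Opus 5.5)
The plan is to reduce the claim to a statement inside the single copy $\Ahat[0]\cong A_q(\mathfrak n)$. There, bracketing with $f_{i,-1}$ acts as the skew-derivation dual to Kashiwara's operator $e_i'^{*}$. Its nonvanishing on an upper global basis element is then detected by $\varepsilon_i^*$, which Lemma~\ref{lem:star-epsilon-monotonicity} controls.

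\emph{Step 1: $u:=\T_{j_1}\cdots\T_{j_t}(f_{i,0})$ is, up to a unit of $\K$, an upper global basis element $g_0(x)$ with $\varepsilon_i^*(x)\ge1$.} For $j\ne i$, the displayed formula for $\T_j(f_{i',m})$ uses only $f_{\cdot,m}$ and $\mathcal F_{j,m}$. Likewise $\T_j(f_{j,m})=f_{j,m+1}$ never arises, since $j_a\ne i$ and we only apply $\T_j$ to elements of $\Ahat[0]$ generated by $f_{i,0}$ and the $f_{j_a,0}$. Hence I would show inductively that each partial image lies in $\Ahat[0]$. On the crystal side, $q_i^{1/2}f_{i,0}=G(\mathbf b)$ with $\mathbf b$ supported at $m=0$ and $b_0=\widetilde f_iu_\infty$, so $\varepsilon_i^*(b_0)=1$. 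Since $b_{-1}=u_\infty$ gives $\varepsilon_j(b_{-1})=0$, the rule $\T_jG(\mathbf b)=G(\mathsf R_j\mathbf b)$ gives $(\mathsf R_j\mathbf b)_0=\widetilde{\mathscr S}_j(b_0)$. I also need $(\mathsf R_j\mathbf b)_1=(\widetilde f_j^*)^{\varepsilon_j(b_0)}u_\infty$ to be trivial, i.e.\ $\varepsilon_j(b_0)=0$. Rather than check this crystal-theoretically, I would deduce it from the support statement already proved on the algebra side, because $G(\mathbf b)\in\Ahat[0]$ forces $\mathbf b$ to be supported at $0$. Applying \eqref{eq:epsilon-monotonicity} $t$ times then gives $u\in\K^\times g_0(x)$ with $\varepsilon_i^*(x)\ge\varepsilon_i^*(\widetilde f_iu_\infty)=1$.

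\emph{Step 2: $\mathcal E_i^*$ restricted to $\Ahat[0]$ is a twisted derivation that transports to $e_i'^*$.} The defining relation with $m=-1$, $n=0$ reads
\[
f_{i,-1}f_{k,0}=q^{(\alpha_i,\alpha_k)}f_{k,0}f_{i,-1}+\delta_{ik}(1-q_i^2).
\]
Since $\operatorname{wt}f_{i,-1}=\alpha_i$ and $\operatorname{wt}f_{k,0}=-\alpha_k$, this says $\mathcal E_i^*(f_{k,0})=\delta_{ik}(1-q_i^2)$. Moreover, $\mathcal E_i^*(yz)=\mathcal E_i^*(y)z+q^{(\alpha_i,\operatorname{wt}y)}y\,\mathcal E_i^*(z)$ for homogeneous $y,z$, so $\mathcal E_i^*$ preserves $\Ahat[0]$. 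Transporting through $\psi_0$, it is a nonzero scalar multiple of the twisted derivation on $A_q(\mathfrak n)$ that is adjoint to right (or left, depending on conventions) multiplication by $f_i$. Up to rescaling, this is the operator $e_i'^*$, i.e.\ $*\circ e_i'\circ*$. Identifying it exactly, including which side and which $q$-twist, is the step I expect to be the main obstacle. I would settle it by checking on generators and using that such twisted derivations are determined by their values there.

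\emph{Step 3: conclusion and consequence.} For the upper global basis one has the standard fact that $e_i'^*G^{\mathrm{up}}(x)=0$ if and only if $\varepsilon_i^*(x)=0$. More precisely, $e_i'^*G^{\mathrm{up}}(x)$ has leading term a nonzero multiple of $G^{\mathrm{up}}(\widetilde e_i^*x)$. Combined with Steps 1 and 2, this gives $[f_{i,-1},u]_q\ne0$.

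For the consequence, take the word $(i,j_1,\ldots,j_t,i)$. Its first and last generators are $P_1=q_i^{1/2}f_{i,0}$ and $P_{t+2}=q_i^{1/2}\T_i(u)$. Since $\T_i$ is an algebra automorphism with $\T_i(f_{i,-1})=f_{i,0}$, applying it to the bracket gives
\[
\T_i\bigl([f_{i,-1},u]_q\bigr)=[f_{i,0},\T_i u]_q,
\]
with exponent $(s_i\mu,s_i\nu)=(\mu,\nu)$. This is a nonzero scalar times the correction $P_1P_{t+2}-q^{-(\gamma_1,\gamma_{t+2})}P_{t+2}P_1$ in \eqref{eq:LS}. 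It is therefore nonzero.
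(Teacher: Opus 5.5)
\paragraph{Gap in Step~1.} The hypothesis only forbids the letter $i$. The letters $j_a$ may repeat, and the lemma is used for arbitrary nonreduced positive words. Your claim that $\T_j(f_{j,m})=f_{j,m+1}$ never arises is therefore false. Once $\T_{j_b}$ has been applied, the image involves $\mathcal F_{j_b,0}$, and a later $\T_{j_a}$ with $j_a=j_b$ sends it to $\mathcal F_{j_b,1}$. Concretely, in type $A_2$ with $d_i=d_j=1$ and $\mathbf j=(j,j)$, a direct computation gives
\[
\T_j^2(f_{i,0})=q^{1/2}f_{j,1}\,\T_j(f_{i,0})-q\,f_{i,0}.
\]
This is not in $\Ahat[0]$: under the ordered decomposition $\Ahat[1]\otimes\Ahat[0]\cong\Ahat[0,1]$ its layer-$1$ part is nontrivial. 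On the crystal side, $\widetilde{\mathscr S}_j(\widetilde f_iu_\infty)=\widetilde f_j\widetilde f_iu_\infty$ has $\varepsilon_j=1$. So the second $\mathsf R_j$ places $\widetilde f_j^*u_\infty$ in layer $1$, and your derived condition $\varepsilon_j(b_0)=0$ fails too. In general $u$ is a global basis element $G(\mathbf b)$ supported in some $[0,M]$, not a scalar multiple of a single $g_0(x)$. The single-layer criterion of your Step~3 therefore does not apply to it.

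\paragraph{What is needed instead.} The missing idea is how to detect $\mathcal E_i^*(G(\mathbf b))\neq0$ when $\mathbf b$ has several layers. Part of your Step~1 survives: $b_m=u_\infty$ for $m<0$, and $b_0=\widetilde{\mathscr S}_{j_1}\cdots\widetilde{\mathscr S}_{j_t}(\widetilde f_iu_\infty)$ with $\varepsilon_i^*(b_0)\ge1$. This holds because the layer-$0$ formula for $\mathsf R_j$ only uses $b_{-1}=u_\infty$. The paper then argues as follows.
\begin{itemize}
\item By $\mathcal E_i^*(\Ahat[m])=0$ for $m>0$ and the skew Leibniz rule, $\mathcal E_i^*(P(\mathbf c))$ is a power of $q$ times $g_M(c_M)\cdots g_1(c_1)\,\mathcal E_i^*(g_0(c_0))$.
\item This lies in the layer-degree component indexed by $\boldsymbol\beta(\mathbf c)-\boldsymbol a_i$, where $\boldsymbol\beta(\mathbf c)=(\beta_0(\mathbf c),\ldots,\beta_M(\mathbf c))$ and $\boldsymbol a_i=(\alpha_i,0,\ldots,0)$.
\item Project the unitriangular expansion \eqref{eq:global-pbw} onto the component $\boldsymbol\beta(\mathbf b)-\boldsymbol a_i$. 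Every term with $\mathbf c\prec\mathbf b$ is killed.
\item What remains is $g_M(b_M)\cdots g_1(b_1)\,\mathcal E_i^*(g_0(b_0))$. It is nonzero because $\varepsilon_i^*(b_0)\ge1$ and ordered multiplication $\Ahat[M]\otimes\cdots\otimes\Ahat[0]\to\Ahat[0,M]$ is injective.
\end{itemize}
Your deduction of the consequence via $\T_i$ is fine. One minor correction: the twist in your Leibniz rule should be $q^{-(\alpha_i,\operatorname{wt}y)}$, not $q^{(\alpha_i,\operatorname{wt}y)}$.
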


\begin{proof}
Let \(\mathbf b^{(i)}\) have zeroth component
\(\widetilde f_i u_\infty\) and all other components
\(u_\infty\), and put
\(\mathbf b=\mathsf R_{j_1}\cdots\mathsf R_{j_t}
\mathbf b^{(i)}\).
The formula for \(\mathsf R_j\) and
\eqref{eq:epsilon-monotonicity} give
$$
b_m=u_\infty\quad(m<0),
\qquad
b_0=\widetilde{\mathscr S}_{j_1}\cdots
\widetilde{\mathscr S}_{j_t}
(\widetilde f_i u_\infty),
\qquad
\varepsilon_i^*(b_0)\ge1.
$$
Thus
\(X:=\T_{j_1}\cdots\T_{j_t}(q_i^{1/2}f_{i,0})
=G(\mathbf b)\), with \(\mathbf b\) supported in
\([0,M]\) for some \(M\ge0\).

By \cite[Lemma~6.1]{KKOP-global}, for homogeneous \(u,v\),
$$
\mathcal E_i^*(uv)
=
\mathcal E_i^*(u)v
+q^{-(\alpha_i,\wt u)}u\mathcal E_i^*(v),
\qquad
\mathcal E_i^*(\Ahat[m])=0\quad(m>0).
$$
Compatibility of the upper global basis with the right skew
derivative gives \cite[Section~1.4]{KKOP-braid}
$$
\ker\bigl(\mathcal E_i^*|_{\Ahat[0]}\bigr)
=
\bigoplus_{\varepsilon_i^*(c)=0}\mathbb K g_0(c).
$$
In particular,
\(\mathcal E_i^*(g_0(b_0))\ne0\).

For sequences supported in \([0,M]\), write
\(\boldsymbol\beta(\mathbf c)
=(\beta_0(\mathbf c),\ldots,\beta_M(\mathbf c))\).
Let \(V_{\boldsymbol\nu}\) be the span of the elements
\(P(\mathbf c)\) with
\(\boldsymbol\beta(\mathbf c)=\boldsymbol\nu\).
The ordered tensor-product decomposition gives
\(\Ahat[0,M]=\bigoplus_{\boldsymbol\nu}V_{\boldsymbol\nu}\),
where \(\Ahat[0,M]\) is the subalgebra generated by
layers \(0,\ldots,M\).
Denote the corresponding projections by
\(\operatorname{pr}_{\boldsymbol\nu}\), and set
\(\boldsymbol a_i=(\alpha_i,0,\ldots,0)\).
For \(Y_{\mathbf c}=g_M(c_M)\cdots g_1(c_1)\),
the skew Leibniz rule yields
$$
\mathcal E_i^*(P(\mathbf c))
=
q^{-(\alpha_i,\wt Y_{\mathbf c})}
Y_{\mathbf c}\mathcal E_i^*(g_0(c_0))
\in V_{\boldsymbol\beta(\mathbf c)-\boldsymbol a_i}.
$$
Here \(V_{\boldsymbol\nu}=0\) if any component of
\(\boldsymbol\nu\) lies outside \(Q_+\).
With \(\boldsymbol\beta=\boldsymbol\beta(\mathbf b)\),
the componentwise root order gives
$$
\mathbf c\prec\mathbf b
\ \Longrightarrow\
\boldsymbol\beta(\mathbf c)-\boldsymbol a_i
<
\boldsymbol\beta-\boldsymbol a_i
\ \Longrightarrow\
\operatorname{pr}_{\boldsymbol\beta-\boldsymbol a_i}
\mathcal E_i^*(P(\mathbf c))=0.
$$
Applying this projection to \eqref{eq:global-pbw}, we obtain
\begin{equation}\label{eq:leading-layer-derivative}
\begin{aligned}
\operatorname{pr}_{\boldsymbol\beta-\boldsymbol a_i}
\mathcal E_i^*(X)
&=\mathcal E_i^*(P(\mathbf b))\\
&=q^{-(\alpha_i,\wt Y_{\mathbf b})}
Y_{\mathbf b}\mathcal E_i^*(g_0(b_0))
\ne0.
\end{aligned}
\end{equation}
The final inequality follows from
\(\mathcal E_i^*(g_0(b_0))\ne0\) and the injectivity of
ordered multiplication
\(\Ahat[M]\otimes\cdots\otimes\Ahat[0]\to\Ahat[0,M]\).
Hence
\([f_{i,-1},\T_{j_1}\cdots\T_{j_t}(f_{i,0})]_q
=q_i^{-1/2}\mathcal E_i^*(X)\ne0\).

Finally, let \(P_1,\ldots,P_{t+2}\) be the PBW generators
of \((i,j_1,\ldots,j_t,i)\), and set
\(\gamma=s_{j_1}\cdots s_{j_t}\alpha_i\).
Then \(P_1=q_i^{1/2}f_{i,0}\),
\(P_{t+2}=\T_i(X)\), and \(\wt X=-\gamma\).
Using \(\T_i^{-1}(f_{i,0})=f_{i,-1}\) and
\((\alpha_i,s_i\gamma)=-(\alpha_i,\gamma)\), we obtain
$$
\begin{aligned}
&\T_i^{-1}\!\left(
P_1P_{t+2}
-q^{-(\alpha_i,s_i\gamma)}P_{t+2}P_1
\right)\\
&\qquad
=q_i^{1/2}\left(
f_{i,-1}X-q^{(\alpha_i,\gamma)}Xf_{i,-1}
\right)
=q_i^{1/2}\mathcal E_i^*(X)\ne0.
\end{aligned}
$$
The straightening correction is therefore nonzero, since
\(\T_i\) is an automorphism.
\end{proof}

Put $\zeta_i=1-q_i^2$ for $i\in I$. For
$\beta=\sum_{j\in I}n_j\alpha_j\in Q$, set
\begin{equation}
\zeta^\beta:=\prod_{j\in I}\zeta_j^{n_j}\in\mathbb K^\times.    
\end{equation}

Let $F$ be a free $\A$-module with basis
$\{e_j\}_{j\in J}$. Write each $u\in F$ uniquely as
\(
u=\sum_{j\in J}u_je_j\) for \(
 u_j\in\A,
\)
where only finitely many coordinates $u_j$ are nonzero.
We call a nonzero element $u$ \emph{primitive} if its coordinates
have no common nonunit divisor in $\A$.
Equivalently, $u$ is primitive if every factorization
$u=sw$, with $s\in\A$ and $w\in F$, forces $s$ to be a unit.
Since such a factorization exists exactly when $s$ divides
every coordinate, primitivity is independent of the basis.
Recall that $\A=\mathbb Z[q^{\pm1/2}]$ is a unique factorization
domain with fraction field $\mathbb K=\mathbb Q(q^{1/2})$, and
\(
\A^\times=\{\pm q^{m/2}\mid m\in\mathbb Z\}.
\)

\begin{lemma}\label{lem:primitive-rescaling}
Let $F$ be a free $\A$-module, and let $u,v\in F$ be nonzero
primitive elements. If $u=cv$ in $\mathbb K\otimes_\A F$
for some $c\in\mathbb K^\times$, then $c\in\A^\times$.
\end{lemma}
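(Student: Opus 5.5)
The plan is to reduce everything to a statement about coordinates in a fixed basis $\{e_j\}_{j\in J}$ of $F$, and then use unique factorization in $\A$. Write $u=\sum_j u_je_j$ and $v=\sum_j v_je_j$ with $u_j,v_j\in\A$. Since $\mathbb K\otimes_\A F$ is free over $\mathbb K$ with the same basis, the equality $u=cv$ is equivalent to $u_j=cv_j$ for every $j\in J$, as elements of $\mathbb K$. Because $\A$ is a unique factorization domain with fraction field $\mathbb K$, I would write $c=s/t$ with $s,t\in\A$ nonzero and coprime, meaning they have no common nonunit divisor.

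Clearing denominators gives $tu_j=sv_j$ in $\A$ for all $j$. Fix a prime element $\pi\in\A$ dividing $t$. Since $\pi\nmid s$, primality forces $\pi\mid v_j$ for every $j$. Thus $\pi$ is a common nonunit divisor of all coordinates of $v$, which contradicts the primitivity of $v$. Hence $t$ has no prime divisors, and so $t\in\A^\times$.

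The symmetric argument applies to $s$. A prime $\pi\mid s$ does not divide $t$, so it divides every $u_j$, which contradicts the primitivity of $u$. Therefore $s\in\A^\times$, and $c=s/t\in\A^\times$. The same argument can be phrased using the basis-free characterization of primitivity given just before the lemma: $tu=sv$ in $F$, and one divides by $\pi$ in $F$.

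I expect no serious obstacle. The only points needing care are these. First, $F\to\mathbb K\otimes_\A F$ is injective, because $F$ is free, so coordinate comparison is legitimate. Second, coprime fractions exist and irreducibles are prime in the UFD $\A=\mathbb Z[q^{\pm1/2}]$; this holds because $\A$ is a localization of the UFD $\mathbb Z[q^{1/2}]$. Nonzeroness of $u$ and $v$ guarantees that $c\neq0$ and that some coordinate is nonzero, although the argument above does not actually need the latter.
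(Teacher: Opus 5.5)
Your argument is correct and is essentially the paper's proof: both write $c$ as a coprime fraction, clear denominators coordinatewise, and use primitivity of $v$ to force the denominator to be a unit. The only cosmetic difference is in the last step. The paper concludes $c\in\A$ and applies primitivity of $u$ directly to the factorization $u=cv$, whereas you run the prime-divisor argument again on the numerator.
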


\begin{proof}
Choose an $\A$-basis $\{e_j\}_{j\in J}$ of $F$ and write
\(
u=\sum_j u_je_j\) and \(v=\sum_j v_je_j.
\)
Since $\A$ is a unique factorization domain, we may write
$c=a/b$, where $a,b\in\A\setminus\{0\}$ have no common
irreducible factor.
The equality $u=cv$ gives
\(
bu_j=av_j\)
for every $j$.

Since $a$ and $b$ have no common irreducible factor, $b$
divides every $v_j$.
Hence
\(
v=bw\),\(
w:=\sum_j (v_j/b)e_j\in F.
\)
As $v$ is primitive, $b$ must be a unit. Therefore $c\in\A$.

Now $u=cv$ is a factorization in $F$, so primitivity of $u$
forces $c$ to be a unit.
\end{proof}
Let $x\in U_q^-(\mathfrak g)$ be homogeneous of weight $-\beta$,
and set $a=\iota(x)$. By \cite[(1.10)]{KKOP-braid}, the natural
pairing and Kashiwara's bilinear form satisfy
\begin{equation}\label{eq:coordinate-kashiwara-pairing}
\langle a,y\rangle
=\zeta^{-\beta}(x,y)_K
\qquad\text{for every }y\in U_q^-(\mathfrak g).
\end{equation}
Write $G_K^{\mathrm{up}}(b)$ for the upper global basis of
$U_q^-(\mathfrak g)$ defined using Kashiwara's form, and retain
$G^{\mathrm{up}}(b)$ for its coordinate-ring version. If
$\beta=-\operatorname{wt}(b)\in Q_+$, the conventions
\cite[(1.8)--(1.10)]{KKOP-braid} give
\[
 G^{\mathrm{up}}(b)
 =\zeta^\beta\iota\bigl(G_K^{\mathrm{up}}(b)\bigr).
\]
Let $e_i^*$ be the right skew derivative on $U_q^-(\mathfrak g)$,
adjoint to right multiplication by $f_i$ for Kashiwara's form, and set
\[
 \partial_i^*
 :=\zeta_i^{-1}\iota\circ e_i^*\circ\iota^{-1}.
\]
This operator is initially defined over $\mathbb K$.

\begin{lemma}\label{lem:integral-skew-derivative}
The operator $\partial_i^*$ preserves $A_{\A}(\mathfrak n)$.
If $\varepsilon_i^*(b)=1$, then
\[
 \partial_i^*G^{\mathrm{up}}(b)
 =G^{\mathrm{up}}(\widetilde e_i^*b).
\]
Consequently, with
$g_0(b)=\psi_0(G^{\mathrm{up}}(b))$ and
$\mathcal E_i^*(u)=[f_{i,-1},u]_q$, one has
\[
 \frac{\mathcal E_i^*(g_0(b))}{1-q_i^2}
 =q^{(\alpha_i,\beta)}q_i^{-3/2}
   g_0(\widetilde e_i^*b),
 \qquad \beta=-\operatorname{wt}(b),\quad
 \varepsilon_i^*(b)=1.
\]
Moreover, $\zeta_i^{-1}\mathcal E_i^*$ preserves
$\widehat A_{\A}[0,M]$ for every $M\ge0$.
\end{lemma}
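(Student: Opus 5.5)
The plan is to prove the three assertions in order: integrality of $\partial_i^*$, then the formula on $\varepsilon_i^*(b)=1$, then the transport to $\Ahat[0]$ and to $\widehat A_{\A}[0,M]$. The common idea is that $\partial_i^*$ is the transpose of right multiplication by $f_i$ for the natural pairing.

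\emph{Integrality.} Let $a=\iota(x)$ be homogeneous of degree $\beta$. Kashiwara's right skew derivative satisfies $(e_i^*x,y)_K=(x,yf_i)_K$, since $(f_i,f_i)_K=1$. Then $\partial_i^*a=\zeta_i^{-1}\iota(e_i^*x)$ has degree $\beta-\alpha_i$. By \eqref{eq:coordinate-kashiwara-pairing},
\[
\langle\partial_i^*a,y\rangle=\zeta_i^{-1}\zeta^{-(\beta-\alpha_i)}(e_i^*x,y)_K
=\zeta^{-\beta}(x,yf_i)_K=\langle a,yf_i\rangle .
\]
Right multiplication by $f_i$ preserves $U^-_{\mathbb A_0}(\mathfrak g)$. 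So if $\langle a,U^-_{\mathbb A_0}\rangle\subseteq\mathbb A_0$, the same holds for $\partial_i^*a$. This gives $\partial_i^*A_{\mathbb A_0}(\mathfrak n)\subseteq A_{\mathbb A_0}(\mathfrak n)$, and the $\A$-statement follows by scalar extension.

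\emph{The formula for $\varepsilon_i^*(b)=1$.} I would invoke Kashiwara's compatibility of the upper global basis with $e_i^*$. It states that $e_i^*G_K^{\mathrm{up}}(b)$ equals $[\varepsilon_i^*(b)]_{q_i}G_K^{\mathrm{up}}(\widetilde e_i^*b)$ plus a combination of $G_K^{\mathrm{up}}(b')$ with $\varepsilon_i^*(b')<\varepsilon_i^*(b)-1$. This is the $*$-twisted form of the standard formula; see the references in \cite[Section~1.4]{KKOP-braid}. When $\varepsilon_i^*(b)=1$, the error terms would need $\varepsilon_i^*(b')<0$, so they vanish, and $[1]=1$. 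Using $G^{\mathrm{up}}=\zeta^\beta\iota(G_K^{\mathrm{up}})$ we get
\[
\partial_i^*G^{\mathrm{up}}(b)=\zeta_i^{-1}\zeta^{\beta}\iota\bigl(G_K^{\mathrm{up}}(\widetilde e_i^*b)\bigr)
=\zeta^{\beta-\alpha_i}\iota\bigl(G_K^{\mathrm{up}}(\widetilde e_i^*b)\bigr)=G^{\mathrm{up}}(\widetilde e_i^*b).
\]

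\emph{Transport to $\Ahat[0]$.} This is the step I expect to be the main obstacle, because one must match the twisting conventions. By \cite[Lemma~6.1]{KKOP-global}, $\mathcal E_i^*$ satisfies $\mathcal E_i^*(uv)=\mathcal E_i^*(u)v+q^{-(\alpha_i,\wt u)}u\mathcal E_i^*(v)$ and maps $\Ahat[0]$ to itself. The operator $e_i^*$ also satisfies a twisted Leibniz rule, with the twist on the opposite side. I would show that
\[
u\longmapsto \zeta_i^{-1}\psi_0^{-1}\mathcal E_i^*\psi_0(u)
\quad\text{and}\quad
u\longmapsto q^{(\alpha_i,\beta)}q_i^{-3/2}\,\partial_i^*u\qquad(\deg u=\beta)
\]
obey the same twisted Leibniz rule on $A_q(\mathfrak n)$, absorbing the $\beta$-dependent factor into the comparison of twists. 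Both are determined by their values on generators, so they agree if they agree on $u=G^{\mathrm{up}}(\widetilde f_iu_\infty)$ and on $G^{\mathrm{up}}(\widetilde f_ju_\infty)$ for $j\ne i$.

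The check at $j=i$ uses the defining relation with $m=-1$, $n=0$:
\[
f_{i,-1}f_{i,0}=q_i^2f_{i,0}f_{i,-1}+(1-q_i^2).
\]
This gives $\mathcal E_i^*(q_i^{1/2}f_{i,0})=q_i^{1/2}(1-q_i^2)$. On the other side, $\partial_i^*$ sends this generator to $1=G^{\mathrm{up}}(u_\infty)$, and $q^{(\alpha_i,\alpha_i)}q_i^{-3/2}=q_i^{1/2}$, so the two values agree. For $j\ne i$, both sides vanish, because $f_{i,-1}$ and $f_{j,0}$ $q$-commute with no constant term. Combining with the previous step yields the displayed formula for $\mathcal E_i^*(g_0(b))$. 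If the Leibniz twists do not match directly, I would instead compare the two operators through their adjoints under the pairing.

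\emph{Integrality on $\widehat A_{\A}[0,M]$.} This integral form is spanned by the products $P(\mathbf c)=g_M(c_M)\cdots g_0(c_0)$. By the Leibniz rule and $\mathcal E_i^*(\Ahat[m])=0$ for $m>0$,
\[
\zeta_i^{-1}\mathcal E_i^*P(\mathbf c)=q^{-(\alpha_i,\wt Y_{\mathbf c})}\,Y_{\mathbf c}\,\zeta_i^{-1}\mathcal E_i^*(g_0(c_0)),
\qquad Y_{\mathbf c}=g_M(c_M)\cdots g_1(c_1).
\]
By the transported identity, $\zeta_i^{-1}\mathcal E_i^*(g_0(c_0))$ is a unit times $\psi_0\partial_i^*G^{\mathrm{up}}(c_0)$. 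By the first step, this lies in the $\A$-span of the $g_0(c')$. Hence the product stays in $\widehat A_{\A}[0,M]$.
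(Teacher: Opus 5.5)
Your proposal is correct. Three of its four steps coincide with the paper's proof:
\begin{itemize}
\item integrality via $\langle\partial_i^*a,y\rangle=\langle a,yf_i\rangle$;
\item the $n=1$ case of the upper-global-basis string formula for $e_i^*$ (the paper cites Kimura's Theorem~3.14 and Corollary~3.16 for it);
\item the final Leibniz argument on layer products in $\widehat A_{\A}[0,M]$.
\end{itemize}

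The genuine difference is the transport step. The paper quotes Lemma~6.1(iii) of \cite{KKOP-global}, namely $\mathcal E_i^*(\psi_0(\iota(x)))=q_i^{-3/2+\langle h_i,\beta\rangle}\psi_0(\iota(e_i^*x))$. You instead derive this identity from uniqueness of twisted derivations.

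Your fallback via adjoints is not needed, because the twists do match. From $e_i^*(xy)=q_i^{\langle h_i,\operatorname{wt}y\rangle}e_i^*(x)y+x\,e_i^*(y)$ one gets $\partial_i^*(uv)=q^{-(\alpha_i,\deg v)}\partial_i^*(u)v+u\,\partial_i^*(v)$. Hence $D(u)=q^{(\alpha_i,\deg u)}q_i^{-3/2}\partial_i^*u$ satisfies $D(uv)=D(u)v+q^{(\alpha_i,\deg u)}u\,D(v)$. This is exactly the rule obeyed by $\zeta_i^{-1}\psi_0^{-1}\mathcal E_i^*\psi_0$, since $\psi_0$ preserves degree and $-(\alpha_i,\operatorname{wt}u)=(\alpha_i,\deg u)$. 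Your generator checks then complete the argument: both sides give $q_i^{1/2}$ for $j=i$, and both give $0$ for $j\ne i$.

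The citation is shorter, but it assumes that the conventions of \cite{KKOP-global} for $\psi_0$, $e_i^*$ and the $q$-commutator agree with those fixed here. Your route avoids that assumption. It recovers the constant $q^{(\alpha_i,\beta)}q_i^{-3/2}$ directly from the relation $f_{i,-1}f_{j,0}=q^{(\alpha_i,\alpha_j)}f_{j,0}f_{i,-1}+\delta_{ij}(1-q_i^2)$ and the normalization of $\psi_0$. It also gives the identity for every homogeneous element, not only for $\varepsilon_i^*(b)=1$, which is what the last step needs.
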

\begin{proof}
To prove the first assertion, take a homogeneous
$a=\iota(x)\in A_\A(\mathfrak n)$, where
$\operatorname{wt}(x)=-\beta$.
Let $U_\A^-(\mathfrak g)$ denote the divided-power integral
form, and take a homogeneous element $v\in U_\A^-(\mathfrak g)$.
The relevant pairings vanish unless
$\operatorname{wt}(v)=-\beta+\alpha_i$.
For this weight, \eqref{eq:coordinate-kashiwara-pairing}
and the adjoint property give
\[
\begin{aligned}
\langle\partial_i^*a,v\rangle
&=\zeta_i^{-1}\langle\iota(e_i^*x),v\rangle\\
&=\zeta_i^{-1}\zeta^{-(\beta-\alpha_i)}(e_i^*x,v)_K\\
&=\zeta^{-\beta}(x,vf_i)_K
=\langle a,vf_i\rangle.
\end{aligned}
\]
Here $\zeta^{\alpha_i}=\zeta_i$. Since multiplication by $f_i$
preserves the divided-power integral form, the last pairing lies
in $\A$. This proves $\partial_i^*a\in A_\A(\mathfrak n)$;
compare the left-handed statement in
\cite[Lemma~4.2]{KKOP-global}.

For completeness, the upper-global-basis string formula in arbitrary
symmetrizable type, in the right-handed convention, is, for
$0\le n\le\varepsilon_i^*(b)$,
\[
 \frac{(e_i^*)^n}{[n]_{q_i}!}G_K^{\mathrm{up}}(b)
 =\begin{bmatrix}\varepsilon_i^*(b)\\n\end{bmatrix}_{q_i}
   G_K^{\mathrm{up}}((\widetilde e_i^*)^n b)
 +\sum_{\varepsilon_i^*(b')<\varepsilon_i^*(b)-n}
   a_{b'}G_K^{\mathrm{up}}(b').
\]
This is \cite[Theorem~3.14 and Corollary~3.16]{Kimura-unipotent}.
When $\varepsilon_i^*(b)=n=1$, the sum is empty and the
binomial coefficient is $1$. Hence
$e_i^*G_K^{\mathrm{up}}(b)=G_K^{\mathrm{up}}(\widetilde e_i^*b)$.
Writing $\beta=-\operatorname{wt}(b)$, we obtain
\[
\begin{aligned}
\partial_i^*G^{\mathrm{up}}(b)
&=\zeta_i^{-1}\zeta^\beta
  \iota\bigl(e_i^*G_K^{\mathrm{up}}(b)\bigr)\\
&=\zeta^{\beta-\alpha_i}
  \iota\bigl(G_K^{\mathrm{up}}(\widetilde e_i^*b)\bigr)
=G^{\mathrm{up}}(\widetilde e_i^*b).
\end{aligned}
\]

For homogeneous $x\in U_q^-(\mathfrak g)$ of weight $-\beta$,
\cite[Lemma~6.1(iii)]{KKOP-global} gives
\[
\mathcal E_i^*(\psi_0(\iota(x)))
=q_i^{-3/2+\langle h_i,\beta\rangle}
 \psi_0(\iota(e_i^*x))
=q^{(\alpha_i,\beta)}q_i^{-3/2}
 \psi_0(\iota(e_i^*x)).
\]
Since $\iota(e_i^*x)=\zeta_i\partial_i^*(\iota(x))$,
every homogeneous coordinate element $a$ of degree $\beta$ satisfies
\[
\zeta_i^{-1}\mathcal E_i^*(\psi_0(a))
=q^{(\alpha_i,\beta)}q_i^{-3/2}\psi_0(\partial_i^*a).
\]
The scalar belongs to $\A^\times$, since $d_i$ and
$(\alpha_i,\beta)$ are integers. This proves the claimed
formula and preservation of $\widehat A_\A[0]$.

Finally, the integral ordered tensor-product decomposition expresses
$\widehat A_\A[0,M]$ as the $\A$-span of products
$x_M\cdots x_0$ with homogeneous $x_k\in\widehat A_\A[k]$.
By \cite[Lemma~6.1(ii)]{KKOP-global},
$\mathcal E_i^*(x_k)=0$ for $k>0$.
The skew Leibniz rule therefore gives
\[
\zeta_i^{-1}\mathcal E_i^*(x_M\cdots x_0)
=q^{-(\alpha_i,\operatorname{wt}(x_M\cdots x_1))}
 x_M\cdots x_1\bigl(\zeta_i^{-1}\mathcal E_i^*(x_0)\bigr).
\]
The scalar is again a unit of $\A$, and the last factor belongs
to $\widehat A_\A[0]$. This proves the final assertion.
\end{proof}

\begin{lemma}[Content of a straightening correction]
\label{lem:primitive-LS}
Let $p<k$ be positions in a positive word $\mathbf i$ with
$p=k^-$. Then
\[
 C_{p,k}:=P_p^{\mathbf i}P_k^{\mathbf i}
 -q^{-(\gamma_p,\gamma_k)}P_k^{\mathbf i}P_p^{\mathbf i}
 \in(1-q_{i_k}^2)\widehat A_{\A}(b)[p+1,k-1],
\]
and $C_{p,k}/(1-q_{i_k}^2)$ is primitive in this integral interval
algebra, whose $\A$-basis consists of the ordered PBW monomials.
\end{lemma}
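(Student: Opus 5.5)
The plan is to reduce to the normal form already used in Lemma~\ref{lem:nonvanishing}, and then read off both integrality and primitivity from the leading layer of a skew derivative.

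\emph{Reduction.} Since $p=k^-$, the positions strictly between $p$ and $k$ carry letters $j_1,\ldots,j_t$, all different from $i:=i_p=i_k$. Put $\T_{<p}=\T_{i_1}\cdots\T_{i_{p-1}}$. Then $P_{p+a}^{\mathbf i}=\T_{<p}(P_{a+1}^{\mathbf j})$ for the word $\mathbf j=(i,j_1,\ldots,j_t,i)$, and $\T_{<p}$ preserves the pairing of degrees, so $C_{p,k}=\T_{<p}(C_{1,t+2}^{\mathbf j})$. As in the proof of Lemma~\ref{lem:integral-interval-saturation}, $\T_{<p}$ permutes $\mathbf G$ and carries $\widehat A_{\A}(b_{\mathbf j})[2,t+1]$ onto $\widehat A_{\A}(b)[p+1,k-1]$. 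Primitivity is preserved by an $\A$-module isomorphism, so it suffices to treat $\mathbf j$ with $p=1$ and $k=t+2$. The last computation in Lemma~\ref{lem:nonvanishing} then gives
\[
C_{1,t+2}=q_i^{1/2}\,\T_i\bigl(\mathcal E_i^*(X)\bigr),
\qquad X=G(\mathbf b),
\]
with $\mathbf b$ supported in $[0,M]$.

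\emph{Integrality.} $X$ lies in $\widehat A_{\A}[0,M]$, and by Lemma~\ref{lem:integral-skew-derivative} so does $\zeta_i^{-1}\mathcal E_i^*(X)$. Since $\T_i$ permutes $\mathbf G$, it follows that $C_{1,t+2}/\zeta_i\in\widehat A_{\A}$. By \eqref{eq:LS} the element $C_{1,t+2}$ also lies in $\widehat A(b)[2,t+1]$. Hence \eqref{eq:integral-interval-intersection} puts $C_{1,t+2}/\zeta_i$ in $\widehat A_{\A}(b)[2,t+1]$, which proves the membership claim.

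\emph{Primitivity.} Because $\widehat A_{\A}(b)[2,t+1]$ is saturated in $\widehat A_{\A}$, it suffices to show that $\zeta_i^{-1}\mathcal E_i^*(X)$ is primitive in $\widehat A_{\A}[0,M]$. Any factorization $u=sw$ with $w\in\widehat A_{\A}$ forces $w$ into the interval algebra, and $\T_i$ preserves primitivity. The elements $P(\mathbf c)$ form an $\A$-basis of $\widehat A_{\A}[0,M]$, since they are unitriangular to $\mathbf G$ by \eqref{eq:global-pbw}. It is therefore enough to exhibit one unit coordinate. For this I use the leading-layer identity \eqref{eq:leading-layer-derivative}: its projection to $V_{\boldsymbol\beta-\boldsymbol a_i}$ equals, up to a unit, $Y_{\mathbf b}\,\zeta_i^{-1}\mathcal E_i^*(g_0(b_0))$. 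If $\varepsilon_i^*(b_0)=1$, Lemma~\ref{lem:integral-skew-derivative} identifies this with a unit times $Y_{\mathbf b}\,g_0(\widetilde e_i^*b_0)=P(\mathbf b')$, a single basis vector. That coordinate is a unit, and primitivity follows.

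\emph{Main obstacle.} The hard step is controlling $\varepsilon_i^*(b_0)$; Lemma~\ref{lem:star-epsilon-monotonicity} only gives $\varepsilon_i^*(b_0)\ge1$. I would first try to prove $\varepsilon_i^*(b_0)=1$ by tracking $\varepsilon_i^*$ through the Saito reflections $\widetilde{\mathscr S}_{j_a}$, seeking an upper bound matching the lower bound in \eqref{eq:epsilon-monotonicity}. The starting value $\varepsilon_i^*(\widetilde f_iu_\infty)=1$ might survive because each $\widetilde{\mathscr S}_j$ with $j\ne i$ is a bijection compatible with the $i$-string structure on the relevant pieces.

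If that fails, I would fall back on the string formula of \cite{Kimura-unipotent} for $e=\varepsilon_i^*(b_0)$. It gives $\partial_i^*G^{\mathrm{up}}(b_0)=[e]_{q_i}G^{\mathrm{up}}(\widetilde e_i^*b_0)+\sum a_{b'}G^{\mathrm{up}}(b')$, where the sum runs over $b'$ with $\varepsilon_i^*(b')<e-1$. In that case I would show that some coordinate is a unit, or that the coefficients have no common factor, by evaluating at $q=1$ or at a root of unity dividing $[e]_{q_i}$. This fallback step is where I expect the real difficulty to lie.
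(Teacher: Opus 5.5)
You have the same architecture as the paper: reduction to the word $(i,j_1,\ldots,j_t,i)$ via $\T_{<p}$, integrality through Lemma~\ref{lem:integral-skew-derivative} and \eqref{eq:integral-interval-intersection}, and primitivity from a single unit coordinate in the leading layer \eqref{eq:leading-layer-derivative}. But the proof is not complete. Everything hinges on $\varepsilon_i^*(b_0)=1$, and you leave this open. Without it, the string formula gives the coefficient $[e]_{q_i}$ on $G^{\mathrm{up}}(\widetilde e_i^*b_0)$, which is a nonunit for $e\ge2$, so the projection is no longer a unit multiple of one basis vector. Neither of your suggested routes closes this. Lemma~\ref{lem:star-epsilon-monotonicity} gives only a lower bound, and Saito reflections can genuinely increase $\varepsilon_i^*$, so no string-compatibility property of $\widetilde{\mathscr S}_j$ will supply the upper bound. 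Specializing at $q=1$ or at a root of unity is a heuristic, not an argument that some coordinate is a unit.

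The missing idea is a weight count. Every operator entering $\mathsf R_j$ for $j\ne i$ changes the weight of a component by a multiple of $\alpha_j$. These operators are $\widetilde e_j$, $\widetilde e_j^*$, $\widetilde f_j$ and $\widetilde f_j^*$. Start from $\mathbf b^{(i)}$, whose only nontrivial component is $\widetilde f_iu_\infty$ in position $0$, and apply $\mathsf R_{j_t},\ldots,\mathsf R_{j_1}$. Each step preserves the $\alpha_i$-coefficient of every $\beta_m$. Hence the $\alpha_i$-coefficient of $\beta_0(\mathbf b)$ is exactly $1$. Now $(\widetilde e_i^*)^n b_0\ne0$ forces $\beta_0(\mathbf b)-n\alpha_i\in Q_+$, so $\varepsilon_i^*(b_0)\le1$. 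Together with $\varepsilon_i^*(b_0)\ge1$ from Lemma~\ref{lem:nonvanishing}, this gives $\varepsilon_i^*(b_0)=1$. With this inserted, your remaining steps go through as in the paper.
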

\begin{proof}
Consider first the word $(i,j_1,\ldots,j_t,i)$ with every $j_a\ne i$,
and put
\[
 X=\mathbf T_{j_1}\cdots\mathbf T_{j_t}(q_i^{1/2}f_{i,0})
   =G(\mathbf b).
\]
As in Lemma~\ref{lem:nonvanishing}, the extended-crystal formula
gives $b_m=u_\infty$ for $m<0$ and
$\varepsilon_i^*(b_0)\ge1$.
Every operator used in $\mathsf R_j$, for $j\ne i$, changes a
component weight only by a multiple of $\alpha_j$.
It follows that the coefficient of $\alpha_i$ in
$\beta_0(\mathbf b)=-\operatorname{wt}(b_0)$ is exactly one,
and its coefficient in every other $\beta_m(\mathbf b)$ is zero.
Hence $\varepsilon_i^*(b_0)=1$.

Choose $M\ge0$ containing the support of $\mathbf b$.
Lemma~\ref{lem:integral-skew-derivative} gives
$\zeta_i^{-1}\mathcal E_i^*(X)\in\widehat A_{\A}[0,M]$.
Applying the same lemma to $b_0$ in
\eqref{eq:leading-layer-derivative} yields
\[
 \operatorname{pr}_{\boldsymbol\beta(\mathbf b)
                       -(\alpha_i,0,\ldots,0)}
 \bigl(\zeta_i^{-1}\mathcal E_i^*(X)\bigr)
 =q^{N/2}g_M(b_M)\cdots g_1(b_1)
       g_0(\widetilde e_i^*b_0)
\]
for some integer $N$. This projection is a single ordered
layer-product basis vector with a unit coefficient. The products
$P(\mathbf c)$, for $\mathbf c\in\widehat B(\infty)$, form an
$\A$-basis of the full bosonic integral form; hence
$\zeta_i^{-1}\mathcal E_i^*(X)$ is primitive there.

Applying $\mathbf T_i^{-1}$ to the straightening correction
between the endpoints gives
$q_i^{1/2}\mathcal E_i^*(X)$, exactly as in
Lemma~\ref{lem:nonvanishing}.  The automorphism $\mathbf T_i$
preserves the integral form and therefore preserves primitivity.
The same holds for the common braid prefix preceding position $p=k^-$.
Consequently $C_{p,k}/\zeta_{i_k}$ is integral and primitive in
$\widehat A_{\A}$ for every pair $p<k$ with $p=k^-$.

The field-level LS relation places this quotient in
$\widehat A(b)[p+1,k-1]$.
Lemma~\ref{lem:integral-interval-saturation} therefore gives
$C_{p,k}/\zeta_{i_k}\in\widehat A_{\A}(b)[p+1,k-1]$.
A nonunit divisor in the interval lattice would also divide
this element in $\widehat A_{\A}$, contradicting its primitivity
there. Thus it is primitive in the interval lattice as well.
\end{proof}

\section{Thickening and integral comparison}\label{sec:new-subalgebra}

\subsection{Thickening and dehomogenization}

Let \(\mathbf i=(i_1,\ldots,i_r)\) be a word in \(I\),
with \(r\ge1\), and set
\(\widetilde I=I\sqcup\{\infty_1,\ldots,\infty_{r-1}\}\).
Extend \(C\) to a generalized Cartan matrix \(\widetilde C\)
by setting, for \(i\in I\) and distinct new vertices \(a,b\),
\[
\widetilde c_{aa}=2,\qquad
\widetilde c_{ia}=-2,\qquad
\widetilde c_{ai}=-2d_i,\qquad
\widetilde c_{ab}=-2.
\]
Put \(\widetilde d_i=d_i\) for \(i\in I\) and
\(\widetilde d_a=1\) at every new vertex. Then
\(\widetilde D_C\widetilde C\) is symmetric, where
\(\widetilde D_C=\operatorname{diag}(\widetilde d_a)_{a\in\widetilde I}\).
Thus \(\widetilde C\) is symmetrizable and extends the fixed
Cartan datum.
Let \(\widetilde W\) and
\(\widetilde Q=\bigoplus_{a\in\widetilde I}\mathbb Z\alpha_a\)
be the corresponding Weyl group and root lattice.
We extend the bilinear form by
\((\alpha_a,\alpha_b)=\widetilde d_a\widetilde c_{ab}\), and identify
\(W\) with the standard parabolic subgroup of
\(\widetilde W\), so that \(s_i=\widetilde s_i\) for \(i\in I\).

Define the thickened word
$$
\mathbf j=\operatorname{th}(\mathbf i)
=(i_1,\infty_1,i_2,\ldots,\infty_{r-1},i_r).
$$
For an old vertex \(i\) and a new vertex \(a\), the product
\(\widetilde c_{ia}\widetilde c_{ai}=4d_i\) is at least four.
Every adjacent pair in \(\mathbf j\) therefore has infinite
Coxeter exponent, so no finite braid move or cancellation applies. The Coxeter word
reduction theorem therefore shows that \(\mathbf j\) is reduced.
Write
\(\widetilde w_{\le s}
=\widetilde s_{j_1}\cdots\widetilde s_{j_s}\),
with \(\widetilde w_{\le0}=1\), and set
\(\widetilde w_{\mathbf i}=\widetilde w_{\le2r-1}\) and
\(A_{\mathbf i}
=A_q(\mathfrak n(\widetilde w_{\mathbf i}))\),
considered over \(\mathbb K\).

Let \(X_1,\ldots,X_{2r-1}\) be the dual PBW generators
associated with \(\mathbf j\), with
\(\deg X_s=\widetilde\beta_s
=\widetilde w_{\le s-1}\alpha_{j_s}\) and
\(\wt X_s=-\widetilde\beta_s\).
With the convention of Section~\ref{sec:CGL}, we have
\begin{equation}\label{eq:quantum-cell-LS}
X_sX_t-q^{-(\widetilde\beta_s,\widetilde\beta_t)}X_tX_s
\in\mathbb K\langle X_{s+1},\ldots,X_{t-1}\rangle
\qquad(s<t).
\end{equation}
The algebra on the right is \(\mathbb K\) if \(t=s+1\).

Set \(H_\ell=q^{-1/2}X_{2\ell}\) for \(1\le\ell<r\).
Since \(\widetilde d_{\infty_\ell}=1\), this normalization makes
the separators ambient frozen cluster variables. Multiplication by an \(\A\)-unit leaves their degrees,
normality, and localization unchanged.
The CGL prime chains are indexed by the letters of
\(\mathbf j\). Since each \(\infty_\ell\) occurs exactly
once, \(H_\ell\) is a homogeneous normal prime element
\cite[Theorems~3.2 and~7.1]{GY-integral}.
The \(H_\ell\) pairwise \(q\)-commute and generate an Ore set.
We define
\(L_{\mathbf i}
=A_{\mathbf i}[H_1^{-1},\ldots,H_{r-1}^{-1}]\).
The root grading extends to \(L_{\mathbf i}\) by
\(\deg H_\ell^{-1}=-\widetilde\beta_{2\ell}\).

\begin{lemma}\label{lem:wide-eta-s}
For \(1\le k\le r\), we have
\begin{equation}\label{eq:wide-beta-s}
\widetilde\beta_{2k-1}
=
\gamma_k
-\sum_{p=1}^{k-1}
\bigl(
s_{i_{p+1}}\cdots s_{i_{k-1}}\alpha_{i_k},
\alpha_{\infty_p}
\bigr)\widetilde\beta_{2p},
\end{equation}
where \(\gamma_k=w_{\le k-1}\alpha_{i_k}\) is as in
Definition~\ref{def:PBW-roots}.
More generally, for \(1\le a\le k\), set
\(v_{a,k-1}
=\widetilde w_{\le2a-2}s_{i_a}\cdots s_{i_{k-1}}\).
Then
\begin{equation}\label{eq:vk-1-beta}
\widetilde\beta_{2k-1}
=
v_{a,k-1}\alpha_{i_k}
-\sum_{p=a}^{k-1}
\bigl(
s_{i_{p+1}}\cdots s_{i_{k-1}}\alpha_{i_k},
\alpha_{\infty_p}
\bigr)\widetilde\beta_{2p}.
\end{equation}
Empty products and sums are understood as the identity and zero,
respectively.
\end{lemma}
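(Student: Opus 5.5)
The plan is to prove the general formula \eqref{eq:vk-1-beta} by descending induction on $a$, starting at $a=k$, and then to read off \eqref{eq:wide-beta-s} as the case $a=1$.

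\emph{Case $a=1$.} Here $\widetilde w_{\le0}=1$, so $v_{1,k-1}=s_{i_1}\cdots s_{i_{k-1}}=w_{\le k-1}$. Since $W$ is identified with a parabolic subgroup of $\widetilde W$, we get $v_{1,k-1}\alpha_{i_k}=\gamma_k$.

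\emph{Base case $a=k$.} Here $v_{k,k-1}=\widetilde w_{\le 2k-2}$, because the product $s_{i_k}\cdots s_{i_{k-1}}$ is empty. Position $2k-1$ of $\mathbf j$ carries the letter $i_k$, so by definition $\widetilde\beta_{2k-1}=\widetilde w_{\le2k-2}\alpha_{i_k}=v_{k,k-1}\alpha_{i_k}$. The sum over $p\in[k,k-1]$ is empty, so the base case holds.

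\emph{Inductive step.} Assume \eqref{eq:vk-1-beta} holds for $a+1\le k$; we derive it for $a$. The thickened word gives
\[
\widetilde w_{\le 2a}=\widetilde w_{\le2a-2}\,s_{i_a}\,\widetilde s_{\infty_a},
\]
and therefore
\[
v_{a+1,k-1}=\widetilde w_{\le2a-2}\,s_{i_a}\,\widetilde s_{\infty_a}\,u,
\qquad u:=s_{i_{a+1}}\cdots s_{i_{k-1}}\in W.
\]
Apply the reflection formula $\widetilde s_{\infty_a}\mu=\mu-\langle h_{\infty_a},\mu\rangle\alpha_{\infty_a}$ to $\mu=u\alpha_{i_k}$, which lies in the old root lattice $Q$. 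Because $\widetilde d_{\infty_a}=1$, the coroot pairing equals the bilinear form: $\langle h_{\infty_a},\mu\rangle=(\mu,\alpha_{\infty_a})$. This is exactly where the normalization $\widetilde d_{\infty_p}=1$ enters, and it is what produces the coefficients $(s_{i_{p+1}}\cdots s_{i_{k-1}}\alpha_{i_k},\alpha_{\infty_p})$ appearing in the statement. We obtain
\[
v_{a+1,k-1}\alpha_{i_k}
= \widetilde w_{\le2a-2}s_{i_a}u\alpha_{i_k}
-(u\alpha_{i_k},\alpha_{\infty_a})\,\widetilde w_{\le2a-2}s_{i_a}\alpha_{\infty_a}.
\]
In the first term, $\widetilde w_{\le2a-2}s_{i_a}u=v_{a,k-1}$. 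In the second term, $\widetilde w_{\le2a-2}s_{i_a}=\widetilde w_{\le 2a-1}$, and $\infty_a$ is the letter in position $2a$ of $\mathbf j$, so $\widetilde w_{\le 2a-1}\alpha_{\infty_a}=\widetilde\beta_{2a}$. Hence
\[
v_{a+1,k-1}\alpha_{i_k}=v_{a,k-1}\alpha_{i_k}-(u\alpha_{i_k},\alpha_{\infty_a})\,\widetilde\beta_{2a}.
\]
Substituting this into the inductive hypothesis adds the $p=a$ term to the sum and completes the step.

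The only real bookkeeping is tracking the indices: we must check that $v_{a,k-1}$ uses $\widetilde w_{\le 2a-2}$, and that when $a=k-1$ the element $u$ is the identity. Neither causes any difficulty. Reducedness of $\mathbf j$ is not needed, since only the definitions and the reflection formula are used.
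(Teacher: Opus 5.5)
Your proof is correct and takes essentially the same approach as the paper. The paper computes the one-step difference $(v_{p+1,k-1}-v_{p,k-1})\alpha_{i_k}=\widetilde w_{\le2p-1}(\widetilde s_{\infty_p}-1)s_{i_{p+1}}\cdots s_{i_{k-1}}\alpha_{i_k}$ using $\widetilde d_{\infty_p}=1$, then sums telescopically from $v_{k,k-1}\alpha_{i_k}=\widetilde\beta_{2k-1}$; this is your descending induction, followed by the specialization $a=1$.
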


\begin{proof}
Since \((\alpha_{\infty_p},\alpha_{\infty_p})=2\), the
reflection formula for \(\widetilde s_{\infty_p}\) has no
additional root-length factor. For \(a\le p<k\), it gives
$$
\begin{aligned}
&(v_{p+1,k-1}-v_{p,k-1})\alpha_{i_k}\\
&\quad=
\widetilde w_{\le2p-1}
(\widetilde s_{\infty_p}-1)
s_{i_{p+1}}\cdots s_{i_{k-1}}\alpha_{i_k}\\
&\quad=
-\bigl(
s_{i_{p+1}}\cdots s_{i_{k-1}}\alpha_{i_k},
\alpha_{\infty_p}
\bigr)\widetilde\beta_{2p},
\end{aligned}
$$
since
\(\widetilde w_{\le2p-1}\alpha_{\infty_p}
=\widetilde\beta_{2p}\).
Summing over \(p=a,\ldots,k-1\) and using
\(v_{k,k-1}\alpha_{i_k}=\widetilde\beta_{2k-1}\)
proves \eqref{eq:vk-1-beta}.
Taking \(a=1\), for which \(v_{1,k-1}=w_{\le k-1}\),
gives \eqref{eq:wide-beta-s}.
\end{proof}

Define the separator degree
\(\deg_\infty:\widetilde Q\to\mathbb Z^{r-1}\) by
\(\deg_\infty(\alpha_i)=0\) for \(i\in I\) and
\(\deg_\infty(\alpha_{\infty_\ell})=e_\ell\), where
\(e_1,\ldots,e_{r-1}\) are the standard basis vectors.
Composing this map with the root grading gives a
\(\mathbb Z^{r-1}\)-grading on \(L_{\mathbf i}\).
The \emph{dehomogenized algebra} is the degree-zero subalgebra
of $L_{\mathbf i}$:
\begin{equation}
\D_{\mathbf i}:=(L_{\mathbf i})_0.
\end{equation}

The root \(\widetilde\beta_{2\ell}\) has coefficient \(1\)
at \(\alpha_{\infty_\ell}\) and coefficient \(0\) at
\(\alpha_{\infty_a}\) for \(a>\ell\). Hence
\(\deg_\infty H_\ell
\in e_\ell+\sum_{a<\ell}\mathbb Z e_a\), so these degrees
form a \(\mathbb Z\)-basis of \(\mathbb Z^{r-1}\).
For each \(k\), there are therefore unique integers
\(m_{\ell k}\) satisfying
\begin{equation}\label{eq:normalizing-exponents}
\deg_\infty X_{2k-1}
+\sum_{\ell=1}^{r-1}
m_{\ell k}\deg_\infty H_\ell=0.
\end{equation}
Lemma~\ref{lem:wide-eta-s} gives
\begin{equation}\label{eq:m-ellk}
m_{\ell k}
=
\begin{cases}
\bigl(
s_{i_{\ell+1}}\cdots s_{i_{k-1}}\alpha_{i_k},
\alpha_{\infty_\ell}
\bigr),&\ell<k,\\
0,&\ell\ge k.
\end{cases}
\end{equation}
Define the right-normalized elements
\begin{equation}\label{eq:Y-definition}
Y_k
=
X_{2k-1}H_1^{m_{1k}}\cdots H_{k-1}^{m_{k-1,k}}
\in\D_{\mathbf i}.
\end{equation}
Equation~\eqref{eq:wide-beta-s} also gives
\(\deg Y_k
=\widetilde\beta_{2k-1}
+\sum_{\ell<k}m_{\ell k}\widetilde\beta_{2\ell}
=\gamma_k\), and hence \(\wt Y_k=-\gamma_k\).
These constructions are compatible with prefixes. For
\(\mathbf i_{\le n}=(i_1,\ldots,i_n)\), the enlarged Cartan matrix
is the restriction to \(I\sqcup\{\infty_1,\ldots,\infty_{n-1}\}\).
The natural embedding of quantum groups identifies its PBW generators
and separators with \(X_1,\ldots,X_{2n-1}\) and
\(H_1,\ldots,H_{n-1}\). Formula~\eqref{eq:m-ellk} is unchanged
for \(k\le n\), and all remaining separator exponents vanish. Hence
\[
\D_{\mathbf i_{\le n}}
=\mathbb K\langle Y_1,\ldots,Y_n\rangle
\subseteq\D_{\mathbf i}
\]
as root-graded algebras. This also identifies the
integral PBW spans. In particular, \(Y_k\) depends only on
\((i_1,\ldots,i_k)\).
For the empty word, set
\(\Ahat(1)=\D_{\emptyset}=\mathbb K\).

The matrix below is the leading PBW commutation matrix
for our LS convention.

\begin{lemma}\label{lem:normalized-skew-pairing}
Let \(e_1,\ldots,e_{2r-1}\) be the standard basis of
\(\mathbb Z^{2r-1}\), and set
\(f_k=e_{2k-1}+\sum_{p<k}m_{pk}e_{2p}\) for \(1\le k\le r\).
Let \(\widetilde\Lambda\) be the skew-symmetric matrix with
\(-\widetilde\Lambda_{st}
=(\widetilde\beta_s,\widetilde\beta_t)\) for \(s<t\).
Then
$$
f_k^{\mathsf t}\widetilde\Lambda f_\ell
=-(\gamma_k,\gamma_\ell)
\qquad(1\le k<\ell\le r).
$$
\end{lemma}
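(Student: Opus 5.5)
The plan is to reduce the skew pairing to an ``upper-triangular'' bilinear form and evaluate it using the tail sums supplied by \eqref{eq:vk-1-beta}. For $u,v\in\mathbb Z^{2r-1}$, put
\[
\mathcal B(u,v)=\sum_{s<t}u_sv_t(\widetilde\beta_s,\widetilde\beta_t).
\]
Since $\widetilde\Lambda_{st}=-(\widetilde\beta_s,\widetilde\beta_t)$ for $s<t$ and $\widetilde\Lambda$ is skew-symmetric, one has $u^{\mathsf t}\widetilde\Lambda v=-\bigl(\mathcal B(u,v)-\mathcal B(v,u)\bigr)$. It therefore suffices to prove
\[
\mathcal B(f_k,f_\ell)-\mathcal B(f_\ell,f_k)=(\gamma_k,\gamma_\ell)\qquad(k<\ell).
\]
I will rewrite $\mathcal B(u,v)=\sum_s u_s\bigl(\widetilde\beta_s,\tau_{>s}(v)\bigr)$, where $\tau_{>s}(v)=\sum_{t>s}v_t\widetilde\beta_t$ is the tail sum.

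The key input is that the tails of $f_\ell$ are explicit. By \eqref{eq:vk-1-beta} and \eqref{eq:m-ellk}, for $1\le a\le\ell$ the sum of $(f_\ell)_t\widetilde\beta_t$ over $t\ge 2a-1$ equals
\[
\widetilde w_{\le 2a-2}\,s_{i_a}\cdots s_{i_{\ell-1}}\alpha_{i_\ell}.
\]
Position $2a-1$ carries no entry of $f_\ell$ when $a<\ell$. Hence this is also the tail $\tau_{>2a-2}(f_\ell)=\tau_{>2a-1}(f_\ell)$. For the odd position $s=2k-1$ with $k<\ell$, I get
\[
\bigl(\widetilde\beta_{2k-1},\tau_{>2k-1}(f_\ell)\bigr)
=\bigl(\widetilde w_{\le2k-2}\alpha_{i_k},\widetilde w_{\le2k-2}s_{i_k}\cdots s_{i_{\ell-1}}\alpha_{i_\ell}\bigr)
=\bigl(\alpha_{i_k},s_{i_k}\cdots s_{i_{\ell-1}}\alpha_{i_\ell}\bigr).
\]
By invariance of the form this equals $(w_{\le k-1}\alpha_{i_k},w_{\le \ell-1}\alpha_{i_\ell})=(\gamma_k,\gamma_\ell)$. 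For an even position $s=2p$ with $p<\ell$, I write $\widetilde\beta_{2p}=\widetilde w_{\le 2p-1}\alpha_{\infty_p}$ and $\tau_{>2p}(f_\ell)=\widetilde w_{\le2p-1}\widetilde s_{\infty_p}x$, where $x=s_{i_{p+1}}\cdots s_{i_{\ell-1}}\alpha_{i_\ell}$. Then
\[
\bigl(\widetilde\beta_{2p},\tau_{>2p}(f_\ell)\bigr)=(\alpha_{\infty_p},\widetilde s_{\infty_p}x)=-(\alpha_{\infty_p},x)=-m_{p\ell}.
\]
If $p\ge\ell$, the tail vanishes; the same computation applies with $k$ in place of $\ell$.

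Summing over the support of $f_k$, namely $2k-1$ and the $2p$ with $p<k$, gives
\[
\mathcal B(f_k,f_\ell)=(\gamma_k,\gamma_\ell)-\sum_{p<k}m_{pk}m_{p\ell}.
\]
Summing over the support of $f_\ell$, the term at $s=2\ell-1$ has empty tail in $f_k$. The terms at $s=2p$ with $k\le p<\ell$ also vanish, since $f_k$ has no entries beyond $2k-1$. The terms at $s=2p$ with $p<k$ contribute $-m_{p\ell}m_{pk}$, so
\[
\mathcal B(f_\ell,f_k)=-\sum_{p<k}m_{pk}m_{p\ell}.
\]
Subtracting gives the claim.

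The only delicate point is bookkeeping: checking that \eqref{eq:vk-1-beta} really gives the tail $\tau_{>s}$ for each relevant $s$, including the boundary case where the position $2k-1$ lies in the support of $f_k$ but not of $f_\ell$. One must also confirm that $(\alpha_{\infty_p},\alpha_{\infty_p})=2$, so that no length factor enters the reflection step. I expect this index checking to be the main, though routine, obstacle.
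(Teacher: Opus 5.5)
Your proof is correct and is essentially the paper's argument: the paper evaluates the same tail pairings $(\widetilde\beta_s,\sum_{t>s}(f_\ell)_t\widetilde\beta_t)$ via \eqref{eq:vk-1-beta}, but it includes the diagonal entries $\widetilde d_{j_s}$ (with $\widetilde d_{\infty_p}=1$) so that the even rows of $\widetilde{\mathsf E}F$ vanish, yielding the stronger identity $F^{\mathsf t}\widetilde{\mathsf E}F=\mathsf E$, whereas you drop the diagonal and let the cross terms $\sum_{p<k}m_{pk}m_{p\ell}$ cancel upon antisymmetrization. The root-length check you flag is already absorbed into \eqref{eq:vk-1-beta}; your own reflection step only uses that $\widetilde s_{\infty_p}$ is an isometry with $\widetilde s_{\infty_p}\alpha_{\infty_p}=-\alpha_{\infty_p}$.
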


\begin{proof}
Let \(T\) and \(\widetilde T\) be the word matrices of
\(\mathbf i\) and \(\mathbf j\), respectively, as in
Lemma~\ref{lem:root-coordinate-matrices}. Put
\[
\begin{gathered}
E=T^{-1},\qquad \widetilde E=\widetilde T^{-1},\\
\Delta_{\mathbf i}=\operatorname{diag}(d_{i_1},\ldots,d_{i_r}),\qquad
\Delta_{\mathbf j}=\operatorname{diag}
(d_{i_1},1,d_{i_2},\ldots,1,d_{i_r}),\\
\mathsf E=\Delta_{\mathbf i}E,\qquad
\widetilde{\mathsf E}=\Delta_{\mathbf j}\widetilde E,\qquad
F=(f_1\ \cdots\ f_r),\qquad
J=(e_1\ e_3\ \cdots\ e_{2r-1}).
\end{gathered}
\]
Then \(J^{\mathsf t}F=I_r\). The same lemma gives
\[
\widetilde\Lambda
=\widetilde{\mathsf E}^{\mathsf t}-\widetilde{\mathsf E},\qquad
\widetilde{\mathsf E}_{st}=(\widetilde\beta_s,\widetilde\beta_t)
\quad(s<t),\qquad
\mathsf E_{pk}=(\gamma_p,\gamma_k)\quad(p<k).
\]
The diagonal entries of \(\widetilde{\mathsf E}\) are
\(\widetilde d_{j_s}\).

We claim that \(\widetilde{\mathsf E}F=J\mathsf E\).
For \(p<k\), equations~\eqref{eq:vk-1-beta}
and~\eqref{eq:m-ellk}, together with
\(\widetilde d_{\infty_p}=1\), give
\[
\begin{aligned}
e_{2p}^{\mathsf t}\widetilde{\mathsf E}f_k
&=m_{pk}
 +\left(\widetilde\beta_{2p},
 \widetilde\beta_{2k-1}
 +\sum_{s=p+1}^{k-1}m_{sk}\widetilde\beta_{2s}\right)\\
&=m_{pk}+(\widetilde\beta_{2p},v_{p+1,k-1}\alpha_{i_k})\\
&=m_{pk}+\left(\alpha_{\infty_p},
 \widetilde s_{\infty_p}s_{i_{p+1}}\cdots
 s_{i_{k-1}}\alpha_{i_k}\right)\\
&=m_{pk}-\left(\alpha_{\infty_p},
 s_{i_{p+1}}\cdots s_{i_{k-1}}\alpha_{i_k}\right)=0.
\end{aligned}
\]
Here
\(\widetilde\beta_{2p}
=\widetilde w_{\le2p-1}\alpha_{\infty_p}\) and
\(v_{p+1,k-1}
=\widetilde w_{\le2p-1}\widetilde s_{\infty_p}
s_{i_{p+1}}\cdots s_{i_{k-1}}\).
For \(p\ge k\), upper triangularity gives zero, since
\(f_k\) is supported in positions at most \(2k-1\).

Similarly, for \(p<k\),
\[
\begin{aligned}
e_{2p-1}^{\mathsf t}\widetilde{\mathsf E}f_k
&=\left(\widetilde\beta_{2p-1},
 \widetilde\beta_{2k-1}
 +\sum_{s=p}^{k-1}m_{sk}\widetilde\beta_{2s}\right)\\
&=(\widetilde\beta_{2p-1},v_{p,k-1}\alpha_{i_k})\\
&=(\alpha_{i_p},s_{i_p}\cdots s_{i_{k-1}}\alpha_{i_k})\\
&=(\gamma_p,\gamma_k)=\mathsf E_{pk}.
\end{aligned}
\]
For \(p=k\), this entry is \(d_{i_k}=\mathsf E_{kk}\);
for \(p>k\), it is \(0=\mathsf E_{pk}\).
This proves the claim. Since \(F^{\mathsf t}J=I_r\), it follows that
\[
\begin{aligned}
F^{\mathsf t}\widetilde{\mathsf E}F&=\mathsf E,\\
F^{\mathsf t}\widetilde\Lambda F
&=F^{\mathsf t}
 (\widetilde{\mathsf E}^{\mathsf t}-\widetilde{\mathsf E})F
=\mathsf E^{\mathsf t}-\mathsf E.
\end{aligned}
\]
Taking the \((k,\ell)\)-entry for \(k<\ell\) gives
\(f_k^{\mathsf t}\widetilde\Lambda f_\ell
=-\mathsf E_{k\ell}=-(\gamma_k,\gamma_\ell)\), as required.
\end{proof}

Write \(H^{\mathbf b}=H_1^{b_1}\cdots H_{r-1}^{b_{r-1}}\),
\(\mathbf m_k=(m_{1k},\ldots,m_{r-1,k})^{\mathsf t}\), and
\(Y^{\mathbf a}=Y_1^{a_1}\cdots Y_r^{a_r}\).

\begin{lemma}\label{lem:PBW-Dbfi}
The monomials \(Y^{\mathbf a}\), with
\(\mathbf a\in\mathbb N^r\), form a
\(\mathbb K\)-basis of \(\D_{\mathbf i}\).
Moreover, \(\deg Y_k=\gamma_k\) for every \(k\in[1,r]\).
\end{lemma}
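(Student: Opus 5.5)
The degree statement is already established: it follows from \eqref{eq:wide-beta-s} and \eqref{eq:m-ellk}, as computed right after \eqref{eq:Y-definition}. So the work is to show that the \(Y^{\mathbf a}\) span \(\D_{\mathbf i}\) and are linearly independent. The plan is to compare \(L_{\mathbf i}\) with the PBW basis of \(A_{\mathbf i}\). The key input is that the \(H_\ell\) are normal, homogeneous and pairwise \(q\)-commuting. Hence the localization \(L_{\mathbf i}\) has \(\mathbb K\)-basis
\[
X_1^{a_1}H_1^{b_1}X_3^{a_2}H_2^{b_2}\cdots H_{r-1}^{b_{r-1}}X_{2r-1}^{a_r},
\qquad \mathbf a\in\mathbb N^r,\ \mathbf b\in\mathbb Z^{r-1}.
\]
Equivalently, up to scalars, it has basis \(X^{\mathrm{odd}}(\mathbf a)\,H^{\mathbf b}\), with all odd generators taken in increasing order and the separators moved to the right. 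To see this, write every element of \(L_{\mathbf i}\) as \(uH^{-\mathbf c}\) with \(u\in A_{\mathbf i}\), expand \(u\) in the PBW basis, and move each \(H_\ell\) to the right using normality. Normality of \(H_\ell\) means \(H_\ell A_{\mathbf i}=A_{\mathbf i}H_\ell\), realized by an automorphism that rescales homogeneous elements, since \(H_\ell\) is a homogeneous normal prime. Independence of these monomials follows because \(A_{\mathbf i}\to L_{\mathbf i}\) is injective (a domain, Ore set) and every relation can be cleared of denominators.

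The moving step has a subtlety: conjugating \(X_{2k-1}\) past \(H_\ell\) only rescales it, so ordered monomials in odd generators times a separator monomial stay of the same shape. The real issue is rewriting an arbitrary PBW monomial \(X^{\mathbf e}\), in which separators are interleaved, as a combination of \((\text{odd ordered monomial})\cdot H^{\mathbf b}\). For this I will use Lemma~\ref{lem:leading-commutation} and the fact that each \(\delta_t(X_s)\) lies in an interval algebra. Each \(H_\ell\) is a normal element of \(A_{\mathbf i}\), since it is the unique member of its prime chain (\cite{GY-integral}). Normality then gives \(X_sH_\ell=\lambda H_\ell X_s\) for a scalar \(\lambda\): the degree matches, and \(H_\ell X_s H_\ell^{-1}\) lies in the degree-\(\widetilde\beta_s\) part of \(A_{\mathbf i}\). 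Is it a multiple of \(X_s\)? Not necessarily a priori. Instead I use \(q\)-commutation only up to the normality automorphism, which suffices: the automorphism preserves \(A_{\mathbf i}\). Therefore \(A_{\mathbf i}\) is spanned by the products \(Z\cdot H^{\mathbf b}\), where \(Z\) runs over the subalgebra \(\mathcal O\) generated by the odd generators. Independence is then checked by leading terms in the right lexicographic order.

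Granting the basis \(\{X_{\mathrm{odd}}^{\mathbf a}H^{\mathbf b}\}\) of \(L_{\mathbf i}\), the rest is a triangular change of basis. By Lemma~\ref{lem:leading-commutation}, each \(Y^{\mathbf a}=\prod_k X_{2k-1}^{a_k}H^{a_k\mathbf m_k}\) equals a unit times \(X_{\mathrm{odd}}^{\mathbf a}H^{\sum a_k\mathbf m_k}\), plus terms that are lower in the right lexicographic order on \(\mathbf a\) with adjusted separator exponents. These lower terms do not mix separator degrees, because the whole expression is homogeneous. Now decompose an element of \((L_{\mathbf i})_0\) in the basis. The separator degree of \(X_{\mathrm{odd}}^{\mathbf a}H^{\mathbf b}\) vanishes exactly when \(\mathbf b=\sum a_k\mathbf m_k\); this uses that the \(\deg_\infty H_\ell\) form a basis, via \eqref{eq:normalizing-exponents}. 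So degree-zero basis vectors are indexed bijectively by \(\mathbf a\), and the \(Y^{\mathbf a}\) are unitriangular with respect to them. Hence the \(Y^{\mathbf a}\) form a basis of \(\D_{\mathbf i}\).

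I expect the main obstacle to be the structure statement for \(L_{\mathbf i}\). Specifically, one must show that \(A_{\mathbf i}\) is a free module over the separator monoid with basis given by the ordered odd monomials, even though the even PBW generators are interleaved and the LS corrections between odd generators may involve separators. I would first try Cauchon's quantum torus \(\mathcal T_\Lambda\). There \(H_\ell\) corresponds to \(\overline{X}_{2\ell}\) up to scalars, since each separator forms a singleton chain. Leading-term arguments in that torus, combined with Theorem~\ref{thm:cgl-detected-prime}, would give independence, and the spanning direction would come by induction on the right lexicographic order.
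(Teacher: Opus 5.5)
Your overall plan matches the paper's: build a localized PBW basis with the separators on the right, use that the \(\deg_\infty H_\ell\) form a \(\mathbb Z\)-basis to isolate the degree-zero part, then identify that part with the \(Y^{\mathbf a}\). However, you never prove the step you yourself call the main obstacle; you simply grant the basis \(\{X_{\mathrm{odd}}^{\mathbf a}H^{\mathbf b}\}\) of \(L_{\mathbf i}\). The substitute you offer does not work. Suppose you only know \(H_\ell u=\phi_\ell(u)H_\ell\) for some automorphism \(\phi_\ell\) of \(A_{\mathbf i}\). Then moving \(H_\ell\) to the right across \(X_s\) produces \(\phi_\ell(X_s)\), an arbitrary element of degree \(\widetilde\beta_s\) that may again involve even and other odd generators. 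From this neither spanning by ordered odd monomials times \(H^{\mathbf b}\) nor independence follows, and the Cauchon-torus leading-term argument is only announced, not carried out. Your first claim, that the normalizing automorphism rescales homogeneous elements, is the right fact. But it does not follow merely from \(H_\ell\) being a homogeneous normal prime, since in a graded domain a normalizing automorphism can mix a homogeneous component of dimension \(>1\). It needs a CGL-specific argument, and you then retract it.

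The missing idea is exact \(q\)-commutation, which the paper obtains directly from the singleton prime chain. Since \(\infty_\ell\) occurs once in \(\operatorname{th}(\mathbf i)\), the chain through \(2\ell\) is \(\{2\ell\}\). By Theorem~\ref{thm:cgl-recursive-primes}, \(p(2\ell)=-\infty\) forces \(\delta_{2\ell}=0\), so \(X_{2\ell}\) \(q\)-commutes with every \(X_s\), \(s<2\ell\). For every \(k>2\ell\) with \(\delta_k\ne0\), one has \(2\ell\in\mathcal P(k-1)\setminus\{p(k)\}\), so Theorem~\ref{thm:cgl-detected-prime} gives \(\delta_k(X_{2\ell})=0\), that is, \(X_kX_{2\ell}=\sigma_k(X_{2\ell})X_k\). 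Hence \(H_\ell X_s=q^{\widetilde\Lambda_{2\ell,s}}X_sH_\ell\) for all \(s\). Consequently every ambient PBW monomial is a nonzero scalar times \(X_1^{a_1}X_3^{a_2}\cdots X_{2r-1}^{a_r}H^{\mathbf b}\), giving a basis of \(A_{\mathbf i}\) with \(\mathbf b\in\mathbb N^{r-1}\). Clearing a right separator denominator then gives a basis of \(L_{\mathbf i}\) with \(\mathbf b\in\mathbb Z^{r-1}\). Your worry that LS corrections between odd generators involve separators is irrelevant here, since this is a vector-space basis statement, not a claim about an odd subalgebra. Moreover, \(Y^{\mathbf a}\) is then exactly a nonzero scalar times the degree-zero vector with \(\mathbf b=\sum_k a_k\mathbf m_k\). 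There are no lower terms, so your triangularity step is unnecessary. In any case, Lemma~\ref{lem:leading-commutation} would not directly cover negative separator exponents when some \(\delta_t\ne0\).
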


\begin{proof}
Each \(H_p\) corresponds to a singleton prime chain.
Thus the Ore derivation at its adjoining step is zero,
and every subsequent Ore derivation annihilates \(H_p\),
by Theorems~\ref{thm:cgl-recursive-primes}
and~\ref{thm:cgl-detected-prime}. Consequently,
\(H_pX_s=q^{\widetilde\Lambda_{2p,s}}X_sH_p\)
for all \(p,s\).

It follows that the localized PBW basis of \(L_{\mathbf i}\)
consists of
\(X_1^{a_1}X_3^{a_2}\cdots X_{2r-1}^{a_r}H^{\mathbf b}\),
where \(\mathbf a\in\mathbb N^r\) and
\(\mathbf b\in\mathbb Z^{r-1}\).
These monomials span because separator powers can be moved
to the right. They are linearly independent by clearing a
common right denominator and applying the PBW theorem for
\(A_{\mathbf i}\).

By \eqref{eq:normalizing-exponents}, the separator degree
of such a monomial is
$$
\deg_\infty
\bigl(X_1^{a_1}X_3^{a_2}\cdots X_{2r-1}^{a_r}H^{\mathbf b}\bigr)
=
\sum_{p=1}^{r-1}
\left(b_p-\sum_{k=1}^r a_km_{pk}\right)\deg_\infty H_p.
$$
Since the degrees \(\deg_\infty H_p\) form a
\(\mathbb Z\)-basis, this degree is zero precisely when
\(\mathbf b=\sum_{k=1}^r a_k\mathbf m_k\).
In that case, moving the separator factors shows that the
monomial is a nonzero scalar multiple of \(Y^{\mathbf a}\).
The degree-zero part of the localized PBW basis therefore
gives the required basis of \(\D_{\mathbf i}\).
The identity \(\deg Y_k=\gamma_k\) follows from
Lemma~\ref{lem:wide-eta-s} and \eqref{eq:m-ellk}.
\end{proof}

Let $A_{\mathbf i,\A}=A_{\A}(\mathfrak n(\widetilde w_{\mathbf i}))$
be the standard dual canonical integral form, and define
\[
 L_{\mathbf i,\A}
 :=A_{\mathbf i,\A}[H_1^{-1},\ldots,H_{r-1}^{-1}],
 \qquad
 \D_{\mathbf i,\A}:=(L_{\mathbf i,\A})_0.
\]
For the empty word put $\D_{\emptyset,\A}=\A$.
Since $H_p=q^{-1/2}X_{2p}$, the localized PBW basis is obtained
from the ambient dual PBW basis by allowing arbitrary integer
exponents at the even positions.  All reordering factors are
units of $\A$.  The proof of Lemma~\ref{lem:PBW-Dbfi} therefore
gives
\begin{equation}\label{eq:integral-Y-PBW}
 \D_{\mathbf i,\A}
 =\bigoplus_{\mathbf a\in\mathbb N^r}
   \A Y_1^{a_1}\cdots Y_r^{a_r},
 \qquad
 \mathbb K\otimes_{\A}\D_{\mathbf i,\A}
 =\D_{\mathbf i}.
\end{equation}
We denote by $\D_{\mathbf i,\A}[a,b]$ the $\A$-span of
ordered PBW monomials supported in $[a,b]$. This span is an
algebra: the ambient integral straightening law and commutation
with the separators give integral normalized corrections, and
the degree-zero PBW criterion places them in the same interval.

\begin{proposition}\label{pro:CGL-Dbfi}
The algebra \(\D_{\mathbf i}\) has the symmetric CGL presentation
$$
\D_{\mathbf i}
=
\mathbb K[Y_1]
[Y_2;\sigma_2,\delta_2]\cdots
[Y_r;\sigma_r,\delta_r]
$$
for the action of the root torus \(\mathcal H\) induced
by its \(Q\)-grading.
For \(k<\ell\), its generators satisfy
\begin{equation}\label{eq:common-LS}
Y_kY_\ell-q^{-(\gamma_k,\gamma_\ell)}Y_\ell Y_k
\in
\mathbb K\langle Y_{k+1},\ldots,Y_{\ell-1}\rangle.
\end{equation}
For homogeneous
\(a\in\mathbb K\langle Y_1,\ldots,Y_{\ell-1}\rangle\),
the Ore maps are
\begin{equation}\label{eq:common-sigma}
\sigma_\ell(a)=q^{(\gamma_\ell,\deg a)}a,
\qquad
\delta_\ell(a)=Y_\ell a-\sigma_\ell(a)Y_\ell,
\end{equation}
extended linearly.
\end{proposition}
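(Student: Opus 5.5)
We first establish the straightening relation \eqref{eq:common-LS}, and then deduce the CGL presentation in the same way as in Theorem~\ref{thm:bosonic-symmetric-CGL}.

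\emph{Step 1: the LS relation.} Fix $k<\ell$. The separators $q$-commute with every $X_s$ (proof of Lemma~\ref{lem:PBW-Dbfi}). Hence we may write
\[
Y_kY_\ell=c\,X_{2k-1}X_{2\ell-1}H^{\mathbf m_k+\mathbf m_\ell}
\qquad\text{and}\qquad
Y_\ell Y_k=c'\,X_{2\ell-1}X_{2k-1}H^{\mathbf m_k+\mathbf m_\ell},
\]
with $c,c'$ powers of $q^{1/2}$. Apply the ambient relation \eqref{eq:quantum-cell-LS} to $X_{2k-1}X_{2\ell-1}$. The ratio of leading coefficients is governed by the skew matrix $\widetilde\Lambda$. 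Lemma~\ref{lem:leading-commutation} with integer exponents on the separators computes it as $q^{f_k^{\mathsf t}\widetilde\Lambda f_\ell}$, and Lemma~\ref{lem:normalized-skew-pairing} gives $f_k^{\mathsf t}\widetilde\Lambda f_\ell=-(\gamma_k,\gamma_\ell)$. Therefore
\[
Y_kY_\ell-q^{-(\gamma_k,\gamma_\ell)}Y_\ell Y_k
=c\,Z\,H^{\mathbf m_k+\mathbf m_\ell},
\qquad Z\in\mathbb K\langle X_{2k},\ldots,X_{2\ell-2}\rangle .
\]
The left side has separator degree zero, and so does the right side. Expand $Z$ in the PBW basis of the interval $[2k,2\ell-2]$. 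Moving the separators to the right gives localized PBW monomials $X_{2k+1}^{a_{k+1}}\cdots X_{2\ell-3}^{a_{\ell-1}}H^{\mathbf b}$. Exactly as in the proof of Lemma~\ref{lem:PBW-Dbfi}, degree zero forces $\mathbf b=\sum a_j\mathbf m_j$. Each surviving monomial is then a scalar multiple of $Y_{k+1}^{a_{k+1}}\cdots Y_{\ell-1}^{a_{\ell-1}}$, which proves \eqref{eq:common-LS}.

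We expect this degree-zero bookkeeping to be the main obstacle. One must check that separator powers $H_p$ with $p<k$ or $p\ge\ell$, which appear in $\mathbf m_k+\mathbf m_\ell$, cancel consistently. One must also check that the projection onto degree zero does not kill the relation. The homogeneity of both sides and the uniqueness of $\mathbf b$ handle both points. Note also that the localized PBW basis restricted to odd positions in $[2k+1,2\ell-3]$ together with all separators is a basis of the relevant span, so no odd position outside $[k+1,\ell-1]$ can appear.

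\emph{Step 2: the Ore presentation.} Set $R_\ell=\mathbb K\langle Y_1,\ldots,Y_\ell\rangle$. By the prefix compatibility $\D_{\mathbf i_{\le\ell}}=R_\ell$ and Lemma~\ref{lem:PBW-Dbfi}, the ordered monomials in $Y_1,\ldots,Y_\ell$ form a basis of $R_\ell$. Since $\deg Y_k=\gamma_k$, the map $\sigma_\ell$ of \eqref{eq:common-sigma} is a graded automorphism of $R_{\ell-1}$. Relation \eqref{eq:common-LS} gives
\[
\delta_\ell(Y_k)\in\mathbb K\langle Y_{k+1},\ldots,Y_{\ell-1}\rangle .
\]
The skew Leibniz rule then shows $\delta_\ell(R_{\ell-1})\subseteq R_{\ell-1}$. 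The natural surjection $R_{\ell-1}[X;\sigma_\ell,\delta_\ell]\to R_\ell$ is injective by the PBW basis, so it is an isomorphism.

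\emph{Step 3: the CGL conditions.} Local nilpotence of $\delta_\ell$ follows from the same weighted word-filtration argument as in Theorem~\ref{thm:bosonic-symmetric-CGL}. Take $h_\ell(\beta)=q^{(\gamma_\ell,\beta)}$ and $h_\ell^*(\beta)=q^{-(\gamma_\ell,\beta)}$ on $\mathcal H=\operatorname{Hom}(Q,\mathbb K^\times)$. Their eigenvalues on $Y_\ell$ are $q_{i_\ell}^{\pm2}$, because $(\gamma_\ell,\gamma_\ell)=2d_{i_\ell}$. The interval inclusion from Step 1 gives the first symmetry condition. The computation $h_j^*\cdot Y_l=\lambda_{lj}^{-1}Y_l$ is identical to the bosonic case and gives the second.
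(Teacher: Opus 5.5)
Your proposal is correct and follows essentially the same route as the paper. You write $Y_kY_\ell-q^{-(\gamma_k,\gamma_\ell)}Y_\ell Y_k$ as a power of $q^{1/2}$ times $C_{k\ell}H^{\mathbf m_k+\mathbf m_\ell}$ via Lemma~\ref{lem:normalized-skew-pairing}, use the degree-zero localized PBW criterion to land in $\mathbb K\langle Y_{k+1},\ldots,Y_{\ell-1}\rangle$, and then verify the Ore, local nilpotence, and torus conditions as in Theorem~\ref{thm:bosonic-symmetric-CGL}. One small correction: the scalar comparison in Step~1 is an exact identity obtained directly from the exact $q$-commutation $H_pX_s=q^{\widetilde\Lambda_{2p,s}}X_sH_p$, so you should not cite the integer-exponent clause of Lemma~\ref{lem:leading-commutation}, which is stated only when all derivations vanish.
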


\begin{proof}
Denote the left-hand side of \eqref{eq:common-LS} by
\(R_{k\ell}\), and set
\(C_{k\ell}
=X_{2k-1}X_{2\ell-1}
-q^{-(\widetilde\beta_{2k-1},\widetilde\beta_{2\ell-1})}
X_{2\ell-1}X_{2k-1}\).
By \eqref{eq:quantum-cell-LS},
\(C_{k\ell}\in
\mathbb K\langle X_{2k},\ldots,X_{2\ell-2}\rangle\).
Moving separator powers to the right and applying
Lemma~\ref{lem:normalized-skew-pairing} gives
\begin{equation}\label{eq:normalized-remainder}
R_{k\ell}
=
\kappa_{k\ell}C_{k\ell}H^{\mathbf m_k+\mathbf m_\ell}
\qquad
\text{for some }\kappa_{k\ell}\in q^{\mathbb Z/2}.
\end{equation}

Expanding \(C_{k\ell}\) in its interval PBW basis shows
that every localized PBW monomial in \(R_{k\ell}\) has the form
\(X_{2k+1}^{a_{k+1}}\cdots
X_{2\ell-3}^{a_{\ell-1}}H^{\mathbf b}\).
Since \(R_{k\ell}\) has separator degree zero, every monomial
with nonzero coefficient has degree zero. The PBW criterion
therefore gives
\(\mathbf b=\sum_{j=k+1}^{\ell-1}a_j\mathbf m_j\).
Thus this monomial is a nonzero scalar multiple of
\(Y_{k+1}^{a_{k+1}}\cdots Y_{\ell-1}^{a_{\ell-1}}\),
proving \eqref{eq:common-LS}.
For \(\ell=k+1\), the intermediate algebra is understood
to be \(\mathbb K\).

It remains to verify the CGL conditions.
The grading makes each \(\sigma_\ell\) an automorphism,
and the definition of \(\delta_\ell\) gives
$$
\begin{aligned}
\delta_\ell(uv)
&=\delta_\ell(u)v+\sigma_\ell(u)\delta_\ell(v),\\
\delta_\ell(Y_k)
&=-q^{(\gamma_k,\gamma_\ell)}R_{k\ell}
\in\mathbb K\langle Y_{k+1},\ldots,Y_{\ell-1}\rangle
\qquad(k<\ell).
\end{aligned}
$$
Hence \(\delta_\ell\) preserves
\(\mathbb K\langle Y_1,\ldots,Y_{\ell-1}\rangle\).
Together with Lemma~\ref{lem:PBW-Dbfi}, these identities
give the asserted iterated Ore presentation.

For local nilpotence, fix $\ell$ and express each
$\delta_\ell(Y_k)$ as a finite linear combination of products
of $Y_{k+1},\ldots,Y_{\ell-1}$. Assign positive integer weights
$d_k$ to the generators, working backwards from $k=\ell-1$.
Choose $d_k$ larger than the weight of every product in the
chosen expression for $\delta_\ell(Y_k)$, where product weights
are additive and scalars have weight zero. This is possible
because only generators with already assigned weights occur.

Let $F_n$ be the span of all products of total weight at
most $n$, and set $F_n=0$ for $n<0$.
The automorphism $\sigma_\ell$ rescales each generator, while
the skew Leibniz rule shows that $\delta_\ell$ lowers weight.
Thus
\[
\sigma_\ell(F_n)=F_n,
\qquad
\delta_\ell(F_n)\subseteq F_{n-1}.
\]
Thus $\delta_\ell^{\,n+1}(F_n)=0$ for every $n\ge0$.
Every element belongs to some $F_n$, so $\delta_\ell$
is locally nilpotent.

Finally, the \(Q\)-grading defines a rational
\(\mathcal H\)-action for which every \(Y_k\) is an
eigenvector. For \(\alpha\in Q\), choose
\(h_\ell(\alpha)=q^{(\gamma_\ell,\alpha)}\) and
\(h_k^*(\alpha)=q^{-(\gamma_k,\alpha)}\).
Then \(\sigma_\ell\) is the restriction of \(h_\ell\),
and, writing
\(\lambda_{\ell k}=q^{(\gamma_\ell,\gamma_k)}\), we have
$$
\begin{aligned}
h_\ell\cdot Y_\ell&=q_{i_\ell}^{2}Y_\ell,
& h_k^*\cdot Y_k&=q_{i_k}^{-2}Y_k,\\
h_k^*\cdot Y_\ell
&=q^{-(\gamma_k,\gamma_\ell)}Y_\ell
=\lambda_{\ell k}^{-1}Y_\ell
&& (k<\ell).
\end{aligned}
$$
Here \((\gamma_k,\gamma_k)=2d_{i_k}\) by Weyl invariance, and
\(q_{i_k}^{\pm2}\) are not roots of unity.
Together with the interval containment for
\(\delta_\ell(Y_k)\), these identities verify the remaining
CGL conditions and symmetry.
\end{proof}

For \(1\le n\le 2r-1\) and \(1\le k\le r\), write
\(A_n=\mathbb K\langle X_1,\ldots,X_n\rangle\) and
\(\D_k=\mathbb K\langle Y_1,\ldots,Y_k\rangle\), with
\(\D_0=\mathbb K\). Put
\(L_k=A_{2k-1}[H_1^{-1},\ldots,H_{k-1}^{-1}]\).
The localized PBW basis gives \((L_k)_0=\D_k\).

Let \(\widetilde y_1,\ldots,\widetilde y_{2r-1}\)
be the distinguished homogeneous-prime sequence of
\(A_{\mathbf i}\), and put \(\mathsf p_k=\widetilde y_{2k-1}\).
Write \(k^-=\max\{a<k\mid i_a=i_k\}\), with
\(\max\emptyset=-\infty\).
By \cite[Theorem~7.1(b)]{GY-integral}, the ambient prime
chains are indexed by the letters of
\(\operatorname{th}(\mathbf i)\). Thus
\(\widetilde y_{2a}=q^{1/2}H_a\), and
\begin{equation}\label{eq:ambient-odd-prime-recursion}
\mathsf p_k=
\begin{cases}
X_{2k-1},&k^-=-\infty,\\
\mathsf p_{k^-}X_{2k-1}-C_k,&k^-\ne-\infty,
\end{cases}
\qquad C_k\in A_{2k-2}.
\end{equation}
Put
$$
\boldsymbol\nu_k
=\sum_{\substack{a\le k\\ i_a=i_k}}\mathbf m_a,
\qquad
Z_k^{\mathrm{raw}}=\mathsf p_kH^{\boldsymbol\nu_k}.
$$
The recursion gives
\(\deg \mathsf p_k=\sum_{a\le k,\ i_a=i_k}\widetilde\beta_{2a-1}\);
hence \eqref{eq:normalizing-exponents} implies
\(\deg_\infty Z_k^{\mathrm{raw}}=0\), so \(Z_k^{\mathrm{raw}}\in\D_k\).

\begin{lemma}\label{lem:predecessor-normalized-algebra}
For every \(k\in[1,r]\), the element \(Z_k^{\mathrm{raw}}\) is a
homogeneous prime element of \(\D_k\) outside
\(\D_{k-1}\).
The predecessor function for the CGL presentation
of \(\D_{\mathbf i}\) is \(p(k)=k^-\); equivalently,
its prime chains are the level sets of \(\eta(k)=i_k\).
\end{lemma}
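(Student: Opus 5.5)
We would argue by induction on \(k\), using the localized algebra \(L_k=A_{2k-1}[H_1^{-1},\ldots,H_{k-1}^{-1}]\) and its degree-zero part \((L_k)_0=\D_k\).

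\emph{Step 1: normality in \(L_k\).} By Theorem~\ref{thm:cgl-recursive-primes} applied to the ambient symmetric CGL extension \(A_{2k-1}\), the element \(\mathsf p_k\) is a homogeneous prime element of \(A_{2k-1}\). In particular it is normal; since it is homogeneous, it \(q\)-commutes with every homogeneous element, up to a character of \(\widetilde Q\). The separators \(H_a\) are also normal and homogeneous. Hence \(Z_k^{\mathrm{raw}}=\mathsf p_kH^{\boldsymbol\nu_k}\) is normal and homogeneous in \(L_k\). Restricting to degree zero, \(Z_k^{\mathrm{raw}}\D_k=\D_kZ_k^{\mathrm{raw}}\).

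\emph{Step 2: primality in \(\D_k\).} Localizing at the Ore set generated by the \(H_a\) with \(a<k\) preserves primality of \(\mathsf p_k\), because \(\mathsf p_k\) is not associate to any \(H_a\): the chains are distinct. Thus \(L_k/\mathsf p_kL_k\) is a domain. Since \(\mathsf p_k\) is homogeneous, the ideal \(\mathsf p_kL_k\) is graded, and
\[
(\mathsf p_kL_k)\cap\D_k=Z_k^{\mathrm{raw}}\D_k .
\]
Indeed, an element \(\mathsf p_k u\) of degree zero has \(u\) of degree \(-\deg_\infty\mathsf p_k\), and then \(u=H^{\boldsymbol\nu_k}u'\) with \(u'\) of degree zero, since \(H^{\boldsymbol\nu_k}\) is a unit of the required degree. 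Hence \(\D_k/Z_k^{\mathrm{raw}}\D_k\) embeds into the degree-zero part of the domain \(L_k/\mathsf p_kL_k\), and so it is a domain. It is a nonunit because its leading PBW term involves \(Y_k\). This leading term also shows that \(Z_k^{\mathrm{raw}}\notin\D_{k-1}\). Expanding the recursion~\eqref{eq:ambient-odd-prime-recursion} gives
\[
Z_k^{\mathrm{raw}}=\kappa\, Z_{k^-}^{\mathrm{raw}}Y_k-C_kH^{\boldsymbol\nu_k}
\]
for some \(\kappa\in q^{\mathbb Z/2}\). This holds because \(\boldsymbol\nu_k=\boldsymbol\nu_{k^-}+\mathbf m_k\) and the separators commute past the other factors up to powers of \(q\). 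The correction \(C_kH^{\boldsymbol\nu_k}\) lies in \((L_{k-1})_0=\D_{k-1}\). Therefore the coefficient of \(Y_k\) is \(Z_{k^-}^{\mathrm{raw}}\), which is nonzero.

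\emph{Step 3: identifying the predecessor.} By Theorem~\ref{thm:cgl-recursive-primes} for \(\D_{\mathbf i}\) (Proposition~\ref{pro:CGL-Dbfi}), the homogeneous primes of \(\D_k\) outside \(\D_{k-1}\) are, up to scalars, exactly \(y_k\). So Step 2 gives \(Z_k^{\mathrm{raw}}=c\,y_k\). Compare the \(Y_k\)-linear terms in \(y_k=y_{p(k)}Y_k-c_k\) with those in the displayed formula for \(Z_k^{\mathrm{raw}}\). By uniqueness of the left \(\D_{k-1}\)-coefficient of \(Y_k\) in the Ore extension, \(y_{p(k)}\) is proportional to \(Z_{k^-}^{\mathrm{raw}}\) when \(k^-\neq-\infty\), and \(p(k)=-\infty\) otherwise. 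By induction, \(Z_{k^-}^{\mathrm{raw}}\) is proportional to \(y_{k^-}\), so \(y_{p(k)}\) and \(y_{k^-}\) are associated. Distinct indices in \(\mathcal P(k-1)\) give non-associated primes, hence \(p(k)=k^-\). An alternative is to use Theorem~\ref{thm:cgl-detected-prime}: \(\delta_k\) is nonzero exactly when \(C_k\neq0\), which follows from Lemma~\ref{lem:nonvanishing} or the ambient fact that \(k^-\) is the ambient predecessor.

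The main obstacle is Step 2, namely transferring primality through the localization and the degree-zero truncation. The key point to check is that homogeneity of \(\mathsf p_k\), together with units of every separator degree, yields the intersection identity displayed in Step 2.
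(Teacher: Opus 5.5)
Your proposal is correct and follows essentially the same route as the paper. Both arguments show that $\mathsf p_k$ stays prime in $L_k$, use the degree-zero intersection $Z_k^{\mathrm{raw}}L_k\cap\D_k=Z_k^{\mathrm{raw}}\D_k$ to embed $\D_k/Z_k^{\mathrm{raw}}\D_k$ into $L_k/\mathsf p_kL_k$, normalize the ambient recursion, identify $Z_k^{\mathrm{raw}}\in\mathbb K^\times y_k$, and compare the coefficients of $Y_k$. One point needs tightening: $C_kH^{\boldsymbol\nu_k}$ may a priori involve $H_{k-1}$, so its membership in $\D_{k-1}$ should be justified by the degree-zero PBW criterion of Lemma~\ref{lem:PBW-Dbfi} (which forces the $H_{k-1}$-exponent to vanish), as the paper does, rather than by writing it as an element of $(L_{k-1})_0$.
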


\begin{proof}
The elements \(\mathsf p_k,H_1,\ldots,H_{k-1}\) are pairwise
nonassociate homogeneous prime elements of \(A_{2k-1}\).
Thus the separator Ore set is disjoint from
\(\mathsf p_kA_{2k-1}\), and \(\mathsf p_k\) remains a normal prime
element of \(L_k\).
Since \(H^{\boldsymbol\nu_k}\) is invertible and
\(Z_k^{\mathrm{raw}}\) has separator degree zero, taking degree-zero
components gives
$$
Z_k^{\mathrm{raw}}L_k=\mathsf p_kL_k,
\qquad
Z_k^{\mathrm{raw}}L_k\cap\D_k=Z_k^{\mathrm{raw}}\D_k=\D_kZ_k^{\mathrm{raw}}.
$$
Consequently,
\(\D_k/Z_k^{\mathrm{raw}}\D_k\hookrightarrow L_k/\mathsf p_kL_k\).
The target is a nonzero domain, so \(Z_k^{\mathrm{raw}}\) is a
normal prime element of \(\D_k\). It is root-homogeneous because
\(\mathsf p_k\) and the separators are root-homogeneous.

Next, \eqref{eq:ambient-odd-prime-recursion} and the
definition of \(\boldsymbol\nu_k\) yield
\begin{equation}\label{eq:normalized-prime-recursion}
Z_k^{\mathrm{raw}}=
\begin{cases}
Y_k,&k^-=-\infty,\\
a_kZ_{k^-}^{\mathrm{raw}}Y_k-d_k,&k^-\ne-\infty,
\end{cases}
\qquad
a_k\in\mathbb K^\times,\quad d_k\in\D_{k-1}.
\end{equation}
Moving separator powers to the right introduces only scalar
factors. The normalized remainder has separator degree zero
and involves only odd generators \(X_{2j-1}\) with \(j<k\).
By the PBW criterion in Lemma~\ref{lem:PBW-Dbfi}, it belongs
to \(\D_{k-1}\).

In particular, \(Z_k^{\mathrm{raw}}\) has degree one in \(Y_k\), so
\(Z_k^{\mathrm{raw}}\notin\D_{k-1}\).
Let \(y_1,\ldots,y_r\) be the distinguished homogeneous-prime
sequence of \(\D_{\mathbf i}\).
By \cite[Theorem~3.2]{GY-integral},
\(Z_k^{\mathrm{raw}}\in\mathbb K^\times y_k\).
Comparing the coefficients of \(Y_k\) in
\eqref{eq:normalized-prime-recursion} and the CGL prime
recursion gives \(p(k)=-\infty\) when \(k^-=-\infty\),
and \(y_{p(k)}\in\mathbb K^\times y_{k^-}\) otherwise.
Since \(y_a\in\D_a\setminus\D_{a-1}\), distinct terms
cannot be scalar multiples.
Hence \(p(k)=k^-\).
\end{proof}

\begin{lemma}\label{lem:terminal-prime-normality}
Fix \(n\in[1,r]\) and put \(w=w_{\le n}\).
For each label \(a\) occurring in \((i_1,\ldots,i_n)\),
let \(g_a\) be the terminal homogeneous prime element
of \(\D_n\) in the chain labelled by \(a\).
Then
\begin{equation}\label{eq:prime-degree}
\deg g_a=(1-w)\varpi_a,
\end{equation}
and, for every root-homogeneous \(u\in\D_n\),
\begin{equation}\label{eq:prime-normality}
g_a u=q^{-((1+w)\varpi_a,\deg u)}u g_a.
\end{equation}
\end{lemma}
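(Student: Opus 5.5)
The plan is to identify each terminal prime $g_a$, up to a scalar, with a normalized ambient element $Z_k^{\mathrm{raw}}$. Its degree then comes from a telescoping sum, and its normality from the known commutation of terminal quantum minors in the ambient quantum unipotent coordinate ring.

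\emph{Identifying $g_a$.} Let $k\le n$ be the last position with $i_k=a$. By Lemma~\ref{lem:predecessor-normalized-algebra}, the chains of $\D_n$ are the level sets of $\eta(j)=i_j$ and $p(j)=j^-$. The terminal index of the chain labelled $a$ in $[1,n]$ is therefore $k$. By Theorem~\ref{thm:cgl-recursive-primes} together with Lemma~\ref{lem:predecessor-normalized-algebra}, we get $g_a\in\mathbb K^\times Z_k^{\mathrm{raw}}$. Scalars affect neither \eqref{eq:prime-degree} nor \eqref{eq:prime-normality}, so we may take $g_a=Z_k^{\mathrm{raw}}=\mathsf p_kH^{\boldsymbol\nu_k}$.

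\emph{Degree.} Since $\deg Y_b=\gamma_b$ and $Z_k^{\mathrm{raw}}$ has degree zero in the separators, the recursion \eqref{eq:normalized-prime-recursion} gives $\deg g_a=\sum_{b\le k,\,i_b=a}\gamma_b$. We have $\gamma_b=w_{\le b-1}\alpha_a=w_{\le b-1}(1-s_a)\varpi_a=w_{\le b-1}\varpi_a-w_{\le b}\varpi_a$. Every $s_c$ with $c\ne a$ fixes $\varpi_a$, so $w_{\le b}\varpi_a=w_{\le b^+-1}\varpi_a$ for consecutive occurrences $b<b^+$ of $a$. The sum therefore telescopes to $\varpi_a-w_{\le k}\varpi_a$. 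No letter $a$ occurs in $(k,n]$, so this equals $(1-w)\varpi_a$.

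\emph{Normality.} Work in $L_n$, where $A_{2n-1}=A_q(\mathfrak n(\widetilde w'))$ with $\widetilde w'=\widetilde w_{\le 2n-1}$.
\begin{enumerate}
\item By \cite[Theorems~3.2 and~7.1]{GY-integral}, the terminal ambient primes are, up to scalars, the unipotent quantum minors attached to $\widetilde w'\varpi_c$. These satisfy $D u=q^{-((1+\widetilde w')\varpi_c,\deg u)}uD$ for every homogeneous $u\in A_{2n-1}$, in our conventions this is the $t=$terminal case of $D_s^{\mathbf w}D_t^{\mathbf w}=q^{(\Lambda_{\mathbf w})_{st}}D_t^{\mathbf w}D_s^{\mathbf w}$ extended to all of $A_q(\mathfrak n(w))$. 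Apply this to $\mathsf p_k$ (label $a$) and to each $H_p$ (label $\infty_p$). The relations pass to the Ore localization, since the exponent is additive in $\deg u$.
\item Set $\mu=\sum_p(\boldsymbol\nu_k)_p\varpi_{\infty_p}$. For $u\in\D_n$, the factor $g_a=\mathsf p_kH^{\boldsymbol\nu_k}$ then commutes past $u$ with exponent $-((1+\widetilde w')(\varpi_a+\mu),\deg u)$.
\item Since $(\alpha_{\infty_p},\varpi_a)=0$, each $\widetilde s_{\infty_p}$ fixes $\varpi_a$, so $\widetilde w'\varpi_a=w\varpi_a$.
\item Compare the ambient degree $\deg g_a=(1-\widetilde w')(\varpi_a+\mu)$ with the degree computed above. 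This gives $(1-\widetilde w')\mu=0$, hence $(1+\widetilde w')\mu=2\mu$.
\item Finally, $u\in\D_n$ has separator degree zero, so $\deg u\in Q$, and $(\mu,\alpha_i)=0$ for $i\in I$. The exponent therefore reduces to $-((1+w)\varpi_a,\deg u)$, which is \eqref{eq:prime-normality}.
\end{enumerate}

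The main obstacle is step~1. It requires checking that the ambient minor commutation formula holds for all homogeneous elements of $A_q(\mathfrak n(\widetilde w'))$, with exactly the sign convention of \eqref{eq:cell-LS-fixed}, rather than only among the seed minors. It also requires fixing a realization of $\widetilde C$ in which $\mu$ and the $\varpi_{\infty_p}$ make sense.

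If step~1 proves awkward, the fallback is intrinsic to $\D_n$. Normality of the homogeneous prime $g_a$ gives $g_aY_j=c_jY_jg_a$ with $c_j\in\mathbb K^\times$, using the rational $\mathcal H$-action and \cite[Theorem~3.2]{GY-integral}. Each $c_j$ can then be read off from leading PBW terms via Lemma~\ref{lem:leading-commutation} and Lemma~\ref{lem:normalized-skew-pairing}. This reduces the claim to the matrix identity $\sum_{b\le k,\,i_b=a}f_b^{\mathsf t}\widetilde\Lambda f_j=-((1+w)\varpi_a,\gamma_j)$, which can be checked with Lemma~\ref{lem:root-coordinate-matrices}.
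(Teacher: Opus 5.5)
\paragraph{Steps 3 and 4 are false.} Your identification $g_a\in\mathbb K^\times Z_k^{\mathrm{raw}}$, your degree computation, and your step~1 are sound. Step~1 holds with your sign: it is the ambient instance of the CGL normality formula and matches the paper's $\Lambda_{\mathbf w}$ convention for terminal minors. The problem is that steps~3 and~4 both assert false identities.

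In
\[
\widetilde w'=s_{i_1}\widetilde s_{\infty_1}s_{i_2}\cdots\widetilde s_{\infty_{n-1}}s_{i_n},
\]
each $\widetilde s_{\infty_p}$ acts on $s_{i_{p+1}}\cdots s_{i_n}\varpi_a$, not on $\varpi_a$. So the fact that $\widetilde s_{\infty_p}$ fixes $\varpi_a$ does not give $\widetilde w'\varpi_a=w\varpi_a$. Already for $\mathbf i=(a,a)$ and $n=2$,
\[
\widetilde w'\varpi_a=s_a\widetilde s_{\infty_1}(\varpi_a-\alpha_a)=\varpi_a-4d_a\alpha_a-2d_a\alpha_{\infty_1}\ne\varpi_a=w\varpi_a .
\]
The two sides differ even after pairing with $\alpha_a$; the difference is $-4d_a^2$. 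The failure is generic. For a singleton chain at $k$ one gets $\widetilde w'\varpi_a=\varpi_a-\widetilde\beta_{2k-1}$, whereas $w\varpi_a=\varpi_a-\gamma_k$, and these agree only if $\mathbf m_k=0$.

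Consequently the conclusion of step~4 fails as well. The degree comparison actually gives $(1-\widetilde w')\mu=(\widetilde w'-w)\varpi_a$. In the example above, $\mu=-2d_a\varpi_{\infty_1}$ and $(1-\widetilde w')\mu=-2d_a\widetilde\beta_2\ne0$.

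\paragraph{The fix.} The two errors cancel exactly, so your final exponent is right. The repair is to stop splitting $\varpi_a$ from $\mu$. The degree comparison gives directly $(1-\widetilde w')(\varpi_a+\mu)=\deg g_a=(1-w)\varpi_a$, and hence
\[
(1+\widetilde w')(\varpi_a+\mu)=2(\varpi_a+\mu)-(1-w)\varpi_a=(1+w)\varpi_a+2\mu .
\]
Pairing with $\deg u\in Q$ then kills $2\mu$, exactly as in your step~5. The element $(1-w)\varpi_a\in Q$ is the same whether computed in a realization of $C$ or of $\widetilde C$, so the two realizations are compatible.

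\paragraph{Comparison with the paper.} With this fix your route is valid, but it is a detour. The paper never leaves $\D_n$. Proposition~\ref{pro:CGL-Dbfi} supplies the Ore data $\sigma_k(Y_j)=q^{(\gamma_k,\gamma_j)}Y_j$, and Lemma~\ref{lem:predecessor-normalized-algebra} supplies the chains. Then \cite[Corollary~4.8]{GY-UFD} gives $g_aY_j=q^{\chi_{aj}}Y_jg_a$ with
\[
\chi_{aj}=-\sum_{k<j,\,i_k=a}(\gamma_k,\gamma_j)+\sum_{j<k\le n,\,i_k=a}(\gamma_k,\gamma_j).
\]
The same telescoping used for the degree turns this into $-((1+w)\varpi_a,\gamma_j)$.

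Your step~1 rests on that same formula for the ambient CGL extension, so passing to $L_n$ gains nothing. It adds a realization of $\widetilde C$, the weights $\varpi_{\infty_p}$, and the separator bookkeeping where the error arose. Your fallback is essentially the paper's proof, with the exponents $\chi_{aj}$ read off from leading PBW terms, as in Lemma~\ref{lem:normalized-seed-commutation}, instead of quoted from Goodearl--Yakimov.
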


\begin{proof}
The prime chains give
\[
\deg g_a
=\sum_{\substack{k\le n\\i_k=a}}\gamma_k
=\sum_{k=1}^n(w_{\le k-1}-w_{\le k})\varpi_a
=(1-w)\varpi_a.
\]
Our Ore convention is
\(\sigma_k(Y_j)=q^{(\gamma_k,\gamma_j)}Y_j\) for \(j<k\).
Thus the normality formula
\cite[Corollary~4.8]{GY-UFD} gives
\(g_aY_j=q^{\chi_{aj}}Y_jg_a\), where
\[
\begin{aligned}
\chi_{aj}
&=-\sum_{\substack{k<j\\i_k=a}}(\gamma_k,\gamma_j)
  +\sum_{\substack{j<k\le n\\i_k=a}}(\gamma_k,\gamma_j)\\
&=-\bigl((1+w-2w_{\le j-1})\varpi_a
       +\delta_{a,i_j}\gamma_j,\gamma_j\bigr)\\
&=-((1+w)\varpi_a,\gamma_j).
\end{aligned}
\]
The last equality uses
\((w_{\le j-1}\varpi_a,\gamma_j)=d_a\delta_{a,i_j}\)
and \((\gamma_j,\gamma_j)=2d_{i_j}\).
Since the \(Y_j\) generate \(\D_n\), this proves
\eqref{eq:prime-normality}.
\end{proof}

\begin{lemma}[Invariant center]\label{lem:center}
For every interval subalgebra \(B\) of \(\D_{\mathbf i}\),
\[
Z(\operatorname{Frac}B)^{\mathcal H}=\mathbb K.
\]
\end{lemma}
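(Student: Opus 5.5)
We use the standard description of the center of the fraction field of a CGL extension. By Lemma~\ref{lem:cgl-interval-chains} applied to the symmetric CGL presentation of Proposition~\ref{pro:CGL-Dbfi}, an interval subalgebra \(B=\D_{\mathbf i}[a,d]\) (over \(\mathbb K\)) is itself a symmetric CGL extension. Its prime chains are the level sets of \(\eta|_{[a,d]}\), and by Lemma~\ref{lem:predecessor-normalized-algebra} these are labelled by the letters of \((i_a,\ldots,i_d)\). Re-indexing by the interval, \(B\) is isomorphic to the CGL algebra attached to the shifted word \((i_a,\ldots,i_d)\), and that algebra is again some \(\D_{\mathbf i'}\). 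So it suffices to treat \(B=\D_n\), the full algebra for a word of length \(n\). We may also pass to the localization \(B[g_a^{-1}]\) at the terminal homogeneous primes \(g_a\) of Lemma~\ref{lem:terminal-prime-normality}, since this does not change \(\operatorname{Frac}B\).

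The key input is the Goodearl--Yakimov description of \(\mathcal H\)-invariant central elements: by \cite[Theorem~5.3]{GY-UFD} (or the corresponding statement in \cite{GY-cluster}), \(Z(\operatorname{Frac}R)^{\mathcal H}\) for a CGL extension \(R\) is generated over \(\mathbb K\) by \(\mathcal H\)-invariant Laurent monomials \(\prod_a g_a^{m_a}\) in the terminal homogeneous primes that are central. Equivalently, one passes to Cauchon's quantum torus \(\mathcal T_\Lambda\). There an \(\mathcal H\)-eigenvector commuting with everything must be a monomial, and the terminal primes correspond to a triangular change of variables. Hence we only need to show: if \(\sum_a m_a\deg g_a=0\) and \(\prod_a g_a^{m_a}\) is central, then all \(m_a=0\). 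In fact \(\mathcal H\)-invariance alone should suffice.

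By \eqref{eq:prime-degree}, \(\mathcal H\)-invariance means \(\deg\prod_a g_a^{m_a}=(1-w)\varpi=0\), where \(\varpi=\sum_a m_a\varpi_a\) and the sum runs over the labels occurring in the word, with \(w=w_{\le n}\). Centrality gives, by \eqref{eq:prime-normality}, \(((1+w)\varpi,\gamma_j)=0\) for all \(j\le n\). Combining the two conditions gives \(w\varpi=\varpi\) and \((\varpi,\gamma_j)=0\) for every \(j\). We then argue inductively along the word. For \(j=1\) we get \((\varpi,\alpha_{i_1})=d_{i_1}m_{i_1}=0\), so \(s_{i_1}\varpi=\varpi\). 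Assuming \(w_{\le j-1}\varpi=\varpi\), we have
\[
(\varpi,\gamma_j)=(w_{\le j-1}^{-1}\varpi,\alpha_{i_j})=(\varpi,\alpha_{i_j})=d_{i_j}m_{i_j},
\]
so \(m_{i_j}=0\) and \(s_{i_j}\varpi=\varpi\). Thus every \(m_a\) vanishes, and the invariant center is \(\mathbb K\).

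The main obstacle is the reduction in the second paragraph, namely justifying that invariant central elements of \(\operatorname{Frac}B\) are scalar multiples of monomials in the \(g_a\). Our plan is to invoke the torus-invariant version of the Goodearl--Yakimov center theorem for CGL extensions. Failing a citable form, we would argue directly in \(\mathcal T_\Lambda\): an \(\mathcal H\)-homogeneous central element of \(\operatorname{Frac}\mathcal T_\Lambda\) lies in \(\mathcal T_\Lambda\) and is a monomial there, since \(\lambda_k\) is not a root of unity. We then transport this to the \(g_a\) using the unitriangular relation between the \(\overline{x}_k\) and the \(y_k\) along chains. The invariance computation above then replaces the monomial-exponent condition verbatim.
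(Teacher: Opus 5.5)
Your closing computation is correct. If $\prod_a g_a^{m_a}$ is central and $\mathcal H$-invariant, then $\varpi=\sum_a m_a\varpi_a$ satisfies $w\varpi=\varpi$ and $(\varpi,\gamma_j)=0$ for all $j$, and your induction along the word forces every $m_a=0$. Your aside that invariance alone should suffice is false. For $\mathbf i=(i,i)$ one has $w=1$, and the terminal prime $g=xy-q_i$ of the rank-one example has degree $0$ but satisfies $gx=q_i^{-2}xg$. You do use both conditions, so this does not hurt the argument.

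The gap is the reduction you flag yourself: that $Z(\operatorname{Frac}B)^{\mathcal H}$ is controlled by Laurent monomials in the \emph{terminal} primes $g_a$. As written, this rests on a citation whose content you do not verify, and your fallback does not prove it. In Cauchon's torus, an invariant central element is a combination of central invariant monomials $\overline{x}^{\mathbf v}$. Even placing it in the torus needs an argument, such as the paper's ideal $J_c=\{a\in\mathcal T\mid ac\in\mathcal T\}$ with an element of minimal support. Rewriting $\overline{x}^{\mathbf v}$ via the unitriangular relation $y_k\in\mathbb K^\times\,\overline{x}_{p^m(k)}\cdots\overline{x}_{p(k)}\overline{x}_k$ then gives a Laurent monomial in \emph{all} of $y_1,\ldots,y_n$, non-terminal ones included. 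A non-terminal $y_k$ is normal only in $\D_k$, and Lemma~\ref{lem:terminal-prime-normality} gives no commutation rule between it and $Y_j$ for $j>k$. So your computation does not apply to such monomials.

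There are two ways to close the gap. The first proves your reduction from the $\mathcal H$-UFD property of CGL extensions:
\begin{itemize}
\item For $c\in Z(\operatorname{Frac}B)^{\mathcal H}$, the set $\{a\in B\mid ac\in B\}$ is a nonzero $\mathcal H$-stable two-sided ideal, so it contains a product $a$ of homogeneous primes.
\item Then $ac$ is homogeneous and normal, hence a scalar times a monomial in the $g_a$, so $c$ is a scalar times a Laurent monomial in the $g_a$.
\end{itemize}
The second, which is the paper's route, drops terminal primes and works with Cauchon monomials directly. The paper shows that $G\mathbf v=\Lambda\mathbf v=0$ forces $2\mathsf E\mathbf v=0$, where $\mathsf E=\Delta E$ is triangular with positive diagonal. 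Equivalently, let $k$ be the largest index with $v_k\neq0$. Comparing conjugation by $\overline{x}_k$ with the action of $h_k$ on $\overline{x}^{\mathbf v}$ gives $\lambda_k^{v_k}=1$, which is impossible.

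Separately, your reduction to a prefix via $B\cong\D_{\mathbf i'}$ is not available at this point, because the comparison theorem that would supply it depends on this lemma. This is harmless: only degrees and commutation exponents enter, and these are the subword's data translated by the common Weyl element $w_{\le a-1}$.
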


\begin{proof}
The assertion is immediate for the empty interval. Otherwise,
let \(n\) be the interval length and \(\mathcal T\) its Cauchon
quantum torus. Then
\(\operatorname{Frac}\mathcal T=\operatorname{Frac}B\),
and the Cauchon generators have the same
\(\mathcal H\)-characters as the corresponding Ore
generators \cite[Section~6.1]{GY-UFD}.

Let \(G\) be the matrix whose columns are the root degrees
of these generators. Apply
Lemma~\ref{lem:root-coordinate-matrices} to the interval
word, and write \(E\) for its upper unitriangular matrix.
Let \(\Delta\) be the diagonal matrix of the symmetrizing
integers at its positions, and put \(\mathsf E=\Delta E\).
The interval degrees differ from those of the interval
word by a common Weyl group action. Consequently,
\[
G^{\mathsf t}D_CCG=\mathsf E+\mathsf E^{\mathsf t}.
\]
With the convention
\(\overline x_j\overline x_k
=q^{\Lambda_{jk}}\overline x_k\overline x_j\),
the Cauchon commutation matrix is
\(\Lambda=\mathsf E^{\mathsf t}-\mathsf E\).
In particular,
\begin{equation}\label{eq:center-joint-kernel}
G\mathbf v=\Lambda\mathbf v=0
\quad\Longrightarrow\quad
2\mathsf E\mathbf v
=(G^{\mathsf t}D_CCG-\Lambda)\mathbf v=0
\quad\Longrightarrow\quad
\mathbf v=0.
\end{equation}
Only the invertibility of \(\mathsf E=\Delta E\) is needed here;
its diagonal entries are positive integers. The Cartan matrix \(C\)
may be singular.

Let $\overline x_1,\ldots,\overline x_n$ be the Cauchon generators.
The Laurent monomials
$\overline x^{\mathbf v}:=\overline x_1^{v_1}\cdots
\overline x_n^{v_n}$, with $\mathbf v\in\mathbb Z^n$,
form a basis of $\mathcal T$ and satisfy
\[
\overline x_j\overline x^{\mathbf v}\overline x_j^{-1}
=q^{(\Lambda\mathbf v)_j}\overline x^{\mathbf v},
\qquad
h\cdot\overline x^{\mathbf v}
=h(G\mathbf v)\overline x^{\mathbf v}
\quad(h\in\mathcal H).
\]
Since $q$ is an indeterminate and the root torus separates
root degrees, these actions fix $\overline x^{\mathbf v}$
precisely when $\Lambda\mathbf v=G\mathbf v=0$.
By linear independence, every monomial occurring in a central
$\mathcal H$-invariant element must satisfy both conditions.
The calculation above forces $\mathbf v=0$. Hence
\begin{equation}\label{eq:center-torus-invariants}
Z(\mathcal T)^{\mathcal H}=\mathbb K.
\end{equation}

To pass to the fraction division ring, take
\(c\in Z(\operatorname{Frac}\mathcal T)^{\mathcal H}\)
and set
\[
J_c=\{a\in\mathcal T\mid ac\in\mathcal T\}.
\]
Clearing a left denominator shows that \(J_c\ne0\).
Since \(c\) is central and \(\mathcal H\)-invariant,
\(J_c\) is a two-sided \(\mathcal H\)-stable ideal.
Choose a nonzero element \(a\in J_c\) with minimal
Laurent support. After multiplication by a Laurent monomial unit and a scalar,
we may assume that its constant coefficient is \(1\).
For every \(h\in\mathcal H\) and every Cauchon generator
\(\overline x_j\), the elements
\[
h\cdot a-a,
\qquad
\overline x_j a\overline x_j^{-1}-a
\]
belong to \(J_c\), and their supports are contained in
\(\operatorname{supp}(a)\setminus\{0\}\).
Minimality forces both elements to vanish.
Thus \(a\in Z(\mathcal T)^{\mathcal H}=\mathbb K\),
and its constant coefficient gives \(a=1\).
It follows that \(c\in\mathcal T\), so
\eqref{eq:center-torus-invariants} implies
\(c\in\mathbb K\).
\end{proof}

\subsection{The comparison isomorphism}

\begin{theorem}\label{thm:comparison}
Let \(C\) be symmetrizable and let
\(\mathbf i=(i_1,\ldots,i_r)\) represent a positive
braid \(b\). There are scalars
\(c_1,\ldots,c_r\in\mathbb K^\times\), chosen
recursively along prefixes, such that
\[
\Phi_{\mathbf i}:\Ahat(b)\xrightarrow{\sim}\D_{\mathbf i},
\qquad P_k^{\mathbf i}\longmapsto c_kY_k,
\]
is an isomorphism of root-graded \(\mathbb K\)-algebras.
\end{theorem}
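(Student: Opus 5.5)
We argue by induction on the length \(r\), comparing the two symmetric CGL presentations of Theorem~\ref{thm:bosonic-symmetric-CGL} and Proposition~\ref{pro:CGL-Dbfi}. Both algebras are iterated Ore extensions on generators of the same degrees \(\gamma_1,\ldots,\gamma_r\). They have the same twisting automorphisms \(\sigma_\ell(a)=q^{(\gamma_\ell,\deg a)}a\) and the same prime-chain structure: Lemma~\ref{lem:predecessor-normalized-algebra} gives \(p(k)=k^-\) for \(\D_{\mathbf i}\), and Lemma~\ref{lem:nonvanishing} together with Theorem~\ref{thm:cgl-recursive-primes} gives the same for \(\Ahat(b)\). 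An algebra map \(P_k^{\mathbf i}\mapsto c_kY_k\) that respects the Ore relations is automatically bijective, since it sends the PBW basis of Theorem~\ref{thm:bosonic-symmetric-CGL} onto nonzero multiples of the basis of Lemma~\ref{lem:PBW-Dbfi}. It is therefore enough to find scalars \(c_k\) such that \(\Phi\circ\delta_k=\delta_k\circ\Phi\) on the prefix algebras.

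Assume that \(\Phi_{k-1}\colon\Ahat(b)[1,k-1]\to\D_{k-1}\), \(P_j^{\mathbf i}\mapsto c_jY_j\), has been constructed; its existence is supplied by the prefix compatibility \(\D_{\mathbf i_{\le n}}\subseteq\D_{\mathbf i}\). Transport the derivation \(\delta_k^{\D}\) of \(\D_{k-1}\) back along \(\Phi_{k-1}\) to a left \(\sigma_k\)-derivation \(\delta'\) of \(\Ahat(b)[1,k-1]\). We must show \(\delta'=c\,\delta_k^{\Ahat}\) for some \(c\in\mathbb K^\times\), and then set \(c_k=c^{-1}\). If \(k^-=-\infty\), both derivations vanish by Theorem~\ref{thm:cgl-recursive-primes}, and we take \(c_k=1\) or any convenient scalar. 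Otherwise, a standard uniqueness argument for derivations in symmetric CGL extensions applies. The algebra \(R[1,k]\) is determined by the derivation \(\delta_k\), and \(\delta_k\) is determined up to a scalar and an inner \(\sigma_k\)-derivation by its values on the homogeneous primes \(y_j\), \(j\in\mathcal P(k-1)\). Theorem~\ref{thm:cgl-detected-prime} says that the only prime with nonzero image is \(y_{k^-}\), and that \(\delta_k(y_{k^-})\) is a nonzero element of the prescribed degree \(\deg y_{k^-}+\gamma_k\), commuting with the other primes in a prescribed way.

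To make this rigid, we plan to pass to Cauchon's quantum torus and use the invariant center, Lemma~\ref{lem:center}. The difference \(\delta'-c\,\delta_k^{\Ahat}\) is a homogeneous \(\sigma_k\)-derivation of degree \(\gamma_k\). Any such derivation of the torus is inner modulo a central piece; the obstruction is governed by \(\mathcal H\)-eigenvectors in the center of \(\operatorname{Frac}B\), which are scalars by Lemma~\ref{lem:center}. Choosing \(c\) so that the two derivations agree on \(y_{k^-}\) then leaves an inner derivation \(a\mapsto za-\sigma_k(a)z\) with \(z\in\Ahat(b)[1,k-1]\) of degree \(\gamma_k\). We must rule out a nonzero \(z\), or absorb it. Absorbing \(z\) is not allowed, since \(P_k\mapsto c_kY_k\) must hold exactly. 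The plan is therefore to show that \(z=0\) by degree considerations: \(\gamma_k\) cannot be a nonnegative combination of \(\gamma_1,\ldots,\gamma_{k-1}\) in a way compatible with the filtration \(\ell_k\) used for local nilpotence.

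The \textbf{main obstacle} is this rigidity step. The degree argument that excludes \(z\) uses reducedness, which fails for the bosonic word \(\mathbf i\). In that case I would instead compare the explicit LS corrections: show that \(\Phi_{k-1}(C_{k^-,k})\) and the corresponding normalized correction \(R_{k^-k}\) in \(\D_{\mathbf i}\) are proportional. Both are characterized up to scalar by Theorem~\ref{thm:cgl-detected-prime} as \(\delta_k(y_{k^-})/(a_k(\lambda_k-1))\), which is the residual element needed to make \(y_{k^-}x_k-c_k\) prime. Proportionality on \(y_{k^-}\), vanishing on the other primes, and agreement of both \(\delta_k\) with the \(\sigma_k\)-derivation structure on the normal elements \(g_a\) of Lemma~\ref{lem:terminal-prime-normality} then force \(\delta'\) and \(c\,\delta_k^{\Ahat}\) to agree on a set of elements generating \(\operatorname{Frac}\) of the prefix algebra. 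By Ore localization at the normal primes, they agree on the prefix algebra itself.
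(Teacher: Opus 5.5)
Your overall frame matches the paper's proof: induct along prefixes, reduce to showing that the transported derivation satisfies $\delta'=c\,\delta_k$ for a scalar $c$, and deduce bijectivity from the PBW bases. The gap is the rigidity step, which you yourself flag as the main obstacle. It is not resolved, and the tools you invoke do not do the job.

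\emph{First}, you do not prove, and the paper does not supply, either of the two structural claims you rely on: that a $\sigma_k$-derivation is determined up to a scalar and an inner derivation by its values on the terminal primes, or that homogeneous $\sigma_k$-derivations of Cauchon's torus are inner modulo a central piece. Note also that a $\sigma_k$-derivation of $B$ extends to $\operatorname{Frac}B$ but need not preserve $\mathcal T$.

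\emph{Second}, your fallback is circular. The LS correction $C_{k^-,k}$ is a multiple of $\delta_k(x_{k^-})$, not of $\delta_k(y_{k^-})$. The identity $\delta_k(y_{p(k)})=a_k(\lambda_k-1)c_k$ of Theorem~\ref{thm:cgl-detected-prime} only expresses $c_k$ through the given derivation, so it cannot show that two different derivations take proportional values. In particular, the choice of $c$ making them agree on $y_{k^-}$ is never justified.

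\emph{Third}, there is one terminal prime per distinct letter, and these primes pairwise $q$-commute. When letters repeat, they cannot generate $\operatorname{Frac}$ of the prefix algebra, so agreement on them does not force agreement of the derivations.

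A minor point: Lemma~\ref{lem:nonvanishing} only gives $p(k)\ne-\infty$ for $\Ahat(b)$. For $k^-=-\infty$, use the direct degree argument instead: no earlier $\gamma_j$ involves $\alpha_{i_k}$.

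The missing idea is to compare the derivations on the PBW generator $x_p$, with $p=k^-$, and then kill the difference by CGL theory rather than by degrees.

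\emph{Step 1 (vanishing beyond $p$).} Let $\ell(\beta)$ be the $\alpha_{i_k}$-coefficient of $w_{\le p}^{-1}\beta$. Then $\ell(\gamma_p)=-1$, $\ell(\gamma_j)=0$ for $p<j<k$, and $\ell(\gamma_k)=1$. Comparing $\ell$-degrees in the interval relations shows that both derivations vanish on $x_j$ for $j>p$. It also shows that $x_p$ skew-commutes with $M=B[p+1,k-1]$ via $q^{-(\gamma_p,\cdot)}$.

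\emph{Step 2 (the scalar).} Put $d=\delta_k(x_p)$ and $d'=\delta'(x_p)$. These are nonzero elements of $M$, by Lemma~\ref{lem:nonvanishing}, respectively Lemmas~\ref{lem:predecessor-normalized-algebra} and~\ref{lem:cgl-interval-chains}. They satisfy the same twisted commutation relation with $M$. Hence $(d')^{-1}d\in Z(\operatorname{Frac}M)^{\mathcal H}=\mathbb K$ by Lemma~\ref{lem:center}; call it $c$.

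\emph{Step 3 (killing the difference).} The difference $\varepsilon=\delta_k-c\,\delta'$ vanishes on $B[p,k-1]$. Suppose $\varepsilon\ne0$. Adjoin a new variable $t$ of degree $\gamma_k$; then $B[t;\sigma_k,\varepsilon]$ is a symmetric CGL extension, with local nilpotence given by the same weight filtration. Let $g$ be the terminal prime of the $i_k$-chain. Apply $\varepsilon$ to its normality relation from Lemma~\ref{lem:terminal-prime-normality}. If $\varepsilon(g)=0$, this gives $(q_{i_k}^{-2}-1)\varepsilon(a)g=0$ up to a unit, for every homogeneous $a$, which forces $\varepsilon=0$. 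Hence $\varepsilon(g)\ne0$. Theorem~\ref{thm:cgl-detected-prime} then makes $p$ the predecessor of $t$, and Lemma~\ref{lem:cgl-interval-chains} forces $\varepsilon|_{B[p,k-1]}\ne0$. This is a contradiction.

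This argument needs no reducedness, and it never produces an inner correction term that must be excluded.
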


\begin{proof}
We argue by induction on the word length, starting with the
empty prefix. Suppose that the two
prefix algebras for \((i_1,\ldots,i_n)\) have been
identified with a common root-graded algebra \(B\),
with corresponding generators differing only by nonzero
scalars. Denote these generators
by \(x_1,\ldots,x_n\), and let \(B[a,d]\) be the
subalgebra generated by \(x_a,\ldots,x_d\).
Append \(i=i_{n+1}\), and put
\(w=w_{\le n}\) and \(\gamma=w\alpha_i\).

The CGL presentations give two extensions
\[
B[x;\sigma,\delta],\qquad B[y;\sigma,\delta'],
\qquad
\sigma(a)=q^{(\gamma,\deg a)}a
\]
for homogeneous \(a\in B\).
Both derivations have root degree \(\gamma\), and
\(\delta(x_j),\delta'(x_j)\in B[j+1,n]\).
We prove that
\begin{equation}\label{eq:derivation-proportionality}
\delta=c\delta'
\qquad\text{for some }c\in\mathbb K^\times.
\end{equation}

If \(i\) does not occur in the prefix, all root degrees
of \(B\) have zero \(\alpha_i\)-coefficient, whereas
\(\gamma\) has coefficient one. Homogeneity therefore
forces \(\delta=\delta'=0\), and we may take \(c=1\).

Otherwise, let
\(p=\max\{j\le n\mid i_j=i\}\) and put
\(M=B[p+1,n]\). Let \(\ell_i(\beta)\) be the
\(\alpha_i\)-coefficient of \(w_{\le p}^{-1}\beta\).
Since none of \(i_{p+1},\ldots,i_n\) equals \(i\),
\[
\ell_i(\gamma_p)=-1,
\qquad \ell_i(\gamma_j)=0\quad(p<j\le n),
\qquad \ell_i(\gamma)=1.
\]
Comparison of degrees in the interval straightening
relations gives
\begin{equation}\label{eq:vanishing-after-p}
\delta(x_j)=\delta'(x_j)=0\quad(j>p),
\qquad
x_pa=q^{-(\gamma_p,\deg a)}ax_p
\quad(a\in M\text{ homogeneous}).
\end{equation}
Indeed, the straightening correction between \(x_p\)
and a generator \(x_j\), with \(j>p\), would have
\(\ell_i\)-degree \(-1\), whereas its intermediate
algebra has \(\ell_i\)-degree zero.

Set \(d=\delta(x_p)\) and \(d'=\delta'(x_p)\).
These elements belong to \(M\) and have degree
\(\gamma_p+\gamma\).
Lemma~\ref{lem:nonvanishing}, applied after removing
the common braid prefix, gives \(d\ne0\).
For the second extension,
Lemmas~\ref{lem:predecessor-normalized-algebra}
and~\ref{lem:cgl-interval-chains} imply that
\(\delta'|_{B[p,n]}\ne0\). Together with
\eqref{eq:vanishing-after-p}, this gives \(d'\ne0\).

For homogeneous \(a\in M\), applying \(\delta\) to
the commutation relation in
\eqref{eq:vanishing-after-p} gives
\[
da
=q^{-(\gamma_p,\deg a)}\sigma(a)d
=q^{(\gamma-\gamma_p,\deg a)}ad.
\]
The same identity holds for \(d'\).
Thus \((d')^{-1}d\) commutes with \(M\) and lies in the center
of \(\operatorname{Frac}M\). It is \(\mathcal H\)-invariant
because \(d\) and \(d'\) have the same root degree.
Lemma~\ref{lem:center} gives
\((d')^{-1}d=c\in\mathbb K^\times\).

Set \(\varepsilon=\delta-c\delta'\). Then
\begin{equation}\label{eq:epsilon-vanishing}
\varepsilon(x_j)=0\qquad(p\le j\le n).
\end{equation}
We claim that \(\varepsilon=0\).
The map \(\varepsilon\) is a homogeneous
\(\sigma\)-derivation of degree \(\gamma\), and
\(\varepsilon(x_j)\in B[j+1,n]\).
The filtration argument in Proposition~\ref{pro:CGL-Dbfi}
shows that \(\varepsilon\) is locally nilpotent.
Give a new Ore generator \(z\) root degree \(\gamma\).
Since \(\varepsilon\) is homogeneous, the root-torus action
extends to \(B[z;\sigma,\varepsilon]\).
The torus elements
\[
h_{n+1}(\beta)=q^{(\gamma,\beta)},
\qquad h_{n+1}^*(\beta)=q^{-(\gamma,\beta)}
\]
have eigenvalues \(q_i^2\) and \(q_i^{-2}\) on \(z\).
The earlier elements \(h_j,h_j^*\) still have the required
actions; in particular,
\(h_j^*\cdot z=q^{-(\gamma_j,\gamma)}z\).
Thus \(B[z;\sigma,\varepsilon]\) is again a symmetric
CGL extension for the same root torus.

Suppose that \(\varepsilon\ne0\), and let \(g_i\)
be the terminal homogeneous prime of \(B\) indexed
by \(p\). We claim that \(\varepsilon(g_i)\ne0\).
Assume otherwise and set
\[
\mu=q^{(\gamma,(1-w)\varpi_i)},
\qquad
\chi_i(\beta)=q^{-((1+w)\varpi_i,\beta)}.
\]
By \eqref{eq:prime-degree} and
\eqref{eq:prime-normality},
\(\sigma(g_i)=\mu g_i\) and
\(g_i a=\chi_i(\beta)ag_i\) for homogeneous
\(a\in B\) of degree \(\beta\).
Apply \(\varepsilon\) to this last identity.
Since \(\varepsilon(g_i)=0\) and
\(\deg\varepsilon(a)=\beta+\gamma\), we obtain
\[
\mu\chi_i(\beta+\gamma)\varepsilon(a)g_i
=\chi_i(\beta)\varepsilon(a)g_i.
\]
On the other hand,
\[
\mu\chi_i(\gamma)
=q^{(\gamma,(1-w)\varpi_i)
      -(\gamma,(1+w)\varpi_i)}
=q^{-2(\gamma,w\varpi_i)}
=q^{-2d_i}=q_i^{-2}\ne1.
\]
Since \(B\) is a domain and \(g_i\ne0\), we obtain
\(\varepsilon(a)=0\) for every homogeneous \(a\in B\),
contrary to our assumption.
Hence \(\varepsilon(g_i)\ne0\).

Theorem~\ref{thm:cgl-detected-prime} now identifies
\(p\) as the predecessor of \(z\) in this new CGL
extension. Applying
Lemma~\ref{lem:cgl-interval-chains} to its interval
subalgebra generated by \(x_p,\ldots,x_n,z\)
gives \(\varepsilon|_{B[p,n]}\ne0\).
But \eqref{eq:epsilon-vanishing} and the skew Leibniz
rule imply \(\varepsilon|_{B[p,n]}=0\).
This contradiction proves \eqref{eq:derivation-proportionality}.

The prefix isomorphism therefore extends by
\(x\mapsto cy\), since
\[
(cy)a-\sigma(a)(cy)=c\delta'(a)=\delta(a).
\]
This extension is an isomorphism by the PBW bases.
Induction proves the theorem.
\end{proof}

\begin{lemma}\label{lem:ambient-primitive-LS}
Let $p<k$ satisfy $p=k^-$, and put
\[
 C'_{p,k}:=Y_pY_k-q^{-(\gamma_p,\gamma_k)}Y_kY_p.
\]
Put $i=i_p=i_k$ and $\zeta_i=1-q_i^2$.
Then $C'_{p,k}/\zeta_i$ belongs to the integral interval algebra
$\D_{\mathbf i,\A}[p+1,k-1]$ and is primitive there.
\end{lemma}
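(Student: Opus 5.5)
The plan is to work inside the ambient thickened cell $A_{\mathbf i,\A}$, where the straightening correction between $X_{2p-1}$ and $X_{2k-1}$ is a bosonic-type correction for the thickened word. Its content will be controlled by Lemma~\ref{lem:primitive-LS} transported along the thickening. By \eqref{eq:normalized-remainder},
\[
C'_{p,k}=\kappa_{pk}\,C_{pk}\,H^{\mathbf m_p+\mathbf m_k},\qquad \kappa_{pk}\in q^{\mathbb Z/2},
\]
where $C_{pk}=X_{2p-1}X_{2k-1}-q^{-(\widetilde\beta_{2p-1},\widetilde\beta_{2k-1})}X_{2k-1}X_{2p-1}$. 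This lies in $A_\A[2p,2k-2]$, the integral interval algebra of the quantum unipotent coordinate ring for $\widetilde C$ and the reduced word $\mathbf j$. Here $p=k^-$ in $\mathbf i$ means $2p-1=(2k-1)^-$ in $\mathbf j$, because the letters $\infty_\ell$ are distinct from $i$.

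First I will show that $C_{pk}/\zeta_i\in A_{\A}[2p,2k-2]$ is primitive there. For this I will apply Lemma~\ref{lem:primitive-LS} to the bosonic extension for $\widetilde C$ and the positive word $\mathbf j$. The quantum unipotent coordinate ring $A_q(\mathfrak n(\widetilde w_{\mathbf i}))$ sits inside $\widehat A$ for $\widetilde C$ as $\widehat A[0]$, via $\psi_0$. Since $\mathbf j$ is reduced, its bosonic PBW generators $P_s^{\mathbf j}$ are the images $\psi_0(X_s)$, up to units. Moreover, the integral forms match: the bosonic global basis restricted to layer $0$ is $g_0$ of the upper global basis. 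Hence the primitive element $C_{p',k'}/\zeta_i$ given by Lemma~\ref{lem:primitive-LS}, with $p'=2p-1$ and $k'=2k-1$, corresponds to $C_{pk}/\zeta_i$ up to a unit in $A_\A[2p,2k-2]$. The case analysis for the crystal argument of Lemma~\ref{lem:primitive-LS} is already done there, so only this dictionary must be checked. I expect this identification, layer $0$ versus the cell algebra and matching of normalizations, to be the main point needing care.

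Next I will transport primitivity to the normalized setting. Multiplication by the unit monomial $H^{\mathbf m_p+\mathbf m_k}$ in $L_{\mathbf i,\A}$ will be handled using the localized PBW basis of \eqref{eq:integral-Y-PBW}. The ordered monomials $X_{2p+1}^{a_{p+1}}X_{2p}^{b_p}\cdots X_{2k-3}^{a_{k-1}}X_{2k-2}^{b_{k-1}}$ spanning $A_\A[2p,2k-2]$ become, after right multiplication by a separator monomial and reordering, unit multiples of distinct localized PBW monomials $X_{2p+1}^{a_{p+1}}\cdots H^{\mathbf b}$. Reordering factors are powers of $q^{1/2}$, because the $H$'s $q$-commute with all $X_s$ (Lemma~\ref{lem:PBW-Dbfi}). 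So this map sends the PBW basis bijectively, up to units, onto a subset of the localized PBW basis, and coefficients are preserved up to units. Because $C'_{p,k}$ has separator degree zero, only monomials with $\mathbf b=\sum_j a_j\mathbf m_j$ survive. These are unit multiples of $Y_{p+1}^{a_{p+1}}\cdots Y_{k-1}^{a_{k-1}}$, the $\A$-basis of $\D_{\mathbf i,\A}[p+1,k-1]$. Hence $C'_{p,k}/\zeta_i$ has coordinates in this basis equal, up to units, to the coordinates of $C_{pk}/\zeta_i$. This proves both integrality and primitivity, since the gcd of the coordinates is unchanged.
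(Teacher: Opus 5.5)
Your proposal is correct and follows the paper's proof essentially step for step. You identify the ambient correction between $X_{2p-1}$ and $X_{2k-1}$ with the bosonic correction for the reduced word $\operatorname{th}(\mathbf i)$ via $\psi_0$, invoke Lemma~\ref{lem:primitive-LS} for $\widetilde C$, and transport primitivity through right multiplication by the separator monomial, using the localized PBW basis and the degree-zero criterion; the only cosmetic difference is that you work in the interval lattice $A_{\A}[2p,2k-2]$ from the start, whereas the paper argues in the full ambient lattice and restricts to the interval at the end.
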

\begin{proof}
For the reduced word $\operatorname{th}(\mathbf i)$, the
coordinate-ring realization of the zeroth bosonic layer identifies
the dual root vectors $X_s$ with the bosonic cuspidal generators.
Writing
$\mathbf j=\operatorname{th}(\mathbf i)$, one has
\[
\psi_0(X_s)
=\mathbf T_{j_1}\cdots\mathbf T_{j_{s-1}}
  (q_{j_s}^{1/2}f_{j_s,0}).
\]
This identity follows from \cite[(1.20), (2.12)--(2.13),
and (3.10)]{KKOP-braid}, with $\psi_0$ denoting the
coordinate-ring map fixed in Section~\ref{sec:bosonic}.
The dual PBW bases show that this identification is integral.
Lemma~\ref{lem:primitive-LS}, applied to the enlarged
symmetrizable Cartan datum, shows that the ambient straightening
correction between $X_{2p-1}$ and $X_{2k-1}$, divided by
$\zeta_i=1-q_i^2$, is integral
and primitive in the ambient dual PBW lattice.

Moving the separator factors to the right writes $C'_{p,k}$
as this ambient correction times a separator Laurent monomial
and an $\A$-unit. In the localized PBW basis, this multiplication
shifts the even exponents and rescales coefficients by powers
of $q$, so it preserves primitivity. The degree-zero localized
PBW basis differs from the $Y$-PBW basis only by $\A$-units.
Thus $C'_{p,k}/\zeta_i$ is integral and primitive in
$\D_{\mathbf i,\A}$. The interval straightening relation and
PBW independence place it in $\D_{\mathbf i,\A}[p+1,k-1]$,
where it remains primitive.
\end{proof}

\begin{theorem}[Integral comparison]\label{thm:integral-comparison}
The comparison isomorphism can be chosen with
$c_k\in \A^\times$ for all $k$. It then restricts to an
isomorphism
\[
 \Phi_{\mathbf i,\A}:\widehat A_{\A}(b)
       \xrightarrow{\sim}\D_{\mathbf i,\A},
 \qquad P_k^{\mathbf i}\longmapsto c_kY_k.
\]
\end{theorem}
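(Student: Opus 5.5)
We argue by induction along prefixes, following the proof of Theorem~\ref{thm:comparison}, and show that each scalar $c_{n+1}$ can be taken in $\A^\times$. Given $c_1,\ldots,c_n\in\A^\times$, the prefix isomorphism identifies $\widehat A_{\A}(b_{\le n})$ with $\D_{\mathbf i_{\le n},\A}$, since both are free on ordered PBW monomials and the generators correspond up to units. Monomials of the two kinds then also differ by units, and by the PBW criterion \eqref{eq:integral-Y-PBW} this is compatible with the interval lattices.

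Now append $i=i_{n+1}$. If $i$ does not occur in the prefix, both derivations vanish and we take $c_{n+1}=1$. Otherwise let $p=(n+1)^-$. By \eqref{eq:vanishing-after-p} and the skew Leibniz rule, $\delta$ and $\delta'$ are determined by $d=\delta(x_p)$ and $d'=\delta'(x_p)$. Hence the scalar $c$ in \eqref{eq:derivation-proportionality} satisfies $d=cd'$ after transport along the prefix isomorphism. Up to explicit unit factors $q^{(\gamma,\gamma_p)}$, we have $d=C_{p,n+1}$ and $d'=C'_{p,n+1}$. The isomorphism is rescaled by $c_p$ on $x_p$, so the relation becomes $c_pC_{p,n+1}=c\,c_{n+1}$-type multiples of $C'_{p,n+1}$. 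More precisely, we set $c_{n+1}$ so that
\[
\Phi(P_{n+1})=c_{n+1}Y_{n+1},
\]
and the proportionality reads $\Phi(C_{p,n+1})=\kappa\, c_p c_{n+1}C'_{p,n+1}$ for some $\kappa\in q^{\mathbb Z/2}$.

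The key input is Lemma~\ref{lem:primitive-LS} together with Lemma~\ref{lem:ambient-primitive-LS}. These say that $C_{p,n+1}/\zeta_i$ is primitive in $\widehat A_{\A}(b)[p+1,n]$, and $C'_{p,n+1}/\zeta_i$ is primitive in $\D_{\mathbf i,\A}[p+1,n]$. The inductive isomorphism maps the first lattice onto the second, because it sends interval PBW monomials to unit multiples of interval PBW monomials. It therefore preserves primitivity. Lemma~\ref{lem:primitive-rescaling}, applied in the free module $\D_{\mathbf i,\A}[p+1,n]$, then shows that $\kappa c_pc_{n+1}\in\A^\times$. Since $c_p$ and $\kappa$ are units, $c_{n+1}\in\A^\times$.

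It remains to check that the resulting isomorphism restricts to the integral forms. By \eqref{eq:bosonic-integral-form}, $\widehat A_{\A}(b)$ is free on the normalized ordered monomials in the $P_k^{\mathbf i}$. These are sent to unit multiples of the ordered monomials in the $Y_k$; the order reversal is harmless because ordered monomials in either direction are $\A$-bases. By \eqref{eq:integral-Y-PBW}, these monomials form an $\A$-basis of $\D_{\mathbf i,\A}$, so $\Phi$ maps one basis onto a basis up to units and is an $\A$-isomorphism.

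The main obstacle is the bookkeeping in the proportionality step. One must check that the scalar produced by Theorem~\ref{thm:comparison} is exactly the ratio of the two primitive corrections up to $q$-powers. One must also check that the inductive isomorphism identifies the interval lattices themselves, not merely the full lattices. For the latter, Lemma~\ref{lem:integral-interval-saturation} and the interval-closure remark after \eqref{eq:integral-Y-PBW} should suffice.
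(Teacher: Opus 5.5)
Your proposal is correct and follows the paper's proof. The steps match: induction along prefixes, primitivity of $C_{p,n+1}/\zeta_i$ and $C'_{p,n+1}/\zeta_i$ from Lemmas~\ref{lem:primitive-LS} and~\ref{lem:ambient-primitive-LS}, identification of the two interval lattices by the unit-rescaled prefix isomorphism, and Lemma~\ref{lem:primitive-rescaling} to force $c_pc_{n+1}\in\A^\times$. Your claim that $\delta,\delta'$ are ``determined by'' $d,d'$ is not literally true but harmless, since $d=cd'$ follows from the proportionality $\delta=c\delta'$ already proved in Theorem~\ref{thm:comparison}; and applying Lemma~\ref{lem:primitive-rescaling} directly in the free interval lattice avoids the paper's passage to the prefix algebra.
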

\begin{proof}
We induct on the word length, choosing $c_k=1$ whenever
$i_k$ has no predecessor. Assume that the prefix comparison
is integral. In the comparison over $\mathbb K$, the two
derivations satisfy $\delta=c\delta'$, with $c$ determined by
$d=cd'$. Here $d,d'$ are their nonzero values on the last
preceding occurrence of the appended letter $i$. Put $\zeta_i=1-q_i^2$.
The ordinary Ore correction differs from the corresponding
forward LS correction only by a signed power of $q$.
The prefix isomorphism identifies the two integral PBW bases
up to the chosen $\A$-units, and hence identifies their
interval lattices supported after the last preceding occurrence.
Lemmas~\ref{lem:primitive-LS} and
\ref{lem:ambient-primitive-LS} show that $d/\zeta_i$ and
$d'/\zeta_i$ are primitive in this common interval lattice.
By Lemma~\ref{lem:integral-interval-saturation} and the
$Y$-PBW decomposition \eqref{eq:integral-Y-PBW}, that lattice is
a direct summand of the common free integral prefix algebra.
Its PBW basis extends to the prefix PBW basis, so both elements
remain primitive there. Applying
Lemma~\ref{lem:primitive-rescaling} to
$d/\zeta_i=c\,d'/\zeta_i$ gives $c\in\A^\times$.
The resulting diagonal map identifies the integral PBW bases
up to units, proving the assertion.
\end{proof}

\section{Quantum cluster structures and categorification}
\label{sec:cluster-category}

\subsection{Integral quantum cluster structures}
\label{sec:cluster-dehomogenization}

Set \(\mathbb A=\mathbb Z[q^{\pm1/2}]\) and
\(\mathbb K=\mathbb Q(q^{1/2})\).
Throughout this subsection, \(C\) is symmetrizable, with
\(d_ic_{ij}=d_jc_{ji}\), \(d_i\in\mathbb Z_{>0}\), and
\(q_i=q^{d_i}\). The separators have symmetrizer entries
\(d_{\infty_p}=1\), as in the thickening construction above.
For the empty word, set \(\D_{\emptyset,\mathbb A}=\mathbb A\);
the assertions below are immediate in this case. Henceforth assume
\(r\ge1\). We use the integral forms from
\eqref{eq:integral-Y-PBW} and the multiplication convention
of Section~\ref{sec:CGL}.
Recall that \(H_p=q^{-1/2}X_{2p}\) is the normalized frozen
variable corresponding to the separator \(\infty_p\).
In particular, the monomials in the \(Y_k\) form an
\(\mathbb A\)-basis of \(\D_{\mathbf i,\mathbb A}\), and
\(\mathbb K\otimes_{\mathbb A}\D_{\mathbf i,\mathbb A}
\cong\D_{\mathbf i}\).

Write \(\mathbf j=\operatorname{th}(\mathbf i)\).
With our multiplication and seed conventions,
\cite[Theorem~7.3]{GY-integral} gives
\begin{equation}\label{eq:ambient-integral-cluster}
A_{\mathbf i,\mathbb A}
=\mathcal A_{\mathbb A}(\mathbf s(\mathbf j)),
\end{equation}
where none of the frozen variables is inverted.
Thus \(L_{\mathbf i,\mathbb A}\) is obtained by inverting
only the separator frozen variables.

Denote the initial variables of \(\mathbf s(\mathbf j)\)
by \(\widetilde Z_1,\ldots,\widetilde Z_{2r-1}\), so that
\(\widetilde Z_{2p}=H_p\).
Let \(V=(v_{ak})\in M_r(\mathbb Z)\), where
\(v_{ak}=1\) if \(a\le k\) and \(i_a=i_k\), and zero
otherwise. Write \(\mathbf v_k\) for its \(k\)-th column.
The leading PBW exponent of \(\widetilde Z_{2k-1}\) is
\(\sum_a v_{ak}e_{2a-1}\).
Set
\[
M=(m_{pk})_{1\le p<r,\,1\le k\le r},
\qquad N=MV=(\mathbf n_1\ \cdots\ \mathbf n_r).
\]
Then \(\widetilde Z_{2k-1}H^{\mathbf n_k}\) has separator
degree zero and leading \(Y\)-PBW exponent \(\mathbf v_k\).

Index the ambient seed pattern by a mutation tree
\(\mathbb T\) with initial vertex \(t_0\), and identify
ambient mutable position \(2k-1\) with
\(k\in\operatorname{ex}(\mathbf i)\).
The initial seed is root-homogeneous by
\cite[Proposition~7.2]{GY-integral}. Since mutation preserves
homogeneity, every ambient cluster variable is root-homogeneous.
Write \(\widetilde Z_{t,s}\) for the variables at \(t\).
The separator variables remain \(\widetilde Z_{t,2p}=H_p\).

Order the ambient toric-frame coordinates as
\[
\widetilde{\mathbf Z}_t
=(\widetilde Z_{t,1},\widetilde Z_{t,3},\ldots,
  \widetilde Z_{t,2r-1},H_1,\ldots,H_{r-1}),
\]
and write \(\widetilde\Omega_t\) for their commutation
matrix. The associated toric frame is
\[
\widetilde{\mathsf M}_t(\mathbf u)
=q^{-\frac12\sum_{a<b}u_au_b(\widetilde\Omega_t)_{ab}}
\widetilde{\mathbf Z}_t^{\mathbf u}.
\]
It satisfies
\(\widetilde{\mathsf M}_t(\mathbf u)
\widetilde{\mathsf M}_t(\mathbf v)
=q^{\mathbf u^{\mathsf t}\widetilde\Omega_t\mathbf v/2}
\widetilde{\mathsf M}_t(\mathbf u+\mathbf v)\).

Since the separator degrees form a \(\mathbb Z\)-basis,
there is a unique \(\mathbf n_{t,k}\in\mathbb Z^{r-1}\)
such that
\[
\deg_\infty\widetilde Z_{t,2k-1}
+\sum_p n_{t,pk}\deg_\infty H_p=0.
\]
Set
\begin{equation}\label{eq:normalized-toric-frame}
N_t=(\mathbf n_{t,1}\ \cdots\ \mathbf n_{t,r}),
\qquad K_t=\begin{pmatrix}I_r\\N_t\end{pmatrix},
\qquad
\mathsf M_t(\mathbf a)=\widetilde{\mathsf M}_t(K_t\mathbf a),
\qquad Z_{t,k}=\mathsf M_t(e_k).
\end{equation}
Here \(N_{t_0}=N\), and
\(Z_{t,k}\in\D_{\mathbf i,\mathbb A}\).
Write \(Z_k=Z_{t_0,k}\). These variables are related to the primes constructed above by
\begin{equation}\label{eq:raw-seed-normalization}
Z_k=u_k Z_k^{\mathrm{raw}},
\qquad u_k\in q^{\mathbb Z/2}\subset\mathbb A^\times.
\end{equation}
Indeed, \(\mathbf n_k=\boldsymbol\nu_k\), and the ambient normalized
minor \(\widetilde Z_{2k-1}\) is a scalar multiple of
\(\mathsf p_k=\widetilde y_{2k-1}\).
The recursive prime has leading coefficient one in the ascending
PBW basis, and the normalized minor has leading coefficient in
\(q^{\mathbb Z/2}\). Comparing these coefficients and applying the
toric-frame multiplication rule gives
\eqref{eq:raw-seed-normalization}. We retain the superscript
\(\mathrm{raw}\) when referring to the prime recursion.

\begin{lemma}\label{lem:ZHb}
For \(\mathbf a\in\mathbb N^r\) and
\(\mathbf b\in\mathbb Z^{r-1}\),
\[
\widetilde Z_{t,1}^{a_1}\widetilde Z_{t,3}^{a_2}
\cdots\widetilde Z_{t,2r-1}^{a_r}H^{\mathbf b}
\in\D_{\mathbf i,\mathbb A}
\quad\Longleftrightarrow\quad
\mathbf b=N_t\mathbf a.
\]
In this case the displayed element is
\(q^{d/2}Z_{t,1}^{a_1}\cdots Z_{t,r}^{a_r}\)
for some \(d\in\mathbb Z\).
\end{lemma}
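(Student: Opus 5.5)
The plan is to compute the separator degree of the displayed monomial, exactly as in the proof of Lemma~\ref{lem:PBW-Dbfi}. Then we use that membership in \(\D_{\mathbf i,\mathbb A}\) means integrality plus separator degree zero. Every ambient cluster variable \(\widetilde Z_{t,s}\) is root-homogeneous, so the monomial \(U:=\widetilde Z_{t,1}^{a_1}\cdots\widetilde Z_{t,2r-1}^{a_r}H^{\mathbf b}\) is homogeneous. Its separator degree is
\[
\sum_{k}a_k\deg_\infty\widetilde Z_{t,2k-1}+\sum_p b_p\deg_\infty H_p
=\sum_p\Bigl(b_p-\sum_k n_{t,pk}a_k\Bigr)\deg_\infty H_p,
\]
by the defining property of \(\mathbf n_{t,k}\). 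Since the \(\deg_\infty H_p\) form a \(\mathbb Z\)-basis of \(\mathbb Z^{r-1}\), this vanishes iff \(\mathbf b=N_t\mathbf a\).

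For the forward direction, \(U\in\D_{\mathbf i,\mathbb A}\) forces separator degree zero, hence \(\mathbf b=N_t\mathbf a\). Homogeneity is used here: \(U\) is a nonzero homogeneous element, since \(L_{\mathbf i}\) is a domain. It has a single separator degree, and that degree must be \(0\).

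For the converse, two facts are needed: \(U\in L_{\mathbf i,\mathbb A}\), and \(U\) has separator degree zero. The first holds because ambient cluster variables lie in \(A_{\mathbf i,\mathbb A}=\mathcal A_{\mathbb A}(\mathbf s(\mathbf j))\) by \eqref{eq:ambient-integral-cluster}, and \(H^{\mathbf b}\in L_{\mathbf i,\mathbb A}\). The second was computed above. So \(U\in(L_{\mathbf i,\mathbb A})_0=\D_{\mathbf i,\mathbb A}\).

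It remains to compare \(U\) with \(Z_{t,1}^{a_1}\cdots Z_{t,r}^{a_r}\). By definition \(Z_{t,k}=\widetilde{\mathsf M}_t(K_te_k)\). The toric frame multiplication rule then gives
\[
Z_{t,1}^{a_1}\cdots Z_{t,r}^{a_r}=q^{c/2}\,\widetilde{\mathsf M}_t(K_t\mathbf a)
\]
for some \(c\in\mathbb Z\), with \(c\) a combination of entries of \(K_t^{\mathsf t}\widetilde\Omega_tK_t\). Also \(K_t\mathbf a=(\mathbf a,N_t\mathbf a)=(\mathbf a,\mathbf b)\). Finally, \(\widetilde{\mathsf M}_t(\mathbf a,\mathbf b)\) is \(q^{c'/2}\) times the ordered monomial \(\widetilde{\mathbf Z}_t^{(\mathbf a,\mathbf b)}\), which is exactly \(U\) in the stated order (odd variables first, then \(H_1,\ldots,H_{r-1}\)). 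Combining these gives \(U=q^{d/2}Z_{t,1}^{a_1}\cdots Z_{t,r}^{a_r}\).

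The main point requiring care is that all exponents of \(q\) are half-integers, which needs \(\widetilde\Omega_t\) to be an integer matrix. The cluster variables in \(\widetilde{\mathbf Z}_t\) form a cluster, so they pairwise \(q\)-commute with integer exponents: the compatibility of \(\mathbf s(\mathbf j)\) gives integer \(\Lambda\)-matrices at every seed. Negative exponents of \(H_p\) are allowed in \(L_{\mathbf i}\), and the rule \(\widetilde{\mathsf M}_t(\mathbf u)\widetilde{\mathsf M}_t(\mathbf v)=q^{\mathbf u^{\mathsf t}\widetilde\Omega_t\mathbf v/2}\widetilde{\mathsf M}_t(\mathbf u+\mathbf v)\) extends to \(\mathbf u,\mathbf v\) with integer separator coordinates. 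This is as in the Laurent case of Lemma~\ref{lem:leading-commutation}, because the \(H_p\) are invertible and \(q\)-commute with the cluster. Hence \(d\in\mathbb Z\).
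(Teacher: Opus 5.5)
Your proposal is correct and follows the paper's own proof. The separator-degree computation against the $\mathbb Z$-basis $\deg_\infty H_p$ gives the equivalence, and the toric-frame multiplication rule gives the comparison with $Z_{t,1}^{a_1}\cdots Z_{t,r}^{a_r}$. You add details the paper leaves implicit: nonvanishing and homogeneity for the forward direction, integrality via \eqref{eq:ambient-integral-cluster} for the converse, and integrality of $\widetilde\Omega_t$. That last point actually follows from the integral mutation rule $\mathsf E_k^{\mathsf t}\Lambda\mathsf E_k$, not from compatibility, but this does not affect your argument.
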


\begin{proof}
Its separator degree is
\(\sum_p(b_p-(N_t\mathbf a)_p)\deg_\infty H_p\).
Since the separator degrees form a basis, this vanishes exactly
when the stated equality holds. The toric-frame multiplication
rule then gives the second assertion, with
\(q^{d/2}\) a unit of \(\mathbb A\).
\end{proof}

\begin{lemma}\label{lem:normalized-seed-commutation}
The initial normalized variables satisfy
\[
Z_kZ_\ell=q^{(\Lambda_{\mathbf i})_{k\ell}}Z_\ell Z_k,
\qquad
\Lambda_{\mathbf i}
=V^{\mathsf t}(E^{\mathsf t}\Delta_{\mathbf i}
              -\Delta_{\mathbf i}E)V,
\]
where \(E=T^{-1}\) is the word matrix from
Lemma~\ref{lem:root-coordinate-matrices} and
\(\Delta_{\mathbf i}=\operatorname{diag}(d_{i_1},\ldots,d_{i_r})\).
For \(k<\ell\), its entries are
\[
(\Lambda_{\mathbf i})_{k\ell}
=\bigl((1-w_{\le k})\varpi_{i_k},
       (1+w_{\le\ell})\varpi_{i_\ell}\bigr).
\]
\end{lemma}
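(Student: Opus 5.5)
The plan is to obtain the entrywise formula from the normality of terminal primes, Lemma~\ref{lem:terminal-prime-normality}, and the matrix formula from the leading-term calculus of Lemma~\ref{lem:leading-commutation}. The scalar $u_k$ in \eqref{eq:raw-seed-normalization} is a unit, and $q$-commutation relations are insensitive to rescaling. So it suffices to work with $Z_k^{\mathrm{raw}}$ throughout.

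\emph{Step 1: terminal primes and their degrees.} By Lemma~\ref{lem:predecessor-normalized-algebra}, $Z_k^{\mathrm{raw}}$ is, up to a nonzero scalar, the distinguished prime $y_k$ of $\D_{\mathbf i}$, and the prime chains are the level sets of $k\mapsto i_k$. Hence $Z_k^{\mathrm{raw}}$ is the terminal homogeneous prime of $\D_k$ in the chain labelled $i_k$. Unwinding the recursion \eqref{eq:normalized-prime-recursion}, its root degree is $\sum_{a\le k,\ i_a=i_k}\gamma_a$. By \eqref{eq:prime-degree} with $n=k$, this equals $(1-w_{\le k})\varpi_{i_k}$.

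\emph{Step 2: the entries for $k<\ell$.} Apply Lemma~\ref{lem:terminal-prime-normality} with $n=\ell$ and $a=i_\ell$, so that $g_a=Z_\ell^{\mathrm{raw}}$ up to a scalar. Take $u=Z_k^{\mathrm{raw}}$, which lies in $\D_k\subseteq\D_\ell$ and is root-homogeneous. Then \eqref{eq:prime-normality} gives
\[
Z_\ell Z_k=q^{-((1+w_{\le\ell})\varpi_{i_\ell},(1-w_{\le k})\varpi_{i_k})}Z_kZ_\ell .
\]
Inverting this relation yields $Z_kZ_\ell=q^{(\Lambda_{\mathbf i})_{k\ell}}Z_\ell Z_k$ with the stated entry. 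For $k>\ell$, the relation follows by skew-symmetry, and for $k=\ell$ it is trivial.

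\emph{Step 3: the matrix formula.} By Proposition~\ref{pro:CGL-Dbfi}, $\D_{\mathbf i}$ is a symmetric CGL extension in the $Y_k$. In the convention of Lemma~\ref{lem:leading-commutation}, its LS relation \eqref{eq:common-LS} corresponds to the skew matrix $\Lambda_0$ with $-(\Lambda_0)_{k\ell}=(\gamma_k,\gamma_\ell)$ for $k<\ell$. Lemma~\ref{lem:root-coordinate-matrices}, which holds for non-reduced words, identifies $\Lambda_0=E^{\mathsf t}\Delta_{\mathbf i}-\Delta_{\mathbf i}E$.

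The recursion \eqref{eq:normalized-prime-recursion} shows that the leading $Y$-PBW exponent of $Z_k$, in the right lexicographic order, is $\mathbf v_k$, with nonzero coefficient. Here we use that $d_k\in\D_{k-1}$, together with the leading product formula \eqref{eq:PBW-leading-product}. Now compare leading terms on both sides of the exact relation $Z_kZ_\ell=q^{c}Z_\ell Z_k$ using \eqref{eq:PBW-leading-commutation}. This gives $q^{c}=q^{\mathbf v_k^{\mathsf t}\Lambda_0\mathbf v_\ell}$, and hence $\Lambda_{\mathbf i}=V^{\mathsf t}\Lambda_0V$.

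The main obstacle is the bookkeeping in Step 3. One must check that the sign conventions of Lemma~\ref{lem:leading-commutation}, namely $\sigma_t(X_s)=q^{-\Lambda_{st}}X_s$, match the Ore maps \eqref{eq:common-sigma}. One must also confirm that the leading exponent is exactly $\mathbf v_k$ and not some shifted vector. As a cross-check, the two descriptions can be compared directly. Write $\Lambda_0=\mathsf E^{\mathsf t}-\mathsf E$, where $\mathsf E_{ab}=(\gamma_a,\gamma_b)$ for $a<b$. Expand $\mathbf v_k^{\mathsf t}\Lambda_0\mathbf v_\ell$ as a sum over $a\le k$ with $i_a=i_k$ and $b\le\ell$ with $i_b=i_\ell$. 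This sum should reproduce $\chi$-type sums exactly as in the proof of Lemma~\ref{lem:terminal-prime-normality}.
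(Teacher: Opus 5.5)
Your proposal is correct, but it obtains the $q$-commutation and the entry formula by a different route from the paper's. The paper gets $Z_kZ_\ell=\xi_{k\ell}Z_\ell Z_k$ directly from the fact that the $Z_k$ are Laurent monomials in a single ambient quantum seed. It then pins down $\xi_{k\ell}=q^{\mathbf v_k^{\mathsf t}\Lambda_0\mathbf v_\ell}$ by the same leading-coefficient comparison as your Step~3. Finally, it computes the entries by summing $(\Lambda_0\mathbf v_\ell)_p=(\gamma_p,(1+w_{\le\ell})\varpi_{i_\ell})$ over the chain of $k$.

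You instead get the exact relation and its exponent at once from Lemma~\ref{lem:terminal-prime-normality}. You apply it in $\D_\ell$ to the terminal prime $Z_\ell^{\mathrm{raw}}$ and to $u=Z_k^{\mathrm{raw}}\in\D_k\subseteq\D_\ell$, reading the degree of $u$ from \eqref{eq:prime-degree}. This is legitimate for three reasons. First, $\ell\in\mathcal P(\ell)$ trivially. Second, the chain labels are $i_\ell$ and $Z_\ell^{\mathrm{raw}}\in\mathbb K^\times y_\ell$ by Lemma~\ref{lem:predecessor-normalized-algebra}. Third, the units $u_k$ do not affect $q$-commutation. Your Step~3 is then needed only for the matrix identity, and your sign check goes through: $\sigma_t(Y_s)=q^{(\gamma_t,\gamma_s)}Y_s$ makes the matrix of Lemma~\ref{lem:leading-commutation} exactly $\Lambda_0$.

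What your route buys is that it is intrinsic to the CGL structure of $\D_{\mathbf i}$: it never uses the ambient seed. It also avoids repeating the summation, which is exactly the computation $\chi_{aj}=-(\Lambda_0\mathbf v_\ell)_j$ already carried out in the proof of Lemma~\ref{lem:terminal-prime-normality}. The paper's route instead relies only on the ambient seed and matrix arithmetic. That makes it visible that the commutation matrix being computed is that of the normalized toric frame $\mathsf M_{t_0}$, which is the form in which it is used in Proposition~\ref{pro:normalized-mutation}.
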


\begin{proof}
Put \(\Lambda_0=E^{\mathsf t}\Delta_{\mathbf i}
-\Delta_{\mathbf i}E\).
For \(s<t\), we have
\((\Lambda_0)_{st}=-(\gamma_s,\gamma_t)\), so
\eqref{eq:common-LS} reads
\[
Y_sY_t-q^{(\Lambda_0)_{st}}Y_tY_s
\in\mathbb K\langle Y_{s+1},\ldots,Y_{t-1}\rangle.
\]
Write \(Y^{\mathbf a}=Y_1^{a_1}\cdots Y_r^{a_r}\),
and use the right lexicographic order on PBW exponents.
Define
\(\theta(\mathbf a,\mathbf b)
:=-\sum_{s<t}a_tb_s(\Lambda_0)_{st}.
\)
In straightening \(Y^{\mathbf a}Y^{\mathbf b}\), moving
\(Y_t\) from the first factor past \(Y_s\) from the second,
with \(s<t\), contributes \(q^{-(\Lambda_0)_{st}}\) to the
leading coefficient.
Thus Lemma~\ref{lem:leading-commutation} gives
\[
Y^{\mathbf a}Y^{\mathbf b}
=
q^{\theta(\mathbf a,\mathbf b)}Y^{\mathbf a+\mathbf b}
+\text{lower PBW terms}.
\]
Moreover,
\(
\theta(\mathbf a,\mathbf b)-\theta(\mathbf b,\mathbf a)
=\mathbf a^{\mathsf t}\Lambda_0\mathbf b.
\)
The normalized prime recursion and
\eqref{eq:raw-seed-normalization} give the leading PBW terms
of the \(Z_k\):
\[
Z_k
=\kappa_kY^{\mathbf v_k}
+\sum_{\mathbf a\prec\mathbf v_k}
  \kappa_{k,\mathbf a}Y^{\mathbf a},
\qquad
\kappa_k\in\mathbb K^\times,
\qquad
\mathbf v_k
=\sum_{\substack{p\le k\\i_p=i_k}}\mathbf e_p.
\]
Indeed, when \(k^-=-\infty\), the element \(Z_k\) is a
nonzero scalar multiple of \(Y_k\).
Otherwise, the recursion expresses \(Z_k\) as a nonzero
scalar multiple of \(Z_{k^-}Y_k\), plus an element of
\(\mathbb K\langle Y_1,\ldots,Y_{k-1}\rangle\).
Every PBW term of the latter has zero \(k\)-th coordinate,
whereas the leading exponent of the product is
\(\mathbf v_{k^-}+\mathbf e_k=\mathbf v_k\).
The assertion therefore follows by induction.

By construction, the \(Z_k\) are Laurent monomials in
one ambient quantum seed. Hence they pairwise
\(q\)-commute: for some \(\xi_{k\ell}\in\mathbb K^\times\),
\[
Z_kZ_\ell=\xi_{k\ell}Z_\ell Z_k.
\]
The right lexicographic order is compatible with addition,
so products involving lower PBW terms cannot contribute
to the coefficient of
\(Y^{\mathbf v_k+\mathbf v_\ell}\).
Comparing this coefficient on both sides gives
\[
\kappa_k\kappa_\ell
q^{\theta(\mathbf v_k,\mathbf v_\ell)}
=
\xi_{k\ell}\kappa_k\kappa_\ell
q^{\theta(\mathbf v_\ell,\mathbf v_k)}.
\]
Consequently,
\[
\xi_{k\ell}
=q^{\theta(\mathbf v_k,\mathbf v_\ell)
     -\theta(\mathbf v_\ell,\mathbf v_k)}
=q^{\mathbf v_k^{\mathsf t}\Lambda_0\mathbf v_\ell}.
\]
Since \(V=(\mathbf v_1\ \cdots\ \mathbf v_r)\), the
commutation relation holds with
\(\Lambda_{\mathbf i}=V^{\mathsf t}\Lambda_0V\).

For a simple-root index \(a\), put
\[
\zeta_{t,a}
:=\sum_{\substack{u\le t\\i_u=a}}\gamma_u
=(1-w_{\le t})\varpi_a.
\]
The second equality follows by summing
\[
w_{\le u-1}\varpi_a-w_{\le u}\varpi_a
=\delta_{i_u,a}\gamma_u.
\]
Fix \(\ell\) and set \(a=i_\ell\).
For \(p\le\ell\), direct summation yields
\[
\begin{aligned}
(\Lambda_0\mathbf v_\ell)_p
&=\sum_{\substack{u<p\\i_u=a}}(\gamma_p,\gamma_u)
 -\sum_{\substack{p<u\le\ell\\i_u=a}}(\gamma_p,\gamma_u)\\
&=-(\gamma_p,\zeta_{\ell,a}
             -2\zeta_{p-1,a}
             -\delta_{i_p,a}\gamma_p)\\
&=(\gamma_p,(1+w_{\le\ell})\varpi_a)
 -2(\gamma_p,w_{\le p-1}\varpi_a)
 +\delta_{i_p,a}(\gamma_p,\gamma_p)\\
&=(\gamma_p,(1+w_{\le\ell})\varpi_a).
\end{aligned}
\]
The last equality follows from Weyl invariance:
\[
(\gamma_p,w_{\le p-1}\varpi_a)
=(\alpha_{i_p},\varpi_a)=d_{i_p}\delta_{i_p,a},
\qquad
(\gamma_p,\gamma_p)=2d_{i_p}.
\]
Therefore, for \(k<\ell\),
\[
\begin{aligned}
(\Lambda_{\mathbf i})_{k\ell}
&=\sum_{\substack{p\le k\\i_p=i_k}}
  (\Lambda_0\mathbf v_\ell)_p\\
&=\bigl(\zeta_{k,i_k},(1+w_{\le\ell})\varpi_{i_\ell}\bigr)\\
&=\bigl((1-w_{\le k})\varpi_{i_k},
        (1+w_{\le\ell})\varpi_{i_\ell}\bigr).
\end{aligned}
\]
\end{proof}

Write the reordered ambient exchange matrix as
\(\widetilde B_t=\binom{B_t}{C_t}\), with odd rows in
\(B_t\) and separator rows in \(C_t\).
Its columns are indexed by \(\operatorname{ex}(\mathbf i)\).

\begin{proposition}\label{pro:normalized-mutation}
Dehomogenization commutes with quantum mutation. More explicitly,  
if \(t_0\xrightarrow{2k_1-1}\cdots\xrightarrow{2k_m-1}t\) is a mutation
sequence in \(\mathcal A_q(\mathbf s(\operatorname{th}(\mathbf i)))\), then
\[
(\mathsf M_t,B_t)
=\mu_{k_m}\cdots\mu_{k_1}
 (\mathsf M_{t_0},B_{\mathbf i}).
\]
All normalizations and exchange relations are defined
over \(\mathbb A\).
\end{proposition}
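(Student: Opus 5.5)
We argue by induction along the mutation sequence, proving a slightly stronger statement at each vertex \(t\). The claims are that \(\mathsf M_t\) is a toric frame of \(\D_{\mathbf i}\), with commutation matrix \(\Lambda_t:=K_t^{\mathsf t}\widetilde\Omega_tK_t\), and that \((\Lambda_t,B_t)\) is a compatible pair. The base case \(t=t_0\) has two parts. The identification \(\Lambda_{t_0}=\Lambda_{\mathbf i}\) is Lemma~\ref{lem:normalized-seed-commutation}. The identification \(B_{t_0}=B_{\mathbf i}\) comes from reading off the odd rows of \(B_{\mathbf j}\) at odd columns. In \(\mathbf j=\operatorname{th}(\mathbf i)\), position \(2k-1\) has next occurrence \(2k^+-1\), and the interlacing conditions \(s<t<s^+<t^+\) among odd positions match those of \(\mathbf i\). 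The Cartan entries between old letters are unchanged, so the odd--odd block is \(B_{\mathbf i}\).

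The heart of the argument is a general linear-algebra fact about "dehomogenization" of a seed by frozen Laurent monomials. The key input is that the separator degree of every mutable ambient variable is balanced across the exchange relation. At \(t\), mutation at \(2k-1\) gives
\[
\widetilde Z_{t',2k-1}=\widetilde{\mathsf M}_t\bigl(-e_{2k-1}+[\widetilde b_k]_+\bigr)+\widetilde{\mathsf M}_t\bigl(-e_{2k-1}+[-\widetilde b_k]_+\bigr),
\]
where \(\widetilde b_k\) is the \(k\)-th column of \(\widetilde B_t\). Both terms are root-homogeneous of the same degree, so in particular of the same separator degree. Write \([\widetilde b_k]_+=\binom{[b_k]_+}{[c_k]_+}\). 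By Lemma~\ref{lem:ZHb}, the separator degree of the first term is \(-\deg_\infty\) of the frozen part. The second term behaves the same way. We then define \(N_{t'}\) by replacing the \(k\)-th column with \(\mathbf n_{t',k}=-\mathbf n_{t,k}+N_t[b_k]_+-[c_k]_+\). Homogeneity guarantees that this equals \(-\mathbf n_{t,k}+N_t[-b_k]_+-[-c_k]_+\); this equality is exactly \(C_t b_k=-N_tb_k\), i.e.\ \(K_t^{\mathsf t}\)-orthogonality. Concretely, homogeneity gives \(N_tB_t+C_t=0\) as an invariant, which we propagate: if it holds at \(t\), matrix mutation and the new \(N_{t'}\) give \(N_{t'}B_{t'}+C_{t'}=0\). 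The check is a direct computation with the Berenstein--Zelevinsky formulas \(\widetilde B_{t'}=E_k\widetilde B_tF_k\), using \(K_{t'}=\widetilde E_k^{\mathsf t}\)-type transformations. Alternatively, the invariant is automatic from the uniqueness of the \(\mathbf n\) once both terms are homogeneous.

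With this invariant, multiplying the ambient exchange relation on the right by \(H^{\mathbf n_{t',k}}\) and using the toric-frame multiplication rule rewrites it as
\[
Z_{t',k}=\mathsf M_t\bigl(-e_k+[b_k]_+\bigr)+\mathsf M_t\bigl(-e_k+[-b_k]_+\bigr).
\]
The point is that \(\widetilde{\mathsf M}_t(K_t\mathbf a)\) absorbs the frozen exponents precisely because \(K_t(-e_k+[\pm b_k]_+)=-e_{2k-1}+[\pm\widetilde b_k]_++(\text{separator shift }\mathbf n_{t',k}\text{-compatible})\). This is the quantum exchange relation of \(\mu_k(\mathsf M_t,B_t)\). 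The remaining frame values \(\mathsf M_{t'}(e_\ell)=\mathsf M_t(e_\ell)\) for \(\ell\ne k\) are unchanged. Since \(\mathsf M_{t'}\) is \(\widetilde{\mathsf M}_{t'}\circ K_{t'}\), it is again a toric frame with matrix \(K_{t'}^{\mathsf t}\widetilde\Omega_{t'}K_{t'}\). Compatibility of \((\Lambda_{t'},B_{t'})\) follows from the ambient compatibility \(\widetilde B_{t'}^{\mathsf t}\widetilde\Omega_{t'}=2(d\,\delta)\) together with \(K_{t'}\)-restriction: \(B^{\mathsf t}K^{\mathsf t}\widetilde\Omega K=\widetilde B^{\mathsf t}\widetilde\Omega K\), using \(K^{\mathsf t}\)-image of \(B\) equal to \(\widetilde B\) by the invariant. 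This yields \(2D\,[I\ 0]K=2D\,[I\ 0]\), because the top block of \(K\) is \(I_r\). Integrality holds because all rescalings are powers of \(q^{1/2}\).

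The main obstacle is the invariant \(K_tB_t=\widetilde B_t\) (equivalently \(N_tB_t=C_t\) with the right sign) and the verification that it is preserved by mutation. I would first try to derive it directly from homogeneity, as above. This gives it at every \(t\) without any matrix-mutation calculation, since both exchange monomials have equal separator degree. Its linear consequence is immediate, and it is exactly the identity needed.
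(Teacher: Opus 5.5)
Your strategy is the paper's. Homogeneity of the ambient exchange relations gives $\widetilde B_t=K_tB_t$, and taking separator degrees gives the update rule for $\mathbf n_{t',k}$. You then multiply the ambient exchange relation on the right by a separator monomial, and compatibility descends through $B_t^{\mathsf t}K_t^{\mathsf t}\widetilde\Omega_tK_t=\widetilde B_t^{\mathsf t}\widetilde\Omega_tK_t$.

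The gap is the one step where the quantum structure matters, which you assert rather than prove. Put $\epsilon_k=(e_k,0)$, $u_\pm=(-e_k+[\pm\mathbf b]_+,[\pm\mathbf c]_+)$ and $h=(0,\mathbf n_{t',k})$, where $\mathbf c$ is the separator part of the $k$-th column of $\widetilde B_t$. Then $u_\pm+h=K_t(-e_k+[\pm\mathbf b]_+)$; this is all that your remark about $K_t$ ``absorbing'' the frozen exponents provides. The toric multiplication rule yields
\[
\widetilde Z_{t',2k-1}\,\widetilde{\mathsf M}_t(h)
=q^{u_+^{\mathsf t}\widetilde\Omega_t h/2}\,\mathsf M_t(-e_k+[\mathbf b]_+)
+q^{u_-^{\mathsf t}\widetilde\Omega_t h/2}\,\mathsf M_t(-e_k+[-\mathbf b]_+),
\]
while the left side equals $q^{\epsilon_k^{\mathsf t}\widetilde\Omega_{t'}h/2}Z_{t',k}$. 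You get the exchange relation of $\mu_k(\mathsf M_t,B_t)$, with no stray powers of $q$, only if these three exponents coincide. That is not a formality; it is where ambient compatibility enters. Since $u_+-u_-$ is the $k$-th column of $\widetilde B_t$, compatibility gives $(u_+-u_-)^{\mathsf t}\widetilde\Omega_th=2d_{i_k}\epsilon_k^{\mathsf t}h=0$, because $h$ is supported on the frozen separator coordinates. Moreover $\widetilde\Omega_{t'}=\widetilde{\mathsf E}_k^{\mathsf t}\widetilde\Omega_t\widetilde{\mathsf E}_k$ with $\widetilde{\mathsf E}_k\epsilon_k=u_+$ and $\widetilde{\mathsf E}_kh=h$, so $\epsilon_k^{\mathsf t}\widetilde\Omega_{t'}h=u_+^{\mathsf t}\widetilde\Omega_th$. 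This argument must be supplied.

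There are also three smaller points.
\begin{itemize}
\item Midway you state the invariant as $N_tB_t+C_t=0$. The correct relation is $C_t=N_tB_t$, and only it makes your two formulas for $\mathbf n_{t',k}$ agree, since their difference is $N_t\mathbf b-\mathbf c$. Your last paragraph has the right form $\widetilde B_t=K_tB_t$. Deriving it afresh at each $t$ from homogeneity of all ambient exchange relations is exactly what the paper does, so no propagation through matrix mutation is needed.
\item The base case must show that $\mathsf M_{t_0}$ is a toric frame in $\operatorname{Frac}\D_{\mathbf i}$. Linear independence follows from injectivity of $K_{t_0}$. You also need that $Z_1,\ldots,Z_r$ generate this skew field, which the paper deduces from the triangular recursion \eqref{eq:normalized-prime-recursion} together with \eqref{eq:raw-seed-normalization}.
\item Say explicitly that $B_{t'}=\mu_k(B_t)$, because deleting the frozen separator rows commutes with matrix mutation.
\end{itemize}
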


\begin{proof}
The word formula in Section~\ref{app:word-seeds} gives
\(B_{t_0}=B_{\mathbf i}\). Deleting the frozen separator
rows commutes with matrix mutation, so \(B_t\) is obtained
from \(B_{\mathbf i}\) by the same mutation sequence.

Put \(h_p=\deg_\infty H_p\) and
\(g_{t,s}=\deg_\infty\widetilde Z_{t,2s-1}
=-\sum_p n_{t,ps}h_p\).
Homogeneity of the ambient exchange relation in direction
\(k\) gives
\[
0=\sum_s(B_t)_{sk}g_{t,s}+\sum_p(C_t)_{pk}h_p
=\sum_p\bigl((C_t)_{pk}-(N_tB_t)_{pk}\bigr)h_p.
\]
Hence \(C_t=N_tB_t\) and \(\widetilde B_t=K_tB_t\).
Set \(\Omega_t=K_t^{\mathsf t}\widetilde\Omega_tK_t\).
The compatibility of the ambient seed gives
\begin{equation}\label{eq:normalized-compatibility}
B_t^{\mathsf t}\Omega_t
=\widetilde B_t^{\mathsf t}\widetilde\Omega_tK_t
=2(d_{i_k}\delta_{kj})_{k\in\operatorname{ex}(\mathbf i),\,j\in[1,r]}.
\end{equation}
The diagonal coefficients \(2d_{i_k}\) are unchanged by mutation.
Lemma~\ref{lem:normalized-seed-commutation} identifies
\(\Omega_{t_0}=\Lambda_{\mathbf i}\).
Since \(K_{t_0}\) is injective, the corresponding toric
monomials are linearly independent. The triangular prime recursion
\eqref{eq:normalized-prime-recursion} shows that
\(Z_1^{\mathrm{raw}},\ldots,Z_r^{\mathrm{raw}}\) generate
\(\operatorname{Frac}\D_{\mathbf i}\). By
\eqref{eq:raw-seed-normalization}, the same is true of
\(Z_1,\ldots,Z_r\).
Thus \((\mathsf M_{t_0},B_{\mathbf i})\) is a quantum seed.

Fix \(t\xrightarrow{k}t'\). Write \(\mathbf b\) for the
\(k\)-th column of \(B_t\) and \(\mathbf c=N_t\mathbf b\).
Taking separator degrees in the positive exchange monomial gives
\begin{equation}\label{eq:normalization-mutation}
\mathbf n_{t',k}
=-\mathbf n_{t,k}+N_t[\mathbf b]_+-[\mathbf c]_+,
\qquad
\mathbf n_{t',j}=\mathbf n_{t,j}\quad(j\ne k).
\end{equation}
Since \(N_t\mathbf b=\mathbf c\), the first equality also
holds with the positive parts replaced by
\([-\mathbf b]_+\) and \([-\mathbf c]_+\).
Set
\[
\epsilon_k=(e_k,0),\quad
u_\pm=(-e_k+[\pm\mathbf b]_+,[\pm\mathbf c]_+),
\quad h=(0,\mathbf n_{t',k}).
\]
Then
\begin{equation}\label{eq:exchange-lattice-normalization}
u_\pm+h=K_t(-e_k+[\pm\mathbf b]_+),
\qquad
\widetilde Z_{t',2k-1}
=\widetilde{\mathsf M}_t(u_+)
 +\widetilde{\mathsf M}_t(u_-).
\end{equation}
Compatibility and the separator support of \(h\) imply
\[
(u_+-u_-)^{\mathsf t}\widetilde\Omega_t h
=2d_{i_k}\epsilon_k^{\mathsf t}h=0.
\]
Denote the common value
\(u_+^{\mathsf t}\widetilde\Omega_t h
=u_-^{\mathsf t}\widetilde\Omega_t h\) by \(\theta\).
The two exchange summands, and therefore their sum,
\(q\)-commute with \(\widetilde{\mathsf M}_t(h)\) with
exponent \(\theta\).

Let \(\mathsf E_k\) be the identity with \(k\)-th column
\(-e_k+[\mathbf b]_+\), and put
\[
\widetilde{\mathsf E}_k
=\begin{pmatrix}
\mathsf E_k&0\\
[\mathbf c]_+e_k^{\mathsf t}&I_{r-1}
\end{pmatrix}.
\]
Equation~\eqref{eq:normalization-mutation} gives
\(\widetilde{\mathsf E}_kK_{t'}=K_t\mathsf E_k\).
The ambient commutation-matrix mutation therefore gives
\[
\Omega_{t'}
=K_{t'}^{\mathsf t}\widetilde{\mathsf E}_k^{\mathsf t}
 \widetilde\Omega_t\widetilde{\mathsf E}_kK_{t'}
=\mathsf E_k^{\mathsf t}\Omega_t\mathsf E_k.
\]

Since \(h\) is supported on the unchanged separator
variables, \(\widetilde{\mathsf M}_{t'}(h)
=\widetilde{\mathsf M}_t(h)\). The toric multiplication
rule and \(K_{t'}e_k=\epsilon_k+h\) therefore give
\[
\widetilde Z_{t',2k-1}\widetilde{\mathsf M}_t(h)
=q^{\theta/2}Z_{t',k}.
\]
Using \eqref{eq:exchange-lattice-normalization}, we obtain
\[
\begin{aligned}
Z_{t',k}
&=q^{-\theta/2}
  \bigl(\widetilde{\mathsf M}_t(u_+)
       +\widetilde{\mathsf M}_t(u_-)\bigr)
  \widetilde{\mathsf M}_t(h)\\
&=\widetilde{\mathsf M}_t(u_++h)
  +\widetilde{\mathsf M}_t(u_-+h)\\
&=\mathsf M_t(-e_k+[\mathbf b]_+)
  +\mathsf M_t(-e_k+[-\mathbf b]_+).
\end{aligned}
\]
This is the quantum exchange relation for the normalized
variables. The other variables are unchanged, so induction proves
the assertion. All scalar factors are powers of \(q^{1/2}\),
and the calculation is therefore valid over \(\mathbb A\).
\end{proof}

\begin{theorem}\label{thm:cluster-dehomogenization}
For every symmetrizable generalized Cartan matrix and every
positive word \(\mathbf i\), the assignment
\(M_{\mathbf i}(e_k)\mapsto Z_k\) induces an isomorphism
of \(\mathbb A\)-algebras
\[
\Psi_{\mathbf i,\mathbb A}:
\mathcal A_{\mathbb A}(\mathbf s(\mathbf i))
\xrightarrow{\sim}\D_{\mathbf i,\mathbb A}.
\]
No frozen variable of \(\mathbf s(\mathbf i)\) is inverted.
After extension of scalars to \(\mathbb K\), this gives
\(\mathcal A_q(\mathbf s(\mathbf i))\cong\D_{\mathbf i}\).
\end{theorem}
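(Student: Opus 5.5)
We construct $\Psi_{\mathbf i,\mathbb A}$ directly from Proposition~\ref{pro:normalized-mutation}. That proposition shows that the normalized pattern $t\mapsto(\mathsf M_t,B_t)$ is the mutation pattern of the seed $(\mathsf M_{t_0},B_{\mathbf i})$. Its initial commutation matrix is $\Lambda_{\mathbf i}$ by Lemma~\ref{lem:normalized-seed-commutation}, and the same compatibility constants appear in \eqref{eq:normalized-compatibility}. We first check that this seed is the word seed $\mathbf s(\mathbf i)$ of Section~\ref{app:word-seeds}, that is, $\Lambda_{\mathbf i}$ agrees with the word-seed matrix there; the formula $((1-w_{\le k})\varpi_{i_k},(1+w_{\le\ell})\varpi_{i_\ell})$ is exactly its defining entry. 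Then $M_{\mathbf i}(e_k)\mapsto Z_k$ identifies $\mathcal A_{\mathbb A}(\mathbf s(\mathbf i))$ with the $\mathbb A$-subalgebra of $\operatorname{Frac}\D_{\mathbf i}$ generated by all $Z_{t,k}$, with $k$ ranging over all positions. The frozen ones are the unchanged $Z_k$, $k\in\operatorname{fr}(\mathbf i)$, and none is inverted. Each $Z_{t,k}$ lies in $\D_{\mathbf i,\mathbb A}$, by \eqref{eq:normalized-toric-frame} and integrality of the ambient cluster variables \eqref{eq:ambient-integral-cluster}. This gives a homomorphism into $\D_{\mathbf i,\mathbb A}$. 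It is injective because the quantum cluster algebra embeds in its initial quantum torus, and that torus embeds in $\operatorname{Frac}\D_{\mathbf i}$: the $Z_k$ generate the fraction field and their toric monomials are independent.

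Surjectivity is the real content. We must show that the normalized cluster variables generate $\D_{\mathbf i,\mathbb A}$, whose $\mathbb A$-basis is the ordered $Y$-monomials \eqref{eq:integral-Y-PBW}. A generation argument by filtration is unavailable, because the cluster algebra is not obviously closed under the ambient corrections. Instead we push ambient generation down to degree zero. By \eqref{eq:ambient-integral-cluster}, $A_{\mathbf i,\mathbb A}$ is $\mathbb A$-spanned by products of ambient cluster variables $\widetilde Z_{t,s}$. Hence $L_{\mathbf i,\mathbb A}$ is spanned by such products times separator Laurent monomials. Every ambient variable is root-homogeneous, so each spanning product is separator-homogeneous. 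Taking the degree-zero component therefore gives a spanning set of $\D_{\mathbf i,\mathbb A}$ consisting of the degree-zero products
\[
\widetilde Z_{t_1,s_1}\cdots\widetilde Z_{t_n,s_n}H^{\mathbf b}.
\]

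Next, each such product is rewritten. Move all separators to the right, which costs only $q^{\mathbb Z/2}$ factors since the $H_p$ $q$-commute with every cluster variable; they are frozen in every seed. Then insert $H^{\mathbf n_{t_j,k_j}}H^{-\mathbf n_{t_j,k_j}}$ after each non-separator factor. The result is a unit times $Z_{t_1,k_1}\cdots Z_{t_n,k_n}H^{\mathbf b'}$, where $\mathbf b'$ is the leftover separator exponent. The whole product and each $Z_{t_j,k_j}$ have degree zero, so $H^{\mathbf b'}$ has degree zero. Since the separator degrees form a basis, $\mathbf b'=0$, exactly as in Lemma~\ref{lem:ZHb}. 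Therefore every spanning element is a unit multiple of a product of normalized cluster variables, which gives surjectivity. The step we expect to need care is the claim that $A_{\mathbf i,\mathbb A}$ is spanned by products of cluster variables. This holds by definition of the cluster algebra as the algebra generated by all cluster variables, with no frozen inversions; that is exactly the content of \cite[Theorem~7.3]{GY-integral}.

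Finally, all scalars arising above are powers of $q^{1/2}$, so the argument is valid over $\mathbb A$. Tensoring with $\mathbb K$ gives $\mathcal A_q(\mathbf s(\mathbf i))\cong\D_{\mathbf i}$.
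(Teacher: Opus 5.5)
Your proposal is correct and follows essentially the same route as the paper. Injectivity comes from the initial toric-frame embedding combined with Proposition~\ref{pro:normalized-mutation}; surjectivity comes from writing the generators of $L_{\mathbf i,\mathbb A}$ given by \eqref{eq:ambient-integral-cluster} as normalized variables $Z_{t,k}$ times separator Laurent monomials, moving the $H_p$ to the right, and using that the separator degrees form a $\mathbb Z$-basis to force the leftover exponent to vanish, with only a cosmetic difference in ordering (you extract degree-zero homogeneous products before normalizing, whereas the paper first rewrites $\widetilde Z_{t,2k-1}=q^{a_{t,k}/2}Z_{t,k}H^{-\mathbf n_{t,k}}$ and then takes the degree-zero component).
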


\begin{proof}
The initial toric-frame embedding and
Proposition~\ref{pro:normalized-mutation} give an injective
$\mathbb A$-algebra homomorphism $\Psi_{\mathbf i,\mathbb A}$
whose image is generated by all $Z_{t,k}$.

By \eqref{eq:ambient-integral-cluster},
$L_{\mathbf i,\mathbb A}$ is generated by the ambient
cluster variables $\widetilde Z_{t,2k-1}$ and the
separator units $H_p^{\pm1}$. Since
\[
\widetilde Z_{t,2k-1}
=q^{a_{t,k}/2}Z_{t,k}H^{-\mathbf n_{t,k}},
\qquad a_{t,k}\in\mathbb Z,
\]
it is equivalently generated by all $Z_{t,k}$ and
$H_p^{\pm1}$.

The separators $q$-commute with every normalized cluster
variable. Moving them to the right therefore expresses
every element of $L_{\mathbf i,\mathbb A}$ as an
$\mathbb A$-linear combination of monomials
\[
Z_{t_1,k_1}\cdots Z_{t_s,k_s}H^{\mathbf b},
\qquad \mathbf b\in\mathbb Z^{r-1}.
\]
Each $Z_{t,k}$ has degree zero, while the degrees of the
$H_p$ form a $\mathbb Z$-basis. Taking the degree-zero
component thus retains exactly the terms with
$\mathbf b=0$. Hence
$\D_{\mathbf i,\mathbb A}=(L_{\mathbf i,\mathbb A})_0$
is precisely the $\mathbb A$-algebra generated by all
$Z_{t,k}$, proving surjectivity.

No frozen variable of $\mathbf s(\mathbf i)$ is inverted.
Extension of scalars to $\mathbb K$ gives the final assertion.
\end{proof}

\begin{corollary}\label{cor:isoAb}
For every symmetrizable generalized Cartan matrix and every
positive word \(\mathbf i\) representing \(b\), there is
an isomorphism of \(\mathbb A\)-algebras
\[
\widehat A_{\mathbb A}(b)
\cong\mathcal A_{\mathbb A}(\mathbf s(\mathbf i)).
\]
After extension of scalars, this gives
\(\widehat A(b)\cong\mathcal A_q(\mathbf s(\mathbf i))\).
\end{corollary}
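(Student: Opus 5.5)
The corollary follows by composing two isomorphisms already established. Theorem~\ref{thm:integral-comparison} provides the integral comparison isomorphism
\[
\Phi_{\mathbf i,\mathbb A}:\widehat A_{\mathbb A}(b)\xrightarrow{\sim}\D_{\mathbf i,\mathbb A},
\qquad P_k^{\mathbf i}\longmapsto c_kY_k,\quad c_k\in\mathbb A^\times .
\]
Theorem~\ref{thm:cluster-dehomogenization} provides the cluster isomorphism
\[
\Psi_{\mathbf i,\mathbb A}:\mathcal A_{\mathbb A}(\mathbf s(\mathbf i))\xrightarrow{\sim}\D_{\mathbf i,\mathbb A},
\]
with no frozen variable inverted. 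I would set
\[
\Psi_{\mathbf i,\mathbb A}^{-1}\circ\Phi_{\mathbf i,\mathbb A}:\widehat A_{\mathbb A}(b)\xrightarrow{\sim}\mathcal A_{\mathbb A}(\mathbf s(\mathbf i)),
\]
which is an isomorphism of $\mathbb A$-algebras because both factors are.

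Both theorems are stated for the same word $\mathbf i$ and the same integral algebra $\D_{\mathbf i,\mathbb A}$. This common algebra is the degree-zero part of $L_{\mathbf i,\mathbb A}$, with the $\mathbb A$-basis of ordered $Y$-monomials given in \eqref{eq:integral-Y-PBW}. Hence the composition needs no compatibility check beyond this identification. The empty word is covered by the conventions $\widehat A(1)=\D_\emptyset=\mathbb K$ and $\D_{\emptyset,\mathbb A}=\mathbb A$, together with the trivial cluster algebra over $\mathbb A$.

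For the field-level statement, I would extend scalars along $\mathbb A\to\mathbb K$. By \eqref{eq:bosonic-integral-form} and the discussion after it, $\mathbb K\otimes_{\mathbb A}\widehat A_{\mathbb A}(b)=\widehat A(b)$. By \eqref{eq:integral-Y-PBW}, $\mathbb K\otimes_{\mathbb A}\D_{\mathbf i,\mathbb A}=\D_{\mathbf i}$. Finally, $\mathbb K\otimes_{\mathbb A}\mathcal A_{\mathbb A}(\mathbf s(\mathbf i))\cong\mathcal A_q(\mathbf s(\mathbf i))$ by the last assertion of Theorem~\ref{thm:cluster-dehomogenization}. Base change of the composed isomorphism then gives $\widehat A(b)\cong\mathcal A_q(\mathbf s(\mathbf i))$. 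Equivalently, one can compose Theorem~\ref{thm:comparison} with the scalar-extended form of Theorem~\ref{thm:cluster-dehomogenization}.

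No step here is a genuine obstacle; the substance lies in the two cited theorems. The only point needing care is independence of the chosen word: the statement is phrased for a fixed $\mathbf i$ representing $b$, and $\widehat A_{\mathbb A}(b)$ is independent of $\mathbf i$ by \cite[Corollaries~4.9 and~4.11]{KKOP-braid}, so nothing further is required.
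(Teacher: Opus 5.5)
Your proposal is correct and is exactly the paper's argument: the corollary is obtained by composing the integral comparison $\Phi_{\mathbf i,\mathbb A}$ of Theorem~\ref{thm:integral-comparison} with $\Psi_{\mathbf i,\mathbb A}^{-1}$ from Theorem~\ref{thm:cluster-dehomogenization}, both landing in the same algebra $\D_{\mathbf i,\mathbb A}$. The field-level statement then follows by extending scalars to $\mathbb K$, since $\mathcal A_q(\mathbf s(\mathbf i))$ is by definition $\mathbb K\otimes_{\mathbb A}\mathcal A_{\mathbb A}(\mathbf s(\mathbf i))$.
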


\begin{proof}
Combine Theorems~\ref{thm:integral-comparison}
and~\ref{thm:cluster-dehomogenization}.
\end{proof}

\begin{remark}\label{rem:scope-cluster-structure}
For each choice of $\mathbf i$ and comparison map
$\Phi_{\mathbf i,\A}$, the corollary gives a quantum cluster
structure on $\widehat A_{\A}(b)$. Although this integral algebra
is independent of $\mathbf i$, identifying the resulting seed
patterns for different expressions requires the comparison maps
to be compatible with braid moves. We do not prove this compatibility
here. In symmetric type, preservation of the integral form
does not identify the
bosonic global basis with the basis obtained from simple objects
in Theorem~\ref{thm:categorification}. That theorem realizes cluster
monomials by real simple objects; identifying them with elements
of the bosonic global basis requires a separate basis comparison.
\end{remark}

\begin{example}[A rank-one example]
Take \(\mathbf i=(i,i)\), set \(q_i=q^{d_i}\), and put
\(x=q_i^{1/2}f_{i,0}\), \(y=q_i^{1/2}f_{i,1}\).
Then
\[
xy-q_i^2yx=q_i(1-q_i^2),\qquad
g=xy-q_i,\qquad gx=q_i^{-2}xg.
\]
The word matrices and the corresponding variables are
\[
B=\begin{pmatrix}0\\-1\end{pmatrix},\qquad
\Lambda=\begin{pmatrix}0&2d_i\\-2d_i&0\end{pmatrix},\qquad
z_1=q_i^{-1/2}x,\quad z_2=q_i^{-2}g.
\]
The second variable is frozen. Mutation at the first position gives
\(z'_1=q_i^{-1/2}y\), since
\[
z_1z'_1=1+q_i z_2,\qquad z'_1z_1=1+q_i^{-1}z_2.
\]
These identities agree with the LS sign, the compatibility relation
\(B^{\mathsf t}\Lambda=(2d_i\ \ 0)\), and the integral
normalization used above. The resulting cluster algebra is generated
over \(\mathbb A\) by \(x,y\), since
\(z_2=q_i^{-2}(xy-q_i)\).
\end{example}

\subsection{Monoidal categorification}
\label{sec:monoidal-categorification}

Throughout the remainder of this section, assume that $C$ is symmetric
and choose $d_i=1$ for every $i$. The thickened Cartan matrix
$\widetilde C$ is then symmetric as well.

Let \(R_{\widetilde C}\) be a symmetric quiver Hecke algebra
in the standard geometric realization
over an algebraically closed field \(\mathbf{k}\) of
characteristic zero. We write \(\circ\) for convolution
and \(q\) for the grading-shift functor.
Let \(\mathscr C_{\widetilde w_{\mathbf i}}\) be the
standard full subcategory of finite-dimensional graded
\(R_{\widetilde C}\)-modules generated by the simple
modules corresponding to \(X_1,\ldots,X_{2r-1}\), under
convolution, grading shifts, extensions, and subquotients;
see \cite[Section~11.2]{kang2018monoidal}.

For a graded abelian category \(\mathscr C\), set
$$
K_{0,\A}(\mathscr C)
:=
\A\otimes_{\mathbb Z[q^{\pm1}]}K_0(\mathscr C),
$$
where \([qU]=q[U]\). Adjoining \(q^{1/2}\) extends the
coefficient ring; grading shifts in the category remain integral.
By integral quiver Hecke categorification, there is a root-graded
algebra isomorphism
\(\chi:K_{0,\A}
(\mathscr C_{\widetilde w_{\mathbf i}})
\xrightarrow{\sim}A_{\mathbf i,\A}\).

For \(1\le p<r\), let \(M_p\) be the determinantial
simple module corresponding to \(H_p\), up to a power of
\(q^{1/2}\). In particular,
\(\wt M_p=-\widetilde\beta_{2p}\).
As frozen determinantial modules, they have graded braiding
morphisms that are isomorphisms on
\(\mathscr C_{\widetilde w_{\mathbf i}}\)
\cite[Proposition~5.1 and Theorem~5.2]{KKOP-localization}.
The separators thus form a real commuting family of graded
braiders. By \cite[Theorem~2.12]{KKOP-localization}, we may form
the monoidal localization
\[
Q_{\mathbf i}:\mathscr C_{\widetilde w_{\mathbf i}}
\longrightarrow
\mathscr L_{\mathbf i}
:=\mathscr C_{\widetilde w_{\mathbf i}}
[M_p^{-1}\mid 1\le p<r].
\]
We also write \(M_p\) for its image and set
\(M^{\mathbf a}=M_1^{\circ a_1}\circ\cdots
\circ M_{r-1}^{\circ a_{r-1}}\), using this fixed order
and tensor inverses for negative exponents.

Every object is a finite direct sum of fractions
\(Q_{\mathbf i}(X)\circ M^{\mathbf a}\) with \(X\)
root-homogeneous. For \(X\in R_{\widetilde C}(\beta)\text{-gmod}\),
the extended weight and separator degree are
\[
\wt\bigl(Q_{\mathbf i}(X)\circ M^{\mathbf a}\bigr)
=-\beta-\sum_p a_p\widetilde\beta_{2p},
\qquad
\deg_\infty U=\deg_\infty(-\wt U).
\]
Let \(\mathscr C_{\mathbf i}^{0}\) be the full subcategory
of finite direct sums of fractions of separator degree zero.

\begin{lemma}[Localization at the separator subfamily]
\label{lem:separator-category-localization}
The category \(\mathscr L_{\mathbf i}\) is abelian of finite
length, its convolution is exact, and \(Q_{\mathbf i}\) is exact.
It sends simple modules to nonzero simple objects. Every simple
object of \(\mathscr L_{\mathbf i}\) is, up to an integer grading
shift, \(Q_{\mathbf i}(S)\circ M^{\mathbf a}\) for a simple
\(S\in\mathscr C_{\widetilde w_{\mathbf i}}\).
Moreover,
\begin{equation}\label{eq:separator-K0-localization}
K_0(\mathscr L_{\mathbf i})
\cong K_0(\mathscr C_{\widetilde w_{\mathbf i}})
\bigl[[M_p]^{-1}\mid 1\le p<r\bigr].
\end{equation}
The separator degree gives a direct-sum grading of
\(\mathscr L_{\mathbf i}\). Its zero component
\(\mathscr C_{\mathbf i}^{0}\) is an abelian monoidal subcategory,
and
\begin{equation}\label{eq:separator-K0-zero}
K_0(\mathscr C_{\mathbf i}^{0})
=K_0(\mathscr L_{\mathbf i})_0.
\end{equation}
\end{lemma}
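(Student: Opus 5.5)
The plan is to deduce almost everything from the general theory of monoidal localization by real commuting families of graded braiders in \cite{KKOP-localization}, applied to the family $\{M_p\}_{1\le p<r}$. We already know that the $M_p$ are real and that their braidings on $\mathscr C_{\widetilde w_{\mathbf i}}$ are isomorphisms, since they are frozen determinantial modules. So \cite[Theorem~2.12]{KKOP-localization} shows that $\mathscr L_{\mathbf i}$ is abelian, that convolution is exact, and that $Q_{\mathbf i}$ is exact. It also gives the description of objects as fractions $Q_{\mathbf i}(X)\circ M^{\mathbf a}$. Finite length in $\mathscr L_{\mathbf i}$ is inherited from $\mathscr C_{\widetilde w_{\mathbf i}}$. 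Any object is a fraction $Q(X)\circ M^{\mathbf a}$, convolution with the invertible object $M^{\mathbf a}$ is an exact autoequivalence, and $Q$ is exact. Hence a composition series of $X$ maps to a filtration of $Q(X)$ whose factors are images of simples.

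For simples, we need $Q_{\mathbf i}(S)\ne0$ and simple. Nonvanishing holds because $S\circ M^{\mathbf a}\ne0$ in a category of modules over a quiver Hecke algebra, so $S$ is not annihilated in the localization. Simplicity uses that $S\circ M_p$ has simple head: $M_p$ is real and braids centrally, so $S\circ M_p\cong M_p\circ S$ is simple. Then \cite[Section~2 and Section~5]{KKOP-localization} give that $Q$ sends simples to simples. A simple object $U$ of $\mathscr L_{\mathbf i}$ is a subquotient of some $Q(X)\circ M^{\mathbf a}$. By the filtration above it is then $Q(S)\circ M^{\mathbf a}$ up to a grading shift. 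The $K_0$ statement \eqref{eq:separator-K0-localization} follows from the universal property of localization: the classes $[M_p]$ become units, and the map from the Ore localization is surjective by the fraction description. For injectivity, we compare bases. Simples of $\mathscr L_{\mathbf i}$ correspond to pairs $(S,\mathbf a)$ modulo $S\circ M_p\sim$ shift of $S'$, which matches the basis of the localized Grothendieck ring. Concretely, the ambient ring is a domain, $[M_p]$ is normal, and the basis of simples is stable under multiplication by $[M_p]$ up to $q$-powers.

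For the grading, the separator degree is additive under convolution and constant on indecomposable root-homogeneous objects. Every Hom between objects of different degree vanishes, because the extended weight is preserved by morphisms. Hence $\mathscr L_{\mathbf i}=\bigoplus_{\mathbf d}(\mathscr L_{\mathbf i})_{\mathbf d}$ as abelian categories, and each summand is closed under subquotients and extensions. The zero component is closed under $\circ$ and contains the unit, so it is an abelian monoidal subcategory. Finally, $K_0$ of a direct sum of abelian categories is the direct sum of the $K_0$'s, giving \eqref{eq:separator-K0-zero}.

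The main obstacle I expect is the injectivity and simplicity part, namely that $Q_{\mathbf i}$ sends distinct nonisomorphic simples, modulo separator twists, to distinct simples, with nothing killed. I would first try to cite directly the structure results of \cite{KKOP-localization} for localization at determinantial frozen families, where exactly this is proved. Only if the hypotheses there differ would I fall back on the basis comparison in $K_0$ sketched above.
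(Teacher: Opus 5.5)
Your proposal is correct and follows essentially the same route as the paper, which likewise takes abelianness and exactness from \cite[Theorem~2.12 and Proposition~2.13]{KKOP-localization}, cites \cite[Proposition~4.8]{KKOP-localization} for exactly the point you flag as the main obstacle (simples remain nonzero, simple and pairwise distinct, whence finite length), and obtains the direct-sum grading from the vanishing of morphisms between distinct extended weights. For \eqref{eq:separator-K0-localization} the paper packages your basis comparison a little more directly: $K_0(Q_{\mathbf i})$ is injective because both Grothendieck groups are free on simples and $Q_{\mathbf i}$ preserves and separates them, and a fraction in the kernel is reduced to this case by multiplying by its invertible denominator.
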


\begin{proof}
The separators form a real commuting family, so the construction
of \cite[Theorem~2.12 and
Proposition~2.13]{KKOP-localization} gives abelianness and
exactness. Their braiding morphisms on
\(\mathscr C_{\widetilde w_{\mathbf i}}\) are isomorphisms. Thus
\cite[Proposition~4.8]{KKOP-localization} shows that simple
modules remain nonzero and simple, distinct simples remain
distinct, and the localized category has finite length.
The description of its simple objects follows from the fraction
presentation and exactness.
Tensoring with \(M^{\mathbf a}\) in the localized category is
an exact equivalence.

The classes \([M_p]\) are normal and pairwise \(q\)-commuting.
They are nonzero divisors, since scalar extension from
\(\mathbb Z[q^{\pm1}]\) to \(\A\) is faithfully free and
\(\chi\) identifies the extended Grothendieck ring with the domain
\(A_{\mathbf i,\A}\). Their monomials form an Ore denominator
set. The resulting localization maps to \(K_0(\mathscr L_{\mathbf i})\), and
fraction representatives give surjectivity. Both categories
have finite length, so their Grothendieck groups are free on
the isomorphism classes of simple objects. Since localization
preserves and separates simples, \(K_0(Q_{\mathbf i})\) is
injective. A fraction in the kernel
can be multiplied by its invertible denominator, reducing the
claim to this injectivity. This proves
\eqref{eq:separator-K0-localization}; it is the argument of
\cite[Corollary~5.4]{KKOP-localization} for the chosen subfamily.

The morphisms in the localization are obtained after tensoring
by common nonnegative powers of the separators. This preserves
the extended root weight, so morphisms
between distinct root weights vanish. Every object has a finite
root-weight decomposition, and kernels, cokernels, and exact
sequences decompose accordingly. Applying \(\deg_\infty\) gives
a direct-sum grading by \(\mathbb Z^{r-1}\). Since weights add
under convolution, the zero component
is abelian and monoidal. Taking Grothendieck groups gives
\eqref{eq:separator-K0-zero}.
\end{proof}

A simple object \(S\) is called \emph{real}
if \(S\circ S\) is simple.

\begin{theorem}\label{thm:categorification}
For every positive braid word \(\mathbf i\) in symmetric
type, there is an algebra isomorphism
$$
\mathcal A_{\A}(\mathbf s(\mathbf i))
\xrightarrow{\sim}
K_{0,\A}(\mathscr C_{\mathbf i}^{0})
$$
sending every quantum cluster monomial to the class of
a real simple object, up to a power of \(q^{1/2}\).
Thus \(\mathscr C_{\mathbf i}^{0}\) is a monoidal
categorification over \(\A\).
\end{theorem}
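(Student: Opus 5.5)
The plan is to compose three identifications already available and then transport the ambient categorification statement through the normalization. First, Theorem~\ref{thm:cluster-dehomogenization} gives $\Psi_{\mathbf i,\A}:\mathcal A_{\A}(\mathbf s(\mathbf i))\xrightarrow{\sim}\D_{\mathbf i,\A}$. Second, the quiver Hecke isomorphism $\chi:K_{0,\A}(\mathscr C_{\widetilde w_{\mathbf i}})\xrightarrow{\sim}A_{\mathbf i,\A}$ sends $[M_p]$ to $H_p$ up to a power of $q^{1/2}$. Since $K_{0,\A}$ is obtained from $K_0$ by the flat scalar extension $\mathbb Z[q^{\pm1}]\to\A$, applying it to \eqref{eq:separator-K0-localization} shows that $\chi$ extends to an isomorphism
\[
K_{0,\A}(\mathscr L_{\mathbf i})\xrightarrow{\sim}L_{\mathbf i,\A}.
\]
Both sides carry root gradings, and the extension is graded. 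Hence it preserves separator degree, and \eqref{eq:separator-K0-zero} restricts it to $K_{0,\A}(\mathscr C^0_{\mathbf i})\xrightarrow{\sim}\D_{\mathbf i,\A}$. Composing with $\Psi_{\mathbf i,\A}^{-1}$ gives the required algebra isomorphism.

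Next I will control cluster monomials. In symmetric type, \cite[Theorem~11.2.?]{kang2018monoidal} (Kang--Kashiwara--Kim--Oh) shows that $\mathscr C_{\widetilde w_{\mathbf i}}$ monoidally categorifies $A_{\mathbf i,\A}=\mathcal A_{\A}(\mathbf s(\mathbf j))$. Thus every ambient cluster monomial $\widetilde{\mathsf M}_t(\mathbf u)$ with $\mathbf u\ge0$ is $q^{m/2}\chi([S])$ for a real simple $S$. A normalized cluster monomial $\mathsf M_t(\mathbf a)=\widetilde{\mathsf M}_t(K_t\mathbf a)$ with $\mathbf a\in\mathbb N^r$ differs from the ambient cluster monomial $\widetilde{\mathsf M}_t((\mathbf a,\mathbf 0))$ only by multiplication by the separator toric element $\widetilde{\mathsf M}_t((0,N_t\mathbf a))$, up to $q^{\mathbb Z/2}$. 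This follows from the toric-frame multiplication rule. The separators appear as frozen variables in every seed, so I can write $N_t\mathbf a=\mathbf b^+-\mathbf b^-$ with $\mathbf b^\pm\ge0$. Then $\mathsf M_t(\mathbf a)\cdot\widetilde{\mathsf M}_t((0,\mathbf b^-))$ is, up to $q^{\mathbb Z/2}$, the ambient cluster monomial $\widetilde{\mathsf M}_t((\mathbf a,\mathbf b^+))$, which is the class of a real simple $S$. Therefore $\mathsf M_t(\mathbf a)$ corresponds to $Q_{\mathbf i}(S)\circ M^{-\mathbf b^-}$ up to a grading shift and a power of $q^{1/2}$. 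By Lemma~\ref{lem:separator-category-localization} this object is simple, and it lies in $\mathscr C^0_{\mathbf i}$ because its class has separator degree zero.

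The remaining points are the following.
\begin{enumerate}
\item Realness survives localization. The square of the localized object is $Q_{\mathbf i}(S\circ S)$ convolved with an invertible object, up to a shift, because the $M_p$ are central braiders. Since $S\circ S$ is simple, Lemma~\ref{lem:separator-category-localization} shows the square is simple.
\item Monoidal categorification also requires the cluster variables to be classes of real simples whose products remain simple, that is, cluster monomials give simples. This is what was shown above.
\end{enumerate}

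I expect the main obstacle to be the compatibility of the shifts in the categorified setting. One must check that the $q^{1/2}$-power ambiguities coming from $\chi$, from the normalization $H_p=q^{-1/2}X_{2p}$, and from the braiders combine into a single unit, and that the realness of $Q_{\mathbf i}(S)\circ M^{\mathbf a}$ follows cleanly from \cite{KKOP-localization}. I would handle this by invoking \cite[Proposition~4.8]{KKOP-localization} for the simplicity of fractions, and the real-commuting property of the braiders for rearranging $(Q(S)\circ M^{\mathbf a})^{\circ2}$.
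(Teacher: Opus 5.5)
Your proposal is correct and follows essentially the paper's argument. You extend $\chi$ through the separator localization, take separator degree zero to reach $\D_{\mathbf i,\A}$, and compose with Theorem~\ref{thm:cluster-dehomogenization}. You then realize each $\mathsf M_t(\mathbf a)$ as a localized real simple twisted by separator powers, with realness coming from the commuting braiders; the paper writes this object directly as $Q_{\mathbf i}(S_t^{\mathbf a})\circ M^{N_t\mathbf a}$ instead of splitting $N_t\mathbf a=\mathbf b^+-\mathbf b^-$, which is only a cosmetic difference. Beyond your sketch, the paper also constructs the normalized exchange sequences \eqref{eq:normalized-categorical-exchange} in $\mathscr C^0_{\mathbf i}$. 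These are needed for the stronger notion of a monoidal seed admitting successive mutations, but your argument suffices for the displayed cluster-monomial statement.
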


\begin{proof}
By Lemma~\ref{lem:separator-category-localization},
\(\chi\) extends to a root-graded algebra isomorphism
\(\chi_{\mathrm{loc}}:
K_{0,\A}(\mathscr L_{\mathbf i})
\xrightarrow{\sim}L_{\mathbf i,\A}\).
The coefficient extension from \(\mathbb Z[q^{\pm1}]\) to
\(\A\) is free, and the separator grading is a direct-sum
decomposition. Scalar extension therefore commutes with taking
separator degree zero, giving
$$
K_{0,\A}(\mathscr C_{\mathbf i}^{0})
\cong
\bigl(K_{0,\A}(\mathscr L_{\mathbf i})\bigr)_0
\cong
(L_{\mathbf i,\A})_0
=\D_{\mathbf i,\A}.
$$
The determinantial classes are normalized only by powers of
\(q^{1/2}\), which are units of \(\A\). The required isomorphism
therefore follows from
Theorem~\ref{thm:cluster-dehomogenization}.

Let \(t\) be an ambient seed, and let \(S_{t,k}\) be the
real simple module corresponding to
\(\widetilde Z_{t,2k-1}\), up to a power of \(q^{1/2}\).
By \cite[Theorem~11.2.3]{kang2018monoidal}, these modules
belong to a monoidal seed. In particular, for every
\(\mathbf a\in\mathbb N^r\), the convolution product
\(S_t^{\mathbf a}
=S_{t,1}^{\circ a_1}\circ\cdots\circ S_{t,r}^{\circ a_r}\)
is real simple.

Consider
\(T_{t,\mathbf a}
:=Q_{\mathbf i}(S_t^{\mathbf a})\circ M^{N_t\mathbf a}\).
It is simple because localization preserves simplicity
and tensoring with an invertible object is an equivalence.
Moreover, the separator objects commute up to grading
shifts with \(Q_{\mathbf i}(S_t^{\mathbf a})\), so
$$
T_{t,\mathbf a}\circ T_{t,\mathbf a}
\simeq
q^d Q_{\mathbf i}
 \bigl(S_t^{\mathbf a}\circ S_t^{\mathbf a}\bigr)
 \circ M^{2N_t\mathbf a}
$$
for some \(d\in\mathbb Z\).
The right-hand side is simple, so
\(T_{t,\mathbf a}\) is real.

By the defining property of \(N_t\),
$$
\deg_\infty T_{t,\mathbf a}
=
\sum_{k=1}^{r}a_k
\left(
\deg_\infty\widetilde Z_{t,2k-1}
+\sum_{p=1}^{r-1}n_{t,pk}\deg_\infty H_p
\right)
=0.
$$
Hence \(T_{t,\mathbf a}\in\mathscr C_{\mathbf i}^{0}\).
Its class satisfies
\(\chi_{\mathrm{loc}}([T_{t,\mathbf a}])
=q^{m/2}\mathsf M_t(\mathbf a)\)
for some \(m\in\mathbb Z\), by the normalization of quantum
monomials.
Proposition~\ref{pro:normalized-mutation} identifies
\(\mathsf M_t(\mathbf a)\) with the corresponding
cluster monomial of \(\mathcal A_{\A}(\mathbf s(\mathbf i))\).

Finally, let \(t\xrightarrow{k}t'\), write
\(\mathbf b=(b_{sk})_s\) for the \(k\)-th column of \(B_t\),
and put \(\mathbf c=N_t\mathbf b\). Set
\[
T_{t,s}:=Q_{\mathbf i}(S_{t,s})\circ M^{\mathbf n_{t,s}},
\qquad
E_\pm:=T_{t,1}^{\circ[\pm b_{1k}]_+}\circ\cdots
\circ T_{t,r}^{\circ[\pm b_{rk}]_+}.
\]
The ambient exchange sequence has middle term
\(S_{t,k}\circ S_{t',k}\) and end terms, up to integer
shifts, the ordered products with exponent vectors
\(([\pm\mathbf b]_+,[\pm\mathbf c]_+)\).
Apply the exact functor \(Q_{\mathbf i}\) and tensor by
\(M^{\mathbf n_{t,k}+\mathbf n_{t',k}}\). Equation~
\eqref{eq:normalization-mutation} gives
\[
[\pm\mathbf c]_++\mathbf n_{t,k}+\mathbf n_{t',k}
=N_t[\pm\mathbf b]_+.
\]
Using the braiding isomorphisms, we obtain
\begin{equation}\label{eq:normalized-categorical-exchange}
0\longrightarrow q^{d_1}E_{\varepsilon}
\longrightarrow T_{t,k}\circ T_{t',k}
\longrightarrow q^{d_2}E_{-\varepsilon}
\longrightarrow0
\end{equation}
for \(\varepsilon\in\{+,-\}\) and integers \(d_1,d_2\),
where \(-\varepsilon\) denotes the other sign. The order
of the end terms comes from the ambient exchange sequence;
reversing the convolution order reverses these terms.
The shifts combine those in the ambient sequence with those
in the braiding isomorphisms. All three terms have separator degree
zero. Taking classes and applying the toric normalizations of
Proposition~\ref{pro:normalized-mutation} gives the
quantum exchange relation. Hence the normalized families realize
mutation in \(\mathscr C_{\mathbf i}^{0}\).
Here the powers of \(q^{1/2}\) are coefficients in
\(K_{0,\A}\); the categorical grading shifts are integral.
\end{proof}

\subsection{An explicit thickening example}

\begin{example}[Thickening a nonreduced word in type $A_2$]
\label{ex:thickening-A2}
Let $C=\left(\begin{smallmatrix}2&-1\\-1&2\end{smallmatrix}\right)$
and let $\mathbf i=(1,2,2)$ represent the positive braid
$\sigma_1\sigma_2^2$. This word is not reduced in the Weyl group.
The thickened word is
$\mathbf j=(1,\infty_1,2,\infty_2,2)$.
In the ordered simple-root basis
$(\alpha_1,\alpha_2,\alpha_{\infty_1},\alpha_{\infty_2})$,
its root vectors have degrees given by the columns of
\[
 (\widetilde\beta_1\ \cdots\ \widetilde\beta_5)
 =\begin{pmatrix}
 1&2&5&16&27\\
 0&0&1&2&3\\
 0&1&2&6&10\\
 0&0&0&1&2
 \end{pmatrix}.
\]
Thus $H_1=q^{-1/2}X_2$ and $H_2=q^{-1/2}X_4$ have
separator degrees $(1,0)$ and $(6,1)$, respectively.  The
right-normalized PBW generators and their root degrees are
\[
\begin{aligned}
Y_1&=X_1,&\deg Y_1&=\alpha_1,\\
Y_2&=X_3H_1^{-2},&\deg Y_2&=\alpha_1+\alpha_2,\\
Y_3&=X_5H_1^2H_2^{-2},&\deg Y_3&=-\alpha_1-\alpha_2.
\end{aligned}
\]
The PBW and seed normalization matrices are
\[
 M=\begin{pmatrix}0&-2&2\\0&0&-2\end{pmatrix},
 \qquad
 N=MV=\begin{pmatrix}0&-2&0\\0&0&-2\end{pmatrix}.
\]
Write $\widetilde Z_s$ for the initial ambient seed variables,
so $\widetilde Z_2=H_1$, $\widetilde Z_4=H_2$,
$\widetilde Z_1=q^{-1/2}X_1$, and
$\widetilde Z_3=q^{-1/2}X_3$.
With products taken in the displayed order, the normalized
variables are
\[
 Z_1=\widetilde Z_1,\qquad
 Z_2=q^2\widetilde Z_3H_1^{-2},\qquad
 Z_3=\widetilde Z_5H_2^{-2}.
\]
Here only position $2$ is mutable, and
\[
 B_{\mathbf i}=\begin{pmatrix}0\\0\\-1\end{pmatrix},
 \qquad
 \Lambda_{\mathbf i}=
 \begin{pmatrix}0&-1&0\\1&0&2\\0&-2&0\end{pmatrix},
 \qquad
 B_{\mathbf i}^{\mathsf t}\Lambda_{\mathbf i}=(0\ \ 2\ \ 0).
\]
Let $\widetilde Z'_3$ be the ambient mutation at position $3$.
Its exchange column, in the order $(1,3,5,2,4)$, is $(0,0,-1,0,2)^{\mathsf t}$, and
\[
 \widetilde Z_3\widetilde Z'_3
 =q^{-2}H_2^2+q^{-1}\widetilde Z_5.
\]
The mutated normalization exponent is $(2,-2)^{\mathsf t}$.
The toric-frame convention gives
$Z'_2=q^{-20}\widetilde Z'_3H_1^2H_2^{-2}$;
using $H_1^{-2}\widetilde Z'_3
=q^{20}\widetilde Z'_3H_1^{-2}$, we obtain
\[
 Z_2Z'_2=1+qZ_3,\qquad Z'_2Z_2=1+q^{-1}Z_3.
\]
All scalar factors are units of $\A$, and the remaining
frozen variables $Z_1,Z_3$ are not inverted.

In the categorical realization, let $S_s$ and $S'_3$ represent
$\widetilde Z_s$ and $\widetilde Z'_3$, up to powers of $q^{1/2}$,
and let $M_1,M_2$ represent the separators.  The initial normalized
objects and their mutation are, up to integer grading shifts,
\[
\begin{aligned}
T_1&=Q_{\mathbf i}(S_1),&
T_2&=Q_{\mathbf i}(S_3)\circ M_1^{\circ(-2)},\\
T_3&=Q_{\mathbf i}(S_5)\circ M_2^{\circ(-2)},&
T'_2&=Q_{\mathbf i}(S'_3)\circ M_1^{\circ2}
                    \circ M_2^{\circ(-2)}.
\end{aligned}
\]
Their separator degrees are zero: those of
$S_3,S_5,S'_3$ are $(2,0),(12,2),(10,2)$, respectively.
The normalized exchange therefore takes place in
$\mathscr C_{\mathbf i}^{0}$.
\end{example}

\subsection{Embedding Theorem for Bao-Ye-M\'enard seed}
Let $v\leq w$, and let $\mathbf w$ be a reduced expression
of $w$. In symmetric type,
\cite{bi2026monoidal,kashiwara2026monoidal} give an embedding
\[
\Phi_{\mathbf w,v}:
\mathcal A_q(\mathbf s(v,\mathbf w))
\hookrightarrow K_{0,\mathbb K}(\mathscr C_{w,v})
\]
sending quantum cluster monomials to classes of simple
modules up to powers of $q^{1/2}$.
Since the integral cluster algebra is generated by its
cluster variables, this map restricts to an embedding
\[
\Phi_{\mathbf w,v,\mathbb A}:
\mathcal A_{\mathbb A}(\mathbf s(v,\mathbf w))
\hookrightarrow K_{0,\mathbb A}(\mathscr C_{w,v}).
\]

Now let $\mathbf i$ be a positive braid word and let
$v\leq\delta(\mathbf i)$. Put
$\mathbf j=\operatorname{th}(\mathbf i)$ and
$\widetilde w=\widetilde w_{\mathbf i}$.
All categories below are defined for the enlarged Cartan
matrix $\widetilde C$. We use their standard coordinate-ring
realizations and set
\[
R_{\widetilde w,v}
:=K_{0,\mathbb A}(\mathscr C_{\widetilde w,v})
\subseteq A_{\mathbf i,\mathbb A}.
\]
Let $M_p$ be the determinantial module corresponding,
up to a power of $q^{1/2}$, to the separator $H_p$.
As verified below, $M_p$ belongs to
$\mathscr C_{\widetilde w,v}$. Define
\[
\mathscr L_{\widetilde w,v}
:=\mathscr C_{\widetilde w,v}
[M_p^{-1}\mid 1\leq p<r],
\qquad
\mathscr C_{\mathbf i,v}^{0}
:=(\mathscr L_{\widetilde w,v})_0,
\]
where the subscript $0$ denotes auxiliary degree zero.
Write $S$ for the multiplicative set generated by the
separators and powers of $q^{1/2}$. By the proof of Lemma~\ref{lem:separator-category-localization}, the category $\mathscr C_{\mathbf i,v}^0$ is a abelian monoidal subcategory.

\begin{theorem}\label{thm:menard-seed-embedding}
Normalization induces an algebra embedding
\[
\Phi_{\mathbf i,v,\mathbb A}:
\mathcal A_{\mathbb A}(\mathbf s(v,\mathbf i))
\hookrightarrow
K_{0,\mathbb A}(\mathscr C_{\mathbf i,v}^{0})
=
\bigl(R_{\widetilde w,v}[S^{-1}]\bigr)_0.
\]
Moreover, $\Phi_{\mathbf i,v,\mathbb A}$ sends every
quantum cluster monomial to the class of a simple object
of $\mathscr C_{\mathbf i,v}^{0}$, up to a power of
$q^{1/2}$.
Extension of scalars gives an embedding
\(
\Phi_{\mathbf i,v}:
\mathcal A_q(\mathbf s(v,\mathbf i))
\hookrightarrow
K_{0,\mathbb K}(\mathscr C_{\mathbf i,v}^{0}).
\)
\end{theorem}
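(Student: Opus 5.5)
We follow the template of Theorems~\ref{thm:cluster-dehomogenization} and~\ref{thm:categorification}, replacing the word seed of $\operatorname{th}(\mathbf i)$ by the Bao--Ye--M\'enard seed of $(\widetilde v,\mathbf j)$ in the ambient Kac--Moody datum.

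\emph{Step 1: choose the ambient parameters.} Regard $v\le\delta(\mathbf i)$ as an element of $\widetilde W$ via the parabolic embedding $W\subseteq\widetilde W$. We must show $v\le\widetilde w$. The Demazure product of $\mathbf i$ is the Demazure product of the subword of $\mathbf j$ at odd positions. The Demazure product of $\mathbf j$ dominates that of any subword, so $v\le\delta(\mathbf i)\le\delta(\mathbf j)=\widetilde w$; the last equality holds because $\mathbf j$ is reduced. Fix the positive subexpression for $v$ inside $\mathbf j$ that avoids all separator positions $2p$; this is possible because $v\in W$ is supported on old letters. Each $\infty_p$ occurs once in $\mathbf j$ and is never used by this subexpression, so the corresponding position is a frozen minor in $\mathbf s(v,\mathbf j)$. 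Its minor is the determinantial module $M_p$, which therefore lies in $\mathscr C_{\widetilde w,v}$. The one thing to check here is that the Bao--Ye--M\'enard seed variable at that position is the separator $H_p$, up to $q^{1/2}$-powers. I expect this to follow from the definition in Section~\ref{app:menard-seed} as the $v=1$ case does. Applying the cited embedding $\Phi_{\mathbf j,v,\A}$ gives $\mathcal A_\A(\mathbf s(v,\mathbf j))\hookrightarrow R_{\widetilde w,v}$, sending cluster monomials to classes of simple modules.

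\emph{Step 2: normalize the seed.} All seed variables are root-homogeneous, since the exchange relations are homogeneous and the initial minors are. As in \eqref{eq:normalized-toric-frame}, define $N_t$ and the normalized frames $\mathsf M_t=\widetilde{\mathsf M}_t\circ K_t$ by right multiplication by separator Laurent monomials. The proof of Proposition~\ref{pro:normalized-mutation} uses only three facts: homogeneity, that the separators are frozen and never mutated, and compatibility. It therefore applies verbatim and shows that normalization commutes with mutation. We then need the resulting normalized initial seed to coincide with $\mathbf s(v,\mathbf i)$, meaning the same exchange matrix $B_t$ and the same $\Omega_{t_0}$. For the matrix, delete the separator rows and observe that the rows and columns at the odd positions reproduce the Bao--Ye--M\'enard combinatorics of $(v,\mathbf i)$. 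For $\Omega_{t_0}$, run a leading-term computation as in Lemma~\ref{lem:normalized-seed-commutation}, or use the compatibility identity \eqref{eq:normalized-compatibility} together with uniqueness of a compatible $\Lambda$ for full-rank $B$. This identification is the \textbf{main obstacle}. My first approach is an induction on $r$ along prefixes, comparing the interval and predecessor data of the two seeds as Lemma~\ref{lem:predecessor-normalized-algebra} does.

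\emph{Step 3: conclude.} Localize at the $M_p$ as in Lemma~\ref{lem:separator-category-localization}, whose proof only uses that the $M_p$ are real commuting braiders. This gives $K_{0,\A}(\mathscr C^0_{\mathbf i,v})=(R_{\widetilde w,v}[S^{-1}])_0$. Define $\Phi_{\mathbf i,v,\A}$ by sending a cluster variable $Z_{t,k}$ to $\widetilde Z_{t,k}H^{\mathbf n_{t,k}}$. By Step~2 it respects exchange relations. It is injective because the initial normalized toric frame embeds: $K_{t_0}$ is injective and the ambient toric monomials are independent, which is the argument of Theorem~\ref{thm:cluster-dehomogenization}. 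It remains to treat cluster monomials. A cluster monomial $\mathsf M_t(\mathbf a)$ maps to a $q^{1/2}$-power times $[Q(S_t^{\mathbf a})\circ M^{N_t\mathbf a}]$. This object is simple because localization preserves simplicity and tensoring with an invertible object is an equivalence. It has separator degree zero by construction of $N_t$. Extending scalars to $\mathbb K$ gives the final embedding.
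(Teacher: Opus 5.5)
Your Steps 1 and 3 follow the paper's proof closely: the ambient seed $\mathbf s(v,\mathbf j)$, the cited reduced-word embedding, localization at the $M_p$, and simple objects $Q_{\mathbf i,v}(S)\circ M^{\mathbf n}$ of separator degree zero. Your check that $v\le\widetilde w$ via Demazure products is a useful point that the paper leaves implicit.

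The gap is the step you flag yourself in Step~2. You never show that the normalized seed has commutation matrix $\Lambda_{v,\mathbf i}$, and one of your proposed remedies fails. Compatibility does not determine $\Lambda$ for full-rank $B$. If $\Lambda,\Lambda'$ are both compatible with the same coefficients, then $B^{\mathsf t}(\Lambda-\Lambda')=0$, and every skew form vanishing on $\operatorname{im}B$ has this property. Nonzero such forms exist as soon as there are two frozen vertices, which is the typical situation for these seeds. The leading-term route would need the leading PBW terms of the mutated variables $y_a$, which you do not have. The prefix induction is not specified.

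The missing observation is that you should not normalize $\mathbf s(v,\mathbf j)$ as a fresh initial seed. It is obtained from $\mathbf s(\mathbf j)$ by $\mu_{\le m}$, followed by freezing and deleting $J_{\mathbf j}$.
\begin{itemize}
\item Every reduced expression of $v\in W$ uses only old letters, and $a\mapsto 2a-1$ preserves the lexicographic order. So the leftmost reduced subexpression of $v$ in $\mathbf j$ sits at $2p_1-1,\ldots,2p_m-1$. This is the subexpression the construction prescribes; there is nothing to choose. The counts $n_i,a_k,b_k$ are unchanged.
\item Hence all mutation directions are odd and correspond to those for $\mathbf i$. Start Proposition~\ref{pro:normalized-mutation} at $t_0$, where Lemma~\ref{lem:normalized-seed-commutation} gives $\Omega_{t_0}=\Lambda_{\mathbf i}$. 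It already shows that the normalized seed at $\mu_{\le m}t_0$ is $\mu_{\le m}(\mathbf s(\mathbf i))$, with exchange matrix $B^{(m)}_{\mathbf i}$ and commutation matrix $\Lambda^{(m)}_{\mathbf i}$.
\item Freezing and deletion then commute with normalization. The separators are already frozen, so the frozen and deleted sets are determined by entries between odd vertices of $B_t$, and they agree.
\item $K_t$ is the identity on odd coordinates, so restricting $\Omega_t$ to $V_{\mathbf i}$ gives $\Lambda_{v,\mathbf i}$.
\item As the paper notes, the deleted rows vanish on the retained mutable columns and stay zero. So later exchange relations involve no deleted variables.
\end{itemize}
With this in place, the rest of your argument goes through.
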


\begin{proof}
We first compare the seeds.
Since $v$ belongs to the original parabolic subgroup,
its leftmost reduced subexpression in $\mathbf j$ occurs
at $2p_1-1,\ldots,2p_m-1$. Thickening leaves the counts
$n_i,a_k,b_k$ unchanged, so the prescribed mutation
directions agree under $a\leftrightarrow 2a-1$.
Moreover, deleting the frozen separator rows commutes
with matrix mutation. Hence the vertices frozen or
deleted in the two constructions also agree.

After the final freezing, every deleted row is zero
on the retained mutable columns. These entries remain
zero under subsequent mutations, so the corresponding
exchange relations involve no deleted variables.
Proposition~\ref{pro:normalized-mutation} therefore gives
\[
Z_{t,a}
=
q^{c_{t,a}/2}\widetilde Z_{t,2a-1}
H^{\mathbf n_{t,a}},
\qquad
\deg_\infty Z_{t,a}=0,
\]
with the scalar prescribed by the toric normalization.
The normalization exponent vectors have an identity
block on the retained odd coordinates, so distinct
quantum torus monomials remain linearly independent.
Thus normalization induces an embedding
\[
\iota_{\mathbf i,v}:
\mathcal A_{\mathbb A}(\mathbf s(v,\mathbf i))
\hookrightarrow
\bigl(
\mathcal A_{\mathbb A}(\mathbf s(v,\mathbf j))[S^{-1}]
\bigr)_0.
\]

We next pass to the categorical realization.
Since $v\varpi_{\infty_p}=\varpi_{\infty_p}$, we have
\[
M_p
=
M(\widetilde w\varpi_{\infty_p},\varpi_{\infty_p})
=
M(\widetilde w\varpi_{\infty_p},v\varpi_{\infty_p})
\in\mathscr C_{\widetilde w,v}.
\]
The separator braiders restrict to this subcategory.
The argument of
Lemma~\ref{lem:separator-category-localization},
followed by taking auxiliary degree zero, gives
\[
K_{0,\mathbb A}(\mathscr C_{\mathbf i,v}^{0})
=
\bigl(R_{\widetilde w,v}[S^{-1}]\bigr)_0.
\]

The reduced-word embedding $\Phi_{\mathbf j,v}$ sends
each cluster variable to a power of $q^{1/2}$ times
a simple-module class, so it restricts to the integral
forms. It preserves auxiliary degree and extends
injectively after inverting the separators.
Composing this extension with $\iota_{\mathbf i,v}$
gives $\Phi_{\mathbf i,v,\mathbb A}$.

Let $X=\mathsf M_t(\mathbf a)$ be a quantum cluster
monomial, with $\mathbf a\in\mathbb N^{V_{\mathbf i}}$,
and let $\widetilde X$ be the corresponding ambient
cluster monomial in the retained odd variables.
Put
\(
\mathbf n
:=\sum_{a\in V_{\mathbf i}}a_a\mathbf n_{t,a}.
\)
By normalization,
\(
\iota_{\mathbf i,v}(X)
=q^{c/2}\widetilde XH^{\mathbf n}
\)
for some $c\in\mathbb Z$.
The reduced-word result
\cite{bi2026monoidal,kashiwara2026monoidal}
gives a simple module
$S\in\mathscr C_{\widetilde w,v}$ such that
$\Phi_{\mathbf j,v}(\widetilde X)=q^{d/2}[S]$.
Consequently,
\[
\Phi_{\mathbf i,v,\mathbb A}(X)
=
q^{e/2}
\bigl[Q_{\mathbf i,v}(S)\circ M^{\mathbf n}\bigr]
\]
for some $e\in\mathbb Z$.

By Lemma~\ref{lem:separator-category-localization},
$Q_{\mathbf i,v}(S)$ is nonzero and simple.
Since $M^{\mathbf n}$ is invertible, tensoring with it
is an equivalence, so
$Q_{\mathbf i,v}(S)\circ M^{\mathbf n}$ is also simple.
Moreover,
\[
\deg_\infty
\bigl(Q_{\mathbf i,v}(S)\circ M^{\mathbf n}\bigr)
=
\deg_\infty\widetilde X
+\sum_p n_p\deg_\infty H_p
=0.
\]
Thus this simple object belongs to
$\mathscr C_{\mathbf i,v}^{0}$, proving the assertion
for every quantum cluster monomial.
Finally, flatness of $\mathbb K$ over $\mathbb A$
gives the asserted embedding after scalar extension.
\end{proof}

\appendix
\section{Quantum seeds and monoidal localization}\label{sec:appendix}

Throughout this appendix, \(q\) is an indeterminate and
\(\mathbb K=\mathbb Q(q^{1/2})\). Unless otherwise specified,
quantum cluster algebras are defined over \(\A=\mathbb Z[q^{\pm1/2}]\),
and their frozen variables are not inverted.

\subsection{Quantum cluster algebras}
\label{app:quantum-cluster-algebras}

We use the conventions of \cite{BZ-quantum} for quantum
seeds and mutation.
Let \(K\) be a finite ordered index set, with mutable subset
\(K_{\mathrm{ex}}\) and frozen subset
\(K_{\mathrm{fr}}=K\setminus K_{\mathrm{ex}}\).
Let \(B=(b_{ij})_{i\in K,\,j\in K_{\mathrm{ex}}}\) be an
integer matrix with skew-symmetrizable principal part, and let
\(\Lambda=(\lambda_{ij})_{i,j\in K}\) be an integral
skew-symmetric matrix. The pair \((B,\Lambda)\) is
\emph{compatible} if there are positive integers
\(\rho_j\), \(j\in K_{\mathrm{ex}}\), such that
$$
B^{\mathsf t}\Lambda
=
(\rho_j\delta_{ji})_{j\in K_{\mathrm{ex}},\,i\in K},
\qquad\text{equivalently}\qquad
\sum_{s\in K}b_{sj}\lambda_{si}=\rho_j\delta_{ji}.
$$
For the word seeds below, \(\rho_k=2d_{i_k}\), where the
\(d_i\) symmetrize the Cartan matrix. In symmetric type
we use \(d_i=1\), so the diagonal coefficients are \(2\).

The based quantum torus \(\mathcal T^\Lambda\) is the
\(\A\)-algebra with basis
\(\{X^{\mathbf a}\mid\mathbf a\in\mathbb Z^K\}\) and
multiplication
$$
X^{\mathbf a}X^{\mathbf b}
=
q^{\frac12\mathbf a^{\mathsf t}\Lambda\mathbf b}
X^{\mathbf a+\mathbf b}.
$$
Writing \(X_i=X^{e_i}\), where \((e_i)_{i\in K}\) is the
standard basis of \(\mathbb Z^K\), one has
\(X_iX_j=q^{\lambda_{ij}}X_jX_i\) and
$$
X^{\mathbf a}
=
q^{-\frac12\sum_{i<j}a_i a_j\lambda_{ij}}
\prod_{i\in K}^{\longrightarrow}X_i^{a_i}.
$$
The product follows the fixed order on \(K\).

A \emph{toric frame} in a \(\mathbb K\)-division algebra \(\mathcal F\)
is a map
\(\mathsf M:\mathbb Z^K\to\mathcal F^\times\) identifying
a based quantum torus with a subalgebra of \(\mathcal F\)
whose division ring of fractions is \(\mathcal F\).
Its commutation matrix is denoted by \(\Lambda_{\mathsf M}\).
A \emph{quantum seed} is a pair
\(\mathbf s=(\mathsf M,B)\) such that
\((B,\Lambda_{\mathsf M})\) is compatible.
We also write
\(\mathbf s=((\mathsf M(e_i))_{i\in K},B,\Lambda_{\mathsf M})\)
when the index sets are understood.

For \(k\in K_{\mathrm{ex}}\), let \(\mathbf b_k\) be the
\(k\)-th column of \(B\), and write
\([\mathbf v]_+=(\max(v_i,0))_{i\in K}\).
The matrix mutation \(B'=\mu_k(B)\) is defined by
$$
b'_{ij}
=
\begin{cases}
-b_{ij},&i=k\text{ or }j=k,\\
b_{ij}+[b_{ik}]_+[b_{kj}]_+
       -[-b_{ik}]_+[-b_{kj}]_+,
       &i\ne k,\ j\ne k.
\end{cases}
$$
Let \(\mathsf E_k\) be the identity matrix with its
\(k\)-th column replaced by \(-e_k+[\mathbf b_k]_+\).
The mutated seed \(\mu_k(\mathbf s)=(\mathsf M',B')\)
has commutation matrix
\(\Lambda_{\mathsf M'}=
\mathsf E_k^{\mathsf t}\Lambda_{\mathsf M}\mathsf E_k\),
and its variables are
$$
\mathsf M'(e_k)
=
\mathsf M(-e_k+[\mathbf b_k]_+)
+\mathsf M(-e_k+[-\mathbf b_k]_+),
\qquad
\mathsf M'(e_j)=\mathsf M(e_j)\quad(j\ne k).
$$
These variables determine \(\mathsf M'\) by the normalized
monomial formula. The mutated pair is again compatible with
the same diagonal coefficients \(\rho_j\),
and mutation in direction \(k\) is involutive.

Write \(\mathbf s_t=(\mathsf M_t,B_t)\sim\mathbf s\)
for a seed obtained from \(\mathbf s\) by a finite sequence
of mutations. The \emph{quantum cluster algebra} used
in this paper is
$$
\mathcal A_{\A}(\mathbf s)
=
\A\left\langle
\mathsf M_t(e_i)
\mid \mathbf s_t\sim\mathbf s,\ i\in K
\right\rangle
\subseteq\mathcal F.
$$
The generators include the frozen variables, but not their inverses.
A \emph{quantum cluster monomial} is an element
\(\mathsf M_t(\mathbf a)\) with
\(\mathbf a\in\mathbb Z_{\ge0}^K\).

For comparison, define
\(\mathcal T_{\A}(\mathbf s_t)
=\A\langle\mathsf M_t(e_i)^{\pm1}\mid i\in K\rangle\).
The frozen localization and the upper quantum cluster
algebra are
$$
\mathcal A_{\A}^{\mathrm{loc}}(\mathbf s)
=
\mathcal A_{\A}(\mathbf s)
[\mathsf M(e_f)^{-1}\mid f\in K_{\mathrm{fr}}],
\qquad
\mathcal U_{\A}^{\mathrm{loc}}(\mathbf s)
=
\bigcap_{\mathbf s_t\sim\mathbf s}\mathcal T_{\A}(\mathbf s_t).
$$
The quantum Laurent phenomenon gives
\(\mathcal A_{\A}(\mathbf s)
\subseteq\mathcal A_{\A}^{\mathrm{loc}}(\mathbf s)
\subseteq\mathcal U_{\A}^{\mathrm{loc}}(\mathbf s)\subseteq\mathcal F\).
All frozen variables are inverted in
\(\mathcal U_{\A}^{\mathrm{loc}}\). We do not assert that it equals
the cluster algebra before inverting frozen variables. Extension
of scalars defines
\(\mathcal A_q(\mathbf s)=\mathbb K\otimes_{\A}
\mathcal A_{\A}(\mathbf s)\), realized inside the same skew field.

\subsection{Quantum seeds associated with words}
\label{app:word-seeds}

Let \(C=(c_{ij})_{i,j\in I}\) be symmetrizable, fix
\(d_i\in\mathbb Z_{>0}\) with \(d_ic_{ij}=d_jc_{ji}\), and let
\(\mathbf i=(i_1,\ldots,i_r)\) be a positive braid word,
not necessarily reduced in \(W\).
Put \(w_{\le k}=s_{i_1}\cdots s_{i_k}\), with \(w_{\le0}=1\), and
\[
k^-=\max\{a<k\mid i_a=i_k\},\qquad
k^+=\min\{a>k\mid i_a=i_k\},
\]
using \(\max\emptyset=-\infty\) and
\(\min\emptyset=+\infty\).
The frozen and mutable sets are
\(\operatorname{fr}(\mathbf i)=\{k\mid k^+=+\infty\}\)
and \(\operatorname{ex}(\mathbf i)
=[1,r]\setminus\operatorname{fr}(\mathbf i)\).

The extended exchange matrix
\(B_{\mathbf i}=(b_{st})_{s\in[1,r],\,
t\in\operatorname{ex}(\mathbf i)}\) is
\[
b_{st}=
\begin{cases}
-1,&s=t^+,\\
1,&s^+=t,\\
c_{i_si_t},&s<t<s^+<t^+,\\
-c_{i_si_t},&t<s<t^+<s^+,\\
0,&\text{otherwise}.
\end{cases}
\]
The principal part is skew-symmetrizable by
\(\operatorname{diag}(d_{i_k})_{k\in\operatorname{ex}(\mathbf i)}\).
In particular, the Cartan entries in the formula retain their
displayed row and column order.

Choose fundamental weights in a realization of \(C\), and
define the skew-symmetric matrix
\[
(\Lambda_{\mathbf i})_{st}
=\bigl((1-w_{\le s})\varpi_{i_s},
       (1+w_{\le t})\varpi_{i_t}\bigr)
\quad(s<t),
\qquad
(\Lambda_{\mathbf i})_{ss}=0.
\]
The root--weight pairing is normalized by
\((\alpha_i,\alpha_j)=d_ic_{ij}\) and
\((\alpha_i,\varpi_j)=d_i\delta_{ij}\).
Since \((1-w_{\le s})\varpi_{i_s}\in Q\), this expression
is integral and remains meaningful when \(C\) is singular.

The compatibility relation is
\[
\sum_{s=1}^r
b_{st}(\Lambda_{\mathbf i})_{su}
=2d_{i_t}\delta_{tu},
\qquad
t\in\operatorname{ex}(\mathbf i),\quad u\in[1,r].
\]

These signs agree with the forward LS convention
\(P_sP_t-q^{-(\gamma_s,\gamma_t)}P_tP_s
\in\langle P_{s+1},\ldots,P_{t-1}\rangle\) for \(s<t\).
By Proposition~\ref{pro:normalized-mutation}, the compatibility
relation can be written as
\[
B_{\mathbf i}^{\mathsf t}\Lambda_{\mathbf i}
=2(d_{i_k}\delta_{ks})_{k\in\operatorname{ex}(\mathbf i),\,s\in[1,r]}.
\]
Let \(M_{\mathbf i}\) be the standard toric frame in the
division ring of the based quantum torus
\(\mathcal T^{\Lambda_{\mathbf i}}\).
The \emph{quantum word seed} is
\(\mathbf s(\mathbf i)=(M_{\mathbf i},B_{\mathbf i})\).
We write \(\mathcal A_{\mathbb A}(\mathbf s(\mathbf i))\)
for its quantum cluster algebra over
\(\mathbb A=\mathbb Z[q^{\pm1/2}]\), with frozen variables
not inverted, and set
\(\mathcal A_q(\mathbf s(\mathbf i))
=\mathbb K\otimes_{\mathbb A}
\mathcal A_{\mathbb A}(\mathbf s(\mathbf i))\).
Theorem~\ref{thm:cluster-dehomogenization} realizes these
algebras in \(\D_{\mathbf i,\mathbb A}\) and
\(\D_{\mathbf i}\), respectively, via
\(M_{\mathbf i}(e_k)\mapsto Z_k\).

\subsection{Bao--Ye--M\'enard seeds}\label{app:menard-seed}

In this subsection assume that $C$ is symmetric and $d_i=1$ for all $i$.

Let $\mathbf i=(i_1,\ldots,i_r)$ be a positive word.
Its \emph{Demazure product} is defined recursively by
\begin{equation}\label{eq:demazure-product}
\delta(\emptyset)=e,
\qquad
\delta(\mathbf i i)
=\max\{\delta(\mathbf i),\delta(\mathbf i)s_i\},
\end{equation}
where the maximum is taken in the Bruhat order on $W$.
Fix $v\leq\delta(\mathbf i)$ and put $m=\ell(v)$.
The lexicographically smallest sequence
$1\leq p_1<\cdots<p_m\leq r$ satisfying
$s_{i_{p_1}}\cdots s_{i_{p_m}}=v$ determines the
\emph{leftmost reduced subexpression} of $\mathbf i$
representing $v$.

For $i\in I$, let $n_i$ be the number of occurrences of
$i$ in $\mathbf i$. We identify the position of its
$a$-th occurrence with the vertex $(i,a)$.
For $1\leq k\leq m$, set
\[
a_k:=\#\{s\leq k\mid i_{p_s}=i_{p_k}\},
\qquad
b_k:=\#\{t\leq p_k\mid
t\notin\{p_1,\ldots,p_m\},\ i_t=i_{p_k}\}.
\]
Thus $a_k$ and $b_k$ count the selected and unselected
occurrences of $i_{p_k}$ up to position $p_k$.

\begin{definition}\label{def:svbeta}
For $1\leq k\leq m$, put $i=i_{p_k}$ and define
\begin{equation}\label{eq:mu-tilde-k}
\widetilde\mu_k
:=\begin{cases}
\mu_{(i,n_i-a_k)}\circ\cdots\circ\mu_{(i,b_k+1)},
&a_k+b_k<n_i,\\[1mm]
\operatorname{Id},&a_k+b_k=n_i.
\end{cases}
\end{equation}
Composition is read from right to left, so the occurrence
indices are mutated in increasing order. Write
\begin{equation}\label{eq:M-k}
\mu_{\leq k}
:=\widetilde\mu_k\circ\cdots\circ\widetilde\mu_1,
\qquad
\mu_{\leq0}:=\operatorname{Id}.
\end{equation}
We use the same notation for the induced operations on
seeds, quivers, and compatible pairs.

Starting from $\mathbf s(\mathbf i)$, apply
$\widetilde\mu_k$ at step $k$ and then freeze
\(r_k:=(i_{p_k},n_{i_{p_k}}-a_k+1).
\)
After step $m$, set
\[
J_{\mathbf i}:=\{r_k\mid 1\leq k\leq m\},
\qquad
V_{\mathbf i}:=[1,r]\setminus J_{\mathbf i},
\]
using the identification of positions with occurrence
labels above. Retain the frozen status of every vertex
in $V_{\mathbf i}$, and freeze any remaining vertex
adjacent to $J_{\mathbf i}$ in the quiver
$\mu_{\leq m}(Q_{\mathbf s(\mathbf i)})$.
Then delete $J_{\mathbf i}$.
Denote the resulting frozen and exchangeable sets by
$V_{\mathbf i,\mathrm{fr}}$ and
$V_{\mathbf i,\mathrm{ex}}:=V_{\mathbf i}\setminus
V_{\mathbf i,\mathrm{fr}}$.

Let $(B_{\mathbf i}^{(m)},\Lambda_{\mathbf i}^{(m)})$
be the compatible pair obtained by applying
$\mu_{\leq m}$ to $(B_{\mathbf i},\Lambda_{\mathbf i})$,
before restricting to the retained vertices and
exchangeable columns. The freezing rule gives
\(
\bigl(B_{\mathbf i}^{(m)}\bigr)_
{J_{\mathbf i}\times V_{\mathbf i,\mathrm{ex}}}=0.
\)
Consequently, the restrictions
\begin{equation}\label{eq:restricted-compatible-pair}
B_{v,\mathbf i}
:=\bigl(B_{\mathbf i}^{(m)}\bigr)_
{V_{\mathbf i}\times V_{\mathbf i,\mathrm{ex}}},
\qquad
\Lambda_{v,\mathbf i}
:=\bigl(\Lambda_{\mathbf i}^{(m)}\bigr)_
{V_{\mathbf i}\times V_{\mathbf i}}
\end{equation}
form a compatible pair.

Let $y_a$ be the quantum cluster variable at vertex $a$
after applying $\mu_{\leq m}$. The
\emph{Bao--Ye--M\'enard seed} is
\begin{equation}\label{eq:seed-delta-beta}
\mathbf s(v,\mathbf i)
:=\bigl((y_a)_{a\in V_{\mathbf i}},
B_{v,\mathbf i},\Lambda_{v,\mathbf i},
V_{\mathbf i,\mathrm{ex}}\bigr).
\end{equation}
\end{definition}

We denote its quantum cluster algebra by
$\mathcal A_q(\mathbf s(v,\mathbf i))$, with no frozen
variables inverted.

\subsection{Localization of monoidal categories}
\label{app:monoidal-localization}

Assume that $C$ is symmetric and $d_i=1$ for all $i$.
Let $\mathbf k$ be an algebraically closed field of
characteristic zero, and let $R_C$ be the symmetric
quiver Hecke algebra associated with $C$.
We write $R_C\text{-gmod}$ for the category of finite
direct sums of finite-dimensional graded
$R_C(\beta)$-modules, with $\beta\in Q_+$.
Convolution, denoted by $\circ$, is exact in each argument
and has unit $\mathbf 1$. For $U\in R_C(\beta)\text{-gmod}$,
set $\operatorname{wt}U=-\beta$ and write $qU$ for its
grading shift. Weights add under convolution and are
unchanged by grading shifts.

For a root-homogeneous module $M\in R_C(\alpha)\text{-gmod}$,
define its left and right supports by
\[
\begin{aligned}
\mathbf W(M)
&:=\{\beta\in Q_+\mid
\alpha-\beta\in Q_+,\ e(\beta,\alpha-\beta)M\ne0\},\\
\mathbf W^*(M)
&:=\{\beta\in Q_+\mid
\alpha-\beta\in Q_+,\ e(\alpha-\beta,\beta)M\ne0\}.
\end{aligned}
\]
For $v,w\in W$ with $v\leq w$, let $\mathscr C_w$ and
$\mathscr C_{*v}$ be the full subcategories determined by
\[
\begin{aligned}
M\in\mathscr C_w
&\quad\Longleftrightarrow\quad
\mathbf W(M)\subseteq
\operatorname{span}_{\mathbb R_{\geq0}}
\bigl(\Delta^+\cap w\Delta^-\bigr),\\
M\in\mathscr C_{*v}
&\quad\Longleftrightarrow\quad
\mathbf W^*(M)\subseteq
\operatorname{span}_{\mathbb R_{\geq0}}
\bigl(\Delta^+\cap v\Delta^+\bigr).
\end{aligned}
\]
For an arbitrary object, these conditions are imposed on
each root-homogeneous component. Set
$\mathscr C_{w,v}:=\mathscr C_w\cap\mathscr C_{*v}$.

\begin{proposition}[{\cite[Proposition~2.16]{kashiwara2018monoidal}}]
The categories $\mathscr C_w$, $\mathscr C_{*v}$, and
$\mathscr C_{w,v}$ are closed under subquotients, extensions,
convolution products, and grading shifts.
\end{proposition}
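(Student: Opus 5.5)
The statement to prove is the proposition quoted from \cite[Proposition~2.16]{kashiwara2018monoidal}: $\mathscr C_w$, $\mathscr C_{*v}$ and $\mathscr C_{w,v}$ are closed under subquotients, extensions, convolution products and grading shifts. I would handle $\mathscr C_w$ first, obtain $\mathscr C_{*v}$ by the same argument with left and right exchanged, and get $\mathscr C_{w,v}$ by intersecting. Throughout, write $\Gamma_w=\operatorname{span}_{\mathbb R_{\ge0}}(\Delta^+\cap w\Delta^-)$, a closed convex cone.

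\emph{Grading shifts, subquotients, extensions.} Grading shifts do not change $e(\beta,\alpha-\beta)M$, so they are harmless. Each idempotent $e(\beta,\alpha-\beta)$ acts on modules exactly. Hence if $0\to M'\to M\to M''\to0$ is exact, then
\[
e(\beta,\alpha-\beta)M\ne0
\iff
e(\beta,\alpha-\beta)M'\ne0\ \text{or}\ e(\beta,\alpha-\beta)M''\ne0 .
\]
This gives $\mathbf W(M)=\mathbf W(M')\cup\mathbf W(M'')$, which proves closure under subobjects, quotients and extensions at once. For non-homogeneous objects, the condition is imposed componentwise and the same argument applies.

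\emph{Convolution.} Take $M\in R(\alpha)$-gmod and $N\in R(\alpha')$-gmod. I would use the Mackey (shuffle) filtration of $e(\beta,\alpha+\alpha'-\beta)(M\circ N)$. Its subquotients are induced from $e(\beta_1,\alpha-\beta_1)M\otimes e(\beta_2,\alpha'-\beta_2)N$ with $\beta=\beta_1+\beta_2$. It follows that every $\beta\in\mathbf W(M\circ N)$ has the form $\beta_1+\beta_2$ with $\beta_1\in\mathbf W(M)$ and $\beta_2\in\mathbf W(N)$. Since $\Gamma_w$ is closed under addition, $\mathbf W(M\circ N)\subseteq\Gamma_w$.

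\emph{The other two categories.} For $\mathscr C_{*v}$, run the same three arguments with $\mathbf W^*$, the idempotents $e(\alpha-\beta,\beta)$, and the cone $\operatorname{span}_{\mathbb R_{\ge0}}(\Delta^+\cap v\Delta^+)$. The Mackey decomposition is symmetric under exchanging left and right, so nothing new is needed. Finally $\mathscr C_{w,v}=\mathscr C_w\cap\mathscr C_{*v}$, and an intersection of full subcategories each closed under these four operations is again closed under them.

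The main obstacle is the convolution step: one has to check carefully that the Mackey filtration really exists for $e(\beta,\gamma)\,\mathrm{Ind}$ in quiver Hecke algebras, and that only the decompositions $\beta_1+\beta_2$ with both summands in $Q_+$ occur. I would cite the standard Mackey theorem (Khovanov--Lauda) for this rather than reprove it.
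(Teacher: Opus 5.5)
Your proposal is correct. The paper gives no proof of its own and only quotes Kashiwara--Kim--Oh--Park; your argument is the standard proof of that result: exactness of the truncations $M\mapsto e(\beta,\alpha-\beta)M$ gives $\mathbf W(M)=\mathbf W(M')\cup\mathbf W(M'')$ for short exact sequences, the Mackey filtration (equivalently, the shuffle lemma for characters) gives $\mathbf W(M\circ N)\subseteq\mathbf W(M)+\mathbf W(N)$, the same holds for $\mathbf W^*$, and only closure of the cones under addition is needed (not closedness, which can fail for the possibly infinitely generated cone spanned by $\Delta^+\cap v\Delta^+$).
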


For any of these categories $\mathscr C$, its Grothendieck
ring is the $\mathbb Z[q^{\pm1}]$-algebra with
$[U\circ V]=[U][V]$, $[qU]=q[U]$, and
$[V]=[U]+[W]$ for every short exact sequence
$0\to U\to V\to W\to0$. We use the scalar extensions
\[
K_{0,\A}(\mathscr C)
:=\A\otimes_{\mathbb Z[q^{\pm1}]}K_0(\mathscr C),
\qquad
K_{0,\mathbb K}(\mathscr C)
:=\mathbb K\otimes_{\mathbb Z[q^{\pm1}]}K_0(\mathscr C).
\]
Here $q^{1/2}$ is a scalar; categorical grading shifts
remain integral. The standard categorification identifies
$K_{0,\A}(\mathscr C_w)$ with the integral quantum
unipotent coordinate ring $A_q(\mathfrak n(w))_{\A}$.

A simple object $S$ is \emph{real} if $S\circ S$ is
simple. Two simple objects \emph{strongly commute} if
their convolution is simple. Every convolution monomial
in a pairwise strongly commuting family of real simple
quiver Hecke modules is again real simple
\cite{kang2018monoidal}.

A \emph{graded braider structure} on a root-homogeneous
object $M\in\mathscr C$ consists of a homomorphism
$\chi:Q\to\mathbb Z$ and natural morphisms
\[
\mathcal R_M(U):M\circ U
\longrightarrow q^{\chi(\operatorname{wt}U)}U\circ M
\]
for root-homogeneous $U$, compatible with the unit and
convolution. A family of graded braiders is \emph{real
commuting} if its self-braidings and double braidings
are nonzero scalar multiples of the identity, with
total grading shift zero.

Put $\widetilde w=\widetilde w_{\mathbf i}$.
We apply the localization construction to the separator
modules
\[
M_p:=M(\widetilde w\varpi_{\infty_p},\varpi_{\infty_p}),
\qquad 1\leq p<r,
\]
whose classes correspond to $H_p$ up to powers of
$q^{1/2}$. They form a subfamily of the frozen
determinantial modules in $\mathscr C_{\widetilde w}$,
and hence a real commuting family of graded braiders
whose braiding morphisms are isomorphisms
\cite[Proposition~5.1 and Theorem~5.2]{KKOP-localization}.

Localizing at this subfamily gives
\[
Q_{\mathbf i}:\mathscr C_{\widetilde w}
\longrightarrow
\mathscr L_{\mathbf i}
:=
\mathscr C_{\widetilde w}
[M_p^{-1}\mid 1\leq p<r].
\]
This localization is universal among graded monoidal
functors making the separator modules invertible.
The functor $Q_{\mathbf i}$ is exact, and
$\mathscr L_{\mathbf i}$ is abelian with exact convolution
\cite[Theorem~2.12 and Proposition~2.13]{KKOP-localization}.

For $\mathbf a\in\mathbb Z^{r-1}$, write
\[
M^{\mathbf a}
:=M_1^{\circ a_1}\circ\cdots\circ M_{r-1}^{\circ a_{r-1}},
\]
using the same notation for the separator modules and
their images under $Q_{\mathbf i}$.
Negative powers denote monoidal inverses.
By Lemma~\ref{lem:separator-category-localization},
the simple objects of $\mathscr L_{\mathbf i}$ are
precisely the objects
\[
q^d Q_{\mathbf i}(S)\circ M^{\mathbf a},
\qquad
d\in\mathbb Z,\quad
\mathbf a\in\mathbb Z^{r-1},
\]
where $S$ is simple in $\mathscr C_{\widetilde w}$.

The root grading extends by
$\operatorname{wt}(M_p^{-1})=-\operatorname{wt}(M_p)$
and induces the auxiliary grading on
$\mathscr L_{\mathbf i}$. Its degree-zero subcategory
is $\mathscr C_{\mathbf i}^{0}$.
On Grothendieck rings, the same lemma gives
\[
\begin{aligned}
K_{0,\A}(\mathscr L_{\mathbf i})
&\cong
K_{0,\A}(\mathscr C_{\widetilde w})
\bigl[[M_p]^{-1}\mid 1\leq p<r\bigr],\\
K_{0,\A}(\mathscr C_{\mathbf i}^{0})
&=
K_{0,\A}(\mathscr L_{\mathbf i})_0.
\end{aligned}
\]


\end{document}